\numberwithin{equation}{section}
\newtheorem*{cond*}{\assumptionnumber}
\providecommand{\assumptionnumber}{}
\newenvironment{cond}[1]
 {%
  \renewcommand{\assumptionnumber}{Condition #1}%
  \begin{cond*}%
  \protected@edef\@currentlabel{#1}%
 }
 {%
  \end{cond*}
 }
\theoremstyle{plain}
\theoremstyle{definition}
\theoremstyle{remark}
\theoremstyle{remark}\newtheorem{remark}{Remark}
\theoremstyle{definition}\newtheorem{example}{Example}
\theoremstyle{plain}
\theoremstyle{plain}\newtheorem{theorem}{Theorem}
\theoremstyle{plain}\newtheorem{lemma}{Lemma}
\theoremstyle{plain}\newtheorem{proposition}{Proposition}
\theoremstyle{plain}\newtheorem{corollary}{Corollary}
\theoremstyle{plain}
\begin{document}

\begin{frontmatter}
\title{Location estimation for symmetric  log-concave densities}
\runtitle{Location estimation  }

\begin{aug}
\author{\fnms{Nilanjana} \snm{Laha}\thanksref{m1}\ead[label=e1]{nlaha@hsph.harvard.edu}}

\runauthor{N. Laha}

\affiliation{Department of Biostatistics, Harvard University}
\address{Department of Biostatistics, Harvard University, 677 Huntington Ave, Boston, MA 02115, U.S.A\thanksmark{m1}\\
\printead{e1}}
\end{aug}	

\begin{abstract}
 We revisit the problem of  estimating  the center of symmetry $\theta$ of an unknown symmetric density $f$.  Although \cite{stone}, \cite{eden}, and \cite{saks}  constructed adaptive estimators of $\th$ in this model, their estimators depend on tuning parameters. In an effort to circumvent the dependence on tuning parameters,  we impose  an additional assumption of log-concavity on $f$. We show that in this shape-restricted model, the maximum likelihood estimator (MLE) of $\theta$ exists.  We also study some  truncated one-step estimators and show that they are $\sqrt{n}-$consistent, and nearly achieve the asymptotic efficiency bound. We also show that the rate of convergence for the MLE is $O_p(n^{-2/5})$. Furthermore, we show that our estimators are robust with respect to the violation of the log-concavity assumption. In fact, we show that the one step estimators  are still $\sqn$-consistent under some mild  conditions.  These analytical conclusions are supported by simulation studies. 
\end{abstract}


\begin{keyword}
\kwd{log-concave}
\kwd{shape constraint}
\kwd{symmetric location model}
\kwd{one step estimator}
\end{keyword}

\date{\today}
\end{frontmatter}


\section{Introduction}
\label{sec:intro}

In this paper, we  revisit the symmetric location model with an additional shape-restriction of log-concavity. 
We  let $\mathcal{P}$  denote the class of  all densities  on the real line $\RR$. 
For any $\th\in\RR$, denote by $\mathcal{S}_{\th}$ the class of all  densities that are symmetric about $\theta$.
We take
\begin{equation}\label{def: class of concave functions}
\mathcal{C}:=\bigg\{\ph:\RR\mapsto[-\infty,\infty)\ \bl\ \ph \text{ is concave, closed, and proper} \bigg\},
\end{equation}
and define $\mathcal{SC}_{\th}=\mathcal{S}_{\th}\cap\mathcal{C}$. Here  a proper and closed concave function is as defined in \cite{rockafellar}, page 24 and 50.
Letting $\mathcal{LC}$ denote the class of log-concave densities
\[\mathcal{LC}:= \bigg\{f\in\mathcal{P}\ \bl\ \phi=\log f\in\mathcal{C}\bigg\},\]
we set $\mathcal{SLC}_{\th}= \mathcal{LC}\cap\mathcal{S}_{\th}$.
We aim to estimate $\theta$ in the log-concave symmetric location model
\begin{equation}
\label{definition: model}
\mP_0=\bigg\{f\in\mP \ \bl\ f(x;\th)=g(x-\th),\ \th\in\RR,\ g\in\mathcal{SLC}_0,\ \If<\infty \bigg\},
\end{equation}
where $\If$ is the Fisher information for location. 
It is well-established that \citep[][Theorem~3]{huber} $\If$ is finite if and only if $f$ is an absolutely continuous density satisfying
\[\edint\lb\dfrac{f'(x)}{f(x)}\rb^2 f(x)dx<\infty,\]
where $f'$ is an $L_1$-derivative of $f$. Also, in this case, $\If$ takes the form 
\begin{equation*}
\If=\edint\lb\dfrac{f'(x)}{f(x)}\rb^2 f(x)dx.
\end{equation*} 

    Estimation of $\th$ in the full symmetric location model
\begin{equation}\label{definition: the symmetric model Ps}
\mathcal{P}_s =\bigg\{f\in\mathcal{P}\ \bl\ f(x;\th)=g(x-\th),\ \th\in\RR,\ g\in\mathcal{S}_0,\ \If<\infty\bigg\}
\end{equation}
is an old semi-parametric problem, dating back to \cite{stein}. From then on, this problem has been considered by many early authors including, but not limited to, \cite{stone}, \cite{beran}, \cite{saks}, and \cite{eden}. 
   There are two main reasons behind the assumption of symmetry in this model. First, as \cite{stone} has noted, if $f$ is totally unrestricted, $\th$ is not identifiable.  Second,
 the definition of location becomes less clear in the absence of symmetry  \citep{unimodal1}. 
 
 The appeal of the above  model lies in the fact that
 adaptive estimation of $\th$ is possible in this model \citep{stone}.
In other words,  there exist consistent estimators of $\th$ in $\mP_s$, whose variances attain the parametric lower bound, which is $\If^{-1}$ in this case.  
See Sections $3.2$, $3.3$, and $6.3$ of \cite{jonsemi} for more discussion on adaptive estimation in $\mathcal{P}_s$.  

Now we will briefly discuss some existing methods of estimating  $\theta$ in $\mathcal{P}_s$. 
  \cite{stone} considers a one-step estimator. One-step estimators generally depend on the estimation of the score  $-f'/f$. This paper  estimates the scores using symmetrized Gaussian kernels based on truncated data, thereby incorporating  two tuning parameters, one for the truncation,  and another for the scaling of the Gaussian Kernel. Although \cite{stone} requires the tuning parameters  to satisfy some  conditions \citep[Theorem $5.3$]{stone}, he does not provide any  prescription  for choosing them. \cite{saks} considers a subclass of $\mP_s$ with some restrictions on the score function that are weaker than the assumption of log-concavity, and includes heavy-tailed distributions like Cauchy. His estimator is motivated by  a linear function of the order statistics, whose coefficients depend on the score functions. The estimation procedure involves three tuning parameters. \cite{saks} gives some examples of these tuning parameters,  but does not provide any data-dependent method for choosing them. \cite{beran} uses linearized rank estimate for adaptive estimation of $\th$ in $\mP_s$. But he makes the additional assumption that the score is twice continuously differentiable, which excludes densities like Laplace. His method is based on the Fourier series expansion of the score function and requires a tuning parameter for choosing the degree of the associated complex trigonometric polynomial. \cite{beran} does not provide any data-dependent procedure to choose this tuning parameter.

   Now we provide some justification for imposing the shape restriction of log-concavity.  The class of log-concave densities, $\mathcal{LC}$, belongs to the larger class of unimodal densities.  Unimodality is a reasonable assumption in our context,  because, as \cite{unimodal1} points out, in practice, multimodal densities generally result from unimodal mixtures. Separate procedures are available for the latter class.   The problem with the class  of unimodal densities, however, is that it  does not admit an MLE \citep{birge1997}. Many common symmetric unimodal continuous densities, notwithstanding,  are in $\mathcal{LC}$, which, being a smaller and richer class,  admit  MLE \citep[cf.][]{ exist, 2009rufi,dosssymmetric, xuhigh}. Since the MLE  does not depend on tuning parameters, it is possible to estimate $f$  without relying on any tuning parameter, which can lead to tuning parameter free estimation of $\theta$.  This is the main reason to opt for the shape-restriction of log-concavity.

 Although shape-constrained estimation has a rich history,  so far there has been very little to no use of shape-constraints in connection with semiparametric 
models.  
In fact, to  the best of our knowledge, \cite{eden} is the only one to incorporate shape-constraints in treating  the one-sample symmetric location problem considered here. Actually \cite{eden} uses log-concavity and considers estimation in $\mP_0$ although her paper  does not mention log-concavity.  She restricts $\mP_s$ by assuming that $f'(F^{-1}(u))/f(F^{-1}(u))$ is non-increasing in $u$,  which is equivalent to $f$ being log-concave. \cite{eden} considers data-partitioning to estimate scores from a small fraction of the data and constructs the Hodges-Lehmann rank estimate of location \citep{hodges} from the remaining data.  
 The partitioning of the data and the construction of the score function  involve some tuning parameters.
\cite{eden} does not explicitly describe how to choose  these tuning parameters. 
 We also want to remark about  \cite{sharmada}, who consider both location  and scale estimation in an  elliptical symmetry model, which albeit bearing some resemblance,  is  different from $\mathcal{P}_0$. Also, \cite{sharmada}'s estimation procedure is completely different from ours.  
 
To summarize,  our goal is  to bridge the gap between  the symmetric location model and log-concavity. In doing so, we also try to exploit the structure in the class $\mathcal{LC}$ to  get rid of the dependence on external tuning parameters.   Here is a brief account of our main contributions.
 \subsubsection*{The MLE} We show that the MLE of $\theta$ and $g$ exist in  $\mP_0$. It also follows that \hthm, i.e. the MLE of $\theta$, has the rate of convergence $O_p(n^{-2/5})$. In addition, we show that the  Hellinger distance between $f\in\mP_0$ and its MLE $\hgf$ is of order $O_p(n^{-2/5})$, where the Hellinger distance $H(f,g)$ between two densities $f$ and $g$ is defined by
 \[H^2(f,g)=2^{-1}\int(\sqrt{f(x)}-\sqrt{g(x)})^2dx.\]
  The above-mentioned rate is analogous to the rate obtained by \cite{dosssymmetric} for the MLE  of $f$ in $\mathcal{SLC}_0$, or  by \cite{dossglobal}   for the MLE  of $f$ in $\mathcal{LC}$.
  \subsubsection*{The One step estimator} We also propose four one step estimators for $\theta$, which  generally use on the mean, median or a trimmed mean as the  preliminary estimator.  We estimate the score using either the MLE of $f$ in $\mathcal{LC}$, or in $\mathcal{SLC}_{\bth}$, which can be efficiently computed using the R packages "logcondens" and "logcondens.mode", respectively. We establish that  a truncated version of the one-step estimators is $\sqrt{n}-$consistent, where its variance also  nearly achieves the asymptotic efficiency bound $\If^{-1}$. The truncated estimator \hth, however, use a tuning parameter.
 
%
   

 \subsubsection*{Model misspecification} 
We show that our estimators are robust to the violation of the log-concavity assumption.  
   In particular, we show that, even if $f\notin\mP_0$, as long as $f$ is symmetric about  $\th$,  the MLE and the truncated one step estimators are still strongly consistent.   Additionally, for the specific case of the truncated one-step estimators, $\sqn$-consistency is also preserved. The MLE, on the other hand, may still possess a limit  even when $f$ is not symmetric, or if the data generating distribution does not even possess a density. For the analysis in the misspecified case, we rely on the log-concave projection theory developed by \cite{dumbreg}, \cite{theory}, and \cite{xuhigh}. 
   The early authors, \cite{stone}, \cite{beran}, \cite{saks}, and \cite{eden} did not discuss  the behavior of their estimators  under model miss-specification although Stone (1975) did discuss the behavior of his estimator when $\I = \infty$.

   The article is organized as follows. In Section \ref{sec: one-step}, we introduce the one step estimators, and in \ref{sec: MLE}, we discuss  the existence of the MLE and its characterizations.  We study the asymptotic properties of our estimators in Section \ref{sec: asymptotic properties}. We provide a simulation study in Section \ref{sec: simulation}. The subtleties of our estimators are deferred to the Discussion Section. We collect  the proofs in  Section \ref{sec:proofs}.

\subsection{Preliminaries/ Notation and terminology}
\label{sec: Preliminaries}

We assume that $X_1,\ldots, X_n$ are independent and identically distributed (i.i.d.) random variables with density $f_0\in\mathcal{P}_0$. Therefore,  \eqref{definition: model} indicates that $f_0=g_0(\mathord{\cdot}-\th_0)$, where $g_0\in\mathcal{SLC}_{0}$, the class of all log-concave densities symmetric about $0$. We set  $\ph_0=\log f_0$, and $\ps_0=\log g_0$. 
 We let $F_0$ and $G_0$ denote the respective distribution functions of $f_0$ and $g_0$, and denote by $P_0$ the measure corresponding to $F_0$. We  denote the empirical distribution function of the $X_i$'s by $\Fm$, and write $\Pm$ for the corresponding empirical measure.  The (classical) empirical process of the $X_i$'s will be denoted by $\Zn=\sqn(\Fm-F_0)$. 
 For any integrable function $h:\RR\mapsto\RR$, we write
\[\Pm h:=n^{-1}\si {h(X_i)}\quad{\text{and} }\quad P_0 h=\edint h(x)f_0(x)dx.\] 
As usual, we denote the order statistics of a sample  $(Y_1,\ldots,Y_n)$  by
 \[Y_{()}=(Y_{(1)},\ldots,Y_{(n)}).\] 
  
  In context of a concave function $\ps:\RR\mapsto\RR$, the domain  $\dom(\ps)$ will be defined as in \cite{rockafellar} (page $40$), that is, 
$\dom(\ps)=\{x\in\RR|\ps(x)>-\infty\}$. 
  For any concave function $\ps$, we say $x\in\RR$ is a knot of $\ps$, if either $\ps'(x+)\neq\ps'(x-)$, or  $x$ is at the boundary of $\dom(\ps)$. We denote by $\mathcal{K}(\ps)$ the set of the knots of $\ps$. Unless otherwise mentioned, for a real valued function $h$, provided they exist,  $h'$ and $h'(\mathord{\cdot}-)$ will refer to the right and left derivatives of $h$ respectively. For a distribution function $F$, we let $J(F)$ denote the set $\{0<F<1\}$.  For any non-negative real-valued function $h$, we denote the support by $\supp(h)=\{x\in\RR|h(x)>0\}$. 
  As usual, we denote the set of natural numbers by $\mathbb{N}$.

Next, we  develop some notation for  maximum likelihood estimation.
%
  For $\ps\in\mathcal{SC}_0$ and $\th\in\RR$, following \cite{dumbreg} and \cite{xuhigh}, we define the criterion function for maximum likelihood estimation  by
 \begin{equation}\label{criterion function: xu samworth}
 \Psi(\th,\ps,F)=\edint \ps(x-\th)dF(x)-\edint e^{\ps(x-\th)}dx.
 \end{equation}
Following \cite{silverman1982}, we  included a Lagrange term  to get rid of the normalizing constant  involved in density estimation. This is a common device in log-concave density estimation literature \citep[cf.][]{2009rufi, dosssymmetric}.
 We use the notation $\Psi_n(\th,\ps)$ to denote the sample version  $\Psi(\th,\ps,\Fm)$ of $\Psi(\th,\ps,F)$. Thus,
  \begin{equation}\label{criterion function: Doss}
  \Psi_n(\th,\ps)=\edint \ps(x-\th)d\Fm(x)-\edint e^{\ps(x-\th)}dx.
  \end{equation}
We denote the maximized criterion function   by 
\begin{equation}\label{maximum of criterion function}
L(F)=\sup_{\th\in\RR,\ps\in\mathcal{SC}_0}\Psi(\th,\ps,F).
\end{equation}


\section{Estimators}
\label{sec: estimators}


\subsection{One-step estimators}
\label{sec: one-step}
This section focuses on  constructing the one-step estimators of $\theta$. To this end, our first task is to  choose suitable preliminary estimator \bth\ of $\th$. We will impose two requirements on $\bth$: $\sqn$-consistency and strong consistency. The sample mean and the sample median also satisfy our requirements on $\bth$ when $f_0\in\mP_0$. For a general $f_0$, the $Z$-estimator of the shift in the logistic location shift model satisfies these  requirements under minimal regularity conditions \citep[cf. Example 5.40 and Theorem $5.23$,][]{vdv}. 

Suppose $\hn$ is an estimator of the centered density $g_0=f_0(\cdot+\th_0)$. We let $\tilde{\ps}_n$ denote the respective log-density $\log \hn$,
 and write $\hH$ for the distribution function corresponding to \hn. 
%
Ideally, to construct a one-step estimator in this setting, one would first estimate the Fisher information by
\begin{equation}\label{def: untruncated fisher info}
\hin=\edint \tilde{\ps}'_n(x-\bth)^2d\Fm(x),
\end{equation}
and set
\begin{equation}\label{def: one-step estimator full}
\hthf=\bth-\edint\dfrac{\tilde{\ps}'_n(x-\bth)}{\hin}d\Fm(x).
\end{equation}

It is not hard to see that the  choice of $\hn$ will heavily influence the performance of $\hthf$. We want $\hn$ to be symmetric about $0$, and if possible, log-concave.

A simple way of estimating $f_0=g_0(\mathord{\cdot}-\th_0)$ is to compute its MLE in $\mathcal{LC}$, the class of all log-concave densities. However, this log-concave  MLE, which we will denote by $\hf$ from now on, may violate  the requirement of symmetry. In what follows,  we show that, this naive estimator $\hf$  can be improved to yield better estimators of $g_0$.  
 
\subsubsection*{Symmetrized  estimator $\hn^{sym}$}
The simplest way of obtaining a symmetrized estimator of $g_0$ from $\hf$ is by symmetrizing the latter about $\bth$, which yields
\begin{equation}\label{est 1}
\hn^{sym}(z)=\dfrac{1}{2}\lb\hf(\bth+z)+\hf(\bth-z)\rb,\quad z\in\RR.
\end{equation}
Although $\hn^{sym}$ is symmetric, it is not, in general, log-concave. Additionally, the performance of $\hn^{sym}$ can suffer from the lack of smoothness of $\hf$. Our simulations show that the one-step estimator based on $\hn^{sym}$ has considerably less efficiency, especially for smaller sample sizes. Therefore, we seek to improve the performance of $\hn^{sym}$  by incorporating smoothing, which leads to our next estimator of $g_0.$

\subsubsection*{Smoothed symmetrized  estimator $(\hn^{sym})^{sm}$}
  To perform smoothing, our first task is to choose a data-dependent smoothing parameter $\widehat{b}_n$. Towards this end, following \cite{smoothed}, we take 
  \begin{equation}\label{definition bn}
 \widehat{b}_n^2:=\hs-\ts,
 \end{equation}
 where   $\hs$ is the sample variance and $\ts$ is the variance corresponding to the density $\hf$, i.e.
  \[\hs=\dfrac{1}{n-1}\si (X_i-\bX)^2,\]
  and
\[\ts=\edint z^2\hf(z)dz-\lb\edint z\hf(z)dz\rb^2.\]
that the right hand side of \eqref{definition bn} is positive follows from (2.1) of \cite{smoothed}. 
We define the smoothed symmetrized  estimator by
\begin{equation}\label{25eq3}
(\hn^{sym})^{sm}(z):=\dfrac{1}{\widehat{b}_n}\edint\hn^{sym}(z-t)\gd(t/\widehat{b}_n) dt,\quad z\in\RR,
\end{equation}
where $\gd$ is the standard normal density.
The estimator \htsm\ can also be represented in terms  of the  
  smoothed version of $\hf$, which we define by
 \begin{equation}\label{def: smoothed logconcave MLE}
 \hf^{sm}(z)=\dfrac{1}{\widehat{b}_n}\edint \hf(z-t)\gd(t/\widehat{b}_n)dt,\quad z\in\RR.
 \end{equation}
To see this, note that
   \begin{align}\label{representation of htsm}
\htsm(z)=\dfrac{\hts(\bth+z)+\hts(\bth-z)}{2}.
   \end{align}
 Though $(\hn^{sym})^{sm}$ leads to a more efficient one-step estimator of $\th_0$ in small samples,  it is not necessarily log-concave, which leaves room for further improvement.
\subsubsection*{Partial MLE estimator $\widehat{g}_{\bth}$}
The following estimator of $g_0$ is obtained by maximizing the criterion function $\Psi_n(\bth,\ps)$ in \eqref{criterion function: Doss}  over $\ps\in\mathcal{SC}_0$. From Theorem 2.1(C) of \cite{dosssymmetric}, it follows that the maximizer $\widehat{\ps}_{\bth}$ satisfies 
\[\edint e^{\widehat{\ps}_{\bth}(x)}dx=1,\]
 which leads to the density estimator
 \begin{equation}\label{def: partial MLE estimator}
  \widehat{g}_{\bth}=e^{\widehat{\ps}_{\bth}}.
  \end{equation} 
 We call this estimator a {\sl Partial MLE estimator} to distinguish it from the traditional MLE of $g_0$, which we will discuss in Section \ref{sec: MLE}.
Notice that $\widehat{g}_{\bth}$ satisfies both requirements of symmetry and log-concavity. 

  From \cite{2009rufi} and \cite{dosssymmetric}, it follows that  the three symmetric density estimators $\hn$ we have discussed so far share the same support $[-d,d]$, where
\[d=\max(X_{(n)}-\bth,\bth-X_{(1)}).\]

\subsubsection*{Geometric mean type symmetrized estimator $\hn^{geo,sym}$}

 Similar to $\hn^{sym}$, this estimator is also a symmetrized version of $\hf$. However, this time the symmetrization  is based on the geometric mean of $\hf(\bth+\mathord{\cdot})$ and $\hf(\bth-\mathord{\cdot})$. For $z\in\RR$, we define this estimator by
 \begin{equation}\label{est 5}
 \hn^{geo,sym}(z):=C_{n}^{geo}\lb \hf(\bth+z)\hf(\bth-z)\rb^{1/2},
 \end{equation}
  where $C_{n}^{geo}$ is a random normalizing constant. Since
 the sum of two concave functions is again concave, it is not hard to see that $ \hn^{geo,sym}$ is log-concave. However, its support
  \[\supp(\hn^{geo,sym})=[-\min(X_{(n)}-\bth,\bth-X_{(1)}),\min(X_{(n)}-\bth,\bth-X_{(1)})]\] 
   is smaller than that of the previous estimators of $g_0$.

\subsubsection*{Constructing the truncated one-step estimator}

Ideally, one's first choice for a one step estimator will be the estimator \hthf\ suggested by \eqref{def: one-step estimator full}, which we will refer to as the untruncated one-step estimator from now on. However, when  $\phi'_0$ is unbounded near the boundary of its domain, $\hln'$ is also unbounded on its domain (see Corollary \ref{lemma: divergence} in Section 3.1). In this case, $\hln$'s behavior can be hard to control in the tails. To avoid these difficulties, we  consider a truncated one-step estimator. \cite{piet}
 used a similar idea in a current status linear regression model where  a  truncated log-likelihood was considered to avoid problems in the tails of the unknown error distribution.

 We take the truncation parameter $\eta$ to be a small fixed positive number  in the interval $(0,1/2)$, and denote by $\xin$ the $(1-\eta)$-th quantile of $\hH$, the distribution function of $\hn$. 
 The symmetry of $\hn$ about $0$ implies that $\hH^{-1}(\eta)=-\xin$.  
 
Two potential estimators of the Fisher information are
  \begin{align*}
&\hin^{*}(\eta)=\dint _{\bth-\xin}^{\bth+\xin}\tilde{\ps}'_n(x-\bth)^2d\hH(x-\bth),
\end{align*}
and
  \begin{align}\label{27indef}
&\hin(\eta)=\dint _{\bth-\xin}^{\bth+\xin}\tilde{\ps}'_n(x-\bth)^2d\Fm(x).
\end{align}
Though it can be shown that the above estimators differ by a $o_p(1)$ term, our simulations indicate that the estimator $\hin(\eta)$ yields a more efficient one-step estimator. Therefore, we take $\hin(\eta)$ to be our estimator of the Fisher information.

 In the same spirit as the untruncated one-step estimator \hthf\ in \eqref{def: one-step estimator full},  we now  construct a truncated one-step estimator of $\th_0$ as follows:
\begin{equation}\label{def: one-step estimator: truncated}
\hth=\bth-\dint_{\bth-\xin}^{\bth+\xin}\dfrac{\hln'(x-\bth)}{\hin(\eta)}d\Fm(x).
\end{equation} 
Note that we use all the data to estimate $\bth$, $\tilde{\ps}'_n$, and $\hn$. The truncation comes only in the step where we set up the one step estimator. Next, observe that  $\hin(\eta)$ is always smaller than $\hin$,  the estimator of the untruncated Fisher information (see \eqref{def: untruncated fisher info}). In Section \ref{sec: asymptotics: one step estimator}, we show that the asymptotic variance of $\hth$ has an inverse relation with the Fisher information, indicating that  the untruncated estimator in \eqref{def: one-step estimator full} will be asymptotically more efficient. However, our simulations in Section \ref{sec: simulation} show that for sufficiently small $\eta$, this loss in efficiency is negligible for most densities.  It is also evident that $\hin(\eta)$ increases in $\eta$, so  smaller values of  $\eta$ will lead to higher asymptotic efficiency. We will discuss about the relation between $\eta$ and this asymptotic efficiency  in more detail in Section \ref{sec: asymptotics: one step estimator}.


\subsection{Maximum likelihood estimator (MLE)}
\label{sec: MLE}
This section aims to establish the existence and the basic properties of the MLE of $(\th_0,g_0)$. We already mentioned in Section \ref{sec:intro}  that the MLE of $f_0$ in  $\mathcal{SLC}_{\th}$ exists for a fixed $\theta\in\RR$. As we will see, this fact    plays a major role in the derivation of the MLE $(\hthm,\hgm)$ of $(\th_0,g_0)$.

Let $\hpm=\log\hgm$.  Recalling the definition of $\Psi_n(\th,\ps)$ from \eqref{criterion function: Doss},  we observe that $(\hthm,\hpm)$  satisfies
 \[(\hthm,\hpm)=\argmax_{\th\in\RR,\ps\in\mathcal{SC}_0}\Psi_n(\th,\ps).\]
  Let us define
  \[\pst:=\argmax_{\ps\in\mathcal{SC}_0}\Psi_n(\th,\ps),\]
 which  exists for a fixed $\th\in\RR$  by Theorem~2.1(c) of \cite{dosssymmetric}. It is not hard to see that
   \[\hthm=\argmax_{\th\in\RR}\Psi_n(\th,\pst)\quad \text{and}\quad\hpm=\widehat{\ps}_{\hthm}.\]

Note that $\hgm=e^{\hpm}$ is the MLE of $g_0$,  and $\hgf=\hgm(\mathord{\cdot}-\hthm)$ is the MLE of $f_0$. We let $\hFm$ and $\hgF$ denote the  distribution functions corresponding to $\hgm$ and $\hgf$, respectively.  Also, we let $\hpfm$ denote the  log-density $\log\hgf$. 
 
   Before getting into the analysis of the existence of $(\hthm,\hpm)$ for a general case,  let us consider a special case first.  Suppose $\Fm$ is degenerate, i.e.  $\Fm\{x_0\}=1$ for some $x_0\in\RR$.
 Intuition leads us to make the guess that in this case, $\hthm=x_0$.  Now for $k\geq 1$, we denote $A_k$ to be the set $[-1/(2k),1/(2k)]$, and consider
 the sequence of  functions $\{\ps_k\}_{k\geq 1}\in\mathcal{SC}_0$ defined by
 \[\ps_k(x)=\begin{cases}
 \begin{matrix}
 \log k, & \quad x\in A_k\\
 -\infty, &\quad  \text{o.w.}
 \end{matrix}
 \end{cases}\]
   Observe that
 \[\Psi_n(x_0,\ps_k)= \log k-1\to\infty,\quad\text{ as }k\to\infty.\]
 Therefore, $x_0$ indeed is  a candidate for the MLE of $\th_0$. However,  the MLE of $\ps_0$, i.e. $\hpm$, does not exist in this case. To verify, observe that if \hpm\ does exist for some $\hthm\in\RR$, we also have
 \[\hpm(x_0-\hthm)-\edint e^{\hpm(x)}dx= \Psi_n(x_0,\hpm)\geq \lim_{k\to\infty}\Psi_n(x_0,\ps_k)=\infty,\]
 leading to
 \[\hpm(x_0-\hthm)=\infty,\]
 which contradicts the fact that $\hpm$ is a proper concave function. 
 Hence, we conclude that the MLE of $(\th_0,\ps_0)$ does not exist when  $\Fm$ is degenerate. So we only need to consider the case when $\Fm$ is non-degenerate.
 In this case, we will show that, the map $\th\mapsto\Psi_n(\th,\pst)$ is continuous, and the MLE $(\hthm,\hpm)$ exists.
\begin{lemma}\label{theorem: continuity of criterion function}
If $\Fm$ is non-degenerate,
$\th\mapsto\Psi_n(\th,\pst)$ is a continuous map.
\end{lemma}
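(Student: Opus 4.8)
The plan is to study the profile objective $M_n(\th):=\Psi_n(\th,\pst)=\sup_{\ps\in\mathcal{SC}_0}\Psi_n(\th,\ps)$ and to show it is both lower and upper semicontinuous at an arbitrary $\th_0\in\RR$. Since Lebesgue measure is translation invariant, $\int e^{\ps(x-\th)}\,dx=\int e^{\ps(y)}\,dy$ carries no $\th$-dependence, so $\Psi_n(\th,\ps)=n^{-1}\sum_i\ps(X_i-\th)-\int e^{\ps}$; moreover, by Theorem~2.1 of \cite{dosssymmetric} the maximizer satisfies $\int e^{\pst}=1$, whence $M_n(\th)=n^{-1}\sum_i\pst(X_i-\th)-1$ and each $\pst(X_i-\th)$ is finite (otherwise $M_n(\th)=-\infty$, which is impossible, since the uniform density on the support of $\pst$ already yields a finite value). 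I will also use the structural facts from \cite{2009rufi} and \cite{dosssymmetric} that $\pst$ is piecewise linear, with support $[-d(\th),d(\th)]$ where $d(\th)=\max_i|X_i-\th|$ is a strictly positive (because $\Fm$ is non-degenerate) and $1$-Lipschitz function of $\th$, and with knots contained in $\{0\}\cup\{\pm(X_i-\th)\}$.

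Next I will record a priori bounds, uniform for $\th$ in a fixed compact neighbourhood $N$ of $\th_0$. Comparison with the uniform density on $[-d(\th),d(\th)]$ gives $M_n(\th)\ge-\log(2D)-1=:\Psi_{\min}$, where $D:=\sup_{\th\in N}d(\th)<\infty$. From this lower bound I obtain (i) a uniform bound on the peak, $\pst(0)=\max\pst\le\mu^{*}$: if $\pst(0)$ were large, log-concavity would force the mass of $e^{\pst}$ into an interval around $0$ of half-width of order $e^{-\pst(0)}$, so a data point lying at distance at least $(X_{(n)}-X_{(1)})/2>0$ from $\th$ would receive a value pushing $M_n(\th)$ below $\Psi_{\min}$; and (ii) a uniform lower bound $\pst\ge-N^{*}$ on $[-d(\th),d(\th)]$: the value of $\pst$ at the endpoint $\pm d(\th)$ is the value at an extreme data point, and if it dropped below $-N^{*}$ then $n^{-1}\sum_i\pst(X_i-\th)-1<\Psi_{\min}$; concavity (the graph of $\pst$ lies above its chords) then transports this bound from the endpoints and $0$ to all of $[-d(\th),d(\th)]$.

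For lower semicontinuity I will transport the optimizer $\pst$ at $\th_0$: being piecewise linear with finitely many knots, it is Lipschitz on $[-d(\th_0),d(\th_0)]$, and the rescaled map $\ps^{(\th)}(y):=\pst(\alpha_\th y)+\log\alpha_\th$ with $\alpha_\th:=\min\{1,\,d(\th_0)/d(\th)\}\to1$ belongs to $\mathcal{SC}_0$ and has domain containing every $X_i-\th$; Lipschitz continuity of $\pst$ then gives $\Psi_n(\th,\ps^{(\th)})\to\Psi_n(\th_0,\pst)=M_n(\th_0)$, so $\liminf_{\th\to\th_0}M_n(\th)\ge M_n(\th_0)$. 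For upper semicontinuity I will pick $\th_k\to\th_0$ with $M_n(\th_k)\to\limsup_{\th\to\th_0}M_n(\th)$ and write $\psi_k$ for the optimizer at $\th_k$. By the a priori bounds the $\psi_k$ are uniformly bounded on their (converging) supports, and, being concave, they are therefore equi-Lipschitz on compact subsets of $(-d(\th_0),d(\th_0))$; by Helly's selection theorem and a diagonal argument, along a subsequence $\psi_k\to\psi_\infty$ locally uniformly on $(-d(\th_0),d(\th_0))$, with $\psi_\infty$ concave and bounded, and $\psi_\infty$ extends (via its one-sided limits at $\pm d(\th_0)$, and $-\infty$ outside) to a member of $\mathcal{SC}_0$. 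Passing to the limit, $\limsup_kM_n(\th_k)\le\Psi_n(\th_0,\psi_\infty)\le M_n(\th_0)$: for the term $\int e^{\psi_k}$ this is Fatou's lemma (all $\psi_k$ are supported in $[-D,D]$), and for $n^{-1}\sum_i\psi_k(X_i-\th_k)$ one uses local uniform convergence at the interior data points together with a short chord-slope estimate — bounding $\psi_k$ at a data point sitting at the moving endpoint $\pm d(\th_k)$ from above by the affine extrapolation through two fixed interior points and then shrinking the gap — to control the boundary term.

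The step I expect to be the main obstacle is the upper-semicontinuity argument, and specifically the two issues tied to the support of $\pst$: obtaining enough compactness even though the domains $[-d(\th_k),d(\th_k)]$ move and the slopes of $\psi_k$ may blow up near their boundary (this is exactly what the uniform lower bound $\pst\ge-N^{*}$ is designed to prevent), and then passing to the limit in the log-likelihood term at a data point that lies on the boundary of the support, where $\psi_k$ need not be Lipschitz; the chord-slope estimate just described is what makes this last passage go through. Establishing the uniform peak bound $\mu^{*}$ and the uniform lower bound $N^{*}$ — the only genuinely quantitative inputs — is the other non-routine ingredient.
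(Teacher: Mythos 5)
Your proof is correct, but it takes a genuinely different route from the paper's. The paper obtains Lemma~\ref{theorem: continuity of criterion function} as a one-line corollary of the more general Theorem~\ref{thm: continuity: general}, which asserts continuity of $\th\mapsto L(\th;F)$ for \emph{any} $F$ with finite first moment; that theorem is proved by writing $\Psi(\th,\ps,F)=\Psi(0,\ps,F(\mathord{\cdot}+\th))$, invoking Proposition~6 of \cite{xuhigh} (Wasserstein-continuity of the maximized criterion for the symmetric log-concave projection), and then verifying that $\th\mapsto F(\mathord{\cdot}+\th)$ is Wasserstein-continuous via weak convergence plus convergence of first moments. Your argument is an elementary, self-contained semicontinuity proof tailored to the empirical (finitely supported) case: you exploit the piecewise-linear structure and explicit support $[-d(\th),d(\th)]$ of $\widehat{\psi}_\th$, derive uniform-in-$\th$ a priori bounds on $\sup\widehat{\psi}_\th$ and on $\inf_{[-d(\th),d(\th)]}\widehat{\psi}_\th$ by comparison against the uniform density, prove lower semicontinuity by a scaling of the optimizer at $\th_0$, and prove upper semicontinuity via an Arzel\`a--Ascoli/Helly compactness argument on shrinking interior compacts combined with Fatou's lemma and a chord-slope extrapolation to control the data point that sits at the moving boundary. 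Both proofs are sound; yours is more transparent about the finite-sample mechanics and avoids the external projection-stability result, but it buys less: the paper reuses Theorem~\ref{thm: continuity: general} (e.g.\ in the proof of Proposition~\ref{Prop: existence of maximizers}) for non-empirical $F$, where your discrete structural facts are unavailable. One minor remark: the chord-slope step should be written out explicitly, since it is the genuinely delicate part --- the key point, which you do use correctly, is to fix $u$ and send $v\uparrow d(\th_0)$ so that the term $\bigl(\psi_\infty(v)-\psi_\infty(u)\bigr)(d(\th_0)-v)/(v-u)$ vanishes even though the one-sided derivative of $\psi_\infty$ may blow up at the boundary.
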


Figure~\ref{Plot of criterion function: normal distribution} illustrates the continuity of $\Psi_n(\th,\pst)$ in $\theta$, based on for two standard Gaussian samples of size $5$ and $100$.

\begin{figure}[h]
\centering
\begin{subfigure}{.45\textwidth}
\includegraphics[width=\textwidth]{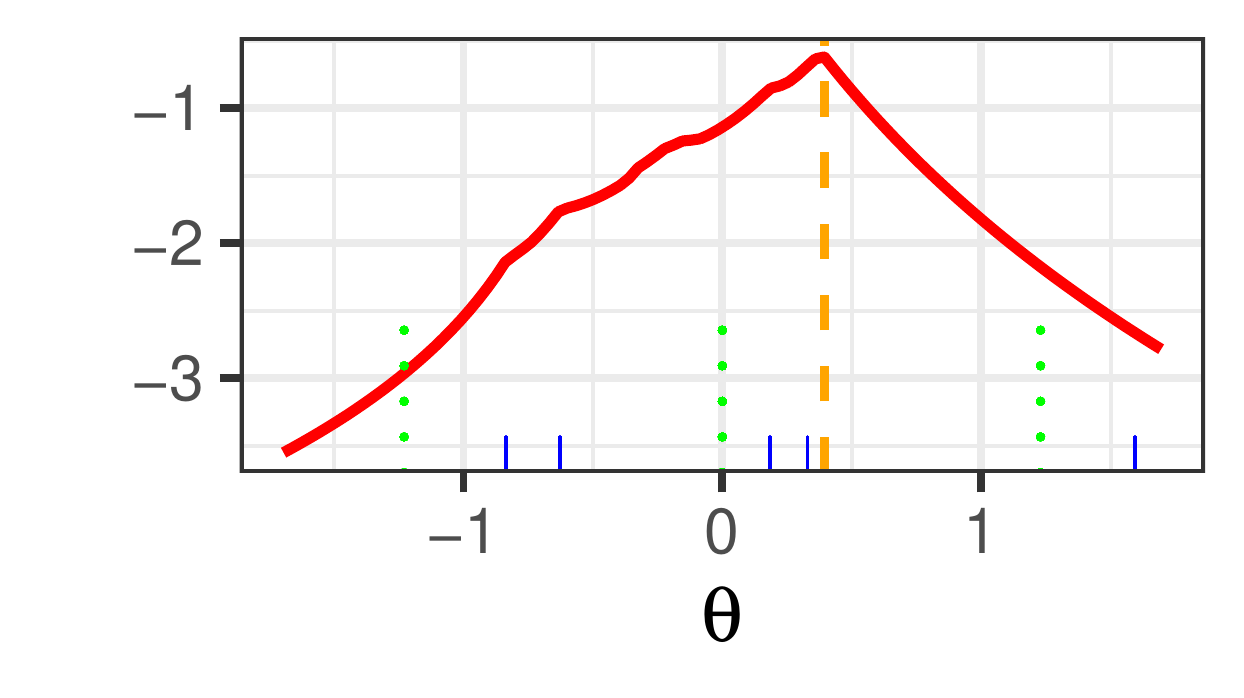}
\caption{$n=5$}
\end{subfigure}~
\begin{subfigure}{.45\textwidth}
\includegraphics[width=\textwidth]{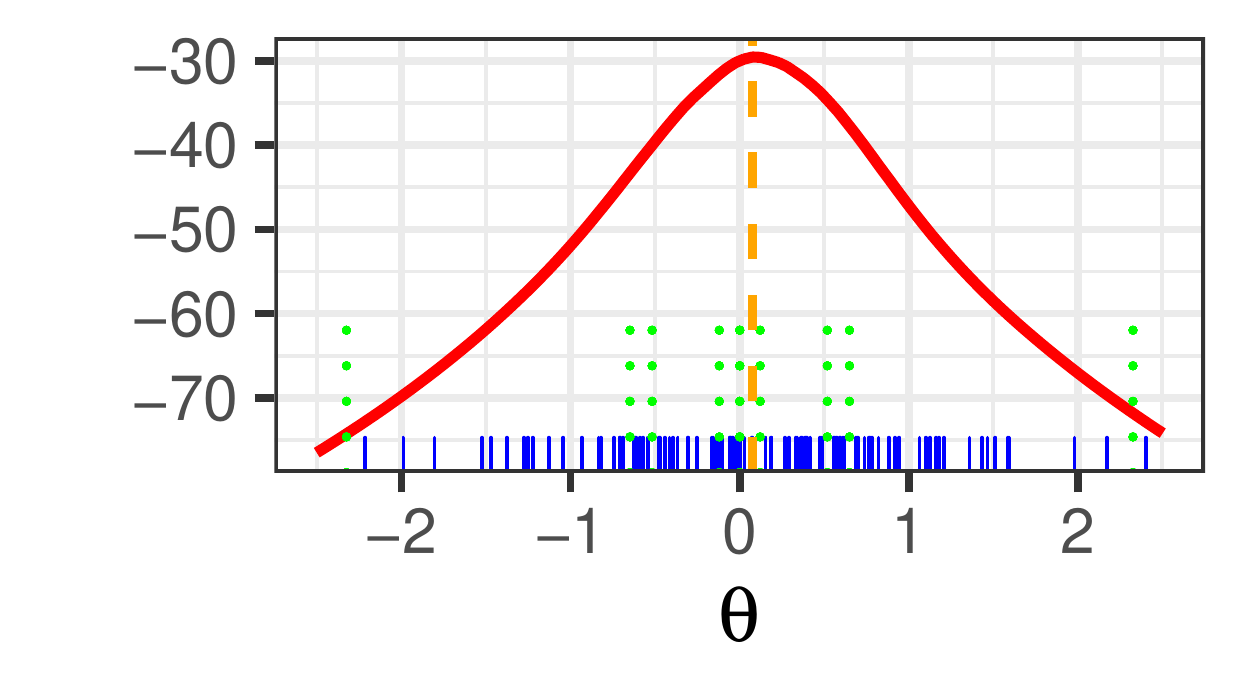}
\caption{$n=100$}
\end{subfigure}
\caption{Plot of $\Psi_n(\th,\pst)$ vs $\th$  generated from the standard Gaussian  distribution. Here the blue ticks, the green  line segments (dotted), and the orange dashed line represent the data points, the knots of $\hpm$, and  $\hthm$  respectively.}\label{Plot of criterion function: normal distribution}
\end{figure}
Our next theorem establishes the existence of a maximizer of $\Psi_n(\th,\pst)$ for non-degenerate $\Fm$. 
\begin{theorem}\label{theorem: existence}
When $\Fm$ is non-degenerate, the MLE $(\widehat{\theta}_n , \widehat{\psi}_{n} )$ of $(\theta_0 , \psi_0 )$ exists. Moreover, $\hthm\in[X_{(1)},X_{(n)}]$.
\end{theorem}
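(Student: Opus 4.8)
The plan is to pass to the profile criterion $h(\th):=\Psi_n(\th,\pst)$. By Lemma~\ref{theorem: continuity of criterion function}, $h$ is continuous on $\RR$ when $\Fm$ is non-degenerate, and it is finite everywhere because $\pst$ exists for each fixed $\th$ by Theorem~2.1(c) of \cite{dosssymmetric}. Since $\sup_{\th\in\RR,\,\ps\in\mathcal{SC}_0}\Psi_n(\th,\ps)=\sup_{\th\in\RR}h(\th)$, it suffices to exhibit a maximizer $\hthm$ of $h$ lying in the \emph{compact} interval $[X_{(1)},X_{(n)}]$; one then sets $\hpm:=\widehat{\ps}_{\hthm}$, and $(\hthm,\hpm)$ is the claimed MLE. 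Continuity of $h$ already yields a maximizer of $h$ over $[X_{(1)},X_{(n)}]$, so the whole argument reduces to the two inequalities
\[ h(\th)<h(X_{(n)})\ \text{ for every }\th>X_{(n)},\qquad h(\th)<h(X_{(1)})\ \text{ for every }\th<X_{(1)}. \]

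I would prove the first inequality (the second being its mirror image) by a ``folding'' construction. Fix $\th>X_{(n)}$ and let $\tilde g:=e^{\pst(\mathord{\cdot}-\th)}$; by Theorem~2.1(c) of \cite{dosssymmetric}, $\int e^{\pst}=1$, so $\tilde g$ is a log-concave density symmetric about $\th$ and $h(\th)=\Pm\log\tilde g-1$. Define $\tilde g_2(x):=\tilde g(X_{(n)}-|x-X_{(n)}|)$ for $x\in\RR$. Since $\log\tilde g$ is concave and nondecreasing on $(-\infty,\th]$ while $X_{(n)}-|x-X_{(n)}|\le X_{(n)}<\th$, the map $x\mapsto\log\tilde g_2(x)$ is the composition of a nondecreasing concave function with the concave function $x\mapsto X_{(n)}-|x-X_{(n)}|$, hence concave; thus $\tilde g_2$ is log-concave, and it is symmetric about $X_{(n)}$ by construction. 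As $X_i\le X_{(n)}$ for every $i$, we have $\tilde g_2(X_i)=\tilde g(X_i)$, so $\Pm\log\tilde g_2=\Pm\log\tilde g$, while a change of variables gives
\[ \int\tilde g_2=2\int_{-\infty}^{X_{(n)}}\tilde g(u)\,du=2G_{\tilde g}(X_{(n)})<1, \]
where $G_{\tilde g}$ is the distribution function of $\tilde g$; the strict inequality holds because $\tilde g$ has median $\th>X_{(n)}$ and is positive on a neighborhood of $\th$. Writing $\tilde g_2=e^{\ps_2(\mathord{\cdot}-X_{(n)})}$ with $\ps_2\in\mathcal{SC}_0$, we conclude
\[ h(X_{(n)})\ge\Psi_n(X_{(n)},\ps_2)=\Pm\log\tilde g_2-\int\tilde g_2>\Pm\log\tilde g-1=h(\th). \]
(Finiteness of $h(\th)$ forces $\tilde g(X_i)>0$ for every $i$, so the manipulations above are legitimate.)

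Combining the two displays, $\sup_{\th\in\RR}h(\th)=\max_{\th\in[X_{(1)},X_{(n)}]}h(\th)$, and every global maximizer of $h$ lies in $[X_{(1)},X_{(n)}]$. Picking any such $\hthm$ and setting $\hpm:=\widehat{\ps}_{\hthm}$, we get $\Psi_n(\hthm,\hpm)=h(\hthm)=\sup_{\th,\ps}\Psi_n(\th,\ps)$, which proves that the MLE exists and that $\hthm\in[X_{(1)},X_{(n)}]$.

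The bookkeeping items (continuity from Lemma~\ref{theorem: continuity of criterion function}, compactness, and the facts $\int e^{\pst}=1$ and $\tilde g>0$ at the data, all from \cite{dosssymmetric} plus finiteness of $h$) are routine. The one genuinely delicate step is the folding construction: one must check that $\tilde g_2$ is again log-concave — this is precisely where log-concavity of $g$, rather than mere unimodality, is needed, together with the fact that the center of symmetry lies strictly to the right of $X_{(n)}$ — and that reflecting about $X_{(n)}$ strictly lowers the total mass. Both of these break down if $\th$ is allowed inside $[X_{(1)},X_{(n)}]$, which is exactly consistent with the conclusion $\hthm\in[X_{(1)},X_{(n)}]$.
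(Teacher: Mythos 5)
Your proof is correct, but it takes a genuinely different route from the paper's. The paper deduces the theorem from two general-purpose results: Proposition~\ref{Prop: existence of maximizers}, which proves existence of a maximizer of $\Psi(\th,\ps,F)$ for any $F$ satisfying Condition~A by a tail argument (take $\th_k$ with $L(\th_k;F)\uparrow L(F)$, then rule out $\th_k\to\pm\infty$ via Fatou's lemma and the elementary bound $\ps(x)\le-\log|2x|$ from Lemma~\ref{lemma: bound on phi: slc}), and Lemma~\ref{lem: support of projection map}, which confines $\th^*(F)$ to $\overline{J(F)}$ by a non-strict monotonicity argument: on $(-\infty,a]$ the profile criterion $L(\th;F)$ is non-decreasing because $\ps$ is non-increasing on $[0,\infty)$, and symmetrically on $[b,\infty)$. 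Your folding construction replaces both ingredients at once: it produces, for each $\th>X_{(n)}$, a competitor $\ps_2\in\mathcal{SC}_0$ at $X_{(n)}$ that preserves the empirical log-likelihood term but strictly reduces the mass penalty, so $h(X_{(n)})>h(\th)$ (and mirror-image for $\th<X_{(1)}$). Continuity of $h$ from Lemma~\ref{theorem: continuity of criterion function} and compactness of $[X_{(1)},X_{(n)}]$ then finish existence. I checked the delicate points: $\log\tilde g$ is indeed concave and nondecreasing on $(-\infty,\th]$, $x\mapsto X_{(n)}-|x-X_{(n)}|$ maps into that half-line with values $\le X_{(n)}<\th$, the ``nondecreasing-concave $\circ$ concave'' composition rule gives log-concavity of $\tilde g_2$ (including with the extended value $-\infty$), and the strict mass inequality $2G_{\tilde g}(X_{(n)})<1$ holds because the support of $\tilde g=e^{\pst(\cdot-\th)}$ contains a neighbourhood of its median $\th>X_{(n)}$ once $\Fm$ is non-degenerate. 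What the two routes buy is different: the paper's argument is stated for arbitrary $F$ satisfying Condition~A, which is needed again in the asymptotic/misspecified analysis (Theorem~\ref{theorem: almost sure convergence: general theorem of the MLE}), whereas your argument is specific to the empirical measure but is self-contained, elementary, and yields a strict inequality outside $[X_{(1)},X_{(n)}]$, so that \emph{every} maximizer lies in that interval, slightly sharper than what the paper's weak-monotonicity step alone gives.
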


Observe that Theorem~\ref{theorem: existence} does not ensure the uniquenesss of \hthm. Since $\Psi_n(\th,\ps)$ may not be jointly concave in $\theta$ and $\ps$, existence of a maximizer does not lead to its uniqueness. For a particular choice of $\hthm$ though, the estimator   $\hpm=\widehat{\ps}_{\hthm}$ is unique by Theorem~$2.1(c)$ of \cite{dosssymmetric}. Therefore, if $(\th,\ps_1)$ and $(\th,\ps_2)$ both are MLEs of $(\th_0,\ps_0)$, we must have $\ps_1=\ps_2$. 

In all our simulations, \hthm\ turned out to be unique, even when the underlying density $f_0$ was skewed or non-log-concave. Considering this fact, in what follows, we  refer to \hthm\ as ``the MLE" instead of ``an MLE". We must remark that even if $\hthm$ is not unique, all our theorems  still hold for each version of \hthm.   

 Now we discuss some finite-sample properties of $\hpm$. The following theorem sheds some light on the structure of $\hpm$. We find that $\hpm$ is piecewise linear,  and $\mathcal{K}(\hpm)$, i.e. the set of the knots of $\hpm$, is a subset of the dataset. This theorem is a direct consequence of Theorem~$2.1(c)$ of \cite{dosssymmetric}.
%
%
\begin{theorem}\label{theorem: structure of the MLE of the density}
For $\Fm$ non-degenerate, the MLE $\hpm$ of $\ps_0$ is piecewise linear with knots belonging to a subset of the set $\{0,\pm|X_1-\hthm|,\ldots,\pm|X_{n}-\hthm|\}$. Also,  for $x\notin[-|X-\hthm|_{(n)},|X-\hthm|_{(n)}]$, we have $\hpm(x)=-\infty$.
Moreover if $0\notin \{\pm|X_1-\hthm|,\ldots,\pm|X_{n}-\hthm|\}$, then $\hpm'(0\pm)=0.$
\end{theorem}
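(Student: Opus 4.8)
The plan is to read off every assertion from Theorem~2.1(c) of \cite{dosssymmetric} applied to the sample shifted by $\hthm$, followed by one elementary symmetry argument. Recall from the discussion preceding the statement that $\hpm=\widehat{\ps}_{\hthm}$ maximizes $\ps\mapsto\Psi_n(\hthm,\ps)$ over $\ps\in\mathcal{SC}_0$; equivalently, $\hgm=e^{\hpm}$ is the MLE of a density that is log-concave and symmetric about $0$, computed from the observations $X_1-\hthm,\ldots,X_n-\hthm$. Since every $\ps\in\mathcal{SC}_0$ satisfies $\ps(x)=\ps(-x)$, the map $\ps\mapsto\Psi_n(\hthm,\ps)$ depends on the data only through the symmetrized empirical measure $\mu_n$ of the shifted sample, i.e.\ the measure placing mass $1/(2n)$ at each of the $2n$ points $\pm(X_i-\hthm)$, $i=1,\ldots,n$. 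The set of atoms of $\mu_n$ is exactly $\{0,\pm|X_1-\hthm|,\ldots,\pm|X_n-\hthm|\}$, with $0$ occurring iff some $X_i=\hthm$, and the convex hull of $\supp(\mu_n)$ is $[-|X-\hthm|_{(n)},|X-\hthm|_{(n)}]$, a non-degenerate interval because $\Fm$ is non-degenerate.

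Theorem~2.1(c) of \cite{dosssymmetric} then gives that the symmetric log-concave MLE $\hpm$ is piecewise linear on its domain with finitely many knots, that $\mathcal{K}(\hpm)$ is contained in the set of atoms of $\mu_n$ (the endpoints of $\dom(\hpm)$, which are themselves atoms of $\mu_n$, being the possible boundary knots), and that $\dom(\hpm)$ equals the convex hull of $\supp(\mu_n)$. The first two conclusions are then immediate: $\mathcal{K}(\hpm)\subseteq\{0,\pm|X_1-\hthm|,\ldots,\pm|X_n-\hthm|\}$, and $\hpm(x)=-\infty$ for $x\notin[-|X-\hthm|_{(n)},|X-\hthm|_{(n)}]$.

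For the last assertion, suppose $0\notin\{\pm|X_1-\hthm|,\ldots,\pm|X_n-\hthm|\}$, that is, no $X_i$ equals $\hthm$. Then $0$ is not an atom of $\mu_n$, and since $\Fm$ is non-degenerate, $0$ lies in the interior of $[-|X-\hthm|_{(n)},|X-\hthm|_{(n)}]$, so it is not a boundary point of $\dom(\hpm)$ either; by the knot characterization just invoked, $0\notin\mathcal{K}(\hpm)$. Hence $\hpm$ is affine on a neighborhood $(-\delta,\delta)$ of $0$, say $\hpm(x)=a+bx$ there; combining this with $\hpm(x)=\hpm(-x)$ forces $b=0$, so $\hpm'(0+)=\hpm'(0-)=0$. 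There is no substantial obstacle here, as everything reduces to the cited characterization; the only step needing a modicum of care is the translation of ``symmetric log-concave MLE of $X_1-\hthm,\ldots,X_n-\hthm$'' into a statement about the symmetrized empirical measure $\mu_n$, so that the knot-and-support description of \cite{dosssymmetric} applies verbatim.
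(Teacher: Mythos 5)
Your argument is correct and follows the same route as the paper, which dispatches the theorem with a one-line appeal to Theorem~2.1(c) of \cite{dosssymmetric}; the extra step you add---passing to the symmetrized empirical measure $\mu_n$ of the shifted sample so that the unconstrained knot/support characterization applies verbatim, and then extracting the slope-zero claim from ``no knot at $0$'' plus symmetry---merely supplies the mechanics the paper leaves implicit. One small caution: you attribute the inclusion $\mathcal{K}(\hpm)\subseteq\{\text{atoms of }\mu_n\}$ directly to Theorem~2.1(c), but that theorem, like the statement being proved, lists $0$ as a candidate knot unconditionally and records $\hpm'(0\pm)=0$ as a separate assertion; your sharper inclusion is nonetheless true (it is what the Dümbgen--Rufibach characterization of the unconstrained MLE of $\mu_n$ gives, once Lemma~\ref{lemma: symmetry of projection operator} identifies the constrained and unconstrained problems), so the derivation is sound---just be explicit that the sharper form comes from the reduction rather than from a verbatim reading of the cited theorem.
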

There is no closed form for $\hpm$,  though $\hpm$ can be characterized by a family of inequalities.  The following two theorems provide  characterizations of $\hpm$. 


\begin{theorem}\label{thm: charactarization 1}
Suppose $\hthm$ is the MLE of $\th_0$. Consider $g=e^{\psi}\in\mathcal{SLC}_{0}$. Then $(\hthm,g)$ is the MLE of $(\th_0,g_0)$ if and only if
\[\edint\Delta(x-\hthm)d\Fm(x)\leq\edint\Delta(x)g(x)dx \]
for all $\Delta:\RR\mapsto\RR$ such that $\psi+t\Delta\in\mathcal{SC}_{0}$ for some $t>0.$ 
\end{theorem}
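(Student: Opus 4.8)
The plan is to exploit the fact that for a fixed $\th$, the maximizer $\widehat\ps_\th$ of $\ps \mapsto \Psi_n(\th,\ps)$ over $\mathcal{SC}_0$ is characterized by a standard first-order variational inequality in log-concave density estimation, and then simply specialize to $\th = \hthm$. Concretely, recall that $\hthm$ is the MLE of $\th_0$ and $\hpm = \widehat\ps_{\hthm}$. Since $(\hthm,g)$ with $g = e^\psi$ is an MLE of $(\th_0,g_0)$ if and only if $\psi$ maximizes $\Psi_n(\hthm,\cdot)$ over $\mathcal{SC}_0$ (the $\th$-coordinate being fixed at the MLE value), the problem reduces to characterizing the maximizer of the concave functional $\ps \mapsto \Psi_n(\hthm,\ps)$.

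First I would record that $\ps\mapsto\Psi_n(\th,\ps)$ is concave in $\ps$ (the linear term $\int\ps(x-\th)d\Fm$ plus the concave term $-\int e^{\ps(x-\th)}dx$, since $-e^{(\cdot)}$ is concave), and that $\mathcal{SC}_0$ is a convex cone-like set of functions. Hence for any feasible perturbation direction $\Delta$ — meaning $\psi + t\Delta \in \mathcal{SC}_0$ for some $t > 0$ — the one-sided directional derivative of $s \mapsto \Psi_n(\hthm, \psi + s\Delta)$ at $s = 0^+$ must be $\le 0$ at a maximizer. Computing this directional derivative by differentiating under the integral sign (justified by dominated convergence, using that $\Delta$ is real-valued and $\psi \in \mathcal{C}$ with $e^\psi$ integrable), one gets
\[
\frac{d}{ds}\Big|_{s=0^+}\Psi_n(\hthm,\psi+s\Delta) = \edint \Delta(x-\hthm)\,d\Fm(x) - \edint \Delta(x)\,e^{\psi(x)}\,dx,
\]
and since $g = e^\psi$ this is exactly $\int \Delta(x-\hthm)d\Fm(x) - \int\Delta(x)g(x)dx$. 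So the necessity of the stated inequality for all feasible $\Delta$ is immediate, and conversely, if the inequality holds for all feasible $\Delta$, then by concavity $\psi$ is a global maximizer of $\Psi_n(\hthm,\cdot)$, hence $(\hthm, e^\psi)$ is an MLE; here one also needs to invoke the uniqueness part of Theorem~2.1(c) of \cite{dosssymmetric} to conclude $\psi = \hpm$, i.e. that any such $g$ genuinely is \emph{the} partial MLE at $\hthm$, matching the claim that $(\hthm,g)$ is the MLE of $(\th_0,g_0)$.

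The main technical point to be careful about — and the step I expect to be the real obstacle — is the interchange of differentiation and integration in the term $\int e^{\psi(x-\th) + s\Delta(x-\th)}dx$ when $\Delta$ is unbounded: one must verify that the difference quotient $s^{-1}(e^{\psi+s\Delta} - e^{\psi})$ is dominated by an integrable function uniformly for small $s>0$, which uses that $\psi + t\Delta \in \mathcal{SC}_0$ (so $e^{\psi + t\Delta}$ is itself an integrable, in fact bounded, log-concave density) together with convexity of $s \mapsto e^{\psi + s\Delta}$ to sandwich the quotient between $e^\psi(\Delta)$-type quantities and $t^{-1}(e^{\psi+t\Delta}-e^\psi)$. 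The finiteness of the linear term $\int|\Delta(x-\hthm)|d\Fm(x)$ is automatic since $\Fm$ is supported on $n$ points. A secondary subtlety is that the set of feasible directions $\Delta$ is exactly the right family to test against — but this is precisely the content of the variational characterization underlying Theorem~2.1(c) of \cite{dosssymmetric}, so I would cite that result to handle both the existence/uniqueness of $\widehat\ps_{\hthm}$ and the sufficiency direction, rather than reproving it; the contribution here is merely the translation to the shifted coordinate $x \mapsto x - \hthm$ and the observation that fixing $\th = \hthm$ decouples the problem into the already-understood partial MLE problem.
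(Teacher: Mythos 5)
Your reduction to the partial MLE problem at $\th = \hthm$ and appeal to the Doss--Wellner characterization is exactly the paper's approach --- the paper's proof is a one-line citation to Theorem~2.2(c) of \cite{dosssymmetric} (the characterization statement; you cite 2.1(c), which is the existence/uniqueness part). One slip in your technical expansion: $\psi+t\Delta\in\mathcal{SC}_0$ only makes $\psi+t\Delta$ a symmetric closed proper concave function, not a log-density, so $e^{\psi+t\Delta}$ need not integrate to one or even be integrable; your monotone-convergence sandwich therefore requires $\int e^{\psi+t\Delta}\,dx<\infty$, and when that fails one must either shrink $t$ to a smaller feasible step or observe the inequality is degenerate --- a boundary case the cited theorem already handles, which is why the citation is the right move.
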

Note that the MLE  in $\mathcal{LC}$ \citep{2009rufi} or $\mathcal{SLC}_0$ \citep{dosssymmetric} also satisfy similar characterizations. The proof of Theorem~\ref{thm: charactarization 1} follows from Theorem~2.2(c) of \cite{dosssymmetric}. 
 
Our next theorem  exhibits another characterization of $\hpm$. This theorem also follows as a direct consequence of Theorem~2.4(C) of \cite{dosssymmetric}.
%
%
\begin{theorem}\label{thm: Doss-Wellner Theorem 2.4}
Suppose $\hthm$ is the MLE of $\th_0$. For $g\in\mathcal{SLC}_{0}$, define
\[\sFm(x)=2\int_{x}^{|X-\hthm|_{(n)}}g(y)dy,\] 
and
\[\Fzm(x)=n^{-1}\si 1\{|X-\hthm|_{(i)}\geq x\}.\]
 Then $(\hthm,g)$ is the MLE of $(\th_0,g_0)$ if and only if
\begin{align}\label{thm statement: Doss-Wellner Theorem 2.4}
\dint_{t}^{|X-\hthm|_{(n)}}\sFm(x)dx
\begin{cases}
\begin{matrix}
\leq \dint_{t}^{|X-\hthm|_{(n)}}\Fzm(x)dx, & \text{ if }t\in[0,|X-\hthm|_{(n)}],\\
=  \dint_{t}^{|X-\hthm|_{(n)}}\Fzm(x)dx,
& \text{ if }t\in\mathcal{K}(\psi)\cap[0,|X-\hthm|_{(n)}],
\end{matrix}
\end{cases}
\end{align}
where $\psi=\log g$.
\end{theorem}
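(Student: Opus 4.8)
The plan is to apply the characterization of the symmetric log-concave MLE for a \emph{fixed} center, Theorem~2.4(C) of \cite{dosssymmetric}, to the particular center $\hthm$; the only genuine work is to first pass from the joint optimization over $(\th,\ps)$ to the optimization over $\ps$ alone with the center frozen at $\hthm$.

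First I would record the reduction to a fixed-center problem. Recall from Section~\ref{sec: MLE} that $\hthm=\argmax_{\th\in\RR}\Psi_n(\th,\widehat{\ps}_{\th})$ and $\hpm=\widehat{\ps}_{\hthm}$, so that
\[\Psi_n(\hthm,\hpm)=\sup_{\ps\in\mathcal{SC}_0}\Psi_n(\hthm,\ps)=\sup_{\th\in\RR,\,\ps\in\mathcal{SC}_0}\Psi_n(\th,\ps).\]
Consequently, for $g=e^{\psi}\in\mathcal{SLC}_0$, the pair $(\hthm,g)$ is the MLE of $(\th_0,g_0)$ if and only if $\Psi_n(\hthm,\psi)$ attains the global supremum, i.e.\ if and only if $\psi$ maximizes $\ps\mapsto\Psi_n(\hthm,\ps)$ over $\mathcal{SC}_0$; by the uniqueness of this maximizer (Theorem~2.1(c) of \cite{dosssymmetric}) that is the same as $\psi=\widehat{\ps}_{\hthm}=\hpm$. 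So it will suffice to characterize which $\psi\in\mathcal{SC}_0$ maximize $\ps\mapsto\Psi_n(\hthm,\ps)$.

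Next I would identify $\ps\mapsto\Psi_n(\hthm,\ps)$ with a symmetric log-concave MLE criterion and transcribe the known characterization. By \eqref{criterion function: Doss},
\[\Psi_n(\hthm,\psi)=n^{-1}\si \psi(X_i-\hthm)-\edint e^{\psi(u)}du,\]
which is exactly the criterion whose maximizer over $\mathcal{SC}_0$ is the MLE of a symmetric log-concave density based on the observations $X_1-\hthm,\dots,X_n-\hthm$; since every element of $\mathcal{SC}_0$ is symmetric about $0$, this criterion depends only on the folded sample $|X_1-\hthm|,\dots,|X_n-\hthm|$, with order statistics $|X-\hthm|_{(i)}$. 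Theorem~2.4(C) of \cite{dosssymmetric}, applied to this sample, then says $\psi$ is a maximizer if and only if the integrated survival function of the fitted folded distribution is dominated by the integrated empirical survival function of the folded sample, with equality on $\mathcal{K}(\psi)$. It then remains to match notation: $\Fzm(x)=n^{-1}\si 1\{|X-\hthm|_{(i)}\ge x\}$ is the empirical survival function of the folded sample, while, since any such maximizer $g$ has $\supp(g)\subseteq[-|X-\hthm|_{(n)},|X-\hthm|_{(n)}]$ (Theorem~\ref{theorem: structure of the MLE of the density}) and is symmetric, the survival function of the folded $g$-distribution at $x\ge 0$ equals $2\int_{x}^{|X-\hthm|_{(n)}}g(y)dy=\sFm(x)$; integrating both from $t$ up to $|X-\hthm|_{(n)}$ produces exactly \eqref{thm statement: Doss-Wellner Theorem 2.4}.

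The main obstacle is the reduction in the first step: one has to be sure that ``$(\hthm,g)$ is the joint MLE'' really does force $\log g$ to equal the conditional maximizer $\widehat{\ps}_{\hthm}$, which uses both that $\Psi_n(\hthm,\cdot)$ attains the global supremum at $\hpm$ and the uniqueness in Theorem~2.1(c) of \cite{dosssymmetric}. Once that is in hand, the second step is a transcription of Theorem~2.4(C) of \cite{dosssymmetric} with the folded sample $|X_i-\hthm|$ in place of a generic sample, and should present no substantive difficulty.
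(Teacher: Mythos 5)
Your proposal is correct and takes the same route as the paper, which proves the theorem simply by invoking Theorem~2.4(C) of \cite{dosssymmetric}; you have spelled out the reduction (freezing the center at $\hthm$, using uniqueness from Theorem~2.1(c) of \cite{dosssymmetric} to identify $\log g$ with $\widehat{\ps}_{\hthm}$, and matching the folded-sample notation) that the paper leaves implicit.
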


The implication of Theorem~\ref{thm: Doss-Wellner Theorem 2.4} is that the function
\begin{equation}\label{def: characterization function}
 h_{g}(t)= \int_{t}^{|X-\hthm|_{(n)}}[\Fzm(x)-\sFm(x)]dx,\quad t\geq 0
 \end{equation}
 is non-negative for $g=\hgm$.
 Figure~\ref{Figure: Theorem 5} displays the plot of $h_{\hgm}$  constructed from a standard Gaussian sample of size $50$. 

\begin{figure}[h]
\includegraphics[width=\textwidth]{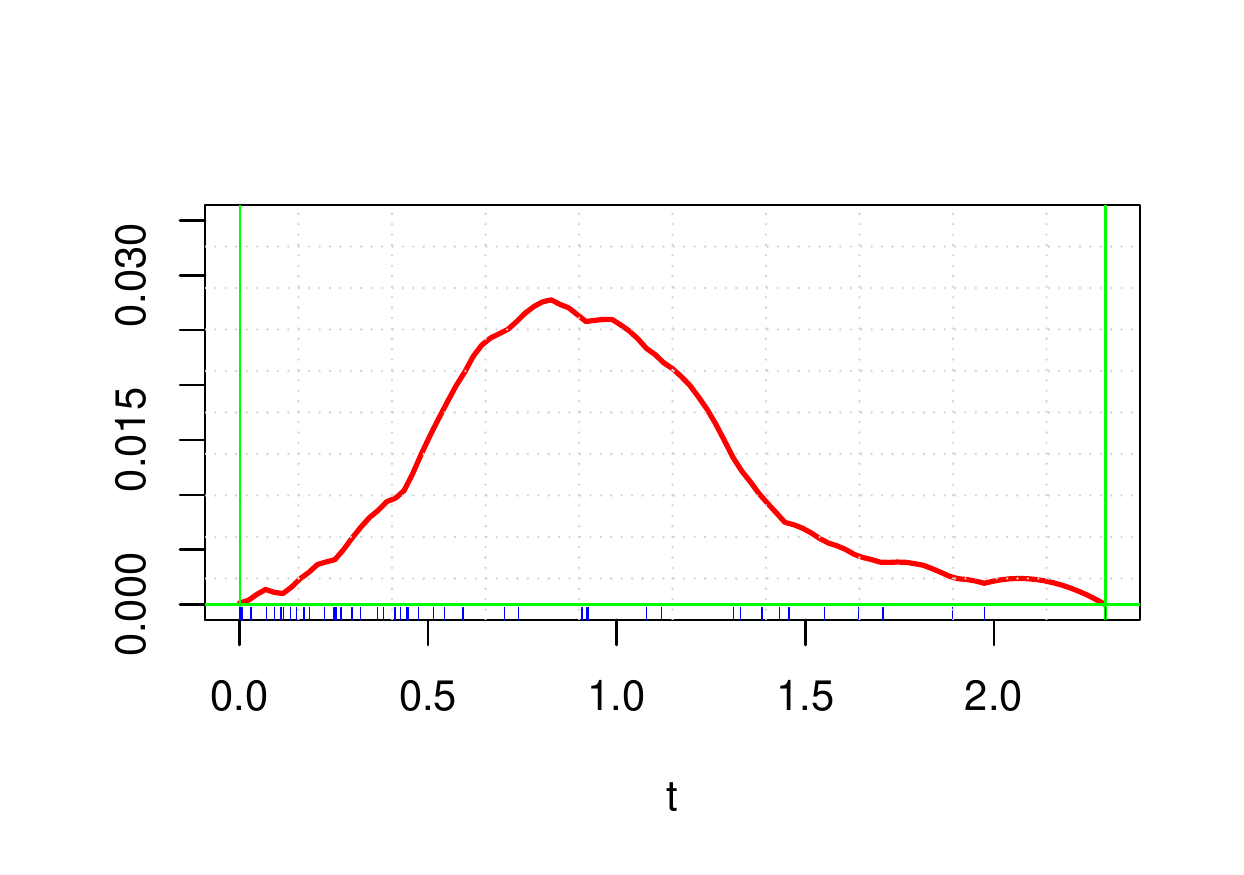}
\caption{Plot of the function $h_{\hgm}$ defined in \eqref{def: characterization function} versus $t$ for a standard Gaussian sample of size $50$. The green vertical lines represents the non-negative knots of \hpm,\ and the blue ticks represent the $|X_i-\hthm|$'s.}\label{Figure: Theorem 5}
\end{figure}
The characterization theorems stated above  lead to some interesting facts about the distribution function  $\hFm$ of $\hgm$. In fact, we show that  $\hFm$ has a close connection with the empirical distribution function of the $|X_i-\hthm|$'s.


\begin{corollary}\label{Corollary 2.7 of Doss-Wellner (2018)}
Suppose $(\hthm,\hpm)$ is the MLE of $(\th_0,\ps_0).$ Let us denote the empirical distribution function of the $|X_i-\hthm|$'s by  $\mathbb{F}_{n,|X-\hthm|}$. Then the following holds almost surely on $\mathcal{K}(\hpm)\cap(0,\infty):$ 
\[\mathbb{F}_{n,|X-\hthm|}-n^{-1}\leq 2\hFm-1\leq \mathbb{F}_{n,|X-\hthm|}.\]
\end{corollary}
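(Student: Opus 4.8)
The plan is to read off the corollary directly from the characterization in Theorem~\ref{thm: Doss-Wellner Theorem 2.4}, specialized to $g=\hgm$. Recall from \eqref{def: characterization function} that for $g=\hgm$ the function $h_{\hgm}(t)=\int_t^{|X-\hthm|_{(n)}}[\Fzm(x)-\sFm(x)]\,dx$ is non-negative on $[0,|X-\hthm|_{(n)}]$ and vanishes at every knot $t\in\mathcal{K}(\hpm)\cap[0,|X-\hthm|_{(n)}]$. Since $h_{\hgm}$ is continuous and (piecewise) differentiable with $h_{\hgm}'(t)=\sFm(t)-\Fzm(t)$ wherever the derivative exists, at an interior knot $t_0$ where $h_{\hgm}$ attains the value $0$ (its minimum over the interval) we get, by the usual one-sided derivative comparison, $h_{\hgm}'(t_0-)\le 0\le h_{\hgm}'(t_0+)$; equivalently $\Fzm(t_0\pm)\le \sFm(t_0\pm)$ on one side and $\ge$ on the other. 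Carefully matching left/right limits will pin down the relation between $\sFm(t_0)$ and $\Fzm(t_0)$ at each such knot.

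The second step is to convert the functions $\sFm$ and $\Fzm$ into the distribution-function language of the statement. By definition $\Fzm(x)=n^{-1}\sum_i \mathbf 1\{|X-\hthm|_{(i)}\ge x\}$, so $\Fzm(x)=1-\mathbb{F}_{n,|X-\hthm|}(x-)$, i.e. $\Fzm$ is (up to the usual left/right-continuity bookkeeping) one minus the empirical c.d.f. of the $|X_i-\hthm|$'s. On the other hand, $\sFm(x)=2\int_x^{|X-\hthm|_{(n)}}\hgm(y)\,dy = 2(\hFm(|X-\hthm|_{(n)})-\hFm(x))$; using that $\hgm$ is a density on $[-|X-\hthm|_{(n)},|X-\hthm|_{(n)}]$ symmetric about $0$ (Theorem~\ref{theorem: structure of the MLE of the density}), we have $\hFm(|X-\hthm|_{(n)})=1$, so $\sFm(x)=2(1-\hFm(x))=2\,\overline{\hFm}(x)$. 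Hence the knot condition $\sFm(t_0)=\Fzm(t_0)$ (or the two-sided inequalities just derived) becomes, after rearranging, exactly a sandwich of $2\hFm(t_0)-1$ between $\mathbb{F}_{n,|X-\hthm|}(t_0)-n^{-1}$ and $\mathbb{F}_{n,|X-\hthm|}(t_0)$; the $n^{-1}$ slack is precisely the jump size of the empirical c.d.f. at a data point, which accounts for the gap between the left and right limits $\Fzm(t_0-)$ and $\Fzm(t_0+)$.

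Concretely I would argue: fix $t_0\in\mathcal{K}(\hpm)\cap(0,\infty)$ with $t_0\le |X-\hthm|_{(n)}$. From $h_{\hgm}(t_0)=0$ and $h_{\hgm}\ge 0$ nearby, together with the explicit form of $h_{\hgm}'$, deduce $\sFm(t_0-)\ge \Fzm(t_0-)$ and $\sFm(t_0+)\le \Fzm(t_0+)$ (the signs coming from $h_{\hgm}'=\sFm-\Fzm$ and $t_0$ being a minimizer). Since $\sFm$ is continuous (integral of a bounded density) while $\Fzm$ has a downward jump of size (multiplicity of $t_0$ in the sample)$/n\ge n^{-1}$ at $t_0$, we get $\Fzm(t_0)\le \sFm(t_0)\le \Fzm(t_0-)=\Fzm(t_0)+(\text{jump})$, hence $\Fzm(t_0)\le \sFm(t_0)\le \Fzm(t_0)+n^{-1}$ at minimum; rewriting via $\sFm=2\overline{\hFm}$ and $\Fzm=\overline{\mathbb{F}_{n,|X-\hthm|}}$ yields the claimed $\mathbb{F}_{n,|X-\hthm|}-n^{-1}\le 2\hFm-1\le \mathbb{F}_{n,|X-\hthm|}$ at $t_0$. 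Finally, note this holds almost surely because the hypothesis that $(\hthm,\hpm)$ is the MLE holds a.s. (equivalently, $\Fm$ is a.s. non-degenerate, so Theorems~\ref{theorem: existence}–\ref{thm: Doss-Wellner Theorem 2.4} apply).

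The main obstacle is the careful bookkeeping of left- versus right-continuity: $\Fzm$ as defined is left-continuous-ish with downward jumps, $\mathbb{F}_{n,|X-\hthm|}$ is right-continuous with upward jumps, and $\sFm$, $\hFm$ are continuous; getting the direction of each inequality right at a knot — and verifying that the slack is exactly $n^{-1}$ rather than a larger multiple (which requires noting that the relevant comparison only uses one $|X_i-\hthm|$ crossing level $t_0$, or, if several coincide, that the inequality is only claimed with the single-jump slack and still holds) — is where the delicate part lies. Everything else is a direct translation of Theorem~\ref{thm: Doss-Wellner Theorem 2.4} and a first-derivative test for the minimizer $t_0$ of $h_{\hgm}$.
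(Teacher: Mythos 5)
Your strategy is exactly the right one and is, as far as one can tell, what the paper (which simply defers to the proof of Corollary~2.7(C) in Doss--Wellner) has in mind: start from Theorem~\ref{thm: Doss-Wellner Theorem 2.4}, observe that $h_{\hgm}\ge 0$ with $h_{\hgm}=0$ at knots, apply the one-sided first-derivative test at a knot $t_0$, and translate $\sFm=2(1-\hFm)$ and $\Fzm=1-\mathbb{F}_{n,|X-\hthm|}(\cdot-)$ into the asserted sandwich. The identification of $\sFm$ with $2(1-\hFm)$ (using $\hFm(|X-\hthm|_{(n)})=1$) and of $\Fzm$ with $1-\mathbb{F}_{n,|X-\hthm|}(\cdot-)$ are both correct.

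However, the intermediate bookkeeping in your write-up is internally inconsistent — you flagged this as the delicate part, and indeed the signs as written are wrong. Since $t_0$ is a \emph{minimizer} of $h_{\hgm}$ and $h_{\hgm}'(\cdot\pm)=\sFm-\Fzm(\cdot\pm)$, you should have $h_{\hgm}'(t_0-)\le 0\le h_{\hgm}'(t_0+)$, i.e.\ $\sFm(t_0)\le\Fzm(t_0-)$ and $\sFm(t_0)\ge\Fzm(t_0+)$, which is the \emph{opposite} direction from what you wrote. Likewise, $\Fzm(x)=n^{-1}\sum_i\mathbf 1\{|X-\hthm|_{(i)}\ge x\}$ is \emph{left}-continuous, so $\Fzm(t_0-)=\Fzm(t_0)$ and it is the \emph{right} limit $\Fzm(t_0+)=\Fzm(t_0)-\text{jump}$ that picks up the drop; you treated $\Fzm$ as if right-continuous. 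These two errors cancel, so the final sandwich you write down, equivalently $1-\mathbb{F}_{n,|X-\hthm|}(t_0)\le 2(1-\hFm(t_0))\le 1-\mathbb{F}_{n,|X-\hthm|}(t_0-)$, is correct — but the chain of inequalities you display in between (e.g.\ $\sFm(t_0-)\ge\Fzm(t_0-)$ \emph{and} $\sFm(t_0)\le\Fzm(t_0-)$) would force equalities and does not cohere. Finally, you correctly notice that a tie of multiplicity $k>1$ among the $|X_i-\hthm|$'s would only yield slack $k/n$ rather than $n^{-1}$; this is genuinely where the ``almost surely'' in the statement is doing work, and it is worth a sentence (in the paper's setting, $\hthm$ depends on the data, so arguing that $P\{X_i+X_j=2\hthm\text{ for some }i\ne j\}=0$ is less trivial than in the known-center case) rather than simply asserting that it ``still holds.''
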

The function $ 2\hFm-1$ can be interpreted as the distribution function of the random variable $|Y|$, where  $Y\sim \hFm$.
 Corollary \ref{Corollary 2.7 of Doss-Wellner (2018)} implies  that at the knots, $\mathbb{F}_{n,|X-\hthm|}$ and  $ 2\hFm-1$  differ at most by $1/n$. The proof of Corollary \ref{Corollary 2.7 of Doss-Wellner (2018)} follows from The proof of Corollary 2.7(C) of \cite{dosssymmetric}. 

 Our next corollary provides an upper bound for the second central moment of $\hFm$. This corollary is analogous to Corollary 2.8(C) of \cite{dosssymmetric}. 
 \begin{corollary}\label{Corollary 2.8 of Doss-Wellner (2018)}
Suppose $(\hthm,\hpm)$ is the MLE of $(\th_0,\ps_0).$ Then  
\[\text{Var}(\hFm)=\edint x^2d\hFm(x)\leq \edint (x-\hthm)^2d\Fm(x),\]
where $\hFm$ is the distribution function corresponding to the density $\hgm=e^{\hpm}$.
\end{corollary}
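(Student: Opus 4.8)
The plan is to convert both sides of the inequality into weighted integrals of survival‑type functions and then read off the conclusion directly from Theorem~\ref{thm: Doss-Wellner Theorem 2.4}, i.e.\ from the non‑negativity of the function $h_{\hgm}$ in \eqref{def: characterization function}. First I would dispose of the stated equality: since $\hgm=e^{\hpm}$ is a density symmetric about $0$ supported on $[-M,M]$ with $M:=|X-\hthm|_{(n)}$ (the support claim is Theorem~\ref{theorem: structure of the MLE of the density}), it has a finite first moment, which by symmetry equals $0$; hence the mean of $\hFm$ is $0$ and $\text{Var}(\hFm)=\int x^2\,d\hFm(x)$.

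Next, using $x^2=\int_0^x 2t\,dt$, Fubini's theorem, and the symmetry of $\hgm$,
\[
\int x^2\,d\hFm(x)=2\int_0^{M}x^2\hgm(x)\,dx=2\int_0^{M}t\,\sFm(t)\,dt,
\]
where, with $g=\hgm$, $\sFm(t)=2\int_t^{M}\hgm(y)\,dy$ is the function appearing in Theorem~\ref{thm: Doss-Wellner Theorem 2.4}. The same elementary identity applied to $Y_i:=|X_i-\hthm|$ and averaged over $i$ gives
\[
\int (x-\hthm)^2\,d\Fm(x)=n^{-1}\sum_{i=1}^{n} Y_i^2=2\int_0^{M}t\,\Fzm(t)\,dt,
\]
since $\Fzm(t)=n^{-1}\sum_{i=1}^{n}1\{Y_{(i)}\ge t\}$ vanishes for $t>M$ and the distinction between $1\{Y_i\ge t\}$ and $1\{Y_i> t\}$ is immaterial under the $dt$‑integral. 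Consequently the assertion is equivalent to $\int_0^{M}t\,\big(\Fzm(t)-\sFm(t)\big)\,dt\ge 0$.

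Finally I would integrate by parts against $h_{\hgm}$. The function $h_{\hgm}(t)=\int_t^{M}\big(\Fzm(x)-\sFm(x)\big)\,dx$ is absolutely continuous with $h_{\hgm}'(t)=-(\Fzm(t)-\sFm(t))$ for a.e.\ $t$, satisfies $h_{\hgm}(M)=0$, and is non‑negative on $[0,M]$ by the first line of \eqref{thm statement: Doss-Wellner Theorem 2.4} in Theorem~\ref{thm: Doss-Wellner Theorem 2.4} applied to $g=\hgm$ (equivalently, by the remark following that theorem). Hence
\[
\int_0^{M}t\,\big(\Fzm(t)-\sFm(t)\big)\,dt=-\int_0^{M}t\,h_{\hgm}'(t)\,dt=-\big[t\,h_{\hgm}(t)\big]_0^{M}+\int_0^{M}h_{\hgm}(t)\,dt=\int_0^{M}h_{\hgm}(t)\,dt\ge 0,
\]
which finishes the proof.

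I do not expect a genuine obstacle here: the substantive input is entirely Theorem~\ref{thm: Doss-Wellner Theorem 2.4}, and the remaining work is the routine bookkeeping of the boundary terms in the two Fubini/integration‑by‑parts steps together with the harmless $\ge$‑versus‑$>$ discrepancy in $\Fzm$ at the atoms of the empirical distribution of the $Y_i$'s.
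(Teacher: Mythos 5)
Your proof is correct, but it takes a different route from the paper's. The paper's argument is a one-liner: it appeals to Theorem~\ref{thm: charactarization 1} with the test function $\Delta(x)=-x^2$, noting that $-x^2$ is concave and symmetric, so $\hpm + t\Delta\in\mathcal{SC}_0$ for all $t>0$; the inequality $\int\Delta(x-\hthm)\,d\Fm\le\int\Delta\,d\hFm$ then reads $-\int(x-\hthm)^2\,d\Fm\le-\int x^2\,d\hFm$, which is the claim after combining with $\int x\,d\hFm=0$. You instead start from the other characterization, Theorem~\ref{thm: Doss-Wellner Theorem 2.4}, rewrite both second moments as weighted integrals of the survival-type functions $\sFm$ and $\Fzm$ via Fubini, and then integrate by parts against the non-negative function $h_{\hgm}$ to land on $\int_0^M h_{\hgm}\ge 0$. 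Both arguments are sound and rest on equivalent ingredients (the two theorems are two faces of the same optimality condition). The paper's route is shorter and makes the result look like one instance of a family of moment inequalities obtained by varying $\Delta$; yours is longer but makes the mechanism visibly quantitative: it exhibits the gap $\int(x-\hthm)^2\,d\Fm-\int x^2\,d\hFm$ exactly as $2\int_0^M h_{\hgm}(t)\,dt$, which is a slightly more informative identity than the bare inequality. One minor point worth keeping in mind for your presentation: you should note (as you implicitly do) that $h_{\hgm}$ is absolutely continuous because $\Fzm-\sFm$ is bounded and integrable on $[0,M]$, so the integration by parts is legitimate.
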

  Since $\int x d\hFm(x)=0$, the proof follows by taking $\Delta=-x^2$ in Theorem~\ref{thm: charactarization 1}.  
%

\section{Asymptotic properties}
\label{sec: asymptotic properties}
We now move on to a discussion of the asymptotic properties of the estimators proposed in sections \ref{sec: one-step} and \ref{sec: MLE}.
Before proceeding any further, we introduce some new notations.
 For  any real valued function $h:\RR\mapsto\RR$, we let $||h||_p$ denote its $L_p$ norm, i.e.
\[||h||_{p}=\lb\edint |h(x)|^p dx\rb^{1/p},\quad p\geq 1.\]
We define the 
Wasserstein distance between two measures $\mu$ and $\nu$ on $\RR$ by
\begin{equation}\label{def: Wasserstein distance}
d_W(\mu,\nu)=\edint|F(x)-G(x)|dx,
\end{equation}
   where $F$ and $G$ are the distribution functions corresponding to $\mu$ and $\nu$ respectively. 
  This representation of $d_W(\mu,\nu)$ follows from  \cite{villani2003}, page $75$. By an abuse of notation, sometime we will denote the above distance by $d_W(F,G)$ as well. For a sequence of distribution functions $\{F_n\}_{n\geq 1}$, we say  $F_n$ converges weakly to  $F$, and write $F_n\to_d F$, if  for all bounded continuous functions $h:\RR\mapsto\RR$, we have $\lim\limits_{n\to\infty}\int h dF_n=\int hdF$.
%


\subsection{Asymptotic properties of the one step estimators}
\label{sec: asymptotics: one step estimator}

In this section,  we will explore the asymptotic properties of the truncated one-step estimator $\hth$. First, we will establish $\sqn$-consistency of this estimator  when $f_0\in\mP_0$. Then we will  study the case when $f_0$ is not log-concave.  In this section, we keep following the notations developed in Section \ref{sec: one-step}. Thus,  $\hn$  denotes any of the density estimators of $g_0$ developed in Section \ref{sec: one-step}.

\subsubsection{Correctly specified model}
We have already mentioned in Section \ref{sec: one-step}  that, since $\hth$ depends on the nuisance  score, its performance relies on how well $\hn$ estimates $g_0$.
Therefore, before going into the analysis of the asymptotic efficiency of \hth, first we have to show that $\hn$ is a reasonably good estimator of $g_0$.
Towards that end, first we   establish that $\hn$ is  $L_1$ consistent for $g_0$. %
 \begin{lemma}\label{lemma: L1 convergence: one step density estimators: model}
Suppose $f_0\in\mathcal{P}_0$. Let $\hn$ be one of the estimators of $g_0$ defined in Section \ref{sec: one-step}. Suppose the preliminary estimator $\bth$ is a  consistent estimator of $\th_0$. Then it follows that
 \begin{equation}\label{L1 convergence of one step estimators: model}
 ||\hn-g_0||_1\to_p 0.
 \end{equation}
  If $\bth$ is a strongly consistent estimator of $\th_0$, then $``\to_p"$ can be replaced by $``\as"$ in the above display.
  \end{lemma}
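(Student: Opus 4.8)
The plan is to handle the four estimators $\hn^{sym}$, $(\hn^{sym})^{sm}$, $\widehat g_{\bth}$, and $\hn^{geo,sym}$ by first establishing a master fact about the unconstrained log-concave MLE $\hf$, and then transferring $L_1$ consistency through the various symmetrization and smoothing operations. The key input is that $\hf$, the MLE of $f_0$ in $\mathcal{LC}$, is $L_1$-consistent (indeed Hellinger- and hence $L_1$-consistent) for the log-concave projection of $f_0$; since $f_0\in\mathcal P_0\subseteq\mathcal{LC}$, this projection is $f_0$ itself, so $\|\hf-f_0\|_1\to 0$ a.s.\ by the results of \cite{2009rufi} (and, for the a.s.\ statement, the Hellinger-consistency arguments available in the log-concave literature). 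I would state this as the first step, citing the known result rather than reproving it.

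Next I would treat $\hn^{sym}$. Write $f_0(\cdot)=g_0(\cdot-\th_0)$ and note
\[
\hn^{sym}(z)-g_0(z)=\tfrac12\big(\hf(\bth+z)-g_0(z)\big)+\tfrac12\big(\hf(\bth-z)-g_0(-z)\big),
\]
using $g_0(z)=g_0(-z)$. Then $\|\hn^{sym}-g_0\|_1\le \tfrac12\|\hf(\bth+\cdot)-g_0\|_1+\tfrac12\|\hf(\bth-\cdot)-g_0\|_1$, and by the change of variables each term is bounded by $\|\hf-f_0(\cdot+\th_0-\bth)\|_1$ up to reflection; a triangle-inequality split gives $\le \|\hf-f_0\|_1+\|f_0(\cdot+\bth)-f_0(\cdot+\th_0)\|_1$. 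The first term $\to 0$ by Step 1, and the second is $\|f_0(\cdot-(\bth-\th_0))-f_0\|_1$, which tends to $0$ because translation is continuous in $L_1$ and $\bth\to\th_0$ (in probability, or a.s.\ under the stronger hypothesis). The geometric-mean estimator $\hn^{geo,sym}$ requires a little more care because of the normalizing constant $C_n^{geo}$: I would first show $\hn^{geo,sym}$ without normalization, namely $(\hf(\bth+z)\hf(\bth-z))^{1/2}$, converges in $L_1$ to $(g_0(z)g_0(z))^{1/2}=g_0(z)$ — using the pointwise (a.e.)\ convergence of $\hf$ to $f_0$ together with an $L_1$-domination/Scheffé-type argument, or directly the inequality $|\sqrt{ab}-\sqrt{cd}|\le \sqrt a|\sqrt b-\sqrt d|+\sqrt d|\sqrt a-\sqrt c|$ combined with Cauchy–Schwarz and the fact that $\hf,f_0$ are uniformly bounded (log-concave densities with convergent modes have uniformly bounded sup-norms) — and then conclude $C_n^{geo}\to 1$, so that the normalized version also converges.

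For $(\hn^{sym})^{sm}$, I would use the representation \eqref{representation of htsm} and reduce to showing $\|\hf^{sm}-f_0\|_1\to 0$; since $\widehat b_n=(\hs-\ts)^{1/2}\to 0$ (because $\hs\to\operatorname{Var}(f_0)$ a.s.\ by the SLLN — finite variance holding since $\If<\infty$ forces finite variance for log-concave $f_0$ — and $\ts\to\operatorname{Var}(f_0)$ by the moment-convergence that accompanies $L_1$-convergence of log-concave densities), convolution with $\widehat b_n^{-1}\gd(\cdot/\widehat b_n)$ is an approximate identity, so $\|\hf^{sm}-\hf\|_1\to 0$, and combining with Step 1 gives the claim; the symmetrization step is then handled exactly as for $\hn^{sym}$. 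Finally, $\widehat g_{\bth}$: here I would invoke the consistency theory for the symmetric log-concave MLE (\cite{dosssymmetric}, Theorem on Hellinger consistency of the MLE in $\mathcal{SLC}_0$ applied to the data $X_i-\bth$), noting that since $X_i-\bth$ has empirical distribution close to that of $X_i-\th_0$ and $f_0(\cdot+\th_0)=g_0\in\mathcal{SLC}_0$, the partial MLE is $L_1$-consistent for $g_0$ — the only new point is propagating the $o_p(1)$ (or $o(1)$ a.s.)\ perturbation $\bth-\th_0$ through that consistency statement, which one can do because the MLE map is continuous with respect to weak convergence of the empirical measure (a property established in the cited works).

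The main obstacle I anticipate is the $\hn^{geo,sym}$ case: controlling the normalizing constant $C_n^{geo}$ and ensuring the unnormalized geometric-mean integrand is $L_1$-convergent despite its smaller, data-dependent support $[-\min(X_{(n)}-\bth,\bth-X_{(1)}),\ \min(X_{(n)}-\bth,\bth-X_{(1)})]$. The support truncation means $g_0$'s tails beyond $X_{(n)}-\th_0$ are dropped; this contributes a term $\int_{|z|>\min(\cdots)}g_0$ which $\to 0$ a.s.\ only because $\min(X_{(n)}-\bth,\bth-X_{(1)})\to\infty$ (as $X_{(n)}\to\infty$, $X_{(1)}\to-\infty$ a.s., $\bth$ bounded), so I would need to record that, and then the approximate-identity/Scheffé machinery finishes it. Everything else is routine triangle inequalities plus translation-continuity in $L_1$ and the SLLN.
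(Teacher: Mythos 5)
Your overall plan is sound and, in substance, follows the same route as the paper's argument, though the paper organizes the material differently: Lemma~\ref{lemma: L1 convergence: one step density estimators: model} is proved there in one line as a corollary of the misspecified-case Lemma~\ref{lemma: L1 convergence: one step density estimators}, using only that for $f_0\in\mP_0$ the log-concave projection $\p$ is $f_0$ itself and $\tilde b=0$, so $\q=\qsm=g_0$ and \eqref{condition: second central moment} holds automatically. What you are effectively reproving is that more general lemma restricted to the well-specified case. The decomposition you use for $\hn^{sym}$ is the same triangle inequality the paper uses in \eqref{inlemma: L1 convergence of the one step estimators: the ones with the MLE}, and your reduction of $(\hn^{sym})^{sm}$ via \eqref{representation of htsm} matches the paper's use of \cite{smoothed}. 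Two local differences: for $\hn^{geo,sym}$ you propose a direct Hellinger/Cauchy--Schwarz estimate on the unnormalized product and then control $C_n^{geo}$, whereas the paper establishes pointwise a.e.\ convergence of $\hf(\bth\pm\cdot)$ via Proposition~2(b) of \cite{theory} and then applies Scheff\'e's lemma to conclude both $C_n^{geo}\as 1$ and $L_1$ convergence --- the two routes are comparable in effort; Scheff\'e also disposes of the shrinking-support worry automatically, so your explicit tracking of $\min(X_{(n)}-\bth,\bth-X_{(1)})\to\infty$ is unnecessary.

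There is one genuine imprecision you should fix. For $\widehat g_{\bth}$ you argue that the symmetric log-concave projection map is ``continuous with respect to weak convergence of the empirical measure.'' That is not true as stated: the continuity results in \cite{dumbreg}, \cite{xuhigh}, and \cite{dosssymmetric} are with respect to the Wasserstein distance $d_W$, which is strictly stronger than weak convergence (it is weak convergence plus convergence of first absolute moments). Weak convergence alone does not preserve the projection --- mass escaping to infinity can wreck it. The paper's proof of Lemma~\ref{lemma: L1 convergence: one step density estimators} handles this carefully: it invokes Proposition~6 of \cite{xuhigh} and then verifies $d_W(\mathbb{F}_{n,\bth},G_0)\as 0$ by separately checking weak convergence of the shifted empirical distribution \emph{and} the convergence $\int|x|\,d\mathbb{F}_{n,\bth}(x)\as\int|x|\,dG_0(x)$, using a uniform Glivenko--Cantelli argument together with the consistency of $\bth$. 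Your plan should do the same; the rest of it then goes through.
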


 Proving Lemma~\ref{lemma: L1 convergence: one step density estimators: model}  is a key step for us because  it leads to further consistency results involving $\hln$, $\hln'$, etc.  
 In particular, we find that $\hln$ converges uniformly to $\ps_0$ over all compact subsets of $\iint(\dom(\ps_0))$ with probability one.
  We can  derive a very similar  result concerning the convergence of $\hln'(x)$ to $\ps_0'(x)$ except that at the derivative level,  the convergence  can only  be proved at the continuity points of $\ps_0'.$   
    The above discussion can be formulated in the following consistency theorem.

\begin{theorem}\label{Theorem: the L1 convergence of the density estimators of one-step estimators: model}
Assume $f_0\in\mathcal{P}_0$. Suppose $\hn$ is one of the estimators of $g_0$ defined in Section \ref{sec: one-step}, and $\bth$ is a consistent estimator of $\th_0$. Let $\{y_n\}_{n\geq 1}$ be any sequence of random variables  converging to $0$ in probability. Then  on any compact set $K\subset\iint(\dom(\ps_0))$ we have,
\begin{itemize}
\item[(A)]
\[\sup_{x\in \RR}|\hn(x+y_n)-g_0(x)|\to_p 0.\]
\item[(B)] 
 \[\sup_{x\in K}|\hln(x+y_n)-\ps_0(x)|\to_p 0.\]
\item[(C)]
Suppose $x\in\iint(\dom(\ps_0))$ is a continuity point of $\ps_0'.$  Then
\[\hln'(x+y_n)\to_p\ps_0'(x).\]
\end{itemize}
  If $\bth$ is a strongly consistent estimator of $\th_0$, then $``\to_p"$ can be replaced by $``\as"$ in the above displays.
\end{theorem}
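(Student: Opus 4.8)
The plan is to leverage Lemma~\ref{lemma: L1 convergence: one step density estimators: model}, which already gives $\|\hn-g_0\|_1\to_p 0$ (a.s. if $\bth$ is strongly consistent), together with standard facts about log-concave densities and their convergence. I will first reduce everything to the case $y_n=0$: since $\hn$ is a density supported on a compact interval for every relevant estimator, and since $g_0$ is uniformly continuous on $\RR$ (being log-concave, hence Lipschitz on compacta and decaying at $\pm\infty$), the perturbation $x\mapsto x+y_n$ with $y_n\to_p 0$ can be absorbed at the end by a triangle-inequality argument once uniform convergence on $\RR$ of $\hn(\cdot)$ to $g_0(\cdot)$ is in hand. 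So the core is: (i) $\|\hn-g_0\|_\infty\to_p 0$; (ii) $\hln\to\ps_0$ uniformly on compact subsets of $\iint(\dom\ps_0)$; (iii) $\hln'(x)\to\ps_0'(x)$ at continuity points of $\ps_0'$.

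For part (A), the key point is that $L_1$ convergence of densities, \emph{combined with log-concavity of the limit}, upgrades to uniform convergence. Concretely, I would invoke the well-known result (e.g. Cule--Samworth, or Lemma~\ref{...}-type statements in \cite{dumbreg}) that if $g_n$ are densities with $\|g_n-g_0\|_1\to 0$ and $g_0$ is log-concave (more precisely, continuous and bounded), then $\|g_n-g_0\|_\infty\to 0$ on any set where the $g_n$ are eventually uniformly bounded and equicontinuous; for $\hn^{sym}$ and $\hn^{geo,sym}$ this follows from the corresponding property of the log-concave MLE $\hf$, and for $\widehat g_{\bth}$ and the smoothed estimator directly from known results in \cite{dosssymmetric} and \cite{smoothed}. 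An alternative, cleaner route: each $\hn$ is built from $\hf$ (the $\mathcal{LC}$-MLE computed from data recentered at $\bth$, or its symmetrization/smoothing), and $\|\hf - f_0(\cdot+\th_0-\bth)\|_\infty\to_p 0$ is essentially in \cite{2009rufi}; symmetrizing, smoothing with $\widehat b_n\to_p 0$ (which holds since $\widehat b_n^2=\hs-\ts\to_p 0$ by consistency of both variances), or taking the geometric-mean normalization all preserve uniform convergence by elementary estimates, using that $C_n^{geo}\to_p 1$. Then part (A) with $y_n$ follows from
\[
\sup_x|\hn(x+y_n)-g_0(x)|\le \sup_x|\hn(x+y_n)-g_0(x+y_n)| + \sup_x|g_0(x+y_n)-g_0(x)|,
\]
where the first term is $\le\|\hn-g_0\|_\infty\to_p0$ and the second tends to $0$ in probability by uniform continuity of $g_0$.

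For parts (B) and (C), I would pass to logarithms. On a compact $K\subset\iint(\dom\ps_0)$, $g_0$ is bounded below by a positive constant $c_K$, and by (A) $\hn(x+y_n)\ge c_K/2$ uniformly on $K$ with probability tending to one; on that event $\hln(x+y_n)=\log\hn(x+y_n)$ and $\log$ is Lipschitz on $[c_K/2,\infty)$, so (B) is immediate from (A). For (C), the derivative statement, the standard tool is convexity/concavity: $\hln$ and $\ps_0$ are concave, $\hln\to\ps_0$ pointwise on a dense set (by (B)), and for concave functions pointwise convergence on a dense subset of an open interval forces convergence of the one-sided derivatives at every continuity point of the limiting derivative (Theorem~25.7 of \cite{rockafellar}, or the standard monotone-difference-quotient argument). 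The perturbation $y_n$ is handled the same way because $\hln'(\cdot+y_n)$ is sandwiched between difference quotients of $\hln$ at nearby points, which converge by (B). Finally, the strong-consistency upgrade: every estimate above is deterministic once Lemma~\ref{lemma: L1 convergence: one step density estimators: model} is invoked in its a.s.\ form and $\bth\to\th_0$ a.s.; the variance convergences ($\hs\to\sigma^2$, $\ts\to\sigma^2$) and $C_n^{geo}\to1$ also hold a.s., so ``$\to_p$'' becomes ``a.s.'' throughout.

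The main obstacle I anticipate is part (A): carefully justifying that $L_1$ convergence plus the structural features of each of the four estimators yields genuine sup-norm convergence \emph{on all of $\RR$}, including control in the tails where $g_0$ and $\hn$ are both small but the estimators have differing (random) supports. For $\hn^{geo,sym}$, whose support $[-\min(X_{(n)}-\bth,\bth-X_{(1)}),\,\cdot\,]$ is strictly smaller than the others, one must check the normalizing constant does not blow up; for the smoothed estimator one must check the Gaussian convolution with bandwidth $\widehat b_n\to_p0$ does not inflate the sup norm near the boundary of the support. I expect these to be handled by the monotonicity of log-concave densities away from the mode together with the fact that, outside a large compact set, both $g_0$ and every $\hn$ are bounded by an exponentially decaying envelope uniformly in $n$ with high probability --- a fact available from \cite{2009rufi} and \cite{dumbreg} --- but assembling this uniformly across the four constructions is the delicate bookkeeping.
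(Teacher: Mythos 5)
Your high-level plan is sound and close in spirit to what the paper actually does (the paper proves the correctly-specified version by reducing to the misspecified Theorem~\ref{Theorem: the L1 convergence of the density estimators of one-step estimators}, where $\q=g_0$ and $\psp=\ps_0$, together with Proposition~2(c) of \cite{theory}). Your part~(B) argument via Lipschitz continuity of $\log$ on $[c_K/2,\infty)$ is clean and, if anything, simpler than the paper's route through Theorem~10.8 of \cite{rockafellar}; and your triangle-inequality handling of the shift $y_n$ is fine once uniform convergence is in hand.

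There is, however, a genuine gap in your part~(C). You write that ``$\hln$ and $\ps_0$ are concave'' and then apply the standard fact that pointwise convergence of concave functions forces convergence of derivatives at continuity points of the limiting derivative. But $\hln=\log\hn$ is \emph{not} concave when $\hn=\hn^{sym}$ or $\hn=\htsm$: the paper states explicitly that $\hn^{sym}$ is ``not, in general, log-concave'' (it is an arithmetic mean of two log-concave densities, whose log is not concave) and likewise for $(\hn^{sym})^{sm}$. For these two estimators your Rockafellar/difference-quotient argument simply does not apply, and these are precisely the estimators the paper promotes (the smoothed one is the best performer in the simulations). The paper circumvents this by exploiting the representation
\[
(\hln^{sym})'(x)=\varrho_n(x)\,(\log\hf)'(\bth+x)-\bigl(1-\varrho_n(x)\bigr)(\log\hf)'(\bth-x),
\qquad \varrho_n(x)=\frac{\hf(\bth+x)}{2\hn^{sym}(x)}\le 1,
\]
so the concavity-based derivative convergence is applied to $\log\hf$ (which \emph{is} concave), and the bounded weights $\varrho_n$ transfer the limit to $(\hln^{sym})'$; an analogous representation handles $\htsm$ via $\hf^{sm}$. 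You need this (or some substitute for the missing concavity) for part~(C) to go through. A secondary remark on part~(A): you correctly flag that $L_1$-to-sup-norm requires some structure on the approximating sequence, not just on the limit; for $\hn^{sym}$ and $\htsm$ the paper again routes this through the log-concave building blocks $\hf(\bth\pm\cdot)$ and $\hf^{sm}(\bth\pm\cdot)$ and a triangle inequality, rather than attempting a direct equicontinuity argument across all four constructions.
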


Now we present a  corollary to Theorem~\ref{Theorem: the L1 convergence of the density estimators of one-step estimators: model}. We show that if $\ps_0$ is unbounded, $\hln'$ imitates $\ps_0'.$ However, we do not know whether the  sequence of concave functions $\hln'$ stays uniformly bounded inside the domain when  $\ps_0'$  is bounded. 

\begin{corollary}\label{lemma: divergence}
Suppose $f_0\in\mP_0$. Then under the assumptions of Theorem~\ref{Theorem: the L1 convergence of the density estimators of one-step estimators: model},   if $\ps_0$ satisfies
\[\sup_{x\in\iint(\dom(\ps_0))}|\ps_0'(x)|=\infty,\]
 we  have
\[\limsup_n\sup_{x\in\iint(\dom(\hln))}|\hln'(x)|=\infty\quad a.s.\]
\end{corollary}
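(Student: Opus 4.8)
The plan is to argue by contradiction. Suppose, for some $f_0 \in \mathcal{P}_0$ with $\sup_{x \in \iint(\dom(\ps_0))} |\ps_0'(x)| = \infty$, that along some subsequence the quantities $\sup_{x \in \iint(\dom(\hln))} |\hln'(x)|$ stay bounded by a finite constant $M$ with positive probability (or even almost surely along that subsequence). The first step is to note that a uniform bound on the slope $|\hln'|$ on its domain, together with the $L_1$-consistency $\|\hn - g_0\|_1 \to 0$ from Lemma~\ref{lemma: L1 convergence: one step density estimators: model} (and hence convergence of the associated log-densities on compacta, Theorem~\ref{Theorem: the L1 convergence of the density estimators of one-step estimators: model}(B)), forces $\ps_0$ to be Lipschitz on the interior of its domain: for any two continuity points $x, y$ of $\ps_0'$ in $\iint(\dom(\ps_0))$, we have $|\hln(x) - \hln(y)| \le M|x-y|$ along the subsequence, and passing to the limit using part (B) gives $|\ps_0(x) - \ps_0(y)| \le M|x-y|$. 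By continuity of $\ps_0$ on the interior of its domain (it is concave and finite there) this extends to all $x,y$, so $\ps_0$ is $M$-Lipschitz on $\iint(\dom(\ps_0))$, which contradicts $\sup |\ps_0'| = \infty$.

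The one subtlety is matching the domains: $\hln'$ is controlled on $\iint(\dom(\hln))$, whereas we want information about $\ps_0'$ on $\iint(\dom(\ps_0))$. Here I would use that for each of the density estimators $\hn$ of Section~\ref{sec: one-step} the support, and hence $\dom(\hln)$, is an interval determined by $\bth$ and the extreme order statistics (explicitly $[-d,d]$ with $d = \max(X_{(n)} - \bth, \bth - X_{(1)})$ for the first three estimators, and the analogous smaller interval for $\hn^{geo,sym}$). Since $\bth$ is consistent and the order statistics spread out to cover $\supp(g_0)$, for any fixed compact $K \subset \iint(\dom(\ps_0))$ we have $K \subset \iint(\dom(\hln))$ eventually (a.s. or in probability according to the mode of consistency of $\bth$). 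Thus the slope bound $|\hln'| \le M$ does apply on $K$ for $n$ large along the subsequence, which is all that is needed to run the Lipschitz argument on $K$; letting $K$ exhaust $\iint(\dom(\ps_0))$ finishes it.

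The main obstacle, and the place to be careful, is the logical bookkeeping of the ``$\limsup$'' and the $a.s.$ statement. Negating ``$\limsup_n \sup_x |\hln'(x)| = \infty$ $a.s.$'' gives an event of positive probability on which $\limsup_n \sup_x |\hln'(x)| < \infty$, i.e. on which there is a (random) finite bound $M(\omega)$ and a (random) index $N(\omega)$ with $\sup_x |\hln'(x)| \le M(\omega)$ for all $n \ge N(\omega)$. One then intersects this with the probability-one event from Theorem~\ref{Theorem: the L1 convergence of the density estimators of one-step estimators: model}(B),(C) on which the convergence $\hln \to \ps_0$ (and $\hln' \to \ps_0'$ at continuity points) holds; on the intersection, which still has positive probability, the deterministic Lipschitz argument above yields $\ps_0$ is $M(\omega)$-Lipschitz, contradicting the hypothesis on $\ps_0'$ (which is a statement about the fixed function $\ps_0$, not random). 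Strictly speaking, since part (B),(C) are only ``$\to_p$'' under mere consistency of $\bth$, for the $a.s.$ conclusion one should either invoke the strongly-consistent version (``$a.s.$'') of the theorem, or pass to a further subsequence along which the convergence is almost sure; I would state the corollary's proof for the strongly consistent regime, matching how Theorem~\ref{Theorem: the L1 convergence of the density estimators of one-step estimators: model} is phrased, so that all the convergences hold on a single probability-one event and the contradiction is clean.
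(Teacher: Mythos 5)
Your proof is correct, but it takes a slightly different route from the paper. The paper's argument is shorter and goes straight through part~(C) of Theorem~\ref{Theorem: the L1 convergence of the density estimators of one-step estimators: model}: by contradiction assume $\sup_{x\in\iint(\dom(\hln))}|\hln'(x)|<M$ eventually, pick a point $y\in\iint(\dom(\ps_0))$ at which $\ps_0'$ is continuous and $|\ps_0'(y)|>2M$ (such a point exists because $\ps_0'$ is monotone and unbounded, so it has at most countably many discontinuities), and then observe that part~(C) forces $\hln'(y)\to\ps_0'(y)$, contradicting the bound. You instead integrate the slope bound to get an $M$-Lipschitz estimate on $\hln$, pass to the limit using the uniform convergence on compacta from part~(B), and conclude $\ps_0$ is $M$-Lipschitz on its interior, which contradicts the hypothesis. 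Both arguments rest on the same consistency theorem and the same domain-exhaustion observation ($K\subset\iint(\dom(\hln))$ eventually), so they are close in spirit; yours is a bit longer because it goes through a Lipschitz estimate rather than directly contradicting pointwise derivative convergence, but it has the mild advantage of not needing to single out a continuity point. You are also more careful than the paper about the measure-theoretic bookkeeping (the random bound $M(\omega)$, working on a single probability-one event, and passing to the strongly consistent version of the theorem for the a.s.\ conclusion) — the paper glosses over these points, so your extra care is not a gap but a refinement.
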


Our next step is analyzing the asymptotic limit  of the truncated Fisher information estimator $\hin(\eta)$.
It  turns out that $\hin(\eta)$ consistently estimates
 \begin{equation}\label{definition: I(eta)}
\I(\eta)=\dint_{F_0^{-1}(\eta)}^{F_0^{-1}(1-\eta)}\ph_0'(x)^2dF_0(x),
 \end{equation}
 which can be seen as a truncated version of the original Fisher information \I.
  The convergence of $\hin(\eta)$, combined with the consistency results in Lemma~\ref{lemma: L1 convergence: one step density estimators: model} and Theorem~\ref{Theorem: the L1 convergence of the density estimators of one-step estimators: model}, leads to the desired consistency of $\hth$.

\begin{lemma}\label{lemma: consistency of Fisher information: model}
Under the conditions of Theorem~\ref{Theorem: the L1 convergence of the density estimators of one-step estimators: model},   we have $\hthm\to_p\th_0$ and $\hin(\eta)\to_p\I(\eta)$, where $\I(\eta)$ was defined in \eqref{definition: I(eta)}. 
 If $\bth$ is a strongly consistent estimator of $\th_0$, then $\hthm\as\th_0$ and $\hin(\eta)\as\I(\eta)$.
\end{lemma}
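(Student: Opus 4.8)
The plan is to decompose the problem into two pieces: the consistency of $\hth$ as a location estimator (once one knows $\hin(\eta)$ converges to a positive constant and the score integral converges), and the consistency of $\hin(\eta)$ itself. I would start with the latter, since it drives everything. Writing $\hin(\eta)=\int_{\bth-\xin}^{\bth+\xin}\tilde\psi_n'(x-\bth)^2\,d\Fm(x)=\int_{-\xin}^{\xin}\tilde\psi_n'(z)^2\,d\Fm(z+\bth)$, the target $\I(\eta)$ equals $\int_{G_0^{-1}(\eta)}^{G_0^{-1}(1-\eta)}\psi_0'(z)^2\,g_0(z)\,dz$ after recentering (note $\ph_0'(x)^2 f_0(x)=\psi_0'(x-\th_0)^2 g_0(x-\th_0)$ and $F_0^{-1}(\eta)-\th_0=G_0^{-1}(\eta)$). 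Three approximations need to be controlled simultaneously: (i) $\bth\to\th_0$ and $\xin\to\xi:=G_0^{-1}(1-\eta)$, the latter following from $L_1$-consistency of $\hn$ (Lemma~\ref{lemma: L1 convergence: one step density estimators: model}), which forces $\hH\to_d G_0$, together with continuity/strict monotonicity of $G_0$ near its $(1-\eta)$-quantile (here $\eta\in(0,1/2)$ and log-concavity of $g_0$ guarantee $g_0>0$ on a neighborhood of $\xi$, so $G_0^{-1}$ is continuous there); (ii) $\tilde\psi_n'(z)\to\psi_0'(z)$ uniformly on the compact set $[-\xi-\delta,\xi+\delta]\subset\iint(\dom(\psi_0))$ for small $\delta$ — this is exactly Theorem~\ref{Theorem: the L1 convergence of the density estimators of one-step estimators: model}(C), upgraded to uniform convergence on compacts because the $\tilde\psi_n'$ are monotone (concave $\tilde\psi_n$) and the limit $\psi_0'$ is continuous on the compact set, so pointwise convergence of monotone functions to a continuous limit is uniform; and (iii) replacing $d\Fm(z+\bth)$ by $d G_0(z)$, handled by the Glivenko–Cantelli theorem for $\Fm$ plus the shift $\bth\to\th_0$, after noting the integrand $\tilde\psi_n'(z)^2\mathbf 1\{|z|\le\xin\}$ is eventually uniformly bounded on the relevant compact (because $\psi_0'$ is bounded on $[-\xi-\delta,\xi+\delta]$ and by (ii) so is $\tilde\psi_n'$, for $n$ large). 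Combining (i)–(iii) via a standard $\varepsilon/3$ argument gives $\hin(\eta)\to_p\I(\eta)$, and under strong consistency of $\bth$ every ``$\to_p$'' becomes ``$\as$'' since each ingredient (Glivenko–Cantelli, Lemma~\ref{lemma: L1 convergence: one step density estimators: model}, Theorem~\ref{Theorem: the L1 convergence of the density estimators of one-step estimators: model}) holds almost surely.

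Next I would argue $\I(\eta)>0$: since $f_0\in\mP_0$ is non-degenerate and log-concave, $\psi_0'$ is not identically zero on $(G_0^{-1}(\eta),G_0^{-1}(1-\eta))$ (a symmetric log-concave density with $\psi_0'\equiv 0$ on a set of positive measure straddling a full interval would have to be uniform, but even then $\psi_0'\neq 0$ at the boundary knots inside the truncation region — and more simply, $\int\psi_0'^2 g_0>0$ unless $f_0$ has no location information, contradicting $\If<\infty$ being meaningful; one can also invoke that $\I(\eta)\uparrow \If$ as $\eta\downarrow 0$ with $\If>0$ for any non-degenerate density). This positivity is what makes the one-step correction well defined in the limit.

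Now for $\hthm\to_p\th_0$. Rewrite
\[
\hth-\th_0=(\bth-\th_0)-\frac{1}{\hin(\eta)}\int_{-\xin}^{\xin}\hln'(z)\,d\Fm(z+\bth).
\]
The numerator integral I would compare to its population analogue $\int_{-\xi}^{\xi}\psi_0'(z)\,g_0(z)\,dz$, which is $0$ by symmetry ($\psi_0'$ odd, $g_0$ even). The convergence of the integral uses the same three approximations as for $\hin(\eta)$ — $\xin\to\xi$, $\hln'\to\psi_0'$ uniformly on compacts, and $d\Fm(z+\bth)\to dG_0(z)$ — plus now a $\sqrt{n}(\bth-\th_0)=O_p(1)$ type bookkeeping is \emph{not} needed here since we only want consistency: $\bth-\th_0\to_p 0$ directly, and the integral $\to_p 0$, so dividing by $\hin(\eta)\to_p\I(\eta)>0$ gives $\hth-\th_0\to_p 0$. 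Under strong consistency of $\bth$ the same steps give $\hth\as\th_0$.

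The main obstacle I anticipate is subtlety (i)—(ii) handled jointly: the truncation endpoints $\pm\xin$ are themselves random and only converge in probability, so one must be careful that the moving domain of integration lands, with probability tending to one, inside a fixed compact subset of $\iint(\dom(\psi_0))$ on which the uniform convergence of $\hln'$ is available. This requires knowing $\dom(\psi_0)$ has nonempty interior containing $[-\xi,\xi]$ — true because $g_0$ is a genuine density with $G_0^{-1}(1-\eta)$ finite and in the interior of the support for $\eta>0$ — and then a standard ``eventually in a fixed compact'' argument lets us quote Theorem~\ref{Theorem: the L1 convergence of the density estimators of one-step estimators: model} with the random shift $y_n=\bth-\th_0$. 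Everything else is routine Glivenko–Cantelli and continuous-mapping bookkeeping.
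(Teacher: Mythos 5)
The paper's own proof is a one-line reduction: Lemma~\ref{lemma: consistency of Fisher information: model} is obtained from the misspecified-model Lemma~\ref{lemma: consistency of Fisher information} upon noting that for $f_0\in\mP_0$ the log-concave projection recovers $f_0$ itself, so $\psp=\ps_0$ and hence $\Ig(\eta)=\Ig^{sm}(\eta)=\I(\eta)$. You instead reconstruct the argument from scratch for the well-specified case, which is a legitimate and self-contained alternative—but your version has a genuine gap at step~(ii).

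You claim that Theorem~\ref{Theorem: the L1 convergence of the density estimators of one-step estimators: model}(C) can be ``upgraded to uniform convergence on compacts because the $\tilde\psi_n'$ are monotone and the limit $\psi_0'$ is continuous on the compact set.'' The continuity premise is false in general: $\psi_0'$ is merely the (non-increasing) right derivative of a concave function and can jump. The Laplace density already kills this—$\psi_0'(z)=-\mathrm{sign}(z)$ is discontinuous at $0\in[-\xi,\xi]$—as does any log-concave $g_0$ whose log-density is piecewise linear. When the limit has a jump, pointwise convergence of monotone functions does \emph{not} upgrade to uniform convergence (Polya's theorem needs a continuous limit), so your uniform bound and the $\varepsilon/3$ split built on it do not hold. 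The downstream step~(iii) then inherits the problem: with $\tilde\psi_n'$ random, replacing $d\Fm(z+\bth)$ by $dG_0(z)$ is not an application of the classical Glivenko--Cantelli theorem for a fixed integrand; you implicitly used the uniform convergence from~(ii) to strip off the randomness, and that crutch is gone.

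The fix—which is exactly what the paper does in the proof of Lemma~\ref{lemma: consistency of Fisher information}—splits $\hin(\eta)-\I(\eta)$ into $\int \hln'^2\,d(\Fm-F_0)$ plus $\int \hln'^2\,dF_0-\I(\eta)$. The first term is handled by a \emph{uniform} Glivenko--Cantelli theorem over the class $\mathcal F_\eta$ of squares of bounded monotone functions restricted to subintervals, whose bracketing entropy is controlled via Theorem~2.7.5 of \cite{wc} together with Lemma~\ref{lemma: boundedness of tp}; no uniform convergence of $\hln'$ is needed. The second term uses only pointwise convergence at continuity points of $\psi_0'$ (which form a co-countable set, hence full $F_0$-measure for absolutely continuous $F_0$) together with the uniform bound on $[-\xi_n,\xi_n]$ and the dominated convergence theorem. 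Your step~(i) (convergence of $\bth$ and $\xin$) and your treatment of $\hth$ via the odd/even symmetry cancellation $\int_{-\xi}^{\xi}\psi_0'\,g_0=0$ are both fine and match the paper's mechanism; only the uniform-convergence shortcut needs replacing by the entropy-plus-DCT argument.
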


Now we focus on the rate of convergence of $\hth-\th_0.$ In this case,  we make the assumption that $\bth$ is  $\sqn$-consistent. A similar assumption was made by \cite{stone}.

\begin{theorem}\label{theorem: main: one-step: model}
Suppose $f_0\in\mP_0$ and $\bth$ is a $\sqrt{n}$-consistent estimator of $\th_0$. Then the following assertion holds for the estimator $\hth$ of $\th_0$ defined in \eqref{def: one-step estimator: truncated}:
\begin{equation}\label{statement: one-step: main theorem}
\sqn(\hth-\th_0)\to_{d}N(0,\I(\eta)^{-1}),
\end{equation}
where $\I(\eta)$ is as defined in \eqref{definition: I(eta)}.
\end{theorem}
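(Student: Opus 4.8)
The plan is to carry out the standard one-step/Newton-type expansion, treating $\hth-\bth$ as $-\hin(\eta)^{-1}$ times a truncated empirical average of the estimated score. First I would write
\[
\sqn(\hth-\th_0)=\sqn(\bth-\th_0)-\hin(\eta)^{-1}\,\sqn\!\int_{\bth-\xin}^{\bth+\xin}\hln'(x-\bth)\,d\Fm(x).
\]
Since Lemma~\ref{lemma: consistency of Fisher information: model} already gives $\hin(\eta)\to_p\I(\eta)$, by Slutsky it suffices to show that the second factor, call it $\sqn\,T_n$, satisfies
\[
\sqn\,T_n=\sqn(\bth-\th_0)\,\I(\eta)+\sqn\!\int_{F_0^{-1}(\eta)}^{F_0^{-1}(1-\eta)}\ph_0'(x-\th_0)\,d(\Fm-F_0)(x)+o_p(1),
\]
because then the two $\sqn(\bth-\th_0)$ terms cancel and what remains is $\sqn$ times a mean-zero i.i.d.\ average of the bounded (truncation makes it bounded) function $x\mapsto \I(\eta)^{-1}\ph_0'(x-\th_0)\mathbf 1\{F_0^{-1}(\eta)<x-\th_0<F_0^{-1}(1-\eta)\}$, whose variance is $\I(\eta)^{-2}\!\int \ph_0'(x-\th_0)^2\,dF_0$ over the truncation region $=\I(\eta)^{-1}$; the CLT then gives $N(0,\I(\eta)^{-1})$. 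Using symmetry of $g_0$ one checks $P_0[\ph_0'(X-\th_0)\mathbf 1\{\cdots\}]=0$, so the centering is automatic.

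The core of the argument is therefore the displayed linearization of $T_n$. I would split the error into three pieces. (i) \emph{Replacing $\hln'$ by $\ph_0'$ (shifted):} on the truncation interval $[\bth-\xin,\bth+\xin]$, Theorem~\ref{Theorem: the L1 convergence of the density estimators of one-step estimators: model}(C) gives pointwise a.s.\ convergence $\hln'(x-\bth)\to\ps_0'(x-\th_0)$ at continuity points of $\ps_0'$, and since $\ps_0'$ is monotone its discontinuities are countable (hence $F_0$-null), and on a compact set strictly inside $\dom(\ps_0)$ the concave derivatives $\hln'$ are eventually uniformly bounded; multiplying the $o(1)$ difference by $\sqn\,d\Fm$ needs an empirical-process/equicontinuity argument — this is where one controls $\sqn\int(\hln'(\cdot-\bth)-\ph_0'(\cdot-\th_0))\,d(\Fm-F_0)$ via a Donsker/bracketing bound for the (uniformly bounded, uniformly bounded-variation since they are monotone) class of functions, plus $\sqn\int(\hln'(\cdot-\bth)-\ph_0'(\cdot-\th_0))\,dF_0=o_p(1)$ by dominated convergence after the a.s.\ pointwise limit and $L_1$-type control coming from Lemma~\ref{lemma: L1 convergence: one step density estimators: model}. (ii) \emph{Moving the random limits $\bth\pm\xin$ to the deterministic ones $F_0^{-1}(1\mp\eta)$:} here $\xin\to F_0^{-1}(1-\eta)$ (a.s., from $L_1$-consistency of $\hn$ and continuity/strict monotonicity of $G_0$ near the quantile), and $\bth\to\th_0$, so the difference of integrals is $\sqn$ times an integral of a bounded integrand over an interval of length $o_p(1)$, with the $d\Fm$-mass over that shrinking interval being $O_p(n^{-1/2})\cdot$(length)$+O(\text{length})$; one keeps the leading drift term $\I(\eta)\cdot(\bth-\th_0)$, which arises precisely from $-\frac{d}{d\theta}\big|_{\theta_0}\int_{F_0^{-1}(\eta)}^{F_0^{-1}(1-\eta)}\ph_0'(x-\theta)\,dF_0(x)=\int \ph_0''\,dF_0 + \text{boundary terms}=\I(\eta)$ by the usual integration-by-parts / information identity under the truncation. (iii) \emph{Linearizing in $\bth$:} a Taylor expansion of $\theta\mapsto\int_{F_0^{-1}(\eta)}^{F_0^{-1}(1-\eta)}\ph_0'(x-\theta)\,dF_0(x)$ around $\th_0$, valid because this is a smooth (indeed, by the information identity, differentiable with derivative $\I(\eta)$) function of $\theta$ near $\th_0$, together with $\sqn$-consistency of $\bth$, produces the term $\sqn(\bth-\th_0)\I(\eta)+o_p(1)$.

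The main obstacle I expect is step (i): controlling $\sqn\int(\hln'(\cdot-\bth)-\ph_0'(\cdot-\th_0))\,d(\Fm-F_0)$ and the analogous $d\Fm$-term, because $\hln'$ is random, depends on all the data (no sample splitting is used), and is only known to converge pointwise, not at a rate. The way through is to exploit that the $\hln'$ are uniformly bounded on the truncation interval (they are concave, hence monotone in a fixed compact set inside $\dom(\ps_0)$, and eventually uniformly bounded by Theorem~\ref{Theorem: the L1 convergence of the density estimators of one-step estimators: model}) and of uniformly bounded variation there, so they lie eventually in a fixed Donsker class $\{h:[a,b]\to[-M,M]\ \text{monotone}\}$; the empirical-process term over such a class is $o_p(1)$ because the $L_2(P_0)$-diameter between $\hln'(\cdot-\bth)$ and $\ph_0'(\cdot-\th_0)$ tends to $0$ (from the pointwise a.s.\ convergence plus uniform boundedness and dominated convergence), and asymptotic equicontinuity of the empirical process over a Donsker class converts a vanishing $L_2$-distance into a vanishing fluctuation. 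The remaining $d\Fm=dF_0+d(\Fm-F_0)$-decomposition then isolates the deterministic drift handled by the information identity in step (ii)–(iii). Everything else — Slutsky, the CLT for the bounded score, the mean-zero check from symmetry — is routine.
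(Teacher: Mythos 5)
Your overall architecture mirrors the paper's proof almost exactly: the paper proves this statement by specializing the more general misspecified-model expansion in Theorem~\ref{theorem: main: one-step}, whose proof carries out the same decomposition you describe — an empirical-process remainder $T_{1n}$ controlled by a bracketing/Donsker argument over monotone, uniformly bounded score classes together with a shrinking $L_2(P_0)$-diameter, a drift term $T_{2n}$ capturing the $(\bth-\th_0)$ dependence, and a CLT term $T_{5n}$ with the random truncation endpoints replaced by deterministic ones via asymptotic equicontinuity over $C_n$. So your plan is the right plan, and your identification of step (i) as the main obstacle, and your Donsker-plus-shrinking-$L_2$-distance remedy, matches what the paper does.

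There is, however, a genuine gap in the $dF_0$-side of your step (i). You assert that
\[
\sqn\int\bigl(\hln'(\cdot-\bth)-\ph_0'(\cdot-\th_0)\bigr)\,dF_0=o_p(1)\quad\text{``by dominated convergence.''}
\]
Dominated convergence only yields $\int(\hln'(\cdot-\bth)-\ph_0'(\cdot-\th_0))\,dF_0\to 0$ with no rate; after multiplying by $\sqn$ this gives $\sqn\cdot o_p(1)$, which is not $o_p(1)$. Since $\hln'$ is an untuned shape-restricted estimator with no better than $O_p(n^{-2/5})$-type rates, you cannot obtain a $o_p(n^{-1/2})$ rate here by any soft argument. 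The paper avoids this entirely by exploiting \emph{exact} odd/even symmetry: after centering the $dF_0$-integral at $\bth$, one has $\int_{-\xin}^{\xin}\hln'(z)\,g_0(z)\,dz=0$ and $\int_{-\xin}^{\xin}\ps_0'(z)\,g_0(z)\,dz=0$ \emph{identically}, because $\hln'$ and $\ps_0'$ are odd, $g_0$ is even, and the truncation interval $[-\xin,\xin]$ is symmetric about $0$ (these are the paper's $T_{3n}$ and $T_{4n}$, which are exactly zero, not merely asymptotically negligible). The only surviving $dF_0$-contribution is the shift term $\int_{-\xin}^{\xin}\hln'(z)\bigl[g_0(z+\bth-\th_0)-g_0(z)\bigr]\,dz$ (the paper's $T_{2n}$), which is a Riemann-sum that linearizes in $\bth-\th_0$ and yields the drift $\I(\eta)(\bth-\th_0)+o_p(n^{-1/2})$. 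You do invoke symmetry elsewhere for the automatic centering of the CLT term, but at this point your argument relies on DCT, which is insufficient. A related imprecision is the accounting in your step (ii): the $d\Fm$-mass over the shrinking symmetric-difference region is not ``$O_p(n^{-1/2})\cdot(\text{length})+O(\text{length})$''; what is needed is that the $d(\Fm-F_0)$-fluctuation over $C_n$ vanishes by equicontinuity (as you later describe correctly), while the $dF_0$-part over $C_n$ is again killed by the exact odd/even symmetry — without that, $\sqn|\xin-\xi_0|$ is not controlled, since the quantile estimate $\xin$ only converges at the nonparametric rate.

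In short: same blueprint as the paper's, with the correct empirical-process core, but the $dF_0$-replacement step must be replaced by the exact odd-even cancellation argument rather than dominated convergence; as written that step does not close.
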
 

\begin{remark}
Although we do not know  the rate of convergence of $\hthf$ for a general $f_0\in\mP_0$, 
in the special case when $f_0$ is bounded away from $0$ on its support, it can be shown  that $\sqn(\hthf-\th_0)$  is asymptotically normal with variance $\I^{-1}$. 
We conjecture that the same holds for any $f_0\in\mP_0$. Our simulation study in Section \ref{sec: simulation} does not refute this conjecture.
\end{remark}

  Theorem~\ref{theorem: main: one-step} underscores the importance of
$\I(\eta)$ for us in that it is inversely proportional to the asymptotic efficiency of $\hth.$ Now we  describe in greater detail 
 how $\eta$ affects the asymptotic efficiency of $\hth.$ 
 
It is natural to expect $\I(\eta)$ to be close to $\I$ if $\eta\in(0,1/2)$ is small. However,  the magnitude of this closeness is likely to be controlled by the underlying density $f_0$. Keeping that in mind, we seek to investigate how the  ratio $\I(\eta)/\I$ behaves as a function of $\eta$ for different choices of $f_0.$  Our choices of $f_0$  include the standard normal, Laplace and logistic densities. Note that all these densities are members of $\mP_0$ with the center of symmetry $\th_0=0.$ We have plotted them in Figure~\ref{Figure: comparison of truncated information densities 1}(a) for convenience.

   Another choice for $f_0$ is the standard symmetrized beta density with parameter $r>2$ (see Figure~\ref{Figure: comparison of truncated information densities 1}(b)), which we define by
\begin{equation}\label{definition: Symmetrized beta}
f_0(x)\equiv f_{0,r}(x)=\dfrac{\Gamma\lb(3+r)/2\rb}{\sqrt{\pi r}\Gamma(1+r/2)}\lb 1-\dfrac{x^2}{r}\rb^{r/2}1_{[-\sqrt{r},\sqrt{r}]}(x),\quad r> 0,
\end{equation}
where $\Gamma$ is the usual Gamma function.
 It is straightforward to verify that in this case the score equals 
 \[\phi_0'(x)=\dfrac{-x}{1-x^2/r}1_{[-\sqrt{r},\sqrt{r}]}(x).\]
 Some computation shows  that $r\leq 2$ leads to $\mathcal{I}_{f_{0,r}}=\infty.$ However   for $r>2$, 
 \[\mathcal{I}_{f_{0,r}}=\dfrac{r\Gamma\lb \dfrac{r}{2}-1\rb \Gamma\lb \dfrac{3+r}{2}\rb}{2\Gamma\lb \dfrac{r}{2}+1\rb \Gamma\lb \dfrac{1+r}{2}\rb}<\infty,\]
  and $f_0\in\mP_0$.
   Figure~\ref{Figure: comparison of truncated information densities 1}(c) displays the plot of $\mathcal{I}_{f_{0,r}}$ versus $r$ for the symmetrized beta density, which depicts that $\mathcal{I}_{f_{0,r}}$  decreases steeply for $r>2$. This finding is consistent with $\I$ being $\infty$ when $f_0$ is the uniform density on $[-1,1].$  

Now we examine the ratio $\I(\eta)/\I$, which gives the asymptotic efficiency of the truncated one step estimators in the correctly specified model.
 Figure~\ref{Figure: comparison of truncated information} illustrates the plot of $\I(\eta)/\I$ versus $\eta$ for the above-mentioned densities.
 It may not be observable from the plot but we verified that for logistic, normal, and the Laplace distribution, the asymptotic efficiency is greater than $0.98$ for $\eta\in(0,0.001).$
However, Figure \ref{Figure: comparison of truncated information} illustrates that for  the symmetrized beta distribution, the situation is somewhat different. We considered  symmetrized beta distributions with parameter $r=2.1$, $2.5$, $3.5$, and $4.5$. Figure~\ref{Figure: comparison of truncated information} indicates that in this case, the asymptotic efficiency is quite small even for smaller values of $\eta$. This trend becomes more extreme as $r$ decreases to $2.1.$  We calculated that for $r=2.1$, as $\eta$ goes down from $0.01$ to $10^{-6}$, the ratio  $\I(\eta)/\I$ only increases from $0.05$ to $0.233$.

The aberration for the symmetrized beta case can be explained inspecting  Figure~\ref{Figure: comparison of truncated information densities 1}(c), which elucidates that as $r$ decreases, the slope of the density near the boundary of the support becomes sharper. As a result, the score, which equals $f_0'/f_0$,  also becomes large. In particular,
\[|\phi'_0(x)|\sim \dfrac{\sqrt{r}/2}{1-|x|/\sqrt{r}}\quad\text{ as }|x|
\to\sqrt{r}.\]
 Therefore, $\mathcal{I}_{f_0,r}$ grows very fast as $r$ decreases.  Since  $\mathcal{I}_{f_0,r}$ does not depend on the value of the score near the boundary, it does not grow as fast as $r\downarrow 0$.  Therefore,   truncation leads to greater loss of asymptotic efficiency for symmetrized beta densities, especially for smaller values of $r$, which is reflected in  Figure~\ref{Figure: comparison of truncated information}.
\begin{figure}[h]
\includegraphics[width=\textwidth, height=.38\textwidth]{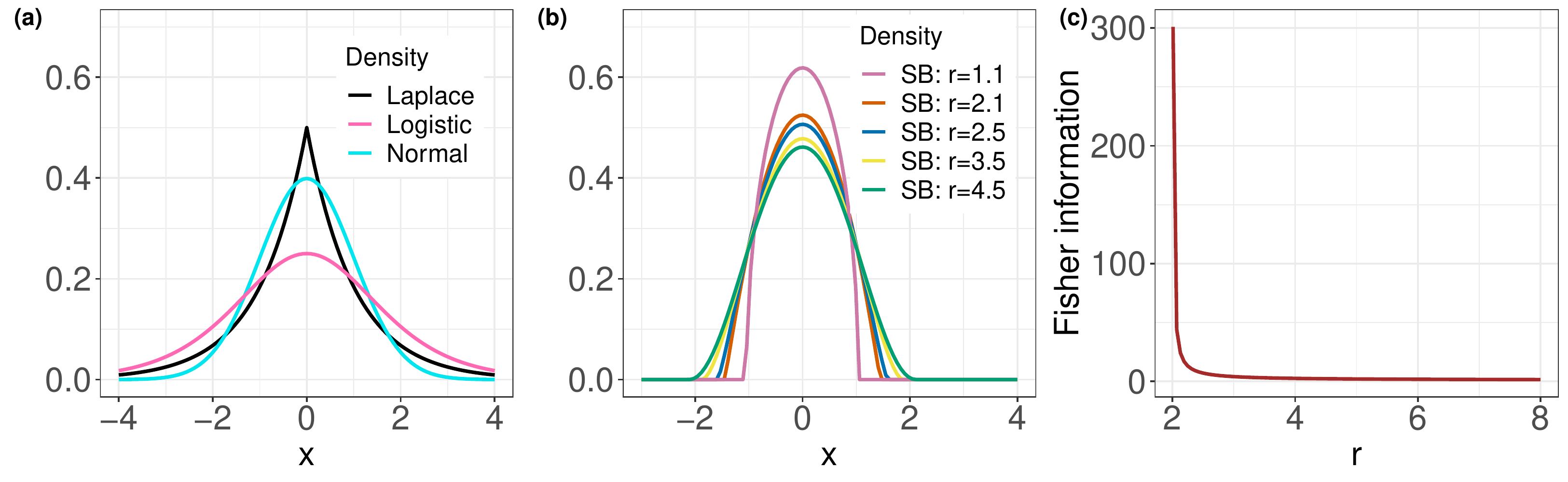}
\caption{
(a) Plot of the standard Laplace, standard normal and standard logistic densities.
(b) Plot of the
 symmetrized beta (SB) densities $f_{0,r}$,  defined in \eqref{definition: Symmetrized beta} for  different values of $r$.
 (c) Plot of Fisher information $\mathcal{I}_{f_{0,r}}$ versus $r$ where $f_{0,r}$ is the symmetrized beta density. }
\label{Figure: comparison of truncated information densities 1}
\end{figure}
\begin{figure}[h]
\includegraphics[width=\textwidth]{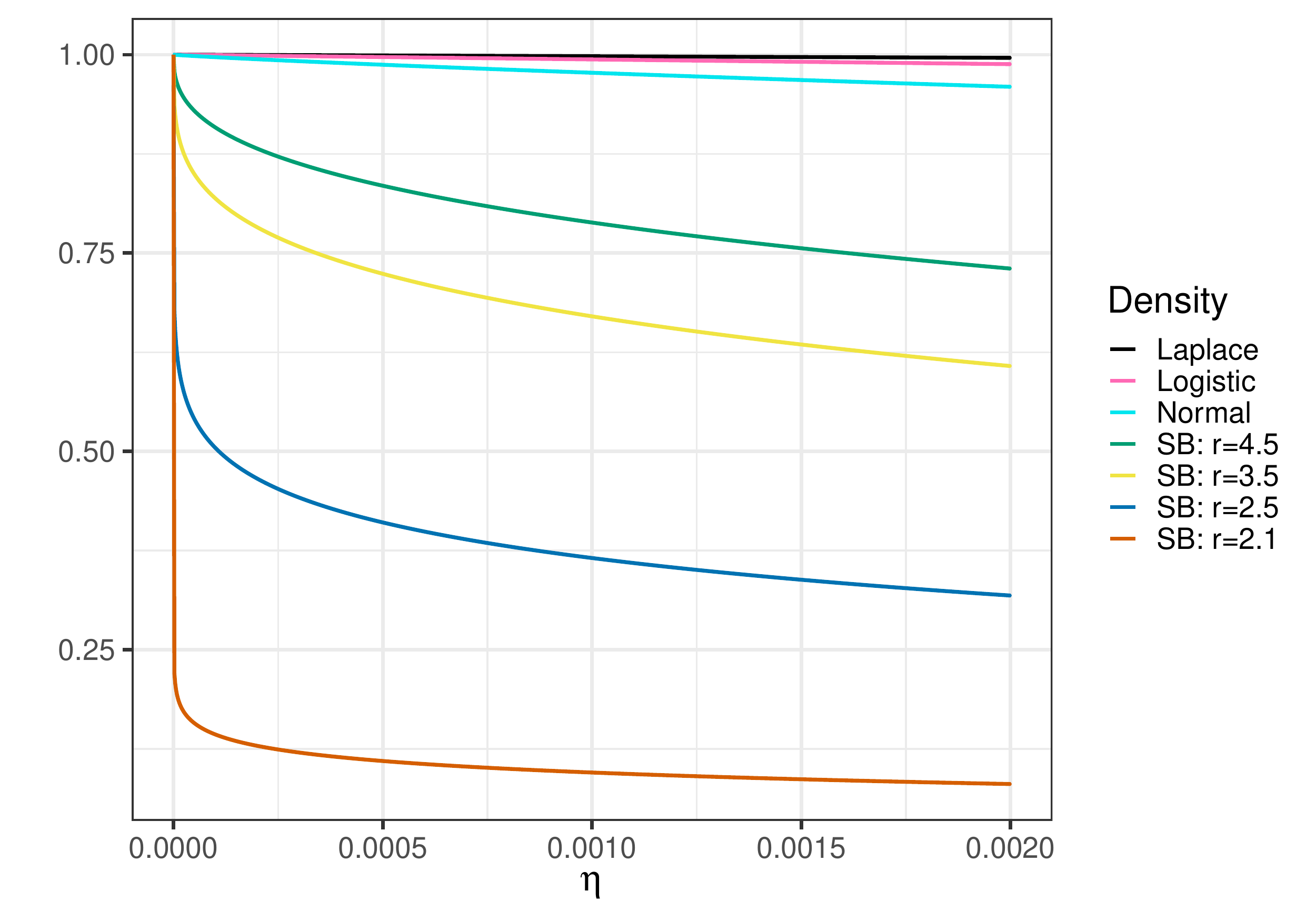}
\caption{Plot of $\I(\eta)/\I$ versus $\eta$ for different densities   }\label{Figure: comparison of truncated information}
\end{figure}


\subsubsection{Misspecified model: when $f_0$ may  not be log-concave}
\label{sec: asymptotic: one-step: misspecified}
The consistency properties of $\hth$ we have discussed so far rely on the log-concavity of $f_0$. However, even if $f_0$ is not log-concave,  the parameter $\th_0$ is still  well-defined as the location parameter as long as $f_0\in\mathcal{S}_{\th_0}$.  It is natural to ask then, in such a case, can \hth\ be still consistent? It turns out that the answer is affirmative under some additional conditions. In what follows, we address this question  in detail.

Let us assume that $f_0\in\mathcal{S}_{\th_0}$. First we will state the additional conditions which are required for the consistency of $\hth$ when $f_0\notin\mP_0$.
First of all, we  impose a restriction on $F_0$. We require $F_0$ to be non-degenerate and to  possess a finite first moment.
\begin{cond}{A}
$F_0$ is a non-degenerate distribution function satisfying 
\[\edint|x|dF_0(x)<\infty.\]
\end{cond}
We will see that this restriction, which also features in \cite{dumbreg}, \cite{smoothed}, and \cite{xuhigh},  is needed for the existence of an $L_1$-limit of  $\hn$. The latter is an important step in proving the consistency of  \hth. Note that, in the special case when $f_0\in\mP_0$,  condition A is automatically satisfied \citep[][Lemma~1] {theory}. We also require $f_0$ to be bounded above, which is needed to ensure the pointwise convergence of $\hn$ to a limit. Note that any member of $\mP_0$ is bounded by Lemma~1 of \cite{theory}.
 Thus, we restrict our attention to the  model
\begin{align}\label{definition: model P1}
\mathcal{P}_1=\bigg\{f\in\mathcal{P}\ \bl\ f\in\mathcal{S}_{\th}\text{ for some  }\th\in\RR,\ \sup_x f(x)<\infty, \int_{-\infty}^{\infty}|x|f(x)dx<\infty\bigg\}.
\end{align}
Note that $\mP_0\subsetneq\mP_1$ but  $\mathcal{P}_1\nsubseteq\mathcal{P}_s$, where the latter was considered by \cite{stone} and  \cite{beran}  (see \eqref{definition: the symmetric model Ps}) because $\mP_1$ allows densities with infinite Fisher information. 

The log-concave projection theorem developed in \cite{dumbreg}, \cite{xuhigh}, and \cite{smoothed} plays a key role in our analysis. 
 Therefore,  we will provide some background on it. For the time being, we assume that $F$ is any distribution function.
we denote by $\tilde{\ph}$ the maximizer of the criterion function 
\begin{equation}\label{definition: new criterion function }
\om(\ph,F)=\edint \ph(x)dF(x)-\edint e^{\ph(x)}dx
\end{equation}
subject to $\ph\in\mathcal{C}$, the class of concave functions defined in \eqref{def: class of concave functions}.  Theorem~$2.7$ of \cite{dumbreg} implies that when $F$ satisfies Condition A, a unique maximizer $\tilde{\ph}$ exists,  and  $\tilde{f}=e^{\tilde{\ph}}$ integrates to  one.  Also, in the case when $F$ has a density $f$,    $\tilde{f}$ can be interpreted as the minimizer of the Kullback-Leiber divergence 
 \[d_{KL}(f,h)=\edint\log\dfrac{f(x)}{h(x)}f(x)dx\]
over all $h\in\mathcal{LC}$. The  distribution function $\tilde{F}$ of $\tilde{f}$ is regarded as the log-concave projection of $F$ onto the space of all distributions with densities in $\mathcal{LC}$.

  Now for  $f_0\in\mP_1$ and $\om$ as defined in \eqref{definition: new criterion function },   we denote
  \begin{equation}\label{def: one-step: limits of misspecified model}
  \php=\argmax_{\phi\in \mathcal{C}}\om(\phi,F_0),\quad \text{ and  }\quad \psp=\php(\mathord{\cdot}+\th_0).
  \end{equation}
   By our earlier arguments, $e^{\php}$ yields a density; which we denote by $\p$, and write \pF\ for the corresponding distribution function.  We also set $\q=e^{\psp}$, and $\pG=\pF(\mathord{\cdot}+\th_0)$. 
  
  We are interested in \p\ because under Condition A, the unconstrained log-concave MLE $\hf$ converges almost surely to $\p$ in $L_1$. See Theorem~$2.15$ of \cite{dumbreg} for a proof.  Therefore, from \eqref{est 1} and \eqref{est 5}, it is not hard to guess that the limit of $\hn^{sym}$ and $\hn^{geo,sym}$ will be dependent on $\p.$ We will show that for $f_0\in\mP_1$, both these density estimators are strongly $L_1$ consistent for $\q$ when $f_0$ satisfies Condition A.

 Next we consider the maximization of the  criterion function $\om(\ph,F)$  in  $\ph$ over the constrained class $\mathcal{SC}_{\th}$ for some fixed $\th\in\RR$. 
 We denote 
 \[ \tph_{\th}=\argmax_{\ph\in\mathcal{SC}_{\th}}\om(\ph,F),\quad\text{ and }\quad\tps_{\th}=\tph_{\th}(\mathord{\cdot}+\th).\]  
 Observe that \eqref{criterion function: xu samworth} provides an alternative representation of $\tps_{\th}$:
 \[\tps_{\th}=\argmax_{\ps\in\mathcal{SC}_0}\Psi(\th,\ps,F).\]
    Proposition 2(iii) of \cite{xuhigh} entails that under Condition A, $\tph_{\th}$ exists, it is unique, and  its exponential  yields a density. Notice that when $F=\Fm$, the density
  $e^{\tps_{\bth}}$ equals our partial MLE estimator $\widehat{g}_{\bth}$, which is why we  study the constrained projection of $F$ as well.  
  
  For a general $F$ satisfying condition A,  $\tph_{\th}$ need not equal $\tilde{\ph}$ since $\tph_{\th}$ is symmetric while $\tilde{\ph}$ is not necessarily symmetric. However, there is some connection between these two. Our next lemma underscores this connection.
 
  \begin{lemma} \label{Lemma: projection theory for a distribution function F}
 Suppose $F$ satisfies condition A.
  Define
\begin{equation}\label{def: symmetrized F}
F^{sym}_{\th}(x)=2^{-1}\lb F(x)+1-F(2\th-x)\rb.
\end{equation}
Then it follows that
\[\argmax_{\ph\in\mathcal{SC}_{\th}}\om(\ph,F)=\argmax_{\ph\in\mathcal{C}}\om(\ph,F^{sym}_{\th}),\]
where  $\om$ is the criterion function defined in \eqref{definition: new criterion function }.
 \end{lemma}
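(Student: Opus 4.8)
The plan is to show that the two optimization problems have the same objective value for every candidate $\ph\in\mathcal{SC}_{\th}$, which forces the argmaxes to coincide (both exist and are unique under Condition~A by Proposition~2(iii) of \cite{xuhigh} applied to $F$ and to $F^{sym}_{\th}$, once we check $F^{sym}_{\th}$ also satisfies Condition~A). First I would record that $F^{sym}_{\th}$ is a genuine distribution function: it is nondecreasing, right-continuous, with the correct limits, because it is the average of $F(x)$ and the distribution function $x\mapsto 1-F(2\th-x)$ of the reflection of $X\sim F$ about $\th$. Moreover $\int|x|\,dF^{sym}_{\th}(x)<\infty$ since both $F$ and its reflection have finite first moment (the reflection's first moment is $2\th-\mathbb{E}_F X$), and $F^{sym}_{\th}$ is nondegenerate whenever $F$ is — if $F^{sym}_{\th}$ were degenerate at a point it would have to be degenerate at $\th$, forcing $F$ to be degenerate at $\th$ as well. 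So Condition~A transfers, and the right-hand argmax is well-defined, unique, symmetric about $\th$ (being the log-concave projection of a distribution symmetric about $\th$), and integrates to one.

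The core computation is the identity $\om(\ph,F)=\om(\ph,F^{sym}_{\th})$ for every $\ph\in\mathcal{SC}_{\th}$. The Lagrange term $\int e^{\ph(x)}dx$ does not involve $F$ at all, so only the linear term $\int\ph\,dF$ needs attention. Writing out the definition,
\begin{equation*}
\edint \ph(x)\,dF^{sym}_{\th}(x)=\frac{1}{2}\edint\ph(x)\,dF(x)+\frac{1}{2}\edint\ph(x)\,d\big(1-F(2\th-x)\big).
\end{equation*}
In the second integral substitute $y=2\th-x$; since $\ph$ is symmetric about $\th$ we have $\ph(x)=\ph(2\th-x)=\ph(y)$, and $d(1-F(2\th-x))$ becomes $dF(y)$ under this change of variable, so the second integral equals $\int\ph(y)\,dF(y)$. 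Hence $\int\ph\,dF^{sym}_{\th}=\int\ph\,dF$, and therefore $\om(\ph,F)=\om(\ph,F^{sym}_{\th})$ on all of $\mathcal{SC}_{\th}$.

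It remains to pass from equality of objectives on $\mathcal{SC}_{\th}$ to equality of the two argmaxes. The left-hand side is by definition the maximizer over $\mathcal{SC}_{\th}$; the right-hand side is a priori the maximizer over the larger class $\mathcal{C}$, so the nontrivial point is that $\argmax_{\ph\in\mathcal{C}}\om(\ph,F^{sym}_{\th})$ already lies in $\mathcal{SC}_{\th}$. This is exactly the statement that the log-concave projection of a distribution symmetric about $\th$ is itself symmetric about $\th$: if $\tilde\ph$ is the unique maximizer over $\mathcal{C}$, then $\tilde\ph(2\th-\cdot)$ is also concave, closed, proper, and achieves the same value $\om(\tilde\ph(2\th-\cdot),F^{sym}_{\th})=\om(\tilde\ph,F^{sym}_{\th})$ because $F^{sym}_{\th}$ is invariant under reflection about $\th$; uniqueness then forces $\tilde\ph(2\th-\cdot)=\tilde\ph$, i.e. $\tilde\ph\in\mathcal{SC}_{\th}$. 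Once we know the unconstrained maximizer is symmetric, restricting to $\mathcal{SC}_{\th}$ does not change it, and combining with the objective identity from the previous paragraph gives $\argmax_{\ph\in\mathcal{SC}_{\th}}\om(\ph,F)=\argmax_{\ph\in\mathcal{SC}_{\th}}\om(\ph,F^{sym}_{\th})=\argmax_{\ph\in\mathcal{C}}\om(\ph,F^{sym}_{\th})$, which is the claim.

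The main obstacle, such as it is, is bookkeeping rather than depth: one must be careful that the change of variables $y=2\th-x$ is applied to a Lebesgue–Stieltjes integral correctly (handling the orientation reversal and the ``$1-$'' so the minus signs cancel), and one must invoke the existence/uniqueness and symmetry of the log-concave projection from \cite{dumbreg} and \cite{xuhigh} only after verifying $F^{sym}_{\th}$ meets their hypotheses. No new estimates are needed.
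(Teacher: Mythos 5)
Your proof is correct and follows the same two-stage structure as the paper's: first establish the objective identity $\om(\ph,F)=\om(\ph,F^{sym}_{\th})$ on $\mathcal{SC}_{\th}$ by a change of variable, then show the unconstrained argmax of $\om(\cdot,F^{sym}_{\th})$ over $\mathcal{C}$ is already symmetric about $\th$, so that constraining to $\mathcal{SC}_{\th}$ does not change it. The only substantive difference is in the second stage: the paper invokes its auxiliary Lemma~\ref{lemma: symmetry of projection operator}, which shows via the convexity of the exponential that symmetrizing $\ph\mapsto\tfrac{1}{2}(\ph+\ph(2\th-\cdot))$ never decreases $\om(\cdot,F^{sym}_{\th})$, and then concludes by uniqueness; you instead observe directly that the reflection $\tilde\ph(2\th-\cdot)$ of the unconstrained maximizer achieves the same objective value and appeal to uniqueness to identify it with $\tilde\ph$. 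Both routes rest on the same uniqueness result from the log-concave projection theory, so they are close to interchangeable; your reflection argument is a touch shorter, while the paper's convexity argument has the mild advantage of isolating a reusable symmetrization inequality in a separate lemma. Your upfront verification that $F^{sym}_{\th}$ inherits Condition~A is a point the paper asserts without detail, and your justification (first moment of the reflection, and degeneracy forcing $F\{\th\}=1$) is correct.
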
 
   Lemma~\ref{Lemma: projection theory for a distribution function F} indicates that if $F$ has a density in $\mathcal{S}_{\th}$, we do have $\tph_{\th}=\tilde{\ph}$.   From the above, it follows that the projection of  $f_0\in\mP_1$   onto $\mathcal{SLC}_{\th_0}$ yields $\p$. 
    
    Now let us consider the case of the partial MLE estimator $\widehat{g}_{\bth}$.
It turns out that if additionally $\bth$ is  consistent, $\widehat{g}_{\bth}$  is an $L_1$ consistent estimator of  $\q$, where $\q$ denotes the centered density $\p(\mathord{\cdot}+\th_0)$.
 
 Indeed for $f_0\in\mP_1$ and $\bth\to_p\th_0$, all the estimators $\hn$ developed in Section \ref{sec: asymptotics: one step estimator} are $L_1$ consistent estimators of  $\q$, except the smoothed symmetrized estimator $\htsm$. Before describing the limit for this case, we will elaborate a little bit on why this difference occurs.

%
%
%

The different behavior of  $\htsm$   stems from the fact that the smoothing parameter $\widehat{b}_n$ defined in \eqref{definition bn} does not necessarily decrease to $0$ as $n$ increases unless $f_0$ is log-concave. Also, Condition A is not sufficient for $\lim_{n\to\infty}\widehat{b}_n$ to exist, for which,  the finiteness of the second central moment of $f_0$ is also required \citep{smoothed}. Provided
\begin{equation}\label{condition: second central moment}
\edint z^2f_0(z)dz<\infty,
\end{equation} 
It can be shown that
 \begin{equation}\label{limit of bn}
 \widehat{b}_n^2\as\tilde{b}^2=\edint z^2 (f_0(z)-\p(z))dz,
 \end{equation}
which is non-negative  for $f_0\in\mP_1$ \citep[cf. Remark 2.3 of][]{dumbreg}, and $0$ for $f_0\in\mP_0$.  Denote by $\qsm$   the convolution of $\q$ and a centered Gaussian density with variance $\tilde{b}^2$.  It turns out that $\hts$ and $\htsm$  are   $L_1$ consistent for $\qsm=\psm(\mathord{\cdot}-\th_0)$.


 Lemma~\ref{lemma: L1 convergence: one step density estimators}, which is a generalization of Lemma~\ref{lemma: L1 convergence: one step density estimators: model},  summarizes the above discussion.   %
 \begin{lemma}\label{lemma: L1 convergence: one step density estimators}
Suppose $f_0\in\mathcal{P}_1$, which was defined in \eqref{definition: model P1}. Let $\hn$ be one of the estimators of $g_0$ defined in Section \ref{sec: one-step}, and suppose that the preliminary estimator  $\bth$ is a  consistent estimator of $\th_0$. Let $\q=e^{\psp}$, where $\psp$ was defined in \eqref{def: one-step: limits of misspecified model}. Then  for $\hn=\hn^{sym}$, $\widehat{g}_{\bth}$, and $\hf^{geo,sym}$, it follows that
 \begin{equation}\label{L1 convergence of one step estimators}
 ||\hn-\q||_1\to_p 0.
 \end{equation}
Suppose $f_0$ additionally satisfies \eqref{condition: second central moment}. Then   we also have
 \[||\htsm-\qsm||_1\to_p 0,\]
 where $\qsm$ is the convolution of $\q$ and a centered Gaussian density with variance $\tilde{b}^2$ defined in \eqref{definition bn}.
 If $\bth$ is a strongly consistent estimator of $\th_0$, then ``$\to_p$" can be replaced by ``$\as$" in the above displays.
  \end{lemma}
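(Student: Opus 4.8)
The plan is to reduce everything to the known $L_1$-convergence of the unconstrained log-concave MLE $\hf$ and of the partial MLE $\widehat{g}_{\bth}$, and then handle each of the four estimators separately by exploiting the continuity of the relevant operations (symmetrization, geometric-mean symmetrization, convolution) in $L_1$. First I would recall the two facts that do the heavy lifting: (i) under Condition~A, Theorem~2.15 of \cite{dumbreg} gives $\|\hf-\p\|_1\as 0$, where $\p=e^{\php}$; and (ii) by Lemma~\ref{Lemma: projection theory for a distribution function F} together with Proposition~2(iii) of \cite{xuhigh}, the symmetric constrained projection of $f_0\in\mP_1$ onto $\mathcal{SLC}_{\th_0}$ is exactly $\p$ (equivalently $\q=\p(\cdot+\th_0)$ is the projection of the centered data). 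Since the partial MLE $\widehat{g}_{\bth}=e^{\tps_{\bth}}$ is the empirical version of this constrained projection evaluated at the random centering $\bth$, its $L_1$-consistency for $\q$ follows from a continuity/stability argument for the map $(\th,F)\mapsto e^{\tps_\th}$ at $(\th_0,F_0)$ — combining consistency of $\bth$ with an empirical-process/Glivenko–Cantelli control of $\Fm\Rightarrow F_0$, exactly as in the correctly-specified Lemma~\ref{lemma: L1 convergence: one step density estimators: model}; the only new ingredient is that the limit is now $\q$ rather than $g_0$, which Lemma~\ref{Lemma: projection theory for a distribution function F} supplies.

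Next I would treat $\hn^{sym}$. Write $\hn^{sym}(z)=\tfrac12(\hf(\bth+z)+\hf(\bth-z))$ and compare it to $\tfrac12(\p(\th_0+z)+\p(\th_0-z))=\q(z)$ (using symmetry of $\p$ about $\th_0$, which holds since $\p$ is the symmetric projection). By the triangle inequality $\|\hn^{sym}-\q\|_1$ is bounded by $\tfrac12$ times $\|\hf(\bth+\cdot)-\p(\th_0+\cdot)\|_1 + \|\hf(\bth-\cdot)-\p(\th_0-\cdot)\|_1$; each term is further split into $\|\hf(\bth+\cdot)-\p(\bth+\cdot)\|_1=\|\hf-\p\|_1\to 0$ plus a translation term $\|\p(\bth+\cdot)-\p(\th_0+\cdot)\|_1$, which tends to $0$ because translation is continuous in $L_1$ and $\bth\to\th_0$. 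For $\hn^{geo,sym}=C_n^{geo}(\hf(\bth+\cdot)\hf(\bth-\cdot))^{1/2}$ I would argue that pointwise/almost-everywhere convergence of $\hf(\bth\pm\cdot)$ to $\p(\th_0\pm\cdot)$ (which follows from $L_1$-convergence along a subsequence, or from the uniform convergence of log-concave densities on compacta away from support boundaries, cf.\ Theorem~\ref{Theorem: the L1 convergence of the density estimators of one-step estimators: model}-type arguments in the misspecified regime) gives a.e.\ convergence of the unnormalized geometric mean to $(\p(\th_0+\cdot)\p(\th_0-\cdot))^{1/2}=\q$; then the normalizing constants $C_n^{geo}$ converge to $1$ by Scheffé's lemma / Fatou, and a second application of Scheffé upgrades a.e.\ convergence of densities to $L_1$-convergence.

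Finally, for $\htsm$, under the extra assumption \eqref{condition: second central moment} I would invoke \cite{smoothed} to get $\widehat{b}_n^2\as\tilde b^2=\int z^2(f_0-\p)$, and then write $\htsm$ as the convolution of $\hn^{sym}$ with the Gaussian kernel $\gd(\cdot/\widehat b_n)/\widehat b_n$. The claim $\|\htsm-\qsm\|_1\to 0$ then follows by bounding $\|\htsm-\qsm\|_1$ by $\|(\hn^{sym}-\q)\ast \phi_{\widehat b_n}\|_1 + \|\q\ast\phi_{\widehat b_n}-\q\ast\phi_{\tilde b}\|_1$, where $\phi_b$ denotes the centered Gaussian density with variance $b^2$; the first term is $\le\|\hn^{sym}-\q\|_1\to 0$ by Young's inequality (convolution with a probability density is an $L_1$-contraction), and the second term $\to 0$ by continuity of $b\mapsto\phi_b$ in $L_1$ on $(0,\infty)$ together with $\widehat b_n\to\tilde b$ — with a separate, easy sub-argument when $\tilde b=0$ (i.e.\ $f_0\in\mP_0$), where $\qsm=\q$ and one uses that $\q\ast\phi_{b}\to\q$ in $L_1$ as $b\downarrow 0$. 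In every case the strong-consistency upgrade of $\bth$ simply replaces the in-probability translation/stability bounds by almost-sure ones, so ``$\to_p$'' becomes ``$\as$''. The main obstacle I anticipate is the stability of the partial MLE $\widehat{g}_{\bth}$ in the misspecified setting — i.e.\ making rigorous that $e^{\tps_{\bth}}\to_{L_1} e^{\tps_{\th_0}}=\q$ uniformly over the randomness in both $\bth$ and $\Fm$; this requires a joint continuity statement for the constrained log-concave projection in $(\th,F)$ with respect to (say) weak convergence plus first-moment convergence of $F$, which is the one place where one genuinely needs the projection machinery of \cite{xuhigh} and \cite{dumbreg} rather than soft $L_1$-analysis.
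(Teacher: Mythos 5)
Your proposal tracks the paper's proof almost exactly: the same reduction to a.s.\ consistency of $\bth$ (via the subsequence argument), the same four-term triangle-inequality decomposition for $\hn^{sym}$ with translation terms handled by continuity of translation in $L_1$ (the paper invokes Glick's theorem, which is the same idea), the same Scheff\'e/pointwise argument for $\hn^{geo,sym}$, and the same appeal to the projection stability machinery of \cite{xuhigh} (Proposition~6 plus Wasserstein/Glivenko--Cantelli control of $\Fm(\cdot+\bth)\to G_0$) for $\widehat{g}_{\bth}$.

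The one place you diverge is the smoothed estimator $\htsm$. The paper uses the representation \eqref{representation of htsm} of $\htsm$ in terms of the smoothed unconstrained MLE $\hf^{sm}$ and then invokes Theorem~1 of \cite{smoothed} (i.e.\ $\|\hf^{sm}-\p^{sm}\|_1\as 0$ under \eqref{condition: second central moment}) together with another four-term translation decomposition. You instead write $\htsm=\hn^{sym}\ast\phi_{\widehat b_n}$ directly from \eqref{25eq3} and split $\|\htsm-\qsm\|_1\le\|(\hn^{sym}-\q)\ast\phi_{\widehat b_n}\|_1+\|\q\ast(\phi_{\widehat b_n}-\phi_{\tilde b})\|_1$, controlling the first term by Young's inequality via the already-proved consistency of $\hn^{sym}$, and the second by $L_1$-continuity of the Gaussian scale family plus $\widehat b_n\to\tilde b$. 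This is a valid and arguably tidier route: you only need \cite{smoothed} for $\widehat b_n^2\as\tilde b^2$, not for the full $L_1$ theorem about $\hf^{sm}$. Both arguments require the second-moment condition \eqref{condition: second central moment} (for the existence of $\tilde b$), so the hypotheses are unchanged; you should just be explicit about the subcase $\tilde b=0$ as you indicated, and about the subsequence reduction from strong consistency of $\bth$ to mere consistency, which the paper carries out at the start of the proof and which your last sentence only gestures at in the opposite direction.
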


 We also have an analog of Theorem~\ref{Theorem: the L1 convergence of the density estimators of one-step estimators: model} for  $f_0\in\mathcal{P}_1$. (See Theorem~\ref{Theorem: the L1 convergence of the density estimators of one-step estimators} in Section \ref{sec:proofs:one-step}.)



Now we are in a position to address the  convergence of $\hin(\eta)$ defined in \eqref{27indef}.  It turns out that, unlike the case for $f_0\in\mP_0$, $\hin(\eta)$ is no longer a consistent  estimator of $\I(\eta)$ because the asymptotic limit of the former now  depends on  $\php'$.  Let us denote $\xi_0=\pG^{-1}(\eta).$ We find that, for $\hn\neq\htsm$, $\hin(\eta)$ consistently estimates $\Ig(\eta)$,  where
 \begin{equation}\label{definition: Ig(eta)}
\Ig(\eta)=\dint_{\pF^{-1}(\eta)}^{\pF^{-1}(1-\eta)}\php'(x)^2  f_0(x)dx=\dint_{-\xi_0}^{\xi_0}\psp'(x)^2  g_0(z)dz.
 \end{equation}
 $\Ig(\eta)$ can be thought of as a truncated Fisher information under model miss-specification. When $\psp=\ps_0$, $\Ig(\eta)$ equals $\I(\eta)$.
 
Now suppose $\hn=\htsm$. In this case, Lemma \ref{lemma: L1 convergence: one step density estimators}  says that $\hn$ consistently estimates $\qsm$ in $L_1$ metric. Letting $\psm=\qsm(\cdot-\bth)$, we denote the distribution function corresponding to $\qsm$ and $\psm$ by $\pGsm$ and $\pFsm$, respectively. We also denote the corresponding  log-densities $\log \qsm$ and $\log\psm$ by $\pspm$ and $\phpm$.  It follows that  $\hin(\eta)$ converges in probability  to
 \begin{align}\label{definition: Ig(eta): smoothed}
\Ig^{sm}(\eta)\ =&\ \dint_{(\pFsm)^{-1}(\eta)}^{(\pFsm)^{-1}(1-\eta)}(\phpm)'(x)^2  f_0(x)dx\nonumber\\
=&\ \dint_{-\xi^{sm}_0}^{\xi^{sm}_0}(\pspm)'(z)^2  g_0(z)dz,
 \end{align}
 where $\xi^{sm}_0$ is the $(1-\eta)$-th quantile of $\pG^{sm}.$ 
%
  
 Even though the limits of  $\hn$ and $\hin(\eta)$ deviate from $g_0$ and $\I(\eta)$ in the misspecified case, it turns out that $\hth$ is still consistent for $\th_0$, provided  $f_0\in\mP_1$. This establishes that $\hth$ is   robust to the violation of log-concavity of $f_0$ as long as $f_0\in\mP_1$. Our next lemma formalizes the above discussion.

\begin{lemma}\label{lemma: consistency of Fisher information}
Under the conditions of Lemma~\ref{lemma: L1 convergence: one step density estimators},  for $\hn=\hn^{sym}$, $\widehat{g}_{\bth}$, and $\hn^{geo,sym}$, we have $\hin(\eta)\to_p\Ig(\eta)$, which was defined in \eqref{definition: Ig(eta)}. Also, $\hthm\to_p\th_0$ in these cases.

  When $\hn=\htsm$, if additionally \eqref{condition: second central moment} holds,  we  have $\hin(\eta)\to_p\Ig^{sm}(\eta)$ $($defined in \eqref{definition: Ig(eta): smoothed}$)$, and $\hthm\to_p\th_0$. When $\bth$ is a strongly consistent estimator of $\th_0$, all the above convergences hold almost surely.
\end{lemma}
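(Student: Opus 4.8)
\emph{Proof strategy.} The plan is to reduce the statement to the centred picture and establish three ingredients that combine to give both convergences. After the substitution $z=x-\bth$, let $\mathbb{G}_n(z):=\Fm(z+\bth)$ be the empirical distribution function of $X_1-\bth,\dots,X_n-\bth$; then $\hin(\eta)=\int_{-\xin}^{\xin}(\hln'(z))^2\,d\mathbb{G}_n(z)$ and $\hth=\bth-N_n/\hin(\eta)$ with $N_n:=\int_{-\xin}^{\xin}\hln'(z)\,d\mathbb{G}_n(z)$. I will show $\hin(\eta)\to_p\Ig(\eta)$ and $N_n\to_p\int_{-\xi_0}^{\xi_0}\psp'(z)g_0(z)\,dz$, where $\xi_0$ is the $(1-\eta)$-th quantile of $\pG$. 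The latter limit is $0$: by Lemma~\ref{Lemma: projection theory for a distribution function F} and the discussion after it, $\q=e^{\psp}$ is the centred log-concave projection of $f_0$ onto $\mathcal{SLC}_{\th_0}$, hence symmetric about $0$, so $\psp'$ is odd and, $g_0$ being even, the integrand is odd. Together with $\bth\to_p\th_0$ and $\Ig(\eta)>0$ this yields $\hth\to_p\th_0$. The estimator $\htsm$ is treated identically after replacing $(\q,\psp,\xi_0,\Ig(\eta))$ by $(\qsm,\pspm,\xi^{sm}_0,\Ig^{sm}(\eta))$, the extra hypothesis \eqref{condition: second central moment} being exactly what Lemma~\ref{lemma: L1 convergence: one step density estimators} requires in that case; and since every input below holds almost surely when $\bth$ is strongly consistent, all conclusions hold almost surely in that case.

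The three ingredients are as follows. \textbf{(i) Quantile convergence.} Since $\eta\in(0,1/2)$ and $\q$ is log-concave, $\xi_0$ lies in the interior of $\supp(\q)$, where $\q>0$ and $\pG$ is continuous and strictly increasing; $L_1$-consistency of $\hn$ for $\q$ (Lemma~\ref{lemma: L1 convergence: one step density estimators}) gives $\hH\to_d\pG$, hence $\xin\to_p\xi_0$. \textbf{(ii) Control of the scores near the truncation boundary.} Fix $\xi_1$ with $\xi_0<\xi_1<\sup(\supp(\q))$, so that $\xin<\xi_1$ eventually; the misspecified analogue of Theorem~\ref{Theorem: the L1 convergence of the density estimators of one-step estimators: model} (namely Theorem~\ref{Theorem: the L1 convergence of the density estimators of one-step estimators}) gives $\hln\to\psp$ uniformly on the compact set $[-\xi_1,\xi_1]\subset\iint(\dom(\psp))$ and $\hln'(z)\to\psp'(z)$ at every continuity point of $\psp'$; uniform convergence of concave functions then makes the $\hln'$ eventually uniformly bounded, by some $M<\infty$, on a fixed compact interval containing $[-\xi_0,\xi_0]$ in its interior and contained in $(-\xi_1,\xi_1)$. \textbf{(iii) Convergence of the centred empirical d.f.} With $\widetilde{\mathbb{G}}_n(z):=\Fm(z+\th_0)$, the empirical d.f.\ of the i.i.d.\ sample $X_i-\th_0\sim G_0$, Glivenko--Cantelli gives $\|\widetilde{\mathbb{G}}_n-G_0\|_\infty\to0$, while $\|\mathbb{G}_n-\widetilde{\mathbb{G}}_n\|_\infty\le 2\|\Fm-F_0\|_\infty+(\sup_x f_0(x))\,|\bth-\th_0|\to0$ since $F_0$ is Lipschitz ($\sup_x f_0<\infty$ in $\mP_1$) and $\bth\to\th_0$; hence $\|\mathbb{G}_n-G_0\|_\infty\to_p0$.

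With these in hand, write $\hin(\eta)-\Ig(\eta)=I_n+II_n$, where $h_n:=(\hln')^2\mathbf{1}_{[-\xin,\xin]}$, $I_n=\int h_n\,d(\mathbb{G}_n-G_0)$ and $II_n=\int\big(h_n-(\psp')^2\mathbf{1}_{[-\xi_0,\xi_0]}\big)g_0\,dz$. For $II_n$: by (i) and (ii) the integrand tends to $0$ at a.e.\ $z$ (outside the two points $\pm\xi_0$ and the countable discontinuity set of $\psp'$) and is eventually dominated by $M^2\mathbf{1}_{[-\xi_1,\xi_1]}\in L^1(g_0\,dz)$, so $II_n\to_p0$ by dominated convergence. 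For $I_n$: by concavity $\hln'$ is monotone on $[-\xi_1,\xi_1]$, so $h_n$ is eventually supported in $[-\xi_1,\xi_1]$, bounded by $M^2$, and of total variation at most $4M^2$ (the square of a monotone function, plus two jumps at $\pm\xin$); integrating by parts, $|I_n|\le\|\mathbb{G}_n-G_0\|_\infty\cdot\mathrm{TV}(h_n)\le 4M^2\|\mathbb{G}_n-G_0\|_\infty\to_p0$. Hence $\hin(\eta)\to_p\Ig(\eta)$. The same splitting applied to $N_n$ (with $\hln'$ in place of $(\hln')^2$, whose total variation on $[-\xi_1,\xi_1]$ is likewise bounded by $2M$ via monotonicity) gives $N_n\to_p\int_{-\xi_0}^{\xi_0}\psp'g_0\,dz=0$, and the conclusion for $\hth$ follows as in the first paragraph.

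I expect ingredient (ii) to be the main obstacle: one must prevent the truncation quantile $\xin$ from drifting to the boundary of $\supp(\q)$ and $\hln'$ from blowing up on $[-\xin,\xin]$, which is exactly why $\eta$ is kept fixed in $(0,1/2)$ and why the uniform-consistency theorem for the misspecified model is needed. The remaining items are routine: upgrading ``$\to_p$'' to almost-sure convergence when $\bth$ is strongly consistent (every cited input then holds almost surely), and the $\htsm$ variant (using that $\qsm$, a Gaussian convolution, is real-analytic, so $(\pspm)'$ is continuous everywhere and $\Ig^{sm}(\eta)>0$ automatically). A minor caveat: the final step $\hth\to_p\th_0$ uses $\Ig(\eta)>0$; for $\hn\neq\htsm$ this is the mild non-degeneracy that $\psp'$ does not vanish a.e.\ on $[-\xi_0,\xi_0]$, which fails only in degenerate cases such as $f_0$ uniform.
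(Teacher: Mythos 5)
Your decomposition of $\hin(\eta)-\Ig(\eta)$ into $I_n+II_n$ is essentially equivalent to the paper's (the paper works with $\Fm - F_0$, you change variables to $\mathbb{G}_n - G_0$), and the treatment of $II_n$ by dominated convergence after establishing quantile convergence, local uniform boundedness of $\hln'$, and pointwise convergence of $\hln'$ is identical in spirit. The genuine difference is in how $I_n\to 0$ is obtained: the paper shows $(\hln')^2(\cdot-\bth)\mathbf{1}_{[\bth-\xin,\bth+\xin]}$ lies in a fixed class $\mathcal{F}_\eta$ of squared non-increasing functions, bounds its bracketing entropy using Theorem~2.7.5 and Example~2.5.4 of \cite{wc}, and invokes the class Glivenko--Cantelli theorem; you instead integrate by parts to get $|I_n|\le\mathrm{TV}(h_n)\|\mathbb{G}_n-G_0\|_\infty$, bound $\mathrm{TV}(h_n)$ by a constant, and use classical Glivenko--Cantelli plus Lipschitz-ness of $F_0$ to push $\|\mathbb{G}_n-G_0\|_\infty\to 0$. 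Your route is more elementary and self-contained; it avoids bracketing-entropy machinery entirely, at the cost of needing the explicit TV bound. The two arguments are formally equivalent (a class of uniformly bounded functions with uniformly bounded TV is a universal Glivenko--Cantelli class), so either one works.

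One caveat, which you should be aware of, is that the step ``by concavity $\hln'$ is monotone'' holds only for $\hn=\widehat{g}_{\bth}$, $\hn^{geo,sym}$, and $\hf(\bth\pm\cdot)$; for $\hn^{sym}$ (and $\htsm$) the log-density is \emph{not} concave, so $\hln'$ need not be monotone and your TV bound does not follow directly. The paper has the same exposure: its class $\mathcal{F}_\eta$ is also built from non-increasing $u$, yet the lemma is stated to include $\hn^{sym}$. In either proof the gap can be closed by appealing to the representations \eqref{definition of tpz by tpc} and \eqref{definition of tp when we have htsm}, which express $\hln'$ as an $x$-dependent convex combination of two monotone bounded score functions, and then showing that the mixing weight $\varrho_n$ itself has total variation bounded uniformly on compacts where $\hn^{sym}$ (resp.\ $\htsm$) is bounded below; this gives $\mathrm{TV}(h_n)\lesssim M^2$ and your argument goes through. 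You should spell this out rather than asserting concavity of $\hln$ in those cases.
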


Theorem~\ref{theorem: main: one-step: model} established the $\sqn$-consistency of $\hth$ under the $\sqn$-consistency of $\bth$ when $f_0\in\mP_0$. Likewise, Theorem~\ref{theorem: main: one-step} provides an asymptotic representation of $\sqn(\hth-\th_0)$  for $f_0\in\mP_1$.

\begin{theorem}\label{theorem: main: one-step}
Suppose $f_0\in\mP_1$ is absolutely continuous and
 $\bth$ is a $\sqrt{n}$-consistent estimator of $\th_0$. Let $\hth$ be the  truncated one-step estimator defined in \eqref{def: one-step estimator: truncated} with truncation parameter $\eta\in(0,1/2)$.    Suppose $\hn=\hn^{sym}$, $\widehat{g}_{\bth}$ or $\hn^{geo,sym}$. 
Then  under the conditions of Theorem~\ref{Theorem: the L1 convergence of the density estimators of one-step estimators}, the following assertion holds:
\begin{align}\label{statement: one-step: main theorem: misspecified}
\sqn(\hth-\th_0)=\int_{\th_0-\xi_0}^{\th_0+\xi_0}\dfrac{\psp'(x-\th_0)}{\Ig(\eta)}d\mathbb{Z}_n(x)+\sqn(1-\gamma_{\eta})(\bth-\th_0)+o_p(1),\nonumber\\
\end{align}
where $\mathbb{Z}_n=\sqn(\Fm-F_0)$, 
\begin{equation}\label{definition of gamma}
\gamma_{\eta}= 1-\dfrac{\dint_{-\xi_0}^{\xi_0}\psp'(z)\lb\psp'(z)-\ps_0'(z)\rb g_0(z)dz}{\Ig(\eta)},
\end{equation}
 $\xi_0=\pG^{-1}(1-\eta)$, and  $\Ig(\eta)$ is as  defined in \eqref{definition: Ig(eta)}.
When $\hn=\htsm$,  we have
\begin{align}\label{statement: one-step: main theorem: misspecified: smoothef}
\sqn(\hth-\th_0)=\int_{\th_0-\xi_0}^{\th_0+\xi_0}\dfrac{(\pspm)'(x-\th_0)}{\Ig^{sm}(\eta)}d\mathbb{Z}_n(x)\nonumber+\sqn(1-\gamma^{sm}_{\eta})(\bth-\th_0)+o_p(1),
\\
\end{align}
where
 \begin{equation}\label{definition of gammasm}
\gamma^{sm}_{\eta}= 1-\dfrac{\dint_{-\xi^{sm}_0}^{\xi^{sm}_0}(\pspm)'(z)\lb(\pspm)'(z)-\ps_0'(z)\rb g_0(z)dz}{\Ig^{sm}(\eta)},
\end{equation}
 $\xi_0^{sm}$ is the $(1-\eta)$-th quantile of $\pG^{sm}$, and $\Ig^{sm}(\eta)$ is as defined in \eqref{definition: Ig(eta): smoothed}.
\end{theorem}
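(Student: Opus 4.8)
The plan is to run the standard one-step linearization and control the two natural pieces by an empirical-process argument and a Taylor expansion. Writing $\delta_n=\bth-\th_0$, from \eqref{def: one-step estimator: truncated} we have
\[
\sqn(\hth-\th_0)=\sqn\delta_n-\frac{\sqn T_n}{\hin(\eta)},\qquad T_n:=\int_{\bth-\xin}^{\bth+\xin}\hln'(x-\bth)\,d\Fm(x),
\]
and I would split $\sqn T_n=A_n+B_n$ through $d\Fm=n^{-1/2}\,d\Zn+dF_0$, with $A_n=\int_{\bth-\xin}^{\bth+\xin}\hln'(x-\bth)\,d\Zn(x)$ the empirical-process part and $B_n=\sqn\int_{\bth-\xin}^{\bth+\xin}\hln'(x-\bth)f_0(x)\,dx$ the bias part. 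Off the shelf I would take: $\hin(\eta)\to_p\Ig(\eta)>0$ and $\xin\to_p\xi_0$ (the latter from the $L_1$-convergence $\hn\to_p\q$ of Lemma~\ref{lemma: L1 convergence: one step density estimators} and continuity of $\pG^{-1}$ at $1-\eta$, since $\eta\in(0,1/2)$), via Lemma~\ref{lemma: consistency of Fisher information}; the convergence $\hln'(\cdot+y_n)\to_p\psp'(\cdot)$ at every continuity point of $\psp'$ for any $y_n\to_p 0$, together with eventual uniform boundedness of $\hln'$ on a fixed compact neighbourhood of $[-\xi_0,\xi_0]\subset\iint(\dom(\psp))$, both from Theorem~\ref{Theorem: the L1 convergence of the density estimators of one-step estimators}; and $\sqn\delta_n=O_p(1)$ by hypothesis. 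Once $\sqn T_n=O_p(1)$ with the claimed expansion is in hand, the substitution $1/\hin(\eta)=1/\Ig(\eta)+o_p(1)$ closes the argument.

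For $A_n$ I would invoke asymptotic equicontinuity. With probability tending to one, both $\hln'(\cdot-\bth)1_{[\bth-\xin,\bth+\xin]}$ and $\psp'(\cdot-\th_0)1_{[\th_0-\xi_0,\th_0+\xi_0]}$ are supported in a fixed compact set on which they are bounded by a fixed constant, and the former converges to the latter pointwise a.e.\ (off the countable discontinuity set of $\psp'$ and the interval endpoints), hence in $L_2(P_0)$ in probability. The class $\{x\mapsto\psi'(x-t)1_{[t-s,t+s]}(x)\}$, with $(\psi,t,s)$ ranging over a neighbourhood of $(\psp,\th_0,\xi_0)$, consists of uniformly bounded functions of uniformly bounded variation and is therefore $P_0$-Donsker, so asymptotic equicontinuity yields $A_n=\int_{\th_0-\xi_0}^{\th_0+\xi_0}\psp'(x-\th_0)\,d\Zn(x)+o_p(1)$.

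For $B_n$, substitute $v=x-\bth$ to get $B_n=\sqn\int_{-\xin}^{\xin}\hln'(v)g_0(v+\delta_n)\,dv$. The crucial structural fact is that every estimator in the list ($\hn^{sym}$, $\widehat{g}_{\bth}$, $\hn^{geo,sym}$, and likewise $\htsm$) is symmetric about $0$, so $\hln'$ is odd and hence $\int_{-\xin}^{\xin}\hln'(v)g_0(v)\,dv=0$ \emph{identically}. Since $f_0$, and thus $g_0$, is absolutely continuous, a first-order expansion of $g_0(v+\delta_n)$ about $\delta_n=0$, with the remainder absorbed using $L_1$-continuity of translation of $g_0'$ on the compact truncation region and the boundedness of $\hln'$, gives
\[
B_n=\sqn\delta_n\int_{-\xin}^{\xin}\hln'(v)g_0'(v)\,dv+o_p(1)=\sqn\delta_n\int_{-\xi_0}^{\xi_0}\psp'(v)\ps_0'(v)g_0(v)\,dv+o_p(1),
\]
using $g_0'=\ps_0'g_0$, $\hln'\to_p\psp'$, and $\sqn\delta_n=O_p(1)$. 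Rearranging the definition \eqref{definition of gamma} of $\gamma_\eta$ together with \eqref{definition: Ig(eta)} yields the identity $\int_{-\xi_0}^{\xi_0}\psp'(v)\ps_0'(v)g_0(v)\,dv=\gamma_\eta\,\Ig(\eta)$, so $B_n=\sqn\delta_n\,\gamma_\eta\,\Ig(\eta)+o_p(1)$.

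Collecting, $\sqn T_n=A_n+B_n=O_p(1)$, and inserting this into the linearization and dividing by $\Ig(\eta)$ produces \eqref{statement: one-step: main theorem: misspecified} after a routine rearrangement. For $\hn=\htsm$ the identical argument goes through with $\psp,\xi_0,\Ig(\eta),\gamma_\eta$ replaced throughout by $\pspm,\xi_0^{sm},\Ig^{sm}(\eta),\gamma^{sm}_\eta$; here one first invokes \eqref{condition: second central moment} so that $\widehat{b}_n$ converges to $\tilde{b}$, giving $\htsm\to_p\qsm$ in $L_1$ and $\hin(\eta)\to_p\Ig^{sm}(\eta)$ via Lemmas~\ref{lemma: L1 convergence: one step density estimators} and \ref{lemma: consistency of Fisher information}, and \eqref{statement: one-step: main theorem: misspecified: smoothef} then follows. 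I expect the main obstacle to be the equicontinuity step for $A_n$: one must simultaneously handle the randomness of the integrand $\hln'$ and of the truncation endpoints $\bth\pm\xin$, and in particular rule out the tail blow-up of $\hln'$ recorded in Corollary~\ref{lemma: divergence} — which is precisely why the truncation is built into $\hth$, and why the uniform-on-compacts control from Theorem~\ref{Theorem: the L1 convergence of the density estimators of one-step estimators} must be secured before the Donsker machinery can be deployed.
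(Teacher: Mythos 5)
Your decomposition $\sqn T_n = A_n + B_n$ via $d\Fm = n^{-1/2}\,d\Zn + dF_0$ is precisely the paper's five-term split $T_{1n},\dots,T_{5n}$: $T_{1n}+T_{5n}$ is your $A_n$ (the paper peels off $T_{1n}$, the $\Zn$-integral of $\hln'(\cdot-\bth)-\psp'(\cdot-\th_0)$, and proves it $o_p(1)$ by showing the integrand's $L_2(P_0)$-norm vanishes inside a bracketing-entropy-controlled class — exactly your asymptotic-equicontinuity step), and $T_{2n}+T_{3n}+T_{4n}$ is your $B_n$ (with $T_{3n}=T_{4n}=0$ by the same exact-oddness observation you make, and $T_{2n}$ handled by a Fubini/dominated-convergence argument equivalent to your first-order expansion of $g_0(\cdot+\delta_n)$ plus the identity $\int_{-\xi_0}^{\xi_0}\psp'\psi_0'g_0=\gamma_\eta\Ig(\eta)$). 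So the proposal is correct and essentially the paper's proof. Two details you flag but leave thin: the eventual uniform bound on $\hln'$ over the random truncation interval is the content of the paper's Lemma~\ref{lemma: boundedness of tp}, not a consequence of Theorem~\ref{Theorem: the L1 convergence of the density estimators of one-step estimators} alone (which gives only pointwise convergence at continuity points of $\psp'$); and for $\hn^{sym}$ and $\htsm$, $\hln'$ is not monotone, so placing $\hln'(\cdot-\bth)1_{[\bth-\xin,\bth+\xin]}$ in a Donsker class of bounded monotone-times-indicator functions (which is what the paper's entropy bound, and your BV remark, actually exploit) requires routing through the mixture representation \eqref{definition of tpz by tpc}.
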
 

Theorem~\ref{theorem: main: one-step} shows that if $\bth$ is an asymptotically linear estimator of $\th_0$, then so is $\hth$. Moreover, the central limit theorem, \eqref{definition: Ig(eta)}, and \eqref{definition: Ig(eta): smoothed} entail that the first terms in the  expansion of $\sqn(\hth-\th_0)$ in \eqref{statement: one-step: main theorem: misspecified} and \eqref{statement: one-step: main theorem: misspecified: smoothef} converge weakly to mean zero Gaussian random variables with variances $\Ig^{-1}$ and $(\Ig^{sm})^{-1}$,  respectively.
From \eqref{statement: one-step: main theorem: misspecified} and \eqref{statement: one-step: main theorem: misspecified: smoothef}, it is also clear that for $f_0\notin\mP_0$, the asymptotic distribution  of $\sqn(\hth-\th_0)$  may depend on $\bth$. The coefficient  $\gamma_{\eta}$ or $\gamma_{\eta}^{sm}$ quantifies this dependence. 

For  $\hn\neq \htsm$,  using Cauchy-Schwartz inequality, we obtain that
\begin{align*}
|1-\gamma_{\eta}|=&\ \dfrac{\dint_{-\xi_0}^{\xi_0}\psp'(z)(\psp'(z)-\ps_0'(z))g_0(z)dz}{\Ig(\eta)}\\
\leq &\ \dfrac{\lb\dint_{-\xi_0}^{\xi_0}\psp'(z)^2g_0(z)dz\rb^{1/2}}{\Ig(\eta)}\lb\dint_{-\xi_0}^{\xi_0}(\psp'(z)-\ps_0'(z))^2g_0(z)dz\rb^{1/2}\\
\leq &\ \lb{\Ig(\eta)}^{-1}\edint (\php'(z)-\ph_0'(z))^2p_0(z)dz\rb ^{1/2},
\end{align*}
where the last inequality follows from  \eqref{definition: Ig(eta)}. 
The above bound on $|1-\gamma_{\eta}|$ hints that  the asymptotic dependence of  $\sqn(\hth-\th_0)$ on  $\bth$ may be  controlled by  the $L_2(P_0)$ distance between $\php'$ and $\phi'_0$. We can draw a similar conclusion for $\gamma_{\eta}^{sm}$ replacing $\psp$ by $\pspm$.

\subsection{Asymptotic properties of the MLE }
\label{sec: asymptotics: MLE}
In this section, we explore the asymptotic properties of the MLE $\hthm$ and $\hgm$. First, we will consider the case when $f_0\in\mP_0$, and then we will move on to the discussion  of the misspecified case.

\subsubsection{Correctly specified model}
\label{sec: asymp: one-step: misspecified}

For $f_0\in\mP_0$,  Lemma~\ref{lemma: consistency of Fisher information: model} showed that the one-step estimators are consistent, whereas Lemma~\ref{lemma: L1 convergence: one step density estimators: model} established the $L_1$-consistency of  the density estimators $\hn$'s established in Section \ref{sec: one-step}. Theorem~\ref{theorem: almost sure convergence of the MLE : true model} below shows that the MLEs $\hthm$ and $\hgm$ satisfy similar consistency 
properties.

\begin{theorem}\label{theorem: almost sure convergence of the MLE : true model}
Suppose $f_0=g_0(\mathord{\cdot}-\th_0)\in\mP_0$ for some $\th_0\in\RR$.
   Then as $n\to\infty$, $L(\Fm)\as L(F_0)$, and the following assertions hold: 
 \begin{enumerate}[label=(\alph*)]
 \item $\hthm\as \th_0$.
  \item $ \hpm(x)\as\ps_0(x)$,  for all $x\in\iint(\dom(\ps_0))$.
 \item  $\limsup_n \hpm(x)\leq \ps_0(x)$, a.s. for all $x\in\RR$.
 \item $ \hgm(x)\as g_0(x)$, for all $x\in\RR$.
 \item There exists $\alpha>0$ such that  for all $\alpha_1<\alpha$,
 \[\edint e^{\alpha_1|x|} |\hgm(x)-g_0(x)|dx\as 0.\]
Moreover, 
  $$\sup_{x\in\RR}e^{\alpha_1|x|}|\hgm(x)-g_0(x)|\as 0.$$
 \end{enumerate}
\end{theorem}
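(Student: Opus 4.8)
The plan is to mimic the consistency arguments used by \cite{dosssymmetric} and \cite{dumbreg} for the log-concave MLE, with the additional complication that here the center of symmetry $\th$ is also being estimated. The starting point is the variational characterization: by Theorem~\ref{theorem: existence}, $(\hthm,\hpm)$ maximizes $\Psi_n(\th,\ps)$ over $\RR\times\mathcal{SC}_0$, and $\hthm\in[X_{(1)},X_{(n)}]$. First I would show $L(\Fm)\as L(F_0)$. The inequality $\liminf_n L(\Fm)\geq L(F_0)$ is easy: plug $(\th_0,\ps_0)$ into $\Psi_n$ and use the strong law of large numbers, since $\Psi_n(\th_0,\ps_0)=\Pm\ps_0(\cdot-\th_0)-1\to P_0\ps_0(\cdot-\th_0)-1=\Psi(\th_0,\ps_0,F_0)=L(F_0)$ (the last equality because $(\th_0,\ps_0)$ is the population maximizer, which follows from the Kullback--Leibler characterization in Lemma~\ref{Lemma: projection theory for a distribution function F} applied to the symmetric $f_0$). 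The reverse inequality $\limsup_n L(\Fm)\leq L(F_0)$ requires controlling $\Psi_n$ uniformly; this is where one invokes the a priori bounds. By Corollary~\ref{Corollary 2.8 of Doss-Wellner (2018)}, $\mathrm{Var}(\hFm)\leq \Pm(x-\hthm)^2\leq \Pm(x-\bar X)^2 + (\bar X - \hthm)^2$, and since $\hthm\in[X_{(1)},X_{(n)}]$ with $\bar X\as P_0 x$, together with the finite second moment of $f_0$ (automatic from $\mathcal{I}_{f_0}<\infty$ and log-concavity, cf. Lemma~1 of \cite{theory}), one gets that $\hgm$ has asymptotically bounded variance, hence is asymptotically tight; combined with the uniform subexponential envelope for log-concave densities this yields the required upper semicontinuity of the likelihood functional and hence $\limsup_n L(\Fm)\leq L(F_0)$.

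Next, for parts (a)--(d), the strategy is a compactness/subsequence argument. Fix a subsequence; using Corollary~\ref{Corollary 2.8 of Doss-Wellner (2018)} and $\hthm\in[X_{(1)},X_{(n)}]$ one extracts a further subsequence along which $\hthm\to\th^*$ for some $\th^*\in\RR$ and $\hgm$ converges (in, say, the topology of pointwise convergence on a dense set, upgraded to locally uniform convergence by concavity of $\log\hgm$, exactly as in Lemma~\ref{theorem: continuity of criterion function}-type arguments and the Cauchy--Schwarz/derivative bounds in \cite{dosssymmetric}) to some $g^*=e^{\psi^*}$ with $\psi^*\in\mathcal{SC}_0$. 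The a priori exponential tail bound (uniform in $n$, from the variance bound plus log-concavity, as in Lemma~2.17 of \cite{dumbreg}) guarantees $\int e^{\psi^*}=1$, so no mass escapes. Then $L(F_0)=\lim L(\Fm)=\lim\Psi_n(\hthm,\hpm)$, and along the subsequence the right side converges to $\Psi(\th^*,\psi^*,F_0)$ by the uniform convergence and the dominated-convergence control of the tails. Hence $(\th^*,\psi^*)$ attains $L(F_0)$; by uniqueness of the population maximizer --- which for fixed center is Theorem~2.1(c)/Proposition~2(iii) of \cite{dosssymmetric}, \cite{xuhigh}, and then the center itself is pinned down because the symmetrization map $F_0\mapsto (F_0)^{sym}_\th$ of Lemma~\ref{Lemma: projection theory for a distribution function F} is injective at the true symmetry point --- we conclude $\th^*=\th_0$ and $\psi^*=\ps_0$. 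Since every subsequence has a further subsequence with this limit, (a) follows, and (b), (d) follow from locally uniform convergence of $\hpm$ to $\ps_0$ on $\iint(\dom(\ps_0))$, with (c) coming from the standard upper-semicontinuity bound $\limsup_n \hpm(x)\le\ps_0(x)$ for \emph{all} $x$ (including boundary points), which is a consequence of concavity plus the interior convergence.

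Finally, part (e) is the exponentially weighted $L_1$ and sup-norm strengthening. Here I would follow the template of Theorem~4 of \cite{exist} / the corresponding result in \cite{dumbreg}: from $\mathcal{I}_{f_0}<\infty$ and log-concavity, $g_0$ has a genuinely exponential tail, i.e. there exist $\alpha_0>0$, $C_0$ with $g_0(x)\le C_0 e^{-\alpha_0|x|}$; and the uniform-in-$n$ tail bound on $\hgm$ (valid eventually a.s.) gives $\hgm(x)\le C_1 e^{-\alpha_1'|x|}$ for some $\alpha_1'>\alpha_1$. Pick $\alpha<\min(\alpha_0,\alpha_1')$. Then for any fixed $T$, $\int_{|x|\le T} e^{\alpha_1|x|}|\hgm-g_0|\to 0$ by (d) and bounded convergence, while $\int_{|x|>T} e^{\alpha_1|x|}|\hgm-g_0|\le \int_{|x|>T} e^{\alpha_1|x|}(C_0 e^{-\alpha_0|x|}+C_1 e^{-\alpha_1'|x|})$ can be made arbitrarily small uniformly in large $n$ by taking $T$ large, since $\alpha_1<\alpha<\min(\alpha_0,\alpha_1')$; a diagonal argument gives the a.s. convergence. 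The sup-norm statement follows similarly: on compacts use locally uniform convergence of $\hpm\to\ps_0$ (which forces $\hgm\to g_0$ uniformly on compacts), and outside a large compact both $e^{\alpha_1|x|}\hgm(x)$ and $e^{\alpha_1|x|}g_0(x)$ are uniformly small. The main obstacle is the second step --- establishing the \emph{uniform in $n$} exponential tail control on $\hgm$ and ruling out escape of mass while simultaneously the center $\hthm$ is moving; once that is in hand, the subsequence argument and the tail splitting in (e) are routine. I expect this to require combining Corollary~\ref{Corollary 2.8 of Doss-Wellner (2018)} with the structural result Theorem~\ref{theorem: structure of the MLE of the density} (support contained in $[-|X-\hthm|_{(n)},|X-\hthm|_{(n)}]$, piecewise linear $\hpm$) and the Doss--Wellner uniform tail estimates for symmetric log-concave MLEs.
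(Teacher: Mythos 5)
The paper's own proof is a one-liner: Theorem~\ref{theorem: almost sure convergence of the MLE : true model} is obtained by specializing the general Wasserstein-continuity result, Theorem~\ref{theorem: almost sure convergence: general theorem of the MLE}, via Theorem~\ref{theorem: almost sure convergence of the MLE :specific theorem} and Corollary~\ref{corollary: almost sure convergence of the MLE :symmetry}, using the fact that $d_W(\Fm,F_0)\as 0$ under Condition A and that Lemma~\ref{Lemma: projection for symmetric densities} gives $\th^*(F_0)=\th_0$, $\ps^*(F_0)=\ps_0$. Your self-contained route is different in organization (it avoids the general misspecified-model machinery), and large parts of it are sound: the $\liminf$ direction via SLLN, the identification of the subsequential limit by uniqueness of the population maximizer, and the compact$+$tail split in part (e) all mirror ingredients that appear in the paper's proof of the general theorem. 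However, there is a genuine gap in the key tightness step.

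You extract a convergent subsequence of $\hthm$ and argue boundedness of $\mathrm{Var}(\hFm)$ by combining Corollary~\ref{Corollary 2.8 of Doss-Wellner (2018)} with $\hthm\in[X_{(1)},X_{(n)}]$. This does not work. Corollary~\ref{Corollary 2.8 of Doss-Wellner (2018)} gives $\mathrm{Var}(\hFm)\le\Pm(x-\hthm)^2=\Pm(x-\bar X)^2+(\bar X-\hthm)^2$. The first piece is $O(1)$ a.s.\ by the SLLN, but the second piece $(\bar X-\hthm)^2$ is \emph{not} a priori bounded: the constraint $\hthm\in[X_{(1)},X_{(n)}]$ only confines $\hthm$ to a random interval whose width $X_{(n)}-X_{(1)}$ diverges whenever $f_0$ has unbounded support (Gaussian, Laplace, logistic are all in $\mP_0$). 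So the bound is circular — it would give tightness only if you already knew $\hthm$ stays bounded. Without a genuine a priori bound on $\hthm$, both the $\limsup_n L(\Fm)\le L(F_0)$ step and the subsequence extraction collapse. The paper handles exactly this point differently: in step (C) of the proof of Theorem~\ref{theorem: almost sure convergence: general theorem of the MLE}, it rules out $\th^*_{n_k}\to\pm\infty$ via Lemma~\ref{lemma: bound on phi: slc} (the universal bound $\ps(x)\le-\log|2x|$ for $\ps\in\mathcal{SLC}_0$) together with Fatou's lemma, which forces $\liminf_k\int e^{\ph^*_{n_k}}=0$, contradicting the fact that $e^{\ph^*_{n_k}}$ integrates to one. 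That argument is what you need in place of the variance bound; it does not rely on any bound on $\hthm$ in terms of order statistics. (Relatedly, the paper also controls the mode height $\sup_x\ps^*_{n_k}(x)$ via the level-set volume argument of Lemma~4.1 of \cite{dumbreg} rather than via the variance bound; the latter alone does not preclude $\ps^*_{n_k}(0)\to\infty$.) Once tightness of $\hthm$ is correctly established, the remainder of your outline — identification of the limit, parts (b)--(d) by concavity and locally uniform convergence, and part (e) by the exponential-envelope argument à la Proposition~2 of \cite{theory} — is on track.
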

Since $g_0$ is a log-concave density, there exist $\alpha$ and $\beta$ such that $\ps_0(x)\leq -\alpha|x|+\beta$ for all $x\in\RR$ \citep[][Lemma 1]{theory}. This is the $\alpha$ that appears in Theorem~\ref{theorem: almost sure convergence of the MLE : true model}. 

 Our next lemma is an analog of Theorem~\ref{Theorem: the L1 convergence of the density estimators of one-step estimators} for one step estimators, and addresses the  uniform convergence of $\hpfm$ over compact subsets of $\iint(\dom(\ps_0))$ and the pointwise convergence of $\hpfm'$.

\begin{lemma}\label{lemma: MLE: convg-lemma}
Suppose $f_0=g_0(\mathord{\cdot}-\th_0)\in\mP_0$ for some $\th_0\in\RR$. Denoting $\phi_0=\log f_0$, on any compact set $K\subset\dom(\phi_0)$, we have,
  \[\sup_{x\in K}|\hpfm(x)-\phi_0(x)|\as 0.\] 
  Suppose $\phi_0$ is differentiable at $x\in K.$ Then it also follows that
  \[\hpfm'(x)\as\phi_0'(x),\] 
  and
   \[\hgf'(x)\as f_0'(x).\] 
 Similarly for $\psi_0=\log g_0$,  on any compact set $K\subset\dom(\psi_0)$, we have,
\[\sup_{x\in K}|\hpm(x)-\ps_0(x)|\as 0.\] 
If $\ps_0$ is differentiable at $x\in K$, then we also have
\[\hpm'(x)\as\ps_0'(x),\] 
and
   \[\hgm'(x)\as g_0'(x).\] 
\end{lemma}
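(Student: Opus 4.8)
The strategy is to bootstrap from the consistency statements in Theorem~\ref{theorem: almost sure convergence of the MLE : true model}, exactly as one does for the one-step density estimators in Theorem~\ref{Theorem: the L1 convergence of the density estimators of one-step estimators: model}. The key structural facts I would invoke are: (i) $\hpm$ is concave (it lies in $\mathcal{SC}_0$) and $\hpm(x)\as\ps_0(x)$ pointwise on $\iint(\dom(\ps_0))$, hence $\hgm\as g_0$ pointwise on $\RR$; and (ii) $\hpfm=\hpm(\mathord{\cdot}-\hthm)$ with $\hthm\as\th_0$, so any statement about $\hpm$ transfers to $\hpfm$ after a vanishing shift. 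Since $\ps_0=\log g_0$ and $\phi_0=\log f_0=\ps_0(\mathord{\cdot}-\th_0)$, and all claims are of ``almost sure" type, I would fix a single probability-one event on which simultaneously $\hthm\to\th_0$, $L(\Fm)\to L(F_0)$, and $\hpm\to\ps_0$ pointwise on $\iint(\dom(\ps_0))$, and argue deterministically on that event.

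First I would prove the uniform convergence $\sup_{x\in K}|\hpm(x)-\ps_0(x)|\to 0$ on a compact $K\subset\iint(\dom(\ps_0))$. This is the standard ``pointwise convergence of concave functions on an open set is locally uniform" fact: $\hpm$ and $\ps_0$ are concave, finite on a neighborhood of $K$, and $\hpm\to\ps_0$ pointwise there; by Theorem~10.8 of \cite{rockafellar} the convergence is uniform on compacta. (One first enlarges $K$ slightly to a compact $K'$ still inside $\iint(\dom(\ps_0))$ to get a safety margin; on the probability-one event the $\hpm$ are eventually finite and bounded on $K'$ by the pointwise bound and concavity.) Exponentiating gives $\sup_{x\in K}|\hgm(x)-g_0(x)|\to 0$. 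For $\hpfm$, write $\hpfm(x)-\phi_0(x)=\bigl(\hpm(x-\hthm)-\ps_0(x-\hthm)\bigr)+\bigl(\ps_0(x-\hthm)-\ps_0(x-\th_0)\bigr)$; the first bracket is controlled by the uniform bound just proved (for $x$ in a compact set and $\hthm$ close to $\th_0$, $x-\hthm$ ranges in a fixed compact subset of $\iint(\dom(\ps_0))$), and the second bracket vanishes uniformly because $\ps_0$ is continuous, hence uniformly continuous, on compacta inside its domain.

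Next, the derivative statements. Here I would use the monotonicity of the subdifferentials of concave functions (Theorem~24.5 of \cite{rockafellar}): if $\hpm\to\ps_0$ pointwise and $\ps_0$ is differentiable at an interior point $x$, then $\hpm'(x\pm)\to\ps_0'(x)$ and indeed every selection of subgradients converges. Concretely, for $h>0$ small with $x\pm h$ still interior, concavity gives
\[
\frac{\hpm(x)-\hpm(x-h)}{h}\ \ge\ \hpm'(x-)\ \ge\ \hpm'(x+)\ \ge\ \frac{\hpm(x+h)-\hpm(x)}{h},
\]
and the outer difference quotients converge to the corresponding difference quotients of $\ps_0$, which pinch down to $\ps_0'(x)$ as $h\downarrow 0$; a standard $\limsup$/$\liminf$ argument then forces $\hpm'(x)\to\ps_0'(x)$. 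From $\hgm'=\hgm\,\hpm'$ and $\hgm\to g_0$, $g_0'=g_0\ps_0'$, we get $\hgm'(x)\to g_0'(x)$. The same argument applied to $\hpfm=\hpm(\mathord{\cdot}-\hthm)$, using that $\hpfm'(x)=\hpm'(x-\hthm)$ and that $\hthm\to\th_0$ together with the continuity of $\ps_0'$ at the (assumed) differentiability point $x-\th_0$ of $\ps_0$, yields $\hpfm'(x)\to\phi_0'(x)$ and $\hgf'(x)=\hgf(x)\hpfm'(x)\to f_0'(x)$.

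**Main obstacle.** The only genuinely delicate point is the interplay between the vanishing shift $\hthm-\th_0$ and the derivative convergence: $\hpfm'(x)=\hpm'(x-\hthm)$ is a subgradient of $\hpm$ evaluated at a \emph{moving} point, so I cannot directly quote pointwise subgradient convergence at a fixed point. The resolution is to note that $\ps_0$ is assumed differentiable at $x-\th_0$, hence (being a concave function) $\ps_0'$ is continuous at that point, and then to combine the locally uniform convergence $\sup_{K'}|\hpm-\ps_0|\to 0$ on a compact neighborhood $K'$ of $x-\th_0$ with the sandwich inequality above applied at the moving point $x-\hthm\in K'$: the difference quotients of $\hpm$ at $x-\hthm$ over a fixed step $h$ converge (by the uniform bound) to those of $\ps_0$ near $x-\th_0$, which converge to $\ps_0'(x-\th_0)=\phi_0'(x)$ as $h\downarrow 0$, uniformly in the small perturbation of the base point. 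Making this ``double limit" ($n\to\infty$ then $h\downarrow 0$, with $\hthm$ drifting) rigorous via an $\varepsilon$-$\delta$ argument using uniform continuity of $\ps_0'$ near $x-\th_0$ is the crux; everything else is bookkeeping.
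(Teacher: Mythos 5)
Your proof is correct but takes a genuinely different route from the paper's. The paper's proof starts from the strong Hellinger consistency $H(\hgf,f_0)\as 0$ established in Theorem~\ref{MLE: Rate results}(A), passes to the uniform density convergence $\sup_{z\in\RR}|\hgf(z)-f_0(z)|\as 0$ via Proposition~2(c) of \cite{theory}, and then bounds $\sup_{x\in K}|\hpfm(x)-\phi_0(x)|$ by the algebraic identity $|\hpfm-\phi_0|=\log\bigl(1+|\hgf-f_0|/\min(\hgf,f_0)\bigr)$ together with unimodality of $\hgf$ and $f_0$, which locates the minimum of the denominator at the endpoints of $K=[b_1,b_2]$. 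For the derivatives it applies Lemma~F.10 of \cite{bodhida2017} \emph{directly to the sequence $\hpfm$} of concave functions converging uniformly to $\phi_0$, which neatly sidesteps the moving-base-point issue you identify as the main obstacle. Your proof instead starts from the pointwise convergence in Theorem~\ref{theorem: almost sure convergence of the MLE : true model}(b), upgrades to locally uniform convergence via Rockafellar's Theorem~10.8, splits off the shift $\hthm-\th_0$ by a two-term decomposition plus uniform continuity of $\ps_0$, and handles the drifting base point in the derivative step via the sandwich of difference quotients. Both are sound; yours is more self-contained and parallels the one-step density estimator proofs, while the paper's route is slightly shorter, especially at the derivative step, by treating $\hpfm$ as its own concave sequence rather than as a shifted version of $\hpm$. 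As a reassurance on your implicit restriction to $K\subset\iint(\dom(\ps_0))$: since $f_0\in\mP_0$ has finite Fisher information, $g_0$ is absolutely continuous (hence continuous), so $\dom(\ps_0)=\{g_0>0\}$ is open; the hypothesis $K\subset\dom(\phi_0)$ in the lemma therefore automatically places $K$ in the interior and no boundary case needs separate treatment.
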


Now we turn our attention to  the global convergence of $\hgm$. Our Theorem~\ref{MLE: Rate results} establishes that  $H(\hgf,f_0)^2$ and $H(\hgm,g_0)^2$ are $O_p(n^{-4/5})$. This rate is standard for  a density estimator in log-concave class $\mathcal{LC}$. For instance,   the unrestricted log-concave MLE $\hf$ satisfies $H(\hf,f_0)^2=O_p(n^{-4/5})$ \citep[Theorem 3.2]{dossglobal}. Moreover, Theorem~1 of \cite{global2015} establishes that $n^{-4/5}$ is the minimax rate of convergence  for density estimation in $\mathcal{LC}$. 
Theorem~4.1(c) of \cite{dosssymmetric} obtains the same rate in connection to the MLE in the subclass $\mathcal{SLC}_{0}$ of $\mathcal{LC}$. \cite{dosssymmetric} conjectured in their Remark 4.2 that $n^{-4/5}$ is the minimax rate of convergence for estimation in $\mathcal{SLC}_0$ as well. 
  In light of the above, we conjecture that the minimax rate for estimating $f_0$ in $\mP_0$ is also $n^{-4/5}$.
 As a corollary to Theorem~\ref{MLE: Rate results}, it follows that $||\hgm-g_0||_1=O_p(n^{-2/5})$. Also, for the MLE of $\theta_0$,  we can  show that $|\hthm-\th_0|$ is  $O_p(n^{-2/5})$. 

\begin{theorem}\label{MLE: Rate results}
Suppose $\th_0\in\RR$ and $f_0=g_0(\mathord{\cdot}-\th_0)$ for some $g_0\in\mathcal{SLC}_0$.
   Then the following assertions hold:
   \begin{enumerate}[label=(\Alph*),ref=(\Alph*)]
  \item\label{lemma 2: E} As $n\to\infty$, 
  \[H(\hgf,f_0)\as 0\quad \text{and}\quad H(\hgm,g_0)\as 0.\]
Moreover,
  \[ H^2(\hgf,f)=O_p(n^{-4/5}).\]
  \item\label{lemma 2: F} We also have $|\hthm-\th_0|=O_p(n^{-2/5})$ and 
  \[H^2(\hgm,g_0)=O_p(n^{-4/5}).\]
   \end{enumerate}
\end{theorem}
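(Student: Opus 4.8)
\textbf{Proof proposal for Theorem~\ref{MLE: Rate results}.}

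The plan is to deduce everything from the log-concave MLE rate results of \cite{dossglobal}, \cite{dosssymmetric} combined with the characterization machinery already established above (Theorems~\ref{thm: charactarization 1}--\ref{thm: Doss-Wellner Theorem 2.4} and Corollaries~\ref{Corollary 2.7 of Doss-Wellner (2018)}--\ref{Corollary 2.8 of Doss-Wellner (2018)}). The key structural observation is that $\hgm = \widehat{\ps}_{\hthm}$, i.e.\ for the random value $\th = \hthm$ the density $\hgm$ is exactly the MLE in $\mathcal{SLC}_0$ built from the data $\{X_i - \hthm\}$ (equivalently, $\hgf$ is the MLE in $\mathcal{SLC}_{\hthm}$ from the raw data). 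So we want to transport Doss--Wellner's rate for the fixed-$\th$ symmetric MLE to the data-driven $\hthm$, while simultaneously controlling $|\hthm - \th_0|$.

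The almost-sure consistency in part~\ref{lemma 2: E} is immediate: Theorem~\ref{theorem: almost sure convergence of the MLE : true model}(d,e) already gives $\hgm(x) \as g_0(x)$ pointwise and even an exponentially weighted $L_1$ statement, from which $H(\hgm, g_0) \as 0$ follows by Scheff\'e-type arguments (dominated convergence using the exponential envelope $\ps_0(x) \le -\alpha|x| + \beta$), and $H(\hgf, f_0) = H(\hgm(\cdot - \hthm), g_0(\cdot - \th_0))$ is handled the same way after noting $\hthm \as \th_0$. For the $O_p(n^{-4/5})$ Hellinger rate, I would first establish the rate for $|\hthm - \th_0|$ and then bootstrap. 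Concretely: (i) use the $\sqrt n$-consistency-type control, or more precisely the characterization inequalities, to show $\hthm - \th_0 = O_p(n^{-2/5})$ --- the natural route is an empirical-process/curvature argument on the profiled criterion $\th \mapsto \Psi_n(\th, \pst)$, whose population analog $\th \mapsto \Psi(\th, \tps_\th, F_0)$ is maximized at $\th_0$ with a nondegenerate second-order behavior (this is where the finite Fisher information $\If < \infty$ enters, guaranteeing the quadratic term is strictly negative); the remainder terms are controlled by the modulus-of-continuity bounds for log-concave MLEs. Alternatively, one can piggyback on the one-step analysis: since the MLE satisfies its own score equation, an argument paralleling Theorem~\ref{theorem: main: one-step: model} gives $n^{-2/5}$ (in fact one expects better, but $n^{-2/5}$ suffices and is all Theorem~\ref{MLE: Rate results} claims). (ii) Given $|\hthm - \th_0| = O_p(n^{-2/5})$, write $H^2(\hgm, g_0) \le 2 H^2(\hgm, \tg_{\hthm}) + 2 H^2(\tg_{\hthm}, g_0)$ where $\tg_{\th}$ is the (fixed-$\th$) symmetric log-concave MLE from $\{X_i - \th\}$; the first term is $O_p(n^{-4/5})$ by Theorem~4.1(c) of \cite{dosssymmetric} applied with the shifted data (uniformly over $\th$ in an $O_p(n^{-2/5})$ neighborhood of $\th_0$, which requires a localized/uniform version of their rate theorem), and the second term, $H^2(\tg_{\hthm}, g_0)$, is controlled by the smoothness of the map $\th \mapsto \tg_\th$ near $\th_0$ together with $|\hthm - \th_0| = O_p(n^{-2/5})$ --- a shift of size $\delta$ in a log-concave density with finite Fisher information changes the Hellinger distance by $O(\delta)$, giving $O_p(n^{-2/5})$, hence $O_p(n^{-4/5})$ after squaring. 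Part~\ref{lemma 2: F} is then just the restatement of the $|\hthm - \th_0| = O_p(n^{-2/5})$ bound obtained in step (i) together with the $H^2(\hgm, g_0) = O_p(n^{-4/5})$ bound from (ii), and $||\hgm - g_0||_1 = O_p(n^{-2/5})$ follows from the Hellinger rate by Cauchy--Schwarz once one knows $\hgm$ and $g_0$ have uniformly controlled tails (again via the exponential envelope).

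The main obstacle is step (i): establishing $|\hthm - \th_0| = O_p(n^{-2/5})$ rigorously, because $\Psi_n(\th, \ps)$ is \emph{not} jointly concave, so there is no convexity shortcut, and one must control the profiled criterion $\Psi_n(\th, \pst)$ whose dependence on $\th$ is only implicit through the re-centered log-concave MLE. The cleanest path I can see is an $M$-estimation argument: show $\th \mapsto \Psi_n(\th,\pst) - \Psi_n(\th_0, \pst[\th_0])$ is, uniformly on compacts, within an $o_p(1)$-plus-empirical-process-remainder of its deterministic counterpart $\th \mapsto \Psi(\th, \tps_\th, F_0) - \Psi(\th_0, \ps_0, F_0)$, which is $\le -c(\th - \th_0)^2$ near $\th_0$ for some $c > 0$ (nondegeneracy of the Fisher information); then use the rate lemma of \cite{vdv} (Theorem~3.2.5) with the modulus of continuity $\phi_n(\delta) \sim \sqrt\delta$ coming from the log-concave MLE's entropy, yielding the $n^{-2/5}$ rate. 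Establishing the needed \emph{uniform} (over $\th$ in a shrinking neighborhood) version of the Doss--Wellner Hellinger rate and the required modulus bound is the technically heaviest part and is where I would expect to spend most of the effort.
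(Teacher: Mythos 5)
Your plan inverts the paper's logical order, and the inversion is where the trouble is. You propose to (i) first establish $|\hthm-\th_0|=O_p(n^{-2/5})$ by an $M$-estimation argument on the profiled criterion $\th\mapsto\Psi_n(\th,\pst)$, and only then (ii) bootstrap that rate into $H^2(\hgm,g_0)=O_p(n^{-4/5})$ and finally $H^2(\hgf,f_0)=O_p(n^{-4/5})$. You correctly flag step (i) as the bottleneck --- the profile criterion is not jointly concave and the needed modulus bound is exactly the kind of thing one usually extracts from a Hellinger rate. The paper's proof avoids this circularity entirely by going the other way around: $H^2(\hgf,f_0)=O_p(n^{-4/5})$ is proved \emph{first} and \emph{directly}, with no prior control on $\hthm$. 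The point is that $\hgf$ is the MLE over the union $\bigcup_{\th}\mathcal{SLC}_{\th}$, and after translating/rescaling one can assume $\hgf$ and $f_0$ both lie (eventually, almost surely) in the class $\mathcal{P}_{M,0}$ of log-concave densities on $\supp(f_0)$, bounded above by $M$ and bounded below by $1/M$ on $[-1,1]$ --- a class whose bracketing entropy is $\lesssim\e^{-1/2}$ by \cite{dossglobal}. A standard application of Theorems~3.4.1 and 3.4.4 of \cite{wc} then delivers the $n^{-2/5}$ Hellinger rate for $\hgf$, and notice that none of this depends on knowing anything about $\hthm$.

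Given that, the paper extracts both parts of (B) from a single Hellinger decomposition that you don't use: write
\begin{align*}
2H^2(\hgf,f_0)&=\edint\lb\sqrt{\hgm(x-\hthm)}-\sqrt{g_0(x-\hthm)}\rb^2dx\\
&\quad+\edint\lb\sqrt{g_0(x-\hthm)}-\sqrt{g_0(x-\th_0)}\rb^2dx+T_c,
\end{align*}
where $T_c$ is the cross term. The first term is $2H^2(\hgm,g_0)$ by translation invariance. For the second, an application of Fatou's lemma together with the Hellinger-differentiability of the location family (this is exactly where $\If<\infty$ enters) yields the lower bound $(\hthm-\th_0)^2\I/4\,(1+o_p(1))$. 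The delicate part of the proof is showing $|T_c|=o_p\lb(\hthm-\th_0)^2+H^2(\hgm,g_0)\rb$, via Cauchy--Schwarz, Fubini, and a sign/positivity observation on $g_0'(x-t)g_0'(x+t)$ for $x>|\hthm-\th_0|$. Once that is in hand, $2H^2(\hgf,f_0)\ge 2H^2(\hgm,g_0)+\frac{\I}{4}(\hthm-\th_0)^2-o_p(1)\lb H^2(\hgm,g_0)+(\hthm-\th_0)^2\rb$, and both claims of part (B) drop out simultaneously from the already-established $H^2(\hgf,f_0)=O_p(n^{-4/5})$. So the Fisher information appears as a \emph{lower bound on the Hellinger distance in terms of} $(\hthm-\th_0)^2$, which is the opposite direction from the way you invoke it.

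Two further concrete problems with your sketch. First, your step~(ii) decomposition $H^2(\hgm,g_0)\le 2H^2(\hgm,\tg_{\hthm})+2H^2(\tg_{\hthm},g_0)$ is vacuous: by the structural observation you state at the outset, $\hgm=\widehat{\ps}_{\hthm}$ \emph{is} the fixed-$\th$ symmetric MLE at $\th=\hthm$, so $\hgm=\tg_{\hthm}$ and your first term is identically zero; the decomposition collapses to $H^2(\hgm,g_0)\le 2H^2(\hgm,g_0)$. What you would actually need is to compare $\tg_{\hthm}$ with $\tg_{\th_0}$, i.e.\ a stability estimate for the symmetric log-concave MLE as the centering varies, which is a hard uniform-in-$\th$ modulus statement that neither \cite{dosssymmetric} nor the present paper proves. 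Second, your route for $H(\hgf,f_0)\as 0$ via Scheff\'e and Theorem~\ref{theorem: almost sure convergence of the MLE : true model}(d,e) would work, but the paper instead invokes the criterion from the proof of Theorem~3.1 of \cite{exist}, using Theorem~\ref{theorem: almost sure convergence of the MLE :specific theorem}(c) to verify the $o(\sqrt n/\log n)$ bound on $\sup_x\log\hgf(x)$; and for $H(\hgm,g_0)\as 0$ it adds a second decomposition handled by Pratt's lemma (Lemma~\ref{lemma: bill1}) together with $\hthm\as\th_0$. That difference is cosmetic. The substantive gap is that your proof requires a rate for $\hthm$ you cannot establish from what you have, whereas the paper produces that rate as an \emph{output} of the Hellinger rate, not an input.
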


In the special case when  $f_0$ is bounded away from $0$ on its support, it can be shown that $\hthm$ is $\sqn$-consistent.
  We conjecture that for any $f_0\in\mP_0$, the MLE $\hthm$ is $\sqn$-consistent and asymptotically efficient.
\subsubsection{Misspecified model: when $f_0$ may  not be log-concave}
\label{sec: asymp: mle: misspecified}
In Subsection \ref{sec: asymp: one-step: misspecified}, we saw that even if $f_0\notin\mP_0$, the one step estimators are still consistent as long as $f_0\in\mP_1$.
It turns out that,  for  $f_0\in\mP_1$,  not only the one step estimators, but also  the MLEs  $\hthm$ and $\hgm$ are consistent.    However, in case of the MLE, it is possible to say more.  In particular,  if $f_0\notin\mP_1$, it turns out that $\hthm$ converges almost surely to some limit under some  minimal conditions.  Much as in Section \ref{sec: asymptotics: one step estimator}, we again appeal to the log-concave projection theory to analyze the asymptotic properties. However, in this case we need to consider projection onto the class of distributions with densities in $\mP_0$, which is not covered by the existing literature. Therefore, we construct a new framework for projection onto the above-mentioned class. The consistency results for $f_0\in\mP_0$ follow as a special case of the general theory developed in this subsection.



In order to study the log-concave projection in context of the class $\mP_0$,  first we need to introduce some new notation. For any distribution function $F$, let us denote
\begin{equation}\label{MLE: maximizer of criterion}
(\th^*(F),\ps^*(F))=\argmax_{\th\in\RR,\ps\in\mathcal{SC}_0}\Psi(\th,\ps,F),
\end{equation}
where $\Psi$ was defined in \eqref{criterion function: xu samworth}. This maximization problem is different from those considered in Section \ref{sec: asymptotics: one step estimator} because in those cases, the maximization was only with respect to $\ps$. We are interested in $\th^*(F)$ and $\ps^*(F)$  because, as we will see, under some mild conditions, $\hthm\as\th^*(F)$ and $||\hgm-e^{\ps^*(F)}||_1\as 0$ as $n\to\infty$ provided that $\th^*(F)$ is unique.

Proposition \ref{Prop: existence of maximizers} below entails that Condition A is sufficient for the existence of the maximizers.
  Following the same argument as in Theorem~2.1(c) of \cite{dosssymmetric}, one can show that when $\ps^*(F)$ exists,  $g^*(F)=e^{\ps^{*}(F)}$ is a density. We denote the  corresponding distribution function by  $G^*(F)$.  We also denote $\ph^{*}(F)=\ps^{*}(F)(\mathord{\cdot}-\th^{*}(F))$, and write $f^{*}(F)$ for $e^{\ph^{*}(F)}$. The corresponding distribution function will be denoted by $F^{*}(F)$, which can be interpreted as the projection of $F$ onto the space of all distribution functions with densities in $\mathcal{SLC}_{\th^*(F)}$.

 \begin{proposition}\label{Prop: existence of maximizers}
 Suppose $F$ satisfies Condition A. Then $L(F)<\infty$, and $\th^*(F)$ and $\ps^*(F)$  exist for $F$, where $\th^*(F)$ and $\ps^*(F)$ were defined in \eqref{MLE: maximizer of criterion}.
 \end{proposition}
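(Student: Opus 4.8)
The plan is to profile out $\ps$, reducing the problem to a one-dimensional maximization over $\th$, and then to combine a continuity argument with a coercivity estimate so that the supremum is attained on a compact set. Write $m_F=\edint x\,dF(x)$, which is finite by Condition~A, and for $\th\in\RR$ set $M(\th)=\sup_{\ps\in\mathcal{SC}_0}\Psi(\th,\ps,F)$, so that $L(F)=\sup_{\th\in\RR}M(\th)$. Since $\Psi(\th,\ps,F)=\om(\ps(\mathord{\cdot}-\th),F)$ and $\ps\mapsto\ps(\mathord{\cdot}-\th)$ maps $\mathcal{SC}_0$ bijectively onto $\mathcal{SC}_\th$, we have $M(\th)=\sup_{\ph\in\mathcal{SC}_\th}\om(\ph,F)$, which by Lemma~\ref{Lemma: projection theory for a distribution function F} equals $\sup_{\ph\in\mathcal{C}}\om(\ph,F^{sym}_\th)$ with $F^{sym}_\th$ the symmetrized distribution function of \eqref{def: symmetrized F}. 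Now $F^{sym}_\th$ inherits Condition~A from $F$: it is non-degenerate whenever $F$ is (it degenerates only if $F$ is degenerate at $\th$), and $\edint|x|\,dF^{sym}_\th(x)\le\edint|x|\,dF(x)+|\th|<\infty$. Hence Theorem~2.7 of \cite{dumbreg} (equivalently Proposition~2(iii) of \cite{xuhigh}) applies to $F^{sym}_\th$, showing that $M(\th)$ is finite and attained, say at $\tph_\th\in\mathcal{SC}_\th$ with $\edint e^{\tph_\th}=1$; writing $\tps_\th=\tph_\th(\mathord{\cdot}+\th)\in\mathcal{SC}_0$ we get $M(\th)=\Psi(\th,\tps_\th,F)$. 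Moreover $M(\th)>-\infty$ for every $\th$, since evaluating $\Psi(\th,\mathord{\cdot},F)$ at the Laplace log-density $\ps(x)=\log(b/2)-b|x|$ gives $M(\th)\ge\log(b/2)-b\big(\edint|x|\,dF(x)+|\th|\big)-1$ for any $b>0$.

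The next step is coercivity: $M(\th)\to-\infty$ as $|\th|\to\infty$. Fix $\th\neq m_F$. Using $\edint e^{\tps_\th}=1$ we have $M(\th)=\edint\tps_\th(x-\th)\,dF(x)-1$; since $\tps_\th$ is concave and $X-\th$ has finite mean, Jensen's inequality gives $\edint\tps_\th(x-\th)\,dF(x)\le\tps_\th(m_F-\th)$ (and if the left side equals $-\infty$ the bound is trivial). Because $e^{\tps_\th}$ is a density symmetric about $0$ and unimodal with mode $0$, for any $t\neq0$ one has $1=\edint e^{\tps_\th}\ge\int_{-|t|}^{|t|}e^{\tps_\th}\ge 2|t|\,e^{\tps_\th(t)}$, so $\tps_\th(t)\le-\log(2|t|)$; taking $t=m_F-\th$ yields $M(\th)\le-\log\big(2|m_F-\th|\big)-1$, which tends to $-\infty$ as $|\th|\to\infty$. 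Combined with the finite value $M(m_F)$ from the previous step, this produces $R=R(F)<\infty$ such that $M(\th)<M(m_F)$ for all $|\th-m_F|>R$, and hence $L(F)=\sup_{|\th-m_F|\le R}M(\th)$.

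It remains to show that $\th\mapsto M(\th)$ is continuous on $\RR$; this is the analog of Lemma~\ref{theorem: continuity of criterion function} for a general $F$ satisfying Condition~A, and I expect the argument given there to carry over (alternatively, one can use that $\th\mapsto F^{sym}_\th$ is continuous in the Wasserstein metric together with stability of the log-concave projection under such perturbations). Granting continuity, $M$ is a finite continuous function on the compact set $\{\th:|\th-m_F|\le R\}$ and so attains its maximum there at some point $\th^*(F)$, whose value is $L(F)<\infty$; setting $\ps^*(F)=\tps_{\th^*(F)}$, the pair $(\th^*(F),\ps^*(F))$ realizes $L(F)$, which is the assertion. (Uniqueness of $\th^*(F)$ is neither obtained nor expected, since $\Psi$ need not be jointly concave in $(\th,\ps)$, exactly as for $\hthm$.) The main obstacle is this continuity step: the lack of joint concavity rules out a convex-analytic argument, and one must control the log-concave projection of $F^{sym}_\th$ as $\th$ varies; the coercivity estimate is the genuinely new ingredient here, playing the role that the localization $\hthm\in[X_{(1)},X_{(n)}]$ plays in the empirical case (Theorem~\ref{theorem: existence}).
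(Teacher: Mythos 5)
Your proposal follows essentially the same architecture as the paper's proof: profile out $\ps$ to get $M(\th)=\sup_{\ps\in\mathcal{SC}_0}\Psi(\th,\ps,F)$ (the paper's $L(\th;F)$), establish coercivity of $M$, and combine with continuity of $\th\mapsto M(\th)$ to conclude that the supremum over $\th$ is attained; the symmetry lemma (Lemma~\ref{Lemma: projection theory for a distribution function F}) and the bound $\tps(t)\le-\log(2|t|)$ (the paper's Lemma~\ref{lemma: bound on phi: slc}) do the heavy lifting in both. The one genuine divergence is in the coercivity step: the paper integrates the pointwise bound $\tps_{\th_k}(x-\th_k)\le-\log|2(x-\th_k)|$ against $dF$ and invokes Fatou's lemma to show $L(\th_k;F)\to-\infty$, whereas you apply Jensen's inequality first to get $\int\tps_\th(x-\th)\,dF(x)\le\tps_\th(m_F-\th)$ and then evaluate the log bound at the single point $m_F-\th$, yielding the clean explicit estimate $M(\th)\le-\log(2|m_F-\th|)-1$. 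Your route sidesteps the integrable-majorant bookkeeping that Fatou requires here and gives a quantitative decay rate for free; it is a small but real simplification. The place you flag as "the main obstacle"—continuity of $\th\mapsto M(\th)$ for a general $F$ satisfying Condition~A—is indeed the remaining ingredient, and the paper supplies it as Theorem~\ref{thm: continuity: general}, proved immediately before the proposition by exactly the mechanism you anticipate (Wasserstein-continuity of $\th\mapsto F(\mathord{\cdot}+\th)$ together with Proposition~6 of \cite{xuhigh} on stability of the constrained log-concave projection). So your proof is correct modulo that lemma, which you correctly identified and isolated but did not prove.
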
   
 Our next lemma  provides  a representation of $\th^*(F)$ and $\ps^*(F)$ in terms of the unconstrained log-concave projection in \eqref{def: one-step: limits of misspecified model}.  Lemma~\ref{Lemma: projection nw} is a direct consequence of Lemma~\ref{Lemma: projection theory for a distribution function F}. 
  \begin{lemma}\label{Lemma: projection nw}
Suppose a distribution function $F$ satisfies condition A.  Then
\[\th^*(F)=\argmax_{\th\in\RR}\lb\max_{\ph\in\mathcal{C}}\om(\ph,F^{sym}_{\th})\rb,\]
and
\[\ps^*(F) =\argmax_{\ph\in\mathcal{C}}\om\lb\ph,F^{sym}_{\th^*(F)}\rb,\]
where  $F^{sym}_{\th}$ is defined in \eqref{def: symmetrized F} for any $\th\in\RR$, and $\om$ is the criterion function defined in \eqref{definition: new criterion function }.
\end{lemma}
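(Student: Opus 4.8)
The plan is to reduce the two-stage maximization defining $\th^*(F)$ and $\ps^*(F)$ in \eqref{MLE: maximizer of criterion} to the symmetrized problem by invoking Lemma~\ref{Lemma: projection theory for a distribution function F} for each fixed $\th$. Recall from \eqref{criterion function: xu samworth} that $\Psi(\th,\ps,F)=\om(\ps(\mathord{\cdot}-\th),F)$, so that
\[
\max_{\ps\in\mathcal{SC}_0}\Psi(\th,\ps,F)=\max_{\ph\in\mathcal{SC}_\th}\om(\ph,F),
\]
via the bijection $\ph=\ps(\mathord{\cdot}-\th)$ between $\mathcal{SC}_0$ and $\mathcal{SC}_\th$. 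Since $F$ satisfies Condition~A, Proposition~\ref{Prop: existence of maximizers} guarantees the outer and inner maximizers exist; fix any $\th\in\RR$ and apply Lemma~\ref{Lemma: projection theory for a distribution function F} to obtain
\[
\argmax_{\ph\in\mathcal{SC}_\th}\om(\ph,F)=\argmax_{\ph\in\mathcal{C}}\om(\ph,F^{sym}_\th),
\]
and in particular the \emph{value} of the inner maximum equals $\max_{\ph\in\mathcal{C}}\om(\ph,F^{sym}_\th)$. This requires checking that $F^{sym}_\th$ itself satisfies Condition~A so that the right-hand maximizer is well defined; non-degeneracy of $F^{sym}_\th$ and finiteness of its first moment both follow immediately from the corresponding properties of $F$ together with the explicit form \eqref{def: symmetrized F}.

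Substituting this identity into the outer maximization over $\th$ gives
\[
\th^*(F)=\argmax_{\th\in\RR}\max_{\ps\in\mathcal{SC}_0}\Psi(\th,\ps,F)=\argmax_{\th\in\RR}\Bigl(\max_{\ph\in\mathcal{C}}\om(\ph,F^{sym}_\th)\Bigr),
\]
which is the first claimed identity. For the second, note that once $\th^*(F)$ is identified, $\ps^*(F)$ is by definition the inner maximizer at $\th=\th^*(F)$ expressed in centered coordinates; but the bijection $\ph\mapsto\ph(\mathord{\cdot}+\th)$ between $\mathcal{SC}_\th$ and $\mathcal{SC}_0$ together with the just-established equality of argmax sets shows
\[
\ps^*(F)=\Bigl(\argmax_{\ph\in\mathcal{SC}_{\th^*(F)}}\om(\ph,F)\Bigr)(\mathord{\cdot}+\th^*(F))=\argmax_{\ph\in\mathcal{C}}\om\bigl(\ph,F^{sym}_{\th^*(F)}\bigr),
\]
where the last equality is exactly Lemma~\ref{Lemma: projection theory for a distribution function F} applied at $\th^*(F)$, using that the right-hand maximizer over $\mathcal{C}$ is already symmetric about $0$ (it lies in $\mathcal{SC}_0$) and hence coincides with its own recentering. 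Here uniqueness of the inner maximizer — furnished by Proposition~2(iii) of \cite{xuhigh} under Condition~A — is what lets us pass freely between ``the argmax'' and the set.

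The only genuine subtlety is bookkeeping the change of variables consistently: one must verify that the map $\th\mapsto\ph(\mathord{\cdot}-\th)$ carries $\mathcal{SC}_0$ onto $\mathcal{SC}_\th$, that $\om(\ph(\mathord{\cdot}-\th),F)=\Psi(\th,\ph(\mathord{\cdot}-\th),F)$ matches \eqref{criterion function: xu samworth} after the shift of integration variable (using translation invariance of Lebesgue measure for the $\int e^{\ph}$ term), and that taking $\argmax$ commutes with this bijection. None of these steps is hard, but they must be laid out carefully so that the recentering $\ps^*(F)=\ph^*(F)(\mathord{\cdot}+\th^*(F))$ appearing in \eqref{def: one-step: limits of misspecified model}-style notation is tracked correctly; everything else is a direct appeal to Lemma~\ref{Lemma: projection theory for a distribution function F} and Proposition~\ref{Prop: existence of maximizers}.
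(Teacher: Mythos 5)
Your treatment of the first identity follows the same route as the paper's one-line proof: convert the inner maximum to a maximum over $\mathcal{SC}_\th$, apply Lemma~\ref{Lemma: projection theory for a distribution function F}, and substitute into the outer maximization. One caveat: Lemma~\ref{Lemma: projection theory for a distribution function F}, as \emph{stated}, equates only the $\argmax$ sets, and the two objective functions ($\om(\mathord{\cdot},F)$ versus $\om(\mathord{\cdot},F^{sym}_\th)$) are different; equality of argmax sets does not by itself give equality of the maximum \emph{values}. To commute the outer $\argmax$ you need $\max_{\ph\in\mathcal{SC}_\th}\om(\ph,F) = \max_{\ph\in\mathcal{C}}\om(\ph,F^{sym}_\th)$, which comes from the pointwise identity $\om(\ph,F)=\om(\ph,F^{sym}_\th)$ for $\ph\in\mathcal{SC}_\th$ (established inside the proof of Lemma~\ref{Lemma: projection theory for a distribution function F}) together with Lemma~\ref{lemma: symmetry of projection operator}. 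Your ``in particular'' glosses this, but it is easily patched.

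For the second identity there is a genuine error in the final step. You assert that $\argmax_{\ph\in\mathcal{C}}\om(\ph, F^{sym}_{\th^*(F)})$ ``is already symmetric about $0$ (it lies in $\mathcal{SC}_0$).'' That is false: from \eqref{def: symmetrized F}, $F^{sym}_{\th^*(F)}$ satisfies $F^{sym}_{\th^*(F)}(\th^*(F)+y)+F^{sym}_{\th^*(F)}(\th^*(F)-y)=1$, i.e., it is symmetric about $\th^*(F)$, so by Lemma~\ref{lemma: symmetry of projection operator} its projection lies in $\mathcal{SC}_{\th^*(F)}$, not $\mathcal{SC}_0$. In your displayed chain the middle term $\bigl(\argmax_{\ph\in\mathcal{SC}_{\th^*(F)}}\om(\ph,F)\bigr)(\mathord{\cdot}+\th^*(F))$ is $\ps^*(F)\in\mathcal{SC}_0$, while the rightmost term lives in $\mathcal{SC}_{\th^*(F)}$, so the last equality fails unless $\th^*(F)=0$. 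What Lemma~\ref{Lemma: projection theory for a distribution function F} actually identifies with $\argmax_{\ph\in\mathcal{C}}\om(\ph,F^{sym}_{\th^*(F)})$ is the \emph{uncentered} maximizer $\ph^*(F)=\ps^*(F)(\mathord{\cdot}-\th^*(F))$. It appears the paper's own statement carries the same slip; compare Lemma~\ref{Lemma: projection for symmetric densities}, where the explicit recentering $G=F(\mathord{\cdot}+\th)$ is performed before taking $\argmax$ over $\mathcal{C}$, precisely so that both sides land in $\mathcal{SC}_0$. The correct formulation puts $\ph^*(F)$ on the left or a recentered distribution on the right; the ``it lies in $\mathcal{SC}_0$'' justification is not a valid step and should be removed.
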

The benefit of the representation of $\th^*(F)$ and $\ps^*(F)$ in terms of the unrestricted log-concave projection  lies in the fact that characterizations and  tools for computing the latter are available \citep[cf.][Remark 2.3, Theorem 2.7, Remark 2.10, Remark 2.11]{dumbreg}. 

 Note that neither Proposition \ref{Prop: existence of maximizers} nor Lemma~\ref{Lemma: projection nw}  provides any information about the uniqueness of $\th^*(F)$ and $\ps^*(F)$. However, the uniqueness of  $\th^*(F)$  is  critical for the consistency of $\hthm$. Therefore, we only consider those distribution functions which satisfy the following condition.
  \begin{cond}{B}
The maximizer $\th^*(F)$ defined in \eqref{MLE: maximizer of criterion}  is unique for distribution function $F$.
  \end{cond}
  
Though Condition B does not say anything about the uniqueness of $\ps^*(F)$, from Theorem~2.1(c) of \cite{dosssymmetric} or Proposition $2$ of \cite{xuhigh}, it follows that if $\th^*(F)$ is unique, so is  $\ps^*(F)$. We have not yet discovered whether there exist some characteristics of $F$ which  enforce the uniqueness condition B.
  Interestingly, we have not yet found  an example of $F$ satisfying condition A but violating Condition B. However, it is easy to find  distributions satisfying  both Conditions A and B, a trivial example being distribution functions satisfying 
  \begin{equation}\label{condition: symmetry}
  F(x)=1-F(2\th-x)
  \end{equation}
   for some $\th\in\RR$. If $F$ has a density in $\mathcal{S}_{\theta}$, \eqref{condition: symmetry} holds. 
    \begin{lemma}\label{Lemma: projection for symmetric densities}
Suppose a distribution function $F$ satisfies \eqref{condition: symmetry} for some $\th\in\RR$,  and set $G=F(\mathord{\cdot}+\th)$.   Then condition B is satisfied with 
\[\th^*(F)=\th,\]
and
\[\ps^*(F) =\argmax_{\ps\in\mathcal{SC}_0}\Psi(0,\ps,G)=\argmax_{\ph\in\mathcal{C}}\om(\ph,G),\]
where $\om$ is the criterion function defined in \eqref{definition: new criterion function }.
\end{lemma}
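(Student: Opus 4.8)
The plan is to reduce everything to the properties of the symmetrized criterion function and the log-concave projection theory already established, in particular Lemma~\ref{Lemma: projection theory for a distribution function F}. First I would note that the hypothesis \eqref{condition: symmetry} says precisely that $F$ is symmetric about $\th$, hence $F^{sym}_{\th}(x) = 2^{-1}(F(x) + 1 - F(2\th - x)) = F(x)$ for all $x$. So the inner maximization $\max_{\ph\in\mathcal{C}}\om(\ph, F^{sym}_{\vartheta})$ at the point $\vartheta = \th$ is literally $\max_{\ph\in\mathcal{C}}\om(\ph, F)$, and by Lemma~\ref{Lemma: projection theory for a distribution function F} this equals $\max_{\ph\in\mathcal{SC}_{\th}}\om(\ph,F)$, which in turn (via \eqref{criterion function: xu samworth} and the substitution relating $\ph$ to $\ps = \ph(\cdot + \th)$) equals $\sup_{\ps\in\mathcal{SC}_0}\Psi(\th,\ps,F)$. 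This is the value $L(F)$ restricted to the slice $\vartheta=\th$.

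Next I would argue that no other value of $\vartheta$ can do as well, i.e.\ that $\vartheta=\th$ is the strict maximizer of $\vartheta\mapsto\max_{\ph\in\mathcal{C}}\om(\ph,F^{sym}_{\vartheta})$, which is exactly Condition~B together with the identification $\th^*(F)=\th$. The natural route is: for arbitrary $\vartheta$, write $\max_{\ph\in\mathcal{C}}\om(\ph,F^{sym}_{\vartheta}) = \max_{\ph\in\mathcal{C}}\om(\ph, \tilde G_\vartheta)$ where $\tilde G_\vartheta = F^{sym}_\vartheta$ is a genuine symmetric (about $\vartheta$) distribution function, and recall that the maximum of $\om(\cdot,\mu)$ over $\mathcal{C}$ is the log-concave projection whose maximized value can be compared across $\mu$ via the Kullback--Leibler / convexity structure from \cite{dumbreg}. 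Since $F$ is already symmetric about $\th$, for $\vartheta\neq\th$ the symmetrized measure $F^{sym}_\vartheta$ is a strict symmetrization of $F$ about the "wrong" point, which must strictly decrease the attainable value of $\om$ (this is where one uses strict concavity of the projection objective, or equivalently Jensen's inequality applied to the symmetrization operator together with the fact that $F$ itself is not symmetric about $\vartheta$). Concretely, I expect to invoke that $\Psi(\vartheta,\ps,F) = \Psi(\vartheta,\ps,F^{sym}_\vartheta)$ for every $\ps\in\mathcal{SC}_0$ (because the linear term $\int\ps(x-\vartheta)dF(x)$ is invariant under symmetrizing $F$ about $\vartheta$ when $\ps$ is even, and the penalty term does not see $F$ at all), so $\sup_\ps\Psi(\vartheta,\ps,F) = L(F^{sym}_\vartheta)$, and then bound $L(F^{sym}_\vartheta) < L(F^{sym}_\th) = L(F)$ for $\vartheta\neq\th$ using that log-concave projection contracts and that $F^{sym}_\vartheta\neq F = F^{sym}_\th$.

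Having established that $\th^*(F)=\th$ uniquely, Condition~B is verified by definition, and the formula for $\ps^*(F)$ follows immediately by plugging $\th^*(F)=\th$ into \eqref{MLE: maximizer of criterion} and applying Lemma~\ref{Lemma: projection theory for a distribution function F} once more (with $\th$ in place of the generic center), giving $\ps^*(F) = \argmax_{\ps\in\mathcal{SC}_0}\Psi(0,\ps,G) = \argmax_{\ph\in\mathcal{C}}\om(\ph,F^{sym}_\th) = \argmax_{\ph\in\mathcal{C}}\om(\ph,G)$, where $G = F(\cdot + \th)$ and the last equality uses $F^{sym}_\th = F$ so that the centered version of $F^{sym}_\th$ is $G$ itself; uniqueness of $\ps^*(F)$ is then automatic from Theorem~2.1(c) of \cite{dosssymmetric} or Proposition~2 of \cite{xuhigh}. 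Finally, if $F$ merely has a density in $\mathcal{S}_\th$, then \eqref{condition: symmetry} holds, so this case is subsumed.

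The main obstacle I anticipate is the strict inequality $L(F^{sym}_\vartheta) < L(F)$ for $\vartheta\neq\th$ — i.e.\ showing the symmetrization about a wrong center strictly hurts. Everything else is bookkeeping with the change of variables $\ps\leftrightarrow\ph=\ps(\cdot-\th)$ and invariance of the penalty term. For the strict inequality one must rule out the degenerate possibility that $F^{sym}_\vartheta$ and $F$ have the same log-concave projection despite being different measures; the cleanest handle is that the log-concave projection, viewed through the dual/variational characterization in \cite{dumbreg}, has a unique maximizer and the objective $\mu\mapsto\sup_\ph\om(\ph,\mu)$ together with the strict concavity in $\ph$ forces distinct projections whenever the measures differ on a set charged by the projected density — and $F$ not being symmetric about $\vartheta$ guarantees $F^{sym}_\vartheta\ne F$ on such a set. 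I would isolate this as the single nontrivial lemma-step and expect to borrow it essentially verbatim from the projection literature cited in the paper.
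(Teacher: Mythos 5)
Your reduction of the problem to comparing $\sup_{\ph\in\mathcal{C}}\om(\ph,F^{sym}_\vartheta)$ with $\sup_{\ph\in\mathcal{C}}\om(\ph,F)$ is a legitimate alternative frame and the identifications you make on the "$\vartheta=\th$" slice (via $F^{sym}_\th=F$ and Lemma~\ref{Lemma: projection theory for a distribution function F}) are correct. The paper, however, does not go through the projection-value comparison at all: it takes an arbitrary $\ps\in\mathcal{SC}_0$ competing at a wrong center $\th'$, constructs the symmetrized competitor $\ps_1(x)=2^{-1}\bigl(\ps(x+\th-\th')+\ps(x-\th+\th')\bigr)\in\mathcal{SC}_0$ at the true center $\th$, derives the exact identity $\Psi(\th',\ps,F)=\Psi(\th,\ps_1,F)+\int e^{\ps_1}-\int e^{\ps}$, and then applies AM--GM to see $\int e^{\ps_1}\leq\int e^{\ps}$ with equality iff $\ps(\cdot+\th-\th')=\ps(\cdot-\th+\th')$ a.e., which for a proper concave log-density forces $\th'=\th$. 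This pointwise construction yields strictness immediately and never touches the unconstrained projection.

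The step you flag as "the single nontrivial lemma-step" — strictness of $L(F^{sym}_\vartheta)<L(F)$ for $\vartheta\neq\th$ — is a genuine gap, and the mechanism you propose to close it does not work. You argue that uniqueness and "strict concavity in $\ph$" force \emph{distinct projections} when $F^{sym}_\vartheta\neq F$; but distinct maximizers do not imply distinct maximized \emph{values}, which is what you actually need, so this cannot be borrowed off-the-shelf from \cite{dumbreg}. If you want to stay on your route, the correct argument is: since $\om(\ph,\mu)$ is affine in $\mu$ and $F^{sym}_\vartheta=2^{-1}(F+\tilde F_\vartheta)$ where $\tilde F_\vartheta$ is $F$ reflected about $\vartheta$, one has $\sup_\ph\om(\ph,F^{sym}_\vartheta)\leq 2^{-1}\sup_\ph\om(\ph,F)+2^{-1}\sup_\ph\om(\ph,\tilde F_\vartheta)=\sup_\ph\om(\ph,F)$ by reflection invariance; equality would force the (unique) maximizers of $\om(\cdot,F)$ and $\om(\cdot,\tilde F_\vartheta)$ to coincide, i.e.\ a log-concave density simultaneously symmetric about $\th$ and about $\vartheta$, hence periodic — impossible for a proper concave log-density. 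This is essentially the AM--GM step of the paper in disguise, so once spelled out correctly your route and the paper's are equivalent in content; the paper's version is simply more elementary and avoids quoting any projection-value machinery.

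There is also a smaller notational issue: you write $\sup_\ps\Psi(\vartheta,\ps,F)=L(F^{sym}_\vartheta)$, but $L(\cdot)$ as defined in \eqref{maximum of criterion function} optimizes over both $\th$ and $\ps$, so this identity presupposes $\th^*(F^{sym}_\vartheta)=\vartheta$, i.e.\ the lemma you are trying to prove applied to the measure $F^{sym}_\vartheta$. What you actually have, and should write, is $L(\vartheta;F)=\sup_{\ph\in\mathcal{C}}\om(\ph,F^{sym}_\vartheta)$ via Lemma~\ref{lemma: symmetry of projection operator}; this avoids the circularity.
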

  
  As a corollary to Lemma~\ref{Lemma: projection for symmetric densities}, it follows that the density class $\mP_1$ satisfies Condition B.  The  following example, which is taken from  Example~$2.9$ of \cite{dumbreg}, helps in understanding the implications of Lemma~\ref{Lemma: projection for symmetric densities}. 
  \begin{example}\label{example: class of symmetric distributions}
 Consider a rescaled version of Student's  $t_2$ distribution, whose density takes the form $f(x)=g(x-\th)$,
with $g$ satisfying
\[g(x)=2^{-1}(1+x^2)^{-3/2},\quad x\in\RR.\]
 Lemma~\ref{Lemma: projection for symmetric densities}  implies that $\th^*(F)=\th$.  Lemma~\ref{Lemma: projection for symmetric densities}  also indicates that $\ps^*(F)$ can be found using the unrestricted log-concave projection theorem developed in \cite{dumbreg}. In fact, using  Example $2.9$ of \cite{dumbreg}, we derive that  $\ps^*(F)(x)=-|x|-\log 2$, which corresponds to  the Laplace density. 
  \end{example}

\begin{example}\label{example: class of bimodal symmetric distributions}
 This example focuses on some bimodal densities. First, we consider  a Gaussian mixture density
 \begin{equation}\label{mixture: normal}
  f(x)=2^{-1}(\varphi(x-2)+\varphi(x+2)).
  \end{equation} 
  Here $\varphi$ denotes the standard Gaussian density.  Since $f\in\mathcal{S}_0$, Lemma~\ref{Lemma: projection for symmetric densities} implies that $\th^*(F)=0$. As in Example~\ref{example: class of symmetric distributions}, $\ps^*(F)$ can be obtained using the log-concave projection illustrated in \cite{dumbreg}. Remark 2.11 of \cite{dumbreg} implies that there exists $z>2$ such that the projection $f^*(F)$ equals $f$ on $\RR\setminus[-z,z]$, and is constant on $[-z,z]$ (see Figure~\ref{Plot: mixture}). In addition, since $f^*(F)\in\mathcal{SLC}_0$, we have
  \begin{equation}\label{example 2: equation}
  \edint f^*(F)(x)dx=2zf(z)+2\lb 1-F(z)\rb,
  \end{equation}
 which implies
 \[1=z\lb \varphi(z-2)+\varphi(z+2)\rb+2-\Phi(z-2)-\Phi(z+2),\]
 where $\Phi$ is the standard Gaussian distribution function. Solving the above equation, we obtain that $z\approx 2.83$.
 
  Similarly, consider the following mixture of Laplace densities:
 \begin{equation}\label{mixture: laplace}
 f(x)=4^{-1}\lb e^{-|x-2|}+e^{-|x+2|}\rb.
 \end{equation}
 Its projection $f^*(F)$ can also be found by Remark 2.11 of \cite{dumbreg} which implies  that $f^*(F)$ equals $f$ on $\RR\setminus[-z,z]$, and is constant on $[-z,z]$, where $z> 2$. Using \eqref{example 2: equation}, which  holds in this case as well, we find that $z=2.61$.   Figure~\ref{Plot: mixture} displays the above bimodal densities and their symmetric log-concave projections.
 \begin{figure}[h]
 \centering
 \includegraphics[width=\textwidth]{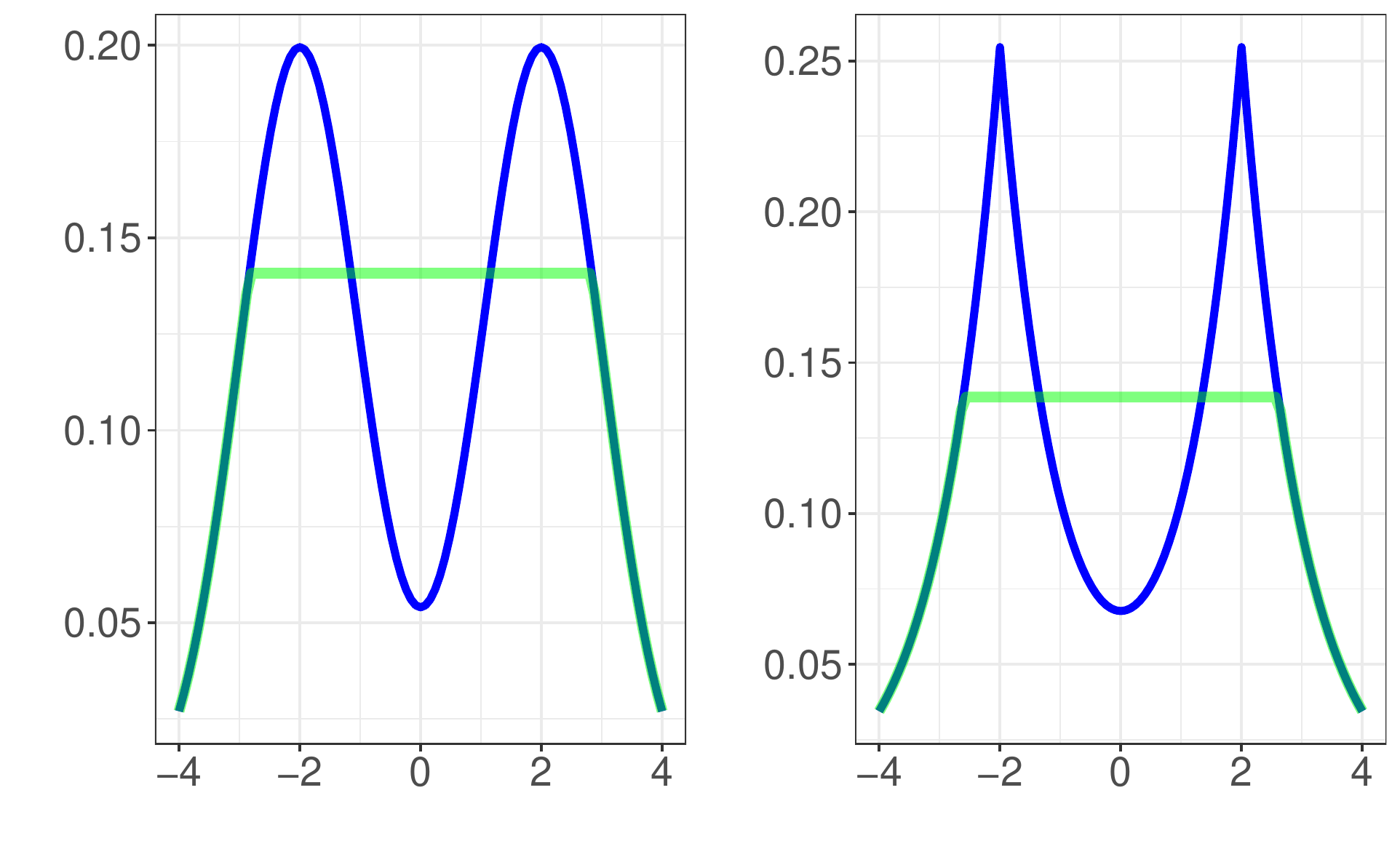}
 \caption{The plot in the left panel displays the density $f$ (in blue) in \eqref{mixture: normal} and its projection $f^*(F)$ (in green); the plot in the right panel displays the  density $f$ in \eqref{mixture: laplace} (drawn in blue) and the corresponding $f^*(F)$ (in green).}\label{Plot: mixture}
 \end{figure}
  \end{example}  
%
%
 Our next lemma gives some insight into the domain of $\ps^*(F).$   
  \begin{lemma}\label{lem: support of projection map}
Suppose Conditions A and B hold for a distribution function $F$.  Then  $\th^*(F)\in\overline{J(F)}$, where $J(F)=\{0<F<1\}$.  Moreover, letting $a$ and $b$ denote  the left and right end-points of the set $J(F)$ respectively, we have,
\[\iint(\dom(\ps^*(F)))=(-d,d),\] 
where $d=(b-\th^*(F))\wedge (\th^*(F)-a).$
\end{lemma}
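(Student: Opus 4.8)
The plan is to prove the two assertions in the order stated, the first being needed to guarantee $d\ge 0$. For the inclusion $\th^*(F)\in\overline{J(F)}$ I would argue by contradiction: suppose $\th^*(F)\notin\overline{J(F)}$ and, by symmetry, that $\th^*(F)>b$ (which forces $b<\infty$), so that $F$ is supported in $(-\infty,b]$, strictly to the left of $\th^*(F)$. Write $\ps^*=\ps^*(F)\in\mathcal{SC}_0$; it exists by Proposition~\ref{Prop: existence of maximizers} and $e^{\ps^*}$ is a density, and being a proper closed concave function symmetric about $0$ it attains its maximum at $0$ and is non-increasing on $[0,\infty)$. Fix $0<\epsilon<\th^*(F)-b$ and set $\th'=\th^*(F)-\epsilon$. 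For every $x$ in the support of $F$ we have $x\le b<\th'<\th^*(F)$, hence $|x-\th'|<|x-\th^*(F)|$, and monotonicity of $\ps^*$ gives $\ps^*(x-\th')\ge\ps^*(x-\th^*(F))$; integrating against $F$ and using $\int e^{\ps^*(x-\th')}\,dx=\int e^{\ps^*(x-\th^*(F))}\,dx$ yields $\Psi(\th',\ps^*,F)\ge\Psi(\th^*(F),\ps^*,F)=L(F)$. Thus $(\th',\ps^*)$ is also a maximiser of $\Psi(\cdot,\cdot,F)$, so $\th'=\th^*(F)$ by Condition~B, contradicting $\th'<\th^*(F)$. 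Hence $\th^*(F)\in\overline{J(F)}=[a,b]$, so $b-\th^*(F)\ge0$ and $\th^*(F)-a\ge0$.

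For the second assertion, the key reduction is that $\ph^*(F)=\ps^*(F)(\mathord{\cdot}-\th^*(F))$ equals $\argmax_{\ph\in\mathcal{SC}_{\th^*(F)}}\om(\ph,F)$, which by Lemma~\ref{Lemma: projection theory for a distribution function F} (equivalently Lemma~\ref{Lemma: projection nw}) is the \emph{unconstrained} log-concave projection of the symmetrised distribution $F^{sym}_{\th^*(F)}$ of \eqref{def: symmetrized F}; note $F^{sym}_{\th^*(F)}$ inherits Condition~A from $F$ (it is non-degenerate with finite first moment), so the projection theory of \cite{dumbreg} is available. The step I would single out is the structural fact that $\iint(\dom(\ph^*(F)))$ is the interior of the closed convex hull of the support of $F^{sym}_{\th^*(F)}$: the inclusion ``$\subseteq$'' is immediate since $\om(\ph^*(F),F^{sym}_{\th^*(F)})>-\infty$ forces the support of $F^{sym}_{\th^*(F)}$ into $\overline{\dom(\ph^*(F))}$, while the reverse (``no wasted mass'') follows from a short truncation argument --- restricting $e^{\ph^*(F)}$ to that convex hull and renormalising keeps it log-concave and strictly increases $\om(\cdot,F^{sym}_{\th^*(F)})$ unless no mass lay outside --- or may be quoted from \cite{dumbreg}. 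Since $\overline{J(F)}=[a,b]$ is the convex hull of the support of $F$, the convex hull of the support of $F^{sym}_{\th}$ is that of $[a,b]\cup(2\th-[a,b])$; evaluating this at $\th=\th^*(F)$, undoing the shift $\dom(\ps^*(F))=\dom(\ph^*(F))-\th^*(F)$, and using $\th^*(F)\in[a,b]$ from the first part, one obtains $\iint(\dom(\ps^*(F)))=(-d,d)$.

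The perturbation argument in the first part is elementary. The crux is the ``support equals convex hull of the support'' property of the log-concave projection used in the second part: it is exactly there that Condition~A, carried over from $F$ to $F^{sym}_{\th^*(F)}$, is needed --- to ensure the projection exists, integrates to one, and places no mass outside the convex hull of the support of $F^{sym}_{\th^*(F)}$. A secondary point requiring care is the bookkeeping between the two centrings --- $\ps^*(F)$ is symmetric about $0$, while $\ph^*(F)$ and $F^{sym}_{\th^*(F)}$ are symmetric about $\th^*(F)$ --- together with tracking the position of $\th^*(F)$ relative to $a$ and $b$, which is what makes the endpoints of $\dom(\ps^*(F))$ come out as claimed.
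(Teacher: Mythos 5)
Your argument is correct, and the first half overlaps with the paper's essentially exactly, while the second half is routed somewhat differently. For the containment $\th^*(F)\in\overline{J(F)}$ the paper shows that $\th\mapsto L(\th;F)$ is non-decreasing on $(-\infty,a]$ and non-increasing on $[b,\infty)$ by comparing $\Psi(\th,\ps,F)$ to $\Psi(\th',\ps,F)$ using that $\ps\in\mathcal{SC}_0$ is non-increasing away from $0$; your perturbation/contradiction with $\ps^*$ and Condition~B is the same idea phrased pointwise. For the description of $\iint(\dom(\ps^*(F)))$ the paper invokes Lemma~\ref{lemma: domain of xuhigh type MLE}, which works directly with the first-order characterization of the constrained projection (Proposition~5(ii) of \cite{xuhigh}); you instead pass through the unconstrained projection of the symmetrized distribution $F^{sym}_{\th^*(F)}$ via Lemma~\ref{Lemma: projection nw} and the ``domain equals convex hull of support'' property of that projection. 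Both routes are legitimate; yours reuses a standard fact from \cite{dumbreg}, while the paper's stays entirely on the constrained side.

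Two smaller points. First, your convex-hull computation, $[a,b]\cup(2\th^*(F)-[a,b])$ shifted by $-\th^*(F)$, yields $d=(b-\th^*(F))\vee(\th^*(F)-a)$, a maximum, not the minimum $\wedge$ appearing in the lemma statement. The paper itself uses the maximum elsewhere (Lemma~\ref{lemma: domain of xuhigh type MLE} and the $\max$ in the proof of Theorem~\ref{theorem: almost sure convergence: general theorem of the MLE}), so the $\wedge$ in the statement of Lemma~\ref{lem: support of projection map} is a typo; your derivation gives the correct $d$, but you should have noticed and flagged that it does not literally match the statement as written. Second, in your sketch of the ``domain equals convex hull of support'' fact the two inclusions are tagged backwards: the argument from $\om(\ph^*(F),F^{sym}_{\th^*(F)})>-\infty$ forces the support of $F^{sym}_{\th^*(F)}$ into $\overline{\dom(\ph^*(F))}$, which proves that the convex hull is contained in $\overline{\dom(\ph^*(F))}$, while the truncation/renormalisation argument proves $\dom(\ph^*(F))$ is contained in the convex hull. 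The arguments themselves are sound; only the labels ``$\subseteq$'' and ``reverse'' are swapped.
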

  Lemma~\ref{lem: support of projection map} implies  that for a general $F_0$, the domain of $\ps_0$ is contained the domain of $\ps^*(F_0)$. In  the special case when $\ps_0\in\mathcal{S}_0$, which is the case for $f_0\in\mP_1$, we have $b-\th^*(F)=\th^*(F)-a$, leading to
\[\iint(\dom(\ps^*(F_0)))=\iint(\dom(\ps_0)).\] 
On the other hand, substituting $F=\Fm$ in Lemma~\ref{lem: support of projection map}, we obtain that $\hthm\in[X_{(1)},X_{(n)}]$, which was already stated in Theorem~\ref{theorem: existence}.

Now we move on to the consistency properties of the MLE, which is connected with  the continuity of the maximized criterion function $F\mapsto L(F)$ defined in \eqref{maximum of criterion function}.
 As mentioned earlier in Section \ref{sec: asymptotics: one step estimator}, \cite{dumbreg}  considered projection of $F$  onto the class of distributions with density in $\mathcal{LC}$. 
  \cite{dumbreg} showed that under  some regularity conditions, the maximized criterion function corresponding to their projection operator 
  is continuous with respect to  the Wasserstein distance $d_W$.  \cite{xuhigh} obtained similar continuity result  in context of  $\mathcal{SLC}_{0}$, which is a subclass of $\mathcal{LC}$. We are working with a further subclass $\mP_0$. In light of the above, it is not unnatural to expect the same continuity result to hold in our set-up. Indeed, our Theorem~\ref{theorem: almost sure convergence: general theorem of the MLE} establishes the continuity of the map $F\mapsto L(F)$ at an $F$ satisfying Conditions A and B. Moreover, the map $F\mapsto\th^*(F)$ is also continuous in this case.

\begin{theorem}\label{theorem: almost sure convergence: general theorem of the MLE}
 Let $\{F_n\}_{n\geq 1}$ be a sequence of distribution functions such that $d_W(F_n,F)\to 0$ for some distribution function $F$. Further suppose, $F$ and $F_n$  satisfy  Conditions A and B for each $n\geq 1$.  Then as $n\to\infty$,  
 \begin{equation}\label{Convergence: maximized criterion function: Fn}
 L(F_n)\to L(F),
 \end{equation} 
and the following assertions hold:
 \begin{enumerate}[label=(\alph*)]
 \item $\lim\limits_n\th^*(F_n)=\th^*(F).$
  \item $\lim\limits_n \ps^*(F_n)(x)=\ps^*(F)(x)$ for all $x\in\iint(\dom(\ps^*(F))).$
 \item  $\limsup_n \ps^*(F_n)(x)\leq \ps^*(F)(x)$ for all $x\in\RR.$
 \item $\lim\limits_n g^*(F_n)(x)= g^*(F)(x)$ for all $x\in\RR.$
 \item There exists $\alpha>0$ such that  for all $\alpha_1<\alpha$,
 \[\lim_n\edint e^{\alpha_1|x|} |g^*(F_n)(x)-g^*(F)(x)|dx= 0.\]
Moreover, if $g^*(F)$ is continuous on $\RR$, then as $n\to\infty$,
  $$\sup_{x\in\RR}e^{\alpha_1|x|}|g^*(F_n)(x)-g^*(F)(x)|\to 0.$$
 \end{enumerate}
\end{theorem}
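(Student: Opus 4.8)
The plan is to establish the continuity statement \eqref{Convergence: maximized criterion function: Fn} first, and then bootstrap the pointwise/uniform convergence of the maximizers from it, following the template of Theorem~2.7 and Theorem~2.15 of \cite{dumbreg} but routed through the symmetrization device of Lemma~\ref{Lemma: projection nw}. The first observation is that $d_W(F_n,F)\to 0$ implies $d_W((F_n)^{sym}_{\th},F^{sym}_{\th})\to 0$ \emph{uniformly} in $\th$ over compact sets, since $F^{sym}_{\th}$ is built from $F$ by reflection about $\th$, and reflection is a $d_W$-isometry; more precisely, $d_W((F_n)^{sym}_{\th},F^{sym}_{\th})\leq d_W(F_n,F)$ for every $\th$. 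Combined with Lemma~\ref{Lemma: projection nw}, which writes $L(F)=\sup_{\th}\max_{\ph\in\mathcal{C}}\om(\ph,F^{sym}_{\th})$, the problem reduces to understanding how $\max_{\ph\in\mathcal{C}}\om(\ph,\cdot)$ — the value of the \emph{unconstrained} log-concave projection — behaves under $d_W$-convergence, which is exactly what \cite{dumbreg} (Theorem~2.7, and the lower-semicontinuity/continuity statements surrounding it) provides. So the inner maximum is continuous in $\th$ (jointly with the distribution), and one then has to take a supremum over $\th$; here Lemma~\ref{lem: support of projection map} confines the relevant $\th$ to $\overline{J(F_n)}$, and $d_W$-convergence plus Condition~A (finite first moment, non-degeneracy) forces these sets to stay in a fixed compact interval eventually, so the outer sup is effectively over a compact set and passes to the limit. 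This yields \eqref{Convergence: maximized criterion function: Fn}.

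**Next I would** prove part (a), the convergence $\th^*(F_n)\to\th^*(F)$. Let $\th_n=\th^*(F_n)$; by the compactness just described, any subsequence has a further subsequence with $\th_{n_k}\to\th_\infty$ for some $\th_\infty$. Along this subsequence, $d_W((F_{n_k})^{sym}_{\th_{n_k}},F^{sym}_{\th_\infty})\to 0$ (triangle inequality: reflect-isometry bound plus continuity of $\th\mapsto F^{sym}_\th$ in $d_W$, which holds because $F$ has a finite first moment). Using the value-continuity of the unconstrained projection again, $\max_\ph\om(\ph,(F_{n_k})^{sym}_{\th_{n_k}})\to\max_\ph\om(\ph,F^{sym}_{\th_\infty})$; but the left side equals $L(F_{n_k})\to L(F)$, so $\max_\ph\om(\ph,F^{sym}_{\th_\infty})=L(F)$, i.e. $\th_\infty$ achieves the sup defining $\th^*(F)$. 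By Condition~B the maximizer is unique, so $\th_\infty=\th^*(F)$; since every subsequence has a sub-subsequence converging to the same limit, $\th_n\to\th^*(F)$.

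**For parts (b)--(e)** I would combine part (a) with the corresponding pointwise/uniform convergence results for the unconstrained log-concave projection. Concretely, once $\th^*(F_n)\to\th^*(F)$ and $d_W((F_n)^{sym}_{\th^*(F_n)},F^{sym}_{\th^*(F)})\to 0$, Lemma~\ref{Lemma: projection nw} identifies $\ps^*(F_n)=\argmax_{\ph\in\mathcal C}\om(\ph,(F_n)^{sym}_{\th^*(F_n)})$, and I apply Theorem~2.15 (and the attendant pointwise-convergence and exponentially-weighted $L_1$ statements) of \cite{dumbreg} to the sequence of symmetrized distribution functions. This gives (b) pointwise convergence on the interior of the domain, (c) the $\limsup$ bound everywhere (a standard consequence of concavity plus pointwise convergence on a dense set), (d) pointwise convergence of the densities $g^*(F_n)=e^{\ps^*(F_n)}$, and (e) the exponential tail bounds — the exponent $\alpha$ coming from the uniform log-linear upper envelope for log-concave densities (Lemma~1 of \cite{theory}), with a uniform-in-$n$ version available because the $(F_n)^{sym}_{\th^*(F_n)}$ stay in a $d_W$-compact family with uniformly bounded first moments. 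The uniform (sup-norm, weighted) statement in (e) follows from (d), the exponential domination, and a standard argument upgrading pointwise convergence of uniformly tail-dominated continuous functions to uniform convergence (continuity of the limit $g^*(F)$ is assumed there).

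**The main obstacle** I anticipate is the interchange of the supremum over $\th$ with the limit in $n$ — specifically, ruling out the possibility that the maximizing $\th_n$ escapes to infinity or sits near the boundary of $J(F_n)$ in a way not controlled by $d_W$ alone. This is where Condition~A (finite first moment ensuring $d_W$ genuinely controls tails, non-degeneracy ensuring $L(F_n)$ does not blow up as in the degenerate example of Section~\ref{sec: MLE}) and Lemma~\ref{lem: support of projection map} (localizing $\th^*(F_n)$ to $\overline{J(F_n)}$) must be used carefully; a related subtlety is that $\th\mapsto\max_\ph\om(\ph,F^{sym}_\th)$ must be shown to be, in effect, coercive or at least to attain its sup on a compact set uniformly along the sequence. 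A secondary technical point is verifying that the exponent $\alpha$ in (e) can be chosen uniformly over the convergent sequence rather than depending on $n$; this requires a uniform version of the log-linear envelope, which should follow from the $d_W$-tightness of $\{(F_n)^{sym}_{\th^*(F_n)}\}$ together with the quantitative form of Lemma~1 of \cite{theory}. Everything else is a faithful transcription of the unconstrained projection continuity theory through the symmetrization identity.
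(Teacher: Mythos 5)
Your symmetrization reduction is a genuinely different route from the paper's direct argument, and the observation that $d_W((F_n)^{sym}_\theta, F^{sym}_\theta)\le d_W(F_n,F)$ uniformly in $\theta$ is a clean contraction fact that would make parts (b)--(e) follow from the unconstrained log-concave projection theory of \cite{dumbreg} once part (a) is established. The genuine gap is at the point you yourself flag as the main obstacle. You argue that $\theta^*(F_n)$ stays in a fixed compact interval because Lemma~\ref{lem: support of projection map} places it in $\overline{J(F_n)}$ and these sets ``stay in a fixed compact interval eventually''; this is false. For any $F_n$ whose density is positive on all of $\mathbb{R}$ (a Gaussian, say), $J(F_n)=\{0<F_n<1\}=\mathbb{R}$, so $\overline{J(F_n)}=\mathbb{R}$ and the localization is vacuous. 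Without an independent boundedness argument, the subsequence-plus-uniqueness argument for (a) cannot get started, and the interchange of $\sup_\theta$ with $\lim_n$ in \eqref{Convergence: maximized criterion function: Fn} is unjustified.

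What the paper actually supplies is a coercivity estimate that does not rely on $J(F_n)$ at all: Lemma~\ref{lemma: bound on phi: slc} gives $\psi(x)\le-\log|2x|$ for every symmetric log-concave log-density $\psi$, so $L(\theta;F_{n_k})\le-\int\log(2|x-\theta|)\,dF_{n_k}(x)-1$, and combined with the tightness and first-moment convergence coming from $d_W(F_n,F)\to 0$ this forces $L(\theta_k;F_{n_k})\to-\infty$ whenever $\theta_k\to\pm\infty$, contradicting the lower bound $\liminf_k L(F_{n_k})>-\infty$ obtained by testing the criterion with $\psi(x)=-|x|$ (steps (A) and (C) in the paper's proof). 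You would need to import an argument of this type; the $\overline{J(F_n)}$ justification does not substitute for it. A secondary gap is part (e): you invoke a uniform-in-$n$ exponential envelope on $\psi^*(F_n)$ but do not construct it. The paper obtains it from Lemma~3 of \cite{exist} together with a uniform lower bound on $\psi^*(F_n)(0)$, which is itself a nontrivial step (the paper's step (D)) that your sketch leaves unaddressed. With these two pieces supplied, the symmetrization route through the \cite{dumbreg} machinery should close, and it is a nice conceptual simplification of parts (b)--(e).
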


Now we are in a situation to address the convergence  of $\hthm$ and $\hgm$. Suppose $F_0$  satisfies Conditions A and B. Then Theorem~\ref{theorem: almost sure convergence of the MLE :specific theorem} entails that  $\hthm\as\th^*(F_0)$ and $\hgm$ converges to $e^{\ps^*(F_0)}$  almost surely, both pointwise and in $L_1$.  Theorem~\ref{theorem: almost sure convergence of the MLE :specific theorem} follows as a corollary of  Theorem~\ref{theorem: almost sure convergence: general theorem of the MLE}.  This can be verified easily observing that under Condition A, $d_W(\Fm,F_0)\as 0$ \citep[][Theorem $6.9$]{villani2009}, and $\mathbb{F}_n$ is non-degenerate with probability $1$ for large $n$.

\begin{theorem}\label{theorem: almost sure convergence of the MLE :specific theorem}
Suppose   $F_0$ satisfies Conditions A and B.
   Then as $n\to\infty$,
   $L(\mathbb{F}_n)\as L(F_0)$, and  the following assertions hold: 
 \begin{enumerate}[label=(\alph*)]
 \item $\hthm\as \th^*(F_0)$.
  \item $ \hpm(x)\as\ps^*(F_0)(x)$,  for all $x\in\iint(\dom(\ps^*(F_0)))$.
 \item  $\limsup_n \hpm(x)\leq \ps^*(F_0)(x)$,  for all $x\in\RR$ a.s.
 \item $ \hgm(x)\as g^*(F_0)(x)$, for all $x\in\RR.$
 \item There exists $\alpha>0$ such that  for all $\alpha_1<\alpha$,
 \[\edint e^{\alpha_1|x|} |\hgm(x)-g^*(F_0)(x)|dx\as 0.\]
 Moreover, if $g^*(F_0)$ is continuous on $\RR$, 
  $$\sup_{x\in\RR}e^{\alpha_1|x|}|\hgm(x)-g^*(F_0)(x)|\as 0.$$
 \end{enumerate}
\end{theorem}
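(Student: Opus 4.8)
The plan is to derive Theorem~\ref{theorem: almost sure convergence of the MLE :specific theorem} as a direct corollary of the general continuity statement in Theorem~\ref{theorem: almost sure convergence: general theorem of the MLE} by feeding in the empirical distribution function $\mathbb{F}_n$ as the perturbed sequence and $F_0$ as the limit. The two facts that make this work are already flagged in the excerpt: first, under Condition~A, $\int|x|dF_0(x)<\infty$, so by the Wasserstein convergence of the empirical measure (e.g. \citealp[Theorem~6.9]{villani2009} together with the strong law applied to $\int |x|\,d\mathbb{F}_n$) we have $d_W(\mathbb{F}_n,F_0)\as 0$; second, since $F_0$ is non-degenerate under Condition~A, with probability one $\mathbb{F}_n$ is non-degenerate for all large $n$. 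So first I would record these two facts, and argue that on the almost-sure event where both hold, we are exactly in the hypothesis regime of Theorem~\ref{theorem: almost sure convergence: general theorem of the MLE}, except for one gap: that theorem requires $\mathbb{F}_n$ to satisfy Condition~B for each $n$.

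To close that gap I would invoke Lemma~\ref{Lemma: projection for symmetric densities}, or rather its proof mechanism at the level of the empirical measure. The subtlety is that a generic empirical distribution $\mathbb{F}_n$ need not be symmetric, so Lemma~\ref{Lemma: projection for symmetric densities} does not apply verbatim; however, by Theorem~\ref{theorem: existence} we already know that when $\mathbb{F}_n$ is non-degenerate the MLE $(\hthm,\hpm)$ \emph{exists}, i.e.\ $\Psi_n(\th,\pst)$ attains its maximum over $\th$, and for each fixed maximizing $\th$ the inner maximizer $\pst$ is unique by Theorem~2.1(c) of \cite{dosssymmetric}. The only thing Condition~B additionally demands is uniqueness of the \emph{outer} maximizer $\th^*(\mathbb{F}_n)=\hthm$. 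Here I would remark that all our theorems were stated to hold for each version of $\hthm$, and more to the point, the conclusions (a)--(e) of Theorem~\ref{theorem: almost sure convergence: general theorem of the MLE} are really statements about convergence to $\th^*(F_0)$ and $\ps^*(F_0)$, which \emph{are} unique because $F_0$ satisfies Condition~B; so the argument only needs the limit point to be uniquely pinned down, not the approximating sequence. I would therefore state the corollary for an arbitrary measurable selection $\hthm\in\argmax_\th\Psi_n(\th,\pst)$ and note that the proof of Theorem~\ref{theorem: almost sure convergence: general theorem of the MLE} goes through with $F_n=\mathbb{F}_n$ replaced by such a selection, since that proof is based on compactness of the set of candidate maximizers and upper semicontinuity of the criterion along $d_W$-convergent sequences, neither of which uses uniqueness of $\th^*(F_n)$.

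Concretely, the steps in order would be: (1) fix the almost-sure event $\mathcal{A}$ on which $d_W(\mathbb{F}_n,F_0)\to 0$ and $\mathbb{F}_n$ is eventually non-degenerate; (2) on $\mathcal{A}$, apply Theorem~\ref{theorem: existence} to get existence of $(\hthm,\hpm)$ and identify $\hthm$ with (a selection from) $\th^*(\mathbb{F}_n)$ and $\hpm$ with $\ps^*(\mathbb{F}_n)$ via the definitions in \eqref{MLE: maximizer of criterion}; (3) observe $\mathbb{F}_n$ satisfies Condition~A for large $n$ (finite support implies finite first moment) and that $F_0$ satisfies Conditions~A and~B by hypothesis; (4) invoke Theorem~\ref{theorem: almost sure convergence: general theorem of the MLE} along the sequence $\{\mathbb{F}_n\}$ to obtain $L(\mathbb{F}_n)\to L(F_0)$ and conclusions (a)--(e) with $\th^*(\mathbb{F}_n)\to\th^*(F_0)$, $\ps^*(\mathbb{F}_n)\to\ps^*(F_0)$ etc., noting that these pointwise and weighted-$L_1$ limits are exactly the assertions (a)--(e) of the present theorem after rewriting $\th^*(\mathbb{F}_n)=\hthm$, $g^*(\mathbb{F}_n)=\hgm$; (5) conclude that the convergences hold on $\mathcal{A}$, hence almost surely.

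The main obstacle I anticipate is item (2): making the identification between ``the MLE $(\hthm,\hpm)$ as defined in Section~\ref{sec: MLE}'' and ``$(\th^*(\mathbb{F}_n),\ps^*(\mathbb{F}_n))$ as defined in \eqref{MLE: maximizer of criterion}'' fully rigorous when $\th^*$ is not known to be unique, and in particular ensuring that a jointly measurable selection of $\hthm$ exists so that $\hthm$ is a bona fide random variable and statements like $\hthm\as\th^*(F_0)$ make sense. This is handled by the continuity/compactness structure: on $\mathcal{A}$ the argmax set is a nonempty compact subset of $[X_{(1)},X_{(n)}]$ (by Theorem~\ref{theorem: existence}), and any measurable selection works because Theorem~\ref{theorem: almost sure convergence: general theorem of the MLE}(a) forces every such selection to converge to the single point $\th^*(F_0)$; thus the limit is selection-independent and the statement is unambiguous. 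Everything else is bookkeeping: translating the $F$-indexed limits of Theorem~\ref{theorem: almost sure convergence: general theorem of the MLE} into the $n$-indexed limits claimed here, and observing the constant $\alpha$ is the same one from Lemma~1 of \cite{theory} applied to $g^*(F_0)\in\mathcal{SLC}_0$.
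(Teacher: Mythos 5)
Your proposal is correct and follows exactly the paper's own route: treat Theorem~\ref{theorem: almost sure convergence of the MLE :specific theorem} as a corollary of Theorem~\ref{theorem: almost sure convergence: general theorem of the MLE} by taking $F_n=\mathbb{F}_n$, using $d_W(\mathbb{F}_n,F_0)\as 0$ under Condition~A (via Theorem~6.9 of \cite{villani2009}) and eventual non-degeneracy of $\mathbb{F}_n$. The one thing you do that the paper's terse proof does not is explicitly notice and close the hypothesis mismatch — Theorem~\ref{theorem: almost sure convergence: general theorem of the MLE} as stated asks that each $F_n$ satisfy Condition~B, which is not guaranteed for $\mathbb{F}_n$ — by observing that the subsequence argument in that proof never uses uniqueness of $\th^*(F_n)$ (only of $\th^*(F)$, which holds by hypothesis), so any measurable selection of $\hthm$ works; this is a genuine, if small, improvement in rigor over what the paper writes down.
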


Theorem~\ref{theorem: almost sure convergence of the MLE :specific theorem} has some important consequences for $F_0$ satisfying \eqref{condition: symmetry} with $\th_0\in\RR$. Note that  $f_0\in\mP_1$  corresponds to such a distribution.
 Lemma~\ref{Lemma: projection for symmetric densities} implies that Condition B holds for $F_0$ satisfying \eqref{condition: symmetry} with $\th_0\in\RR$, which leads to the following corollary.
 
 \begin{corollary}\label{corollary: almost sure convergence of the MLE :symmetry}
If $F_0$ has  density $f_0\in\mP_1$,  all  conclusions of Theorem~\ref{theorem: almost sure convergence of the MLE :specific theorem} hold with $\th^*(F_0)=\th_0$ and $f^*(F_0)=f_0$.
\end{corollary}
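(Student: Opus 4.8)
The plan is to derive Corollary~\ref{corollary: almost sure convergence of the MLE :symmetry} directly from Theorem~\ref{theorem: almost sure convergence of the MLE :specific theorem} by verifying its two hypotheses---Conditions A and B---for a distribution function $F_0$ whose density $f_0$ lies in $\mP_1$, and then identifying the limits $\th^*(F_0)$ and $f^*(F_0)$. First I would recall that any $f_0\in\mP_1$ is, by the definition of $\mP_1$ in \eqref{definition: model P1}, symmetric about some $\th_0$, bounded above, and has a finite first absolute moment; the symmetry gives $F_0(x)=1-F_0(2\th_0-x)$, and the integrability $\int|x|f_0(x)dx<\infty$ together with $f_0$ being a genuine (non-degenerate) density shows $F_0$ satisfies Condition~A.

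Next I would invoke Lemma~\ref{Lemma: projection for symmetric densities}: since $F_0$ satisfies \eqref{condition: symmetry} with $\th=\th_0$, that lemma yields both that Condition~B holds for $F_0$ and that $\th^*(F_0)=\th_0$, with $\ps^*(F_0)=\argmax_{\ph\in\mathcal{C}}\om(\ph,G_0)$ where $G_0=F_0(\mathord{\cdot}+\th_0)$. So both hypotheses of Theorem~\ref{theorem: almost sure convergence of the MLE :specific theorem} are in force, and all of its conclusions (a)--(e) apply verbatim. The remaining point is to check that $f^*(F_0)=f_0$. Because $f_0\in\mP_1\subseteq\mP_0$ is itself log-concave and symmetric about $\th_0$, the function $\ph_0=\log f_0$ already belongs to $\mathcal{SLC}_{\th_0}$, equivalently $\ps_0=\log g_0\in\mathcal{SC}_0$; since $f_0$ integrates to one, $\ph_0$ maximizes $\om(\mathord{\cdot},F_0)$ over $\mathcal{C}$ (equivalently $\ps_0$ maximizes $\om(\mathord{\cdot},G_0)$), by the same argument that underlies the log-concave projection being the identity on log-concave densities---see Theorem~2.7 of \cite{dumbreg} or the characterization underlying Lemma~\ref{Lemma: projection theory for a distribution function F}. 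Hence $\ps^*(F_0)=\ps_0$, $g^*(F_0)=g_0$, and $f^*(F_0)=f_0$, and $g^*(F_0)$ is continuous on $\RR$ since log-concave densities are continuous on the interior of their support and the conclusion there is interpreted appropriately; substituting these identifications into Theorem~\ref{theorem: almost sure convergence of the MLE :specific theorem} gives the corollary.

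I do not expect any real obstacle here, as this is essentially a bookkeeping corollary: the substance lives in Theorem~\ref{theorem: almost sure convergence of the MLE :specific theorem}, Lemma~\ref{Lemma: projection for symmetric densities}, and the projection theory of \cite{dumbreg}. The only point requiring a line of care is the claim $f^*(F_0)=f_0$, i.e.\ that the symmetric log-concave projection fixes densities already in $\mathcal{SLC}_{\th_0}$; this follows because $\om(\mathord{\cdot},G_0)$ restricted to $\ps\in\mathcal{SC}_0$ with $\int e^{\ps}=1$ is maximized precisely at $\ps=\log g_0$ (the Kullback--Leibler projection of a log-concave density onto $\mathcal{LC}$ is itself), and the unconstrained maximizer over $\mathcal{C}$ automatically inherits the symmetry by Lemma~\ref{Lemma: projection theory for a distribution function F} since $(G_0)^{sym}_0=G_0$. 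A secondary minor point is the continuity clause in part (e): one should note that $g_0=g^*(F_0)$ need not be continuous at the boundary of its support in general, so the supremum statement in (e) is used only in the form already granted by the theorem, with ``continuous on $\RR$'' holding whenever $g_0$ has full support or is continuous there.
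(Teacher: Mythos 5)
Your verification of Conditions A and B, and the identification $\th^*(F_0)=\th_0$ via Lemma~\ref{Lemma: projection for symmetric densities}, are correct and follow the paper's route exactly. The gap is in the second half: you assert ``$f_0\in\mP_1\subseteq\mP_0$ is itself log-concave,'' but the inclusion runs the other way. The paper states explicitly, right after \eqref{definition: model P1}, that $\mP_0\subsetneq\mP_1$: membership in $\mP_1$ requires only symmetry, boundedness, and a finite first absolute moment, with no log-concavity constraint at all. Consequently a generic $f_0\in\mP_1$ is not log-concave, and your argument that $\ph_0$ already lies in $\mathcal{SLC}_{\th_0}$ (hence maximizes $\om(\mathord{\cdot},F_0)$, hence $f^*(F_0)=f_0$) does not go through. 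Indeed $f^*(F_0)=e^{\ph^*(F_0)}$ is log-concave by construction, so it cannot equal a non-log-concave $f_0$. The paper's own Example~\ref{example: class of symmetric distributions} exhibits this concretely: the rescaled $t_2$ density lies in $\mP_1$, yet its projection $f^*(F)$ is the Laplace density, not $t_2$.

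Worth flagging: the corollary as printed commits the same overreach---it asserts $f^*(F_0)=f_0$ for all $f_0\in\mP_1$, and the paragraph that follows repeats that $\hgm\to g_0$, both of which are false for $f_0\in\mP_1\setminus\mP_0$ by the $t_2$ example above. The substantive content that holds for all of $\mP_1$ is exactly what the first half of your proof delivers: Conditions A and B are satisfied, $\th^*(F_0)=\th_0$, and the conclusions of Theorem~\ref{theorem: almost sure convergence of the MLE :specific theorem} hold with $\hthm\as\th_0$ and $\hgm\to g^*(F_0)$, where $g^*(F_0)$ is the symmetric log-concave projection of $g_0$. The additional identity $g^*(F_0)=g_0$ (equivalently $f^*(F_0)=f_0$) holds precisely when $f_0$ is itself log-concave, i.e.\ $f_0\in\mP_0$; your projection argument is then valid, and that is the case actually used in the later proofs of Theorem~\ref{theorem: almost sure convergence of the MLE : true model} and Theorem~\ref{MLE: Rate results}. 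A correct rendering of the corollary should therefore either restrict to $f_0\in\mP_0$ for the claim $f^*(F_0)=f_0$, or state only $\th^*(F_0)=\th_0$ for $f_0\in\mP_1$ and add $f^*(F_0)=f_0$ as an additional conclusion under the stronger hypothesis $f_0\in\mP_0$.
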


 For $f_0\in\mP_1$, not only $\hthm$ is a consistent estimator of $\th_0$, but also  $\hgm$ converges to $g_0=f_0(\mathord{\cdot}+\th_0)$ almost surely, both pointwise and in  $L_1$. Also, note that the results for the correctly specified model $\mP_0$ in Theorem~\ref{theorem: almost sure convergence of the MLE : true model} follow  directly from Theorem~\ref{theorem: almost sure convergence of the MLE :specific theorem} and Corollary \ref{corollary: almost sure convergence of the MLE :symmetry}.


\section{Simulation study}\label{sec: simulation}

   In this section, we study the performance of the  one step estimators \hth\ and the MLE \hthm\ for several  log-concave densities symmetric about $0$. These densities include the standard normal, standard logistic, and standard Laplace density, which are plotted in Figure~\ref{Figure: comparison of truncated information densities 1}(a).  We also consider the symmetrized beta distribution discussed in Section \ref{sec: asymptotics: one step estimator} (see Figure~\ref{Figure: comparison of truncated information densities 1}(b)), whose density is given by \eqref{definition: Symmetrized beta}. Taking $g_0$ to be any of the above densities, we let $\th_0=0$.
   
   The general set-up of the simulations is as follows. We generate $3000$ samples of size $n=30$, $100$, $200$, and $500$ from each of the above-mentioned densities. Then for each sample, we construct the MLE \hthm, and the one step estimators, both  truncated and  untruncated, defined in \eqref{def: one-step estimator: truncated} and \eqref{def: one-step estimator full} respectively. The truncated one step estimator requires specification of the truncation parameter $\eta$.  We set $\eta$ to be $0.002$ because  Figure~\ref{Figure: comparison of truncated information densities 1} indicates that the value of $\I(\eta)/\I$ does not vary significantly for $\eta\in(0.0001,0.002)$, at least for normal, logistic, and the Laplace distribution. 
   
    Different choices of the preliminary estimator $\bth$ and the density estimator $\hn$ in \eqref{def: one-step estimator full} and \eqref{def: one-step estimator: truncated}  lead to different one-step estimators. The preliminary estimator $\bth$ is generally taken to be the mean, median, or the trimmed mean ($12.5\%$ trimming from each tail). However, for the logistic distribution, we  also consider the parametric MLE as an initial estimator. 
   The density estimator $\hn$  is taken to be either  $\hf(\mathord{\cdot}-\bth)$, or  the symmetrized estimators  in \eqref{est 1}, \eqref{25eq3},  \eqref{def: partial MLE estimator},  and \eqref{est 5}.  The performance of the resulting estimators are compared with the preliminary estimator $\bth$.
   
  To compare the performance of different estimators, we need a measure of efficiency, which we formulate as follows:
   \begin{equation}\label{Def: efficiency}
   \text{Efficiency}(\th_n)=\dfrac{1/(n\I)}{Var(\th_n)},
    \end{equation} 
    where $\th_n$ is some estimator of $\th_0$.
    The reason behind this choice is that $\I^{-1}$ is the lower bound on the asymptotic variance in this problem. 
   Since we can not directly compute $Var(\th_n)$, we will replace it by its  Monte Carlo estimate. 
   
   Our simulations reveal that all our estimators are consistent for $\th_0$.   
  However, our simulations also suggest that among the one step estimators,   the smoothed symmetrized one step estimator in \eqref{25eq3} and the partial MLE estimator in  \eqref{def: partial MLE estimator} have the highest efficiency uniformly over all sample sizes. Therefore, in this section, among the one step estimators, we present the asymptotic efficiency of only these two one step estimators.

%

 
   \subsection{Normal density}
   In this case, $f_0$ is the standard Gaussian density, which implies $\I=1$, and $\th_0=0$. In case of the Gaussian location model, the sample mean is the parametric MLE, which is also  (asymptotically) efficient.
 Figure~\ref{Plot: normal: efficiency} compares the  efficiency of  the MLE, the partial MLE one-step estimator in \eqref{def: partial MLE estimator}, and the smoothed symmetrized one step estimator in \eqref{25eq3}. Some patterns become clear from Figure~\ref{Plot: normal: efficiency}.
 \begin{enumerate}
 \item As expected, the parametric MLE, i.e. the mean, has the highest efficiency.  Our estimators, however, outperform all other preliminary estimators.
 \item The untruncated one step estimators have higher efficiency than their truncated counterparts.
 \item Among our estimators, the smoothed symmetrized estimator has the best  efficiency for small samples. It turns out that it also slightly outperforms the other estimators in large sample ($n=500$) as well.
 \end{enumerate}
  
\begin{figure}[h]
\centering
\includegraphics[width= \textwidth]{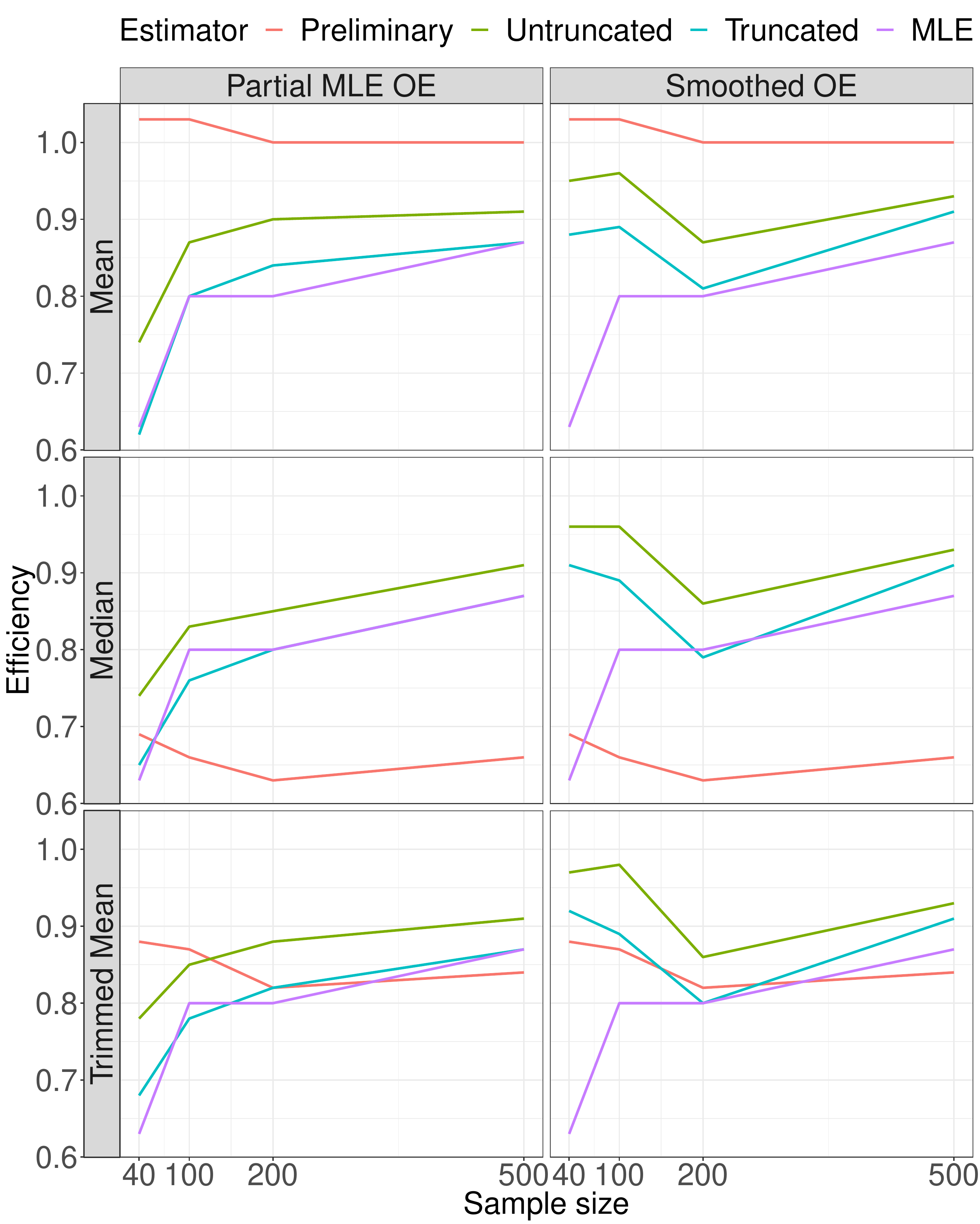}
\caption{Plot of efficiency vs sample size ($n$) for standard Gaussian density. We plotted the efficiency of   the smoothed symmetrized one step estimator in \eqref{25eq3} (Smoothed OE; right column),  the partial MLE one-step estimator  in  \eqref{def: partial MLE estimator} (Partial MLE OE; left column), and the MLE (in purple; present in both columns).  The preliminary estimators $\bth$ corresponding to the one-step estimators are the mean (top), median (middle), and trimmed mean (bottom). 
The preliminary estimator is drawn in red,  the truncated and untruncated estimators are drawn in blue and green respectively. 
}\label{Plot: normal: efficiency}
\end{figure}
\subsection{Logistic density}

The standard logistic density has $\I=1/3$. In this case along with the  mean,  the median, and the trimmed mean, we also consider the parametric MLE  as a preliminary estimator. Figure~\ref{Plot: logistic: efficiency} compares the efficiency of the one step estimators and the MLE. We list the main observations below.
\begin{enumerate}
\item As usual, the parametric MLE, i.e. the mean, has the highest efficiency.
\item Surprisingly, in this case,  the truncated estimator outperforms the untruncated estimator when $n=200$, and $500$, regardless of the choice of the preliminary estimator. For all the other cases, the efficiency of the untruncated estimator dominates the efficiency of the truncated estimator.  
\item   The smoothed symmetrized one step estimator has the highest efficiency in smaller samples (sample size $n=40, 100$). Also, we observe that the MLE has lower efficiency than both the one step estimators, the partial MLE one-step estimator  and the smoothed symmetrized one step estimator.
\end{enumerate}

\begin{figure}[h]
\centering
\includegraphics[width= \textwidth]{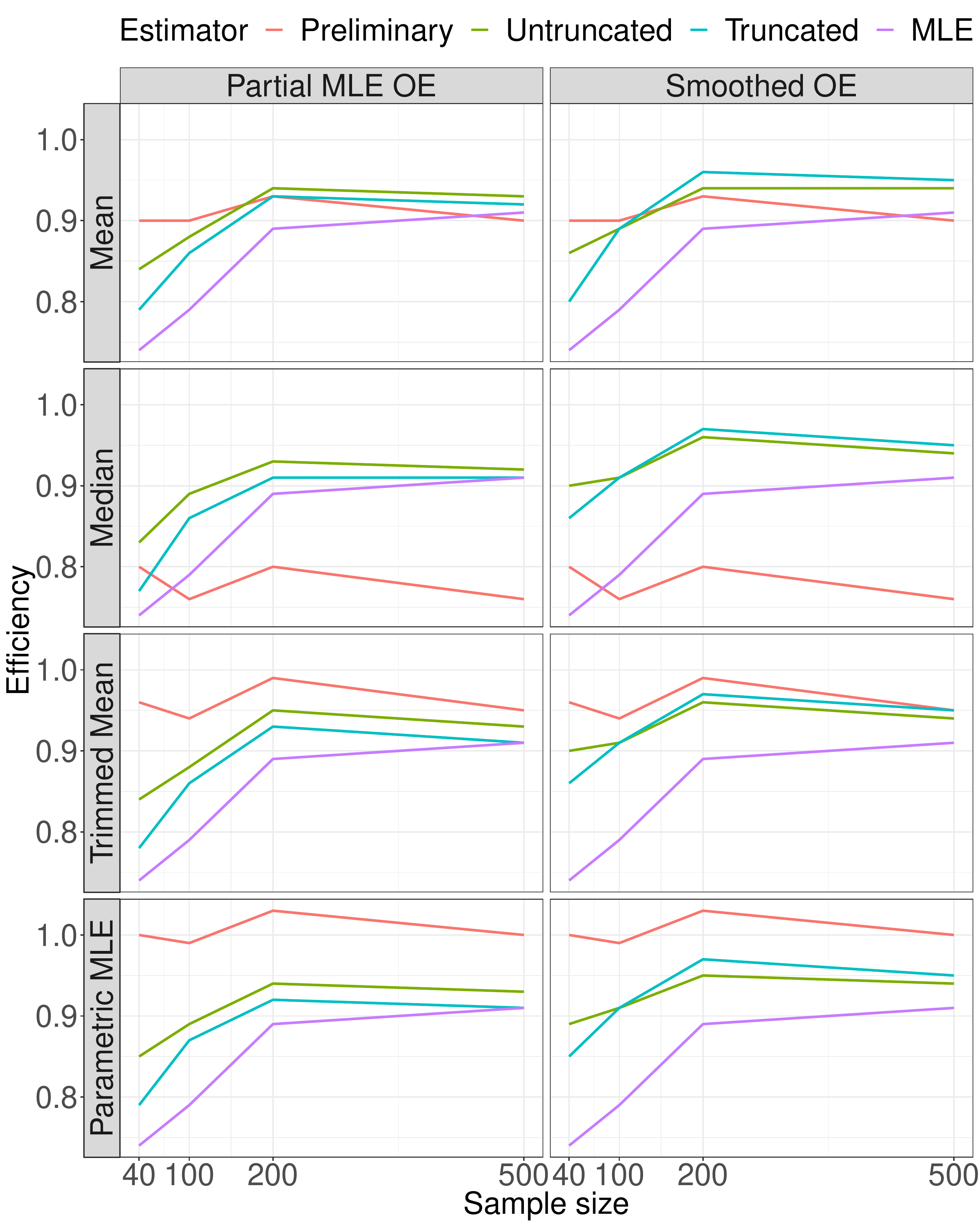}
\caption{Plot of efficiency vs sample size ($n$) when the underlying density is standard Logistic. We calculate the efficiency of   the smoothed symmetrized one step estimator in \eqref{25eq3} (Smoothed OE; right column),  the partial MLE one-step estimator  in  \eqref{def: partial MLE estimator} (Partial MLE OE; left column), and the MLE (in purple; present in both columns). The preliminary estimators $\bth$ corresponding to the one-step estimators (drawn in red) are the mean (first row), median (second row), trimmed mean (third row), and the parametric MLE (fourth MLE). 
The truncated and untruncated estimators are drawn in blue and green respectively. 
}\label{Plot: logistic: efficiency}

\end{figure}
\subsection{Laplace density}

 In this subsection, we simulate observations from the standard Laplace density, which has $\I=1$. In this case, the sample median is the parametric MLE of $\th_0$. Unlike the other densities we considered, this density is not smooth. In fact, it has a kink at $0$ (see Figure~\ref{Figure: comparison of truncated information densities 1}). 
  Figure~\ref{Plot: laplace: efficiency} indicates that the smoothed symmetrized one step estimator in \eqref{25eq3} suffers from the lack of smoothness, and loses its edge to the MLE and the partial MLE one step estimator, both in small and large samples.  When  $n=40$ and $500$, the partial MLE one step estimator has the highest efficiency. For all other sample sizes, Figure~\ref{Plot: laplace: efficiency} suggests that the MLE has the highest efficiency. In all cases, the untruncated one step estimator outperforms the truncated version.
\begin{figure}[h]
\centering
\includegraphics[width= \textwidth]{combineplot3}
\caption{Plot of efficiency vs sample size ($n$) for the standard Laplace density. We plotted the efficiency of   the smoothed symmetrized one step estimator in \eqref{25eq3} (Smoothed OE; right column),  the partial MLE one-step estimator  in  \eqref{def: partial MLE estimator} (Partial MLE OE; left column), and the MLE (in purple; present in both columns).  The preliminary estimators $\bth$ corresponding to the one-step estimators are the mean (top), median (middle), and trimmed mean (bottom). 
The preliminary estimator is drawn in red, and the truncated and untruncated estimators are drawn in blue and green respectively. 
}\label{Plot: laplace: efficiency}
\end{figure}

\subsection{Symmetrized beta density}
In this subsection, we consider the symmetrized  beta density in \eqref{definition: Symmetrized beta}, for $r=2.1$, $2.5$, and $4.5$. 
Figure~\ref{Plot: beta: efficiency} compares the efficiency of the MLE and the one step estimators with the mean as the preliminary estimator. The plots for the other preliminary estimators depict the same story, and hence we do not present them. Below we list the main observations from Figure~\ref{Plot: beta: efficiency}.
\begin{enumerate}
\item The efficiency of the estimators increases with $r$. Also, for $r=2.1$ and $2.5$, all the estimators exhibit poor efficiency even for sample size $500$.
\item The untruncated one step estimators display higher efficiency than the truncated one step estimator. We note that for  $r=2.5$ and $4.5$, this gap in the efficiency is higher compared to the other densities.
\item It turns out that the smoothed  untruncated one step estimator has the highest asymptotic efficiency among all the estimators.
\end{enumerate}

\begin{figure}[h]
\centering
\includegraphics[width= \textwidth]{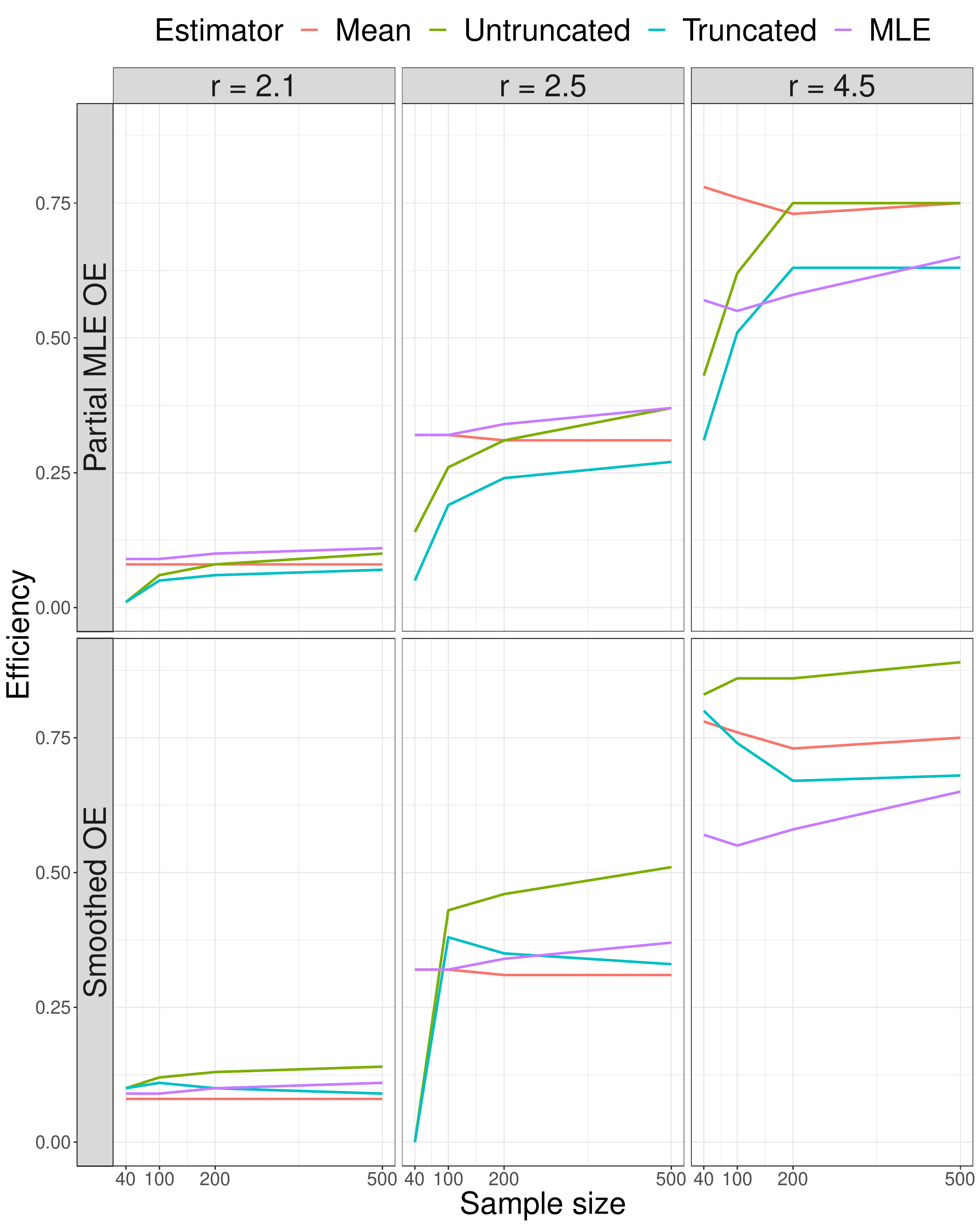}
\caption{Plot of efficiency vs sample size ($n$) when the underlying density is symmetrized beta, which is defined in \eqref{definition: Symmetrized beta}. We consider $r=2.1$ (left), $2.5$ (middle), and $4.5$ (right). The MLE is drawn in purple. Here we calculated the efficiency of   the smoothed symmetrized one step estimator in \eqref{25eq3} (Smoothed OE; right column) and  the partial MLE one-step estimator  in  \eqref{def: partial MLE estimator} (Partial MLE OE; left column) using the  sample mean  as the preliminary estimator. The efficiency of the mean is drawn in red. The truncated and untruncated versions are drawn in blue and green respectively. 
}\label{Plot: beta: efficiency}
\end{figure}

To summarize,  except for the case of standard Laplace,    the smoothed symmetrized one step estimator has the highest efficiency among our estimators. We also note that in most cases the  untruncated one step estimator exhibits higher efficiency than its truncated counterpart.
\FloatBarrier
\section{Discussion}
\label{sec: discussion}
In this article, we revisit the problem of estimating the location parameter $\th_0$ in the symmetric location model.  We show  that the additional assumption of log-concavity can ameliorate the dependence on tuning parameters. In particular, our one-step estimators use only  one additional parameter, and  the MLE does not rely on any tuning parameter. 

We show that the rate of convergence of the MLE \hthm\ is $O_p(n^{-2/5})$ but we conjecture that it is actually $\sqn$-consistent, and asymptotically efficient.
  In our case, the parameter of interest is bundled with  the infinite dimensional nuisance parameter.  Obtaining the precise rate of convergence for the MLE  in such semiparametric models is typically difficult. For example, in context of the monotone current status regression model, this was noted by \cite{murphy1999}, who obtained $n^{-1/3}$ rate for the parametric part in this model. Whether the MLE is $\sqn$-consistent in this model is still unknown \citep{piet}. Bundled parameters also appear in  single index models \citep{bodhida2017}, where not much is known about the MLE. 
  \cite{chen2016} proved the consistency of the joint MLE in  shape-constrained additive index model, which is more general than the single index model, but nothing is known about MLE's rate of convergence.

Although our truncated one step estimator is $\sqn$-consistent, it is not fully asymptotically efficient. The loss of efficiency can be attributed to the use of  a fixed truncation parameter.   A similarity can be drawn with  \cite{piet},  who  study  a  current status regression model. They also  use a  fixed truncation parameter to construct a $\sqn$-consistent $z$-estimator, which, similar to our case,  is not fully asymptotic efficient. However,   our simulations do not exhibit a big difference  between the truncated and untruncated one step estimator, where the latter is conjectured to be $\sqn$-consistent.

  Despite having the above limitations, our estimators are easily computable because  fast algorithms are available for computing log-concave density estimators (using the R packages ``logcondens" and ``logcondens.mode").   Moreover, we show that even if the log-concavity assumption fails, all our estimators of $\th_0$ are still consistent under very mild conditions. In fact, the one step estimators are still $\sqn$-consistent.  This is a consequence of the  impressive stability  of log-concave density estimators \citep{dumbreg, xuhigh}.  This demonstrates the usefulness of  log-concavity  assumption in semiparametric models in facilitating a simplified and robust estimation procedure. 
 
\section{Proofs}
\label{sec:proofs}

 In what follows, we will use the following terminologies. For two sets $A$ and $B$, $A\times B$ will represent the Cartesian product. For any set $A\subset \RR$, and $x\in\RR$, we use the usual notation $A+x$ to denote the translated set  $\{y+x\ :\ y\in A\}$. 
   The notation $\overline{A}$ will refer to the closure of the set $A$.
\subsection{Proofs for the one-step estimators}
\label{sec:proofs:one-step}

 \subsubsection*{Proof of Lemma \ref{lemma: L1 convergence: one step density estimators: model}}
 The proof follows from Lemma~\ref{lemma: L1 convergence: one step density estimators} because when $f_0\in\mP_0$,  the log-concave approximation $\p$ equals $f_0$, which, combined with \eqref{limit of bn},  also implies $\tilde{b}=0$. Hence, $\q=\qsm=g_0$ follows and \eqref{condition: second central moment} is automatically satisfied \citep[][Lemma 1]{theory}.


\subsubsection*{Proof of Theorem \ref{Theorem: the L1 convergence of the density estimators of one-step estimators: model}}
Part (A) of Theorem~\ref{Theorem: the L1 convergence of the density estimators of one-step estimators: model} follows from part (A) of Theorem~\ref{Theorem: the L1 convergence of the density estimators of one-step estimators} and Proposition 2(c) of \cite{theory}. Part (B) and (C) of Theorem~\ref{Theorem: the L1 convergence of the density estimators of one-step estimators: model} follow from part (B) and (C) of Theorem~\ref{Theorem: the L1 convergence of the density estimators of one-step estimators}.
\subsubsection*{Proof of Corollary \ref{lemma: divergence}}
 Suppose that
\[\sup_{x\in\iint(\dom(\ps_0))}|\ps'_0(x)|=\infty,\]
 but there exists $M>0$ such that $\sup_{x\in\iint(\dom(\hln))}|\hln'(x)|<M$ for all sufficiently large $n$. We claim that the probability of such an event is $0$. It is straightforward to see that  there exists $y\in\iint(\dom(\ph_0))$ such that $|\ps_0'(y)|>2M.$ One can also choose $y$ such that $\ps_0'$ is continuous at $y$. 
 Therefore, pointwise convergence of $\hln'$ to $\ps_0'$  fails at $y$. Hence,
 the proof follows by Theorem~\ref{Theorem: the L1 convergence of the density estimators of one-step estimators}.\hfill $\Box$
\subsubsection*{Proof of Lemma \ref{lemma: consistency of Fisher information: model}}
The proof follows from Lemma~\ref{lemma: consistency of Fisher information} because for $f_0\in\mP_0$,
\begin{equation}\label{inlemma: equality of info}
 \Ig(\eta)=\Ig^{sm}(\eta)=\I(\eta).
\end{equation}
\subsubsection*{Proof of Theorem \ref{theorem: main: one-step: model}}
   The proof follows from Theorem~\ref{theorem: main: one-step}. In this case,  $\psp=\pspm=\ps_0$ and \eqref{inlemma: equality of info} holds. As a result, the first terms on the right hand sides of \eqref{statement: one-step: main theorem: misspecified} and \eqref{statement: one-step: main theorem: misspecified: smoothef} converge weakly to a centered Gaussian random variable with variance $\I(\eta)^{-1}$ by the central limit theorem. Also, since $\gamma_{\eta}=\gamma_{\eta}^{sm}=1$, the second terms on the right hand sides of \eqref{statement: one-step: main theorem: misspecified} and \eqref{statement: one-step: main theorem: misspecified: smoothef} vanish, which completes the proof.
\subsubsection*{Proof of Lemma \ref{Lemma: projection theory for a distribution function F} }
Note that each $\ph\in\mathcal{SC}_{\th}$ can be written as $\ps(\mathord{\cdot}-\th)$ for some $\ps\in\mathcal{SC}_0$, and by \eqref{definition: new criterion function } and \eqref{criterion function: xu samworth}, we also have
\begin{equation}\label{connection: om and psi}
\om(\ph,F)=\Psi(\th,\ps,F).
\end{equation}
For $\ps\in\mathcal{SC}_0$ we calculate,
\begin{align*}
\Psi(0,\ps,F)=&\ \dint_{-\infty}^{0}\ps(x)dF(x)+\dint_{0}^{\infty}\ps(x)dF(x)-\edint e^{\ps(x)}dx\\
=&\ -\dint_{0}^{\infty}\ps(-x)dF(-x)+\dint_{0}^{\infty}\ps(x)dF(x)-\edint e^{\ps(x)}dx\\
=&\ \dint_{0}^{\infty}\ps(x)d(F(x)-F(-x))-\edint e^{\ps(x)}dx.
\end{align*}
It also follows that
\begin{align*}
\dint_{0}^{\infty}\ps(x)d(F(x)-F(-x))=\dint_{-\infty}^{0}\ps(x)d(F(x)-F(-x)).
\end{align*}
Therefore,
\[\Psi(0,\ps,F)= 2^{-1}\edint\ps(x)d(F(x)-F(-x))-\edint e^{\ps(x)}dx,\]
implying
\begin{align*}
\Psi(\th,\ps,F)=&\ 2^{-1}\edint\ps(x)d(F(\th+x)-F(\th-x))-\edint e^{\ps(x)}dx\\
=&\  2^{-1}\edint\ps(z-\th)d(F(z)-F(2\th-z))-\edint e^{\ps(z)}dz\\
=&\ \edint\ps(z-\th)dF^{sym}_{\th}(z)-\edint e^{\ps(z)}dz\\
=&\ \Psi(\th,\ps,F^{sym}_{\th}).
\end{align*}
Thus \eqref{connection: om and psi} shows that for any $\ph\in\mathcal{SC}_{\th}$,
\[\om(\ph,F)=\om(\ph,F^{sym}_{\th})\]
Therefore,
\begin{align*}
\argmax_{\ph\in\mathcal{SC}_{\th}}\om(\ph,F)=\argmax_{\ph\in\mathcal{SC}_{\th}}\om(\ph,F^{sym}_{\th}),
\end{align*}
but the distribution function
\[F^{sym}_{\th}(x)=2^{-1}\lb F(x)+1-F(2\th-x)\rb\]
satisfies Condition A and \eqref{condition: symmetry}. Hence, applying Lemma~\ref{lemma: symmetry of projection operator}, we obtain that
\[\argmax_{\ph\in\mathcal{SC}_{\th}}\om(\ph,F^{sym}_{\th})=\argmax_{\ph\in\mathcal{C}}\om(\ph,F^{sym}_{\th}),\]
which exists by Condition A, thus completing the proof.
\hfill $\Box$

\subsubsection*{Proof of Lemma \ref{lemma: L1 convergence: one step density estimators}}
First note that  it suffices to prove the current lemma when $\bth\as \th_0$.  Suppose the strong consistency does not hold but $\bth\to_p\th_0$. Then given any subsequence of $\{\bth\}_{n\geq 1}$, there exists a further subsequence $\{\bar{\th}_{n_k}\}_{k\geq 1}$ such that $\bar{\th}_{n_k}\as \th_0$ as $k\to\infty$. Therefore, along this subsequence $\{n_k\}_{k\geq 1}$, the $L_1$ distance between $\hn$ and $\p$ or $\p^{sym}$ (in case of $\hn=\htsm$) approaches $0$ almost surely. Hence, if $||\hn-\q||_1$ or $||\hn-\q^{sm}||_1$  has any  convergent subsequence at all, the subsequence converges to $0$ almost surely. In that case  \cite{shorack2000} (Theorem~5.7, page 57) implies  that the whole sequence converges in probability to $0$. Therefore, in what follows, we assume that $\bth\as \th_0$.

For our first estimator $\hn^{sym}$, the proof is motivated by Theorem~$4$ of \cite{theory}, which states that if  $\int\log f_0(x) dF_0(x)<\infty$, and  Condition A holds,
\begin{equation}\label{convergence: L1: hf}
\edint |\hf(x)-\p(x)|dx\as 0.
\end{equation}
Note that $f_0\in\mP_1$ is bounded, so $\int\log_{+} f_0(x) dF_0(x)<\infty$ trivially holds, where $\log_{+}(y)=\max\{\log y, 0\}$. 
To prove the $L_1$ consistency of $\hn^{sym}$, we now exploit  the symmetry of $\p$ and $\hf$ about $\th_0$ and $\bth$ respectively. Recalling the definition of $\hn^{sym}$ from \eqref{est 1}, we calculate that
\begin{align}\label{inlemma: L1 convergence of the one step estimators: the ones with the MLE}
\MoveEqLeft2 \edint |\hn^{sym}(x)-\q(x)|dx\nonumber\\
=&\ 2\edint |\hn^{sym}(x)-\p(\th_0+x)|dx\nonumber\\
\leq &\ \edint |\hf(\bth+x)-\p(\bth+x)|dx+\edint |\hf(\bth-x)-\p(\bth-x)|dx\nonumber\\
&\ +\edint|\p(\bth+x)-\p(\th_0+x)|dx+\edint|\p(\bth-x)-\p(\th_0-x)|dx,\nonumber\\
\end{align}
whose first two terms approach $0$ almost surely by \eqref{convergence: L1: hf}. Therefore, we only need to take care of the last two terms of \eqref{inlemma: L1 convergence of the one step estimators: the ones with the MLE}. Since $\p$ is log-concave, it is continuous almost everywhere on $\RR$ with respect to the Lebesgue measure. Therefore, $|\p(\bth+x)-\p(\th_0+x)|\as 0$ almost everywhere  with respect to the  Lebesgue measure. Application of Glick's Theorem \citep[Theorem 2.6][]{devroye1987} then ensures that the last two terms of \eqref{inlemma: L1 convergence of the one step estimators: the ones with the MLE} converge to $0$ almost surely as well.

To establish the convergence of $\htsm$ to $\qsm$ in $L_1$, we will exploit the representation of $\htsm$ in terms of $(\hf)^{sm}$ established in \eqref{representation of htsm}.   
  Since $\qsm$ and $\htsm$ are symmetric about $\th_0$ and $\bth$ respectively,   similar algebra as in the case of $\hn=\hn^{sym}$ leads to
   \begin{align}\label{inlemma: L1 convergence: smoothed}
\MoveEqLeft 2\edint |\htsm(x)-\qsm(x)|dx\nonumber\\
\leq & \edint |\hts(\bth+x)-\psm(\bth+x)|dx +\edint |\hts(\bth-x)-\psm(\bth-x)|dx\nonumber\\
&\ +\edint|\psm(\bth+x)-\psm(\th_0+x)|dx\nonumber\\
&\ +\edint|\psm(\bth-x)-\psm(\th_0-x)|dx.\nonumber\\
\end{align}
The first two terms can be handled by using Theorem~1 of  \cite{smoothed}, which implies that when $f_0$ satisfies \eqref{condition: second central moment}, we have
\begin{equation}\label{convergence: L1: hts}
\edint|\hts(x)-\psm(x)|dx\as 0.
\end{equation}
  Since $\psm$ is the convolution of  a log-concave density and a Gaussian density, it is continuous almost everywhere on $\RR$ with respect to the Lebesgue measure. Therefore, imitating the proof for the case of $\hn=\hn^{sym}$, we can show that the last two terms of \eqref{inlemma: L1 convergence: smoothed} are negligible.
  
%
 
To establish the $L_1$ consistency of the partial MLE estimator $\widehat{g}_{\bth}$, we appeal to  the  projection  theory developed in  \cite{xuhigh}. In this case $\widehat{g}_{\bth}$ is the density of the projection  of \Fn,  the empirical distribution function of the $X_i-\bth$'s, onto the space of the distribution functions with density in $\mathcal{SLC}_0$. By Proposition $6$ of \cite{xuhigh}, if 
\begin{equation}\label{convergence of empirical: wasserstein metric : partial mle estimator}
 d_W(\Fn, G_0)\as 0,
 \end{equation} 
we have $||\widehat{g}_{\bth}-\q||_1\as 0$ 
as long as Condition A holds.  Hence, we only need to show that $d_W(\Fn, G_0)\as 0$,  for which, by Theorem~$6.9$ of \cite{villani2009}, 
 it suffices to show the following. First, we need to show that
 \begin{equation}\label{condition: dw: first condition}
  \edint|x|d\Fn(x)\as \edint|x|dG_0(x),
  \end{equation} 
  and that \Fn\ converges to $G_0$ weakly with probability one.  
Since $\bth$ is strongly consistent for $\th_0$, and
\begin{align}
 \edint|x|d\Fn(x)=&\edint |x-\bth|d\Fm(x), 
\end{align}
 for any $d>0$,  an application of Glivenko-Cantelli Theorem (for example, see Theorem~$2.4.1$ of \cite{wc}) yields
\begin{align*}
\sup\limits_{\bth\in[\th_0-d,\th+d]}\bl\edint |x-\bth|d(\Fm-F_0)(x)\bl\as 0.
\end{align*}
 On the other hand,  strong consistency of $\bth$ implies  $|x-\bth|\leq|x-\th_0|+1$  with probability one for all sufficiently large $n$,  where the latter is integrable. Therefore, the dominated convergence theorem  leads to
\[\edint |x-\bth|dF_0(x)\as \edint |x-\th_0|dF_{0}(x)=\edint |x|dG_0(x),\]
which proves \eqref{condition: dw: first condition}.

Our next step is to prove the weak convergence of \Fn to $G_0$. To this end, we note that
\begin{equation}\label{lemma 3: charles Emp}
\Fn(x)=\Fm(x+\bth)=\edint 1_{(-\infty,x+\bth]}(z)d\Fm(z),
\end{equation}
which converges almost surely to
\begin{align*}
\edint 1_{(-\infty,x+\th_0]}(z)dF_0(z)=G_0(x)
\end{align*}
 by an application of basic Glivenko-Cantelli Theorem (see Theorem~$2.4.1$ of \cite{wc}), and the fact that $F_0(x+\bth)\as F_0(x+\th_0)$ for all $x\in\RR$. This completes the proof of  strong $L_1$ consistency of $\widehat{g}_{\bth}$ for $\q$.
 
 
Finally, we consider the  geometric mean estimator $\hg^{geo,sym}$. 
  Recall that in the course of showing the consistency of $\hn^{sym}$, we have  shown that the right hand side of \eqref{inlemma: L1 convergence of the one step estimators: the ones with the MLE} converges to $0$ almost surely, from which one can derive that
  \begin{equation}\label{L1 convergence: MLE: centered}
  \edint|\hf(\bth\pm x)-\q( x)|dx\as 0.
  \end{equation}
     Proposition $2$(b) of \cite{theory} shows that \eqref{L1 convergence: MLE: centered} leads to almost sure convergence of
 $\hf(\bth\pm x)$ to  $\q( x)$ almost everywhere on $\RR$ with respect to the Lebesgue measure. As a consequence, it follows that
 \[\hg^{geo,sym}(x)/C^{geo}_{n}\as\lb\q(x)\q(-x)\rb^{1/2}=\q(x)\quad a.e.\ \ x.\]
Recall from \eqref{est 5} that
 \[C^{geo}_{n}=\edint\sqrt{\hf(\bth+x)\hf(\bth-x)}dx.\]
   From Scheff\'{e}'s Lemma it follows that $C^{geo}_{n}\as \int d\tilde{G}_0=1$.
 We have thus established that $\hg^{geo,sym}$ converges pointwise to $\q(x)$ almost surely. The desired $L_1$ consistency then follows from Proposition $2$(c) of \cite{theory}.\hfill $\Box$
\subsubsection*{ Theorem \ref{Theorem: the L1 convergence of the density estimators of one-step estimators}}
\begin{theorem}\label{Theorem: the L1 convergence of the density estimators of one-step estimators}
Assume $f_0\in\mathcal{P}_1$. Suppose $\hn$ is one of the estimators of $g_0$ defined in Section \ref{sec: one-step}, where $\bth$ is a  consistent estimator of $\th_0$. Let $\{y_n\}_{n\geq 1}$ be any sequence of random variables  converging to $0$ in probability. Then for $\hn=\hn^{sym}$, $\widehat{g}_{\bth}$, $\hn^{geo,sym}$, or $\hf(\bth\pm\mathord{\cdot})$, on any compact set $K\subset\iint(\supp(g_0))$ we have,
\begin{itemize}
\item[(A)]
\[\sup_{x\in K}|\hn(x+y_n)-\q(x)|\to_p 0.\]
\item[(B)] 
 \[\sup_{x\in K}|\hln(x+y_n)-\psp(x)|\to_p 0.\]
\item[(C)]
Suppose $x\in\iint(\dom(\psp))$ is a continuity point of $\psp'.$  Then
\[\hln'(x+y_n)\to_p\psp'(x).\]
\end{itemize}
Suppose $f_0$ also satisfies \eqref{condition: second central moment}.
 Then for $\hn=\htsm$ or $\hts(\bth\pm\mathord{\cdot})$, (A)-(C) hold  for any compact set $K\subset\RR$, with $\q$ and $\psp$ replaced by $\qsm$ and $\pspm$ respectively. If $\bth$ is a strongly consistent estimator of $\th_0$, and $y_n\as 0$, then the $``\to_p"$ in the above displays can be replaced by $``\as"$.
\end{theorem}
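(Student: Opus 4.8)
The plan is to reduce everything to a single convexity fact about sequences of log-concave densities and then feed each estimator into it. First, it suffices to prove the conclusions with ``$\to_p$'' replaced by ``$\as$'' under the stronger hypotheses $\bth\as\th_0$ and $y_n\as 0$: given an arbitrary subsequence, extract a further subsequence along which both $\bth$ and $y_n$ converge almost surely to their limits, apply the almost-sure version there, and appeal to Theorem~5.7 of \cite{shorack2000} (exactly as in the proof of Lemma~\ref{lemma: L1 convergence: one step density estimators}) to recover ``$\to_p$'' in general. So fix a realization with $\bth\to\th_0$ and $y_n\to 0$. The fact I would isolate is: \emph{if $\{h_n\}$ are log-concave densities with $\|h_n-h\|_1\to 0$, where $h=e^{\varphi}$ is a log-concave density with $\iint(\supp h)\neq\emptyset$, then on every compact $K\subset\iint(\supp h)$ we have $\sup_K|h_n-h|\to 0$, $\sup_K|\log h_n-\varphi|\to 0$, and $(\log h_n)'(x)\to\varphi'(x)$ at every continuity point $x$ of $\varphi'$.}

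To prove this fact, Proposition~2(b) of \cite{theory} converts $L_1$ convergence into a.e.\ pointwise convergence of $h_n$, hence of the concave functions $\log h_n$ on a dense subset of $\iint(\supp h)$ (where the $\log h_n$ are eventually finite, since $\supp h_n\to\supp h$); Theorem~10.8 of \cite{rockafellar} then promotes this to locally uniform convergence $\log h_n\to\varphi$, and Theorem~25.7 of \cite{rockafellar} gives $(\log h_n)'\to\varphi'$ at every point where $\varphi$ is differentiable. Exponentiating — using that $\log h_n$ is eventually uniformly bounded on $K$ and $\varphi$ is bounded away from $-\infty$ there — yields the two uniform statements. To accommodate the shift, enlarge $K$ to a compact $K'$ with $K\subset\iint(K')\subset\iint(\supp h)$; since $y_n\to 0$, $K+y_n\subset K'$ for large $n$, and then
\[
\sup_{x\in K}\bigl|h_n(x+y_n)-h(x)\bigr|\le\sup_{K'}|h_n-h|+\sup_{x\in K}\bigl|h(x+y_n)-h(x)\bigr|\to 0
\]
by uniform continuity of $h$ on $K'$, and likewise for $\log h_n$. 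For a continuity point $x$ of $\varphi'$, fix $\varepsilon>0$ and choose $\delta>0$ with $x\pm\delta$ continuity points of $\varphi'$ satisfying $|\varphi'(x\pm\delta)-\varphi'(x)|<\varepsilon$; once $|y_n|<\delta$, monotonicity of the concave derivatives sandwiches $(\log h_n)'(x+y_n)$ between $(\log h_n)'(x+\delta)\to\varphi'(x+\delta)$ and $(\log h_n)'(x-\delta)\to\varphi'(x-\delta)$, so $\limsup_n|(\log h_n)'(x+y_n)-\varphi'(x)|\le\varepsilon$, and letting $\varepsilon\downarrow 0$ gives $(\log h_n)'(x+y_n)\to\varphi'(x)$.

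The cases are now mechanical; each uses that $\iint(\supp g_0)\subseteq\iint(\dom\psp)$ — a log-concave projection cannot discard mass from $\supp(f_0)$, cf.\ Lemma~\ref{lem: support of projection map} — and that $\supp(\qsm)=\RR$. For $\hf(\bth\pm\mathord{\cdot})$, which are log-concave densities, $\|\hf(\bth\pm\mathord{\cdot})-\q\|_1\to 0$ is \eqref{L1 convergence: MLE: centered} from the proof of Lemma~\ref{lemma: L1 convergence: one step density estimators}, so the isolated fact gives (A)--(C) with limits $\q$ and $\psp$; for $\widehat{g}_{\bth}$ and $\hn^{geo,sym}$, which are log-concave densities with $\|\hn-\q\|_1\to 0$ by Lemma~\ref{lemma: L1 convergence: one step density estimators}, it applies verbatim. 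For $\hn^{sym}=\tfrac12[\hf(\bth+\mathord{\cdot})+\hf(\bth-\mathord{\cdot})]$, I would apply the $\hf(\bth\pm\mathord{\cdot})$ case to each summand — the second with shift $-y_n$ — using that $\q,\psp$ are even and that $\psp'$ is continuous at $x$ iff it is at $-x$: this gives $\hn^{sym}(x+y_n)\to\q(x)$ uniformly on $K$, hence (B) after taking logarithms, and, differentiating through, $\tfrac{d}{dx}\hn^{sym}(x+y_n)=\tfrac12[\hf'(\bth+x+y_n)-\hf'(\bth-x-y_n)]\to\tfrac12[\q'(x)-\q'(-x)]=\q'(x)$ at continuity points of $\psp'$, where $\hf'=\hf\cdot(\log\hf)'$ converges by the isolated fact; dividing by $\hn^{sym}(x+y_n)$ gives (C). Finally $\htsm$ is handled through its representation \eqref{representation of htsm} as $\tfrac12[\hf^{sm}(\bth+\mathord{\cdot})+\hf^{sm}(\bth-\mathord{\cdot})]$: here $\hf^{sm}$ is log-concave (a Gaussian convolution of a log-concave density), $\|\hf^{sm}(\bth\pm\mathord{\cdot})-\qsm\|_1\to 0$ follows from \eqref{convergence: L1: hts} by the same symmetrization step used for $\hn^{sym}$ in the proof of Lemma~\ref{lemma: L1 convergence: one step density estimators}, and $\supp(\qsm)=\RR$; so the isolated fact applies to $\hf^{sm}(\bth\pm\mathord{\cdot})$ directly (limits $\qsm$, $\pspm$), and then to $\htsm$ by averaging, now with $K$ any compact subset of $\RR$.

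The main obstacle is not conceptual: it is pushing the convexity machinery through the random shift $y_n$ at the level of derivatives — the monotonicity sandwich above — and, before any of that, peeling the non-log-concave estimators $\hn^{sym}$ and $\htsm$ into their log-concave building blocks $\hf(\bth\pm\mathord{\cdot})$ and $\hf^{sm}(\bth\pm\mathord{\cdot})$, since the convergence theory for log-concave densities does not apply to $\hn^{sym}$ or $\htsm$ themselves.
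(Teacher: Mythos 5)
Your proof is correct and follows essentially the same route as the paper's: the same subsequence reduction to the almost-sure case, the same isolation of a convergence statement for log-concave sequences via Proposition~2 of \cite{theory} and Theorem~10.8 of \cite{rockafellar}, and the same decomposition of the non-log-concave estimators $\hn^{sym}$ and $\htsm$ into the log-concave pieces $\hf(\bth\pm\mathord{\cdot})$ and $\hf^{sm}(\bth\pm\mathord{\cdot})$. The only differences are cosmetic: you absorb the shift $y_n$ by enlarging the compact set and invoking uniform continuity rather than treating $\hln(\mathord{\cdot}+y_n)$ as a concave sequence in its own right, and you obtain derivative convergence from Theorem~25.7 of \cite{rockafellar} plus a monotone sandwich rather than from Lemma~3.10 of \cite{seijo} (and your direct differentiation of $\hn^{sym}$, dividing by $\hn^{sym}(x+y_n)$ at the end, is algebraically the same as the paper's convex-combination identity with weight $\varrho_n$).
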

\begin{proof}
As in the proof of Lemma~\ref{lemma: L1 convergence: one step density estimators}, one can show that it suffices to prove   Theorem~\ref{Theorem: the L1 convergence of the density estimators of one-step estimators} when $\bth\as\th_0$, and $y_n\as 0$.  Hence, in what follows,  we assume that $\bth\as\th_0$, and $y_n\as 0$.

First note that by theorem $2.2$ of \cite{dumbreg}, 
\[\iint(\dom(\psp))=\iint(\supp(g_0)).\]
Also, since $\qsm=e^{\pspm}>0$ on $\RR$, we find that $\iint(\dom(\pspm))=\RR$.
Hence, 
\begin{equation*}
K\subset\iint(\dom(\psp))\subset\iint(\dom(\pspm))=\RR.
\end{equation*}

 When $\hn$ equals $\widehat{g}_{\bth}$,  $\hn^{geo,sym}$, $\hf(\bth\pm\mathord{\cdot})$, or $\hts(\bth\pm\mathord{\cdot})$, $\hn$ is log-concave, which simplifies  the  proof of part A and B.  We first consider the cases when $\hn=\widehat{g}_{\bth}$, $\hn^{geo,sym}$ or $\hf(\bth\pm\mathord{\cdot})$. 
Now to prove part A, note that
\begin{align*}
\edint |\hn(x+y_n)-\q(x)|dx\leq &\ \edint |\hn(x+y_n)-\q(x+y_n)|dx\\
&\ + \edint |\q(x+y_n)-\q(x)|dx,
\end{align*}
whose first term
\[\edint |\hn(x+y_n)-\q(x+y_n)|dx=\edint|\hn(x)-\q(x)|dx\as 0,\]
which follows from either Lemma~\ref{lemma: L1 convergence: one step density estimators} or \eqref{L1 convergence: MLE: centered}.
Also, noting 
\[\q(x+y_n)\as \q(x)\quad a.e.\ x.\ \text{Lebesgue},\]
and applying Glick's Theorem \citep[Theorem 2.6][]{devroye1987}, we derive that
\[ \edint |\q(x+y_n)-\q(x)|dx\as 0.\] 
   Thus we obtain that
   \[\edint |\hn(x+y_n)-\q(x)|dx\as 0,\]
 which combined with  Proposition 2(c) of \cite{theory} yields that
 \[\hn(x+y_n)\as\q(x)\quad a.e.\ x\]
  with respect to Lebesgue measure. As a consequence,
  \[\hln(x+y_n)=\log(\hn(x+y_n))\as\psp(x)\quad a.e.\ x.\text{ Lebesgue}.\]
 Since  $\hln$ is concave for our choices of $\hn$, Theorem~10.8 of \cite{rockafellar} indicates that
 pointwise convergence on $\RR$ translates to uniform convergence on all compact sets inside $\iint(\dom(\psp))$, leading to
 \[\sup_{x\in K}|\hln(x+y_n)-\psp(x)|\as 0.\]
 As a corollary,
 \[\sup_{x\in K}|\hn(x+y_n)-\q(x)|\as 0.\]
Thus we  have established part A and part B of  Theorem~\ref{Theorem: the L1 convergence of the density estimators of one-step estimators} for $\hn=\widehat{g}_{\bth}$,  $\hn^{geo,sym}$ and $\hf(\bth\pm\mathord{\cdot})$.
Now consider the case when $\hn=\hts(\bth\pm x)$. Note that \eqref{convergence: L1: hts} and continuity of $\qsm$ imply that
\[\edint |\hts(\bth\pm x)-\qsm(x)|dx\as 0.\]
Therefore, following the same arguments as in the case of $\hn=\widehat{g}_{\bth}$,  $\hn^{geo,sym}$ and $\hf(\bth\pm\mathord{\cdot})$, we can prove part A and part B for $\hn=\hts(\bth\pm\mathord{\cdot})$.


Now we focus on proving part A  when $\hn$ is the mixture density $\hn^{sym}$. From the definition of $\hn^{sym}$ in \eqref{25eq3} we obtain that 
\begin{align*}
\MoveEqLeft 2\sup_{x\in K}|\hn^{sym}(x+y_n)-\q(x)|\\
\leq  &\  \sup_{x\in K}|\hf(\bth+x+y_n)-\q(x)| +\sup_{x\in K}|\hf(\bth-x+y_n)-\q(x)|.
\end{align*}
Since we have already proved part A for $\hf(\bth\pm\mathord{\cdot})$, it is not hard to see that both terms on the right hand side of the above display converge to $0$ almost surely, which proves part A for $\hn^{sym}$. Part A follows for \htsm\ in a similar way noting  that \eqref{representation of htsm}  implies
  \begin{align*}
\MoveEqLeft 2\sup_{x\in K}|\htsm(x+y_n)-\qsm(x)|\\
\leq  &\  \sup_{x\in K}|\hf^{sm}(\bth+x+y_n)-\qsm(x)| +\sup_{x\in K}|\hf^{sm}(\bth-x+y_n)-\qsm(x)|.
\end{align*}

  Now we  prove part B of Theorem~\ref{Theorem: the L1 convergence of the density estimators of one-step estimators} when $\hn=\hn^{sym}$. 
  To this end, note that we can write
 \begin{align*}
\sup\limits_{x\in K}(\hln^{sym}(x+y_n)-\psp(x))= & \sup\limits_{x\in K}\log \lb\dfrac{\hn^{sym}(x+y_n)-\q(x)}{\q(x)}+1\rb\\
\leq &\ \sup\limits_{x\in K}\dfrac{\hn^{sym}(x+y_n)-\q(x)}{\q(x)},
\end{align*}
because $\log(1+x)\leq x$ for any $x>-1$.
Similarly we can show that
\[\sup\limits_{x\in K}(\psp(x)-\hln^{sym}(x+y_n))\leq \sup\limits_{x\in K}\dfrac{\q(x)-\hn^{sym}(x+y_n)}{\hn^{sym}(x+y_n)},\]
leading to
 \begin{align*}
 \sup_{x\in K}\bl\hln^{sym}(x+y_n)-\psp(x)\bl\leq \dfrac{\sup_{x\in K}|\hn^{sym}(x+y_n)-\q(x)|}{\min\lb\inf\limits_{x\in K}\hn^{sym}(x+y_n),\inf\limits_{x\in K}\q(x)\rb},
 \end{align*}
whose numerator converges to $0$ almost surely by part A of Theorem~ \ref{Theorem: the L1 convergence of the density estimators of one-step estimators}.  Thus, to prove part B for $\hln^{sym}$, we only need to show that the denominator of the term on the right hand side of last display is bounded away from $0$. To this end, notice that since $\q$ is unimodal, and $\hn^{sym}$ is a convex combination of two unimodal densities,  they attain their infimum over $K$ at either of  the endpoints. By  part A, it also follows that as $n\to\infty$, the respective minimum values  approach $\inf\limits_K\q$, which is bounded away from $0$ as  $K$ is a closed subset of  $\iint(\dom(\psp))$. Thus part B is proved for $\hln^{sym}$.

One can prove part B  for $\htsm$ in a similar fashion because $\qsm$ is unimodal, and $\htsm$ is a convex combination of  two unimodal densities. 

 
 Now we can proceed to prove part C of the Theorem~\ref{Theorem: the L1 convergence of the density estimators of one-step estimators}. First, we consider the log-concave density estimators $\widehat{g}_{\bth}$,  $\hn^{geo,sym}$, and $\hf(\bth\pm\mathord{\cdot})$, all of which are strongly $L_1$  consistent for \q.  Note that if  $\psp'$ is continuous at some $x\in\iint(\dom(\psp))$, then there exists a neighborhood $[x-\xi,x+\xi]$ where $\psp$ is continuous. It follows that for sufficiently large $n$,  $x+y_n\in[x-\xi,x+\xi] $  with probability $1$. Since $\hln$ is concave, and converges to $\psp$ uniformly over $K$ by part B, Lemma~$3.10$ of \cite{seijo} entails that 
 \begin{equation}
 \sup\limits_{t\in[x-\xi,x+\xi]}|\hln'( t\pm)-\psp'(t)|\as 0, 
 \end{equation}
 from which we obtain that
 \[|\hln'(x+y_n)-\psp'(x+y_n)|\as 0,\]
which results in
\begin{equation}\label{inlemma: one step estimator convergence: c}
|\hln'(x+y_n)-\psp'(x)|\as 0
\end{equation}
 because $\psp'$ is continuous at  $x$, thereby proving  part C for $\hn=\widehat{g}_{\bth}$,  $\hn^{geo,sym}$, and $\hf(\bth\pm\mathord{\cdot})$.
  For  $\hn=\hts(\bth\pm x)$, part C follows similarly. Therefore, we skip the proof here.
  To prove part C for the non-logconcave estimators, $\hn^{sym}$ and $\htsm$, we have to exploit  their connection to $\hf$ and $\hf^{sm}$, respectively. 
Beginning with $\hn^{sym}$,  we observe that
\begin{equation}\label{definition of tpz by tpc}
(\hln^{sym})'(x)=\varrho_n(x)\lb(\log\hf)'(\bth+x)\rb-(1-\varrho_n(x))\lb(\log\hf)'(\bth-x)\rb
\end{equation}
where $\varrho_n(x)=\hf(\bth+x)/2\hn^{sym}(x)\leq 1$.
Thus 
\begin{align*}
\MoveEqLeft|(\hln^{sym})'(x+y_n)-\psp'(x)| \leq  \varrho_n(x+y_n)|(\log\hf)'(\bth+x+y_n)-\psp'(x)|\nonumber\\
&\ +\ (1-\varrho_n(x+y_n))|(\log\hf)'(\bth-x-y_n)-\psp'(-x)|.
\end{align*}
 Since $ \varrho_n$ is uniformly bounded above by $1$,  \eqref{inlemma: one step estimator convergence: c} (applied on $\log\hf(\bth\pm\mathord{\cdot})$) completes the proof of part C for $\hln^{sym}$. Analogously using  \eqref{definition of tpz by tpc}, for $\hn=\htsm$ , one can show that 
 \begin{equation}\label{definition of tp when we have htsm}
((\hln^{sym})^{sm})'(x)=\varrho^{sm}_n(x)\lb(\log\hf^{sm})'(\bth+x)\rb-(1-\varrho^{sm}_n(x))\lb(\log\hf^{sm})'(\bth-x)\rb
\end{equation}
for some $\varrho^{sm}_n(x)<1$. Therefore, following the same steps as in the case of $\hn^{sym}$, we can establish  part C for $\htsm$.
\end{proof}

\subsubsection*{Proof of Lemma \ref{lemma: consistency of Fisher information}}
As in the proof of Lemma~\ref{lemma: L1 convergence: one step density estimators}, one can show that it suffices to prove Lemma~\ref{lemma: consistency of Fisher information} when $\bth$ is a strongly consistent estimator of $\th_0$.

 We will show that $\hin(\eta)\as\Ig(\eta)$ and $\hth\as\th_0$ when $\hn$ equals $\hn^{sym}$, $\widehat{g}_{\bth}$ or $\hn^{geo,sym}$. The proof  in the case of $\hn=\htsm$ follows in a similar way.

We will prove the consistency of $\hin(\eta)$ first. Since Lemma~\ref{lemma: L1 convergence: one step density estimators} indicates that $||\hn-\q||_1\as 0$, the corresponding distribution functions satisfy $||\hH-\pG(\eta)||_{\infty}\as 0$, which combined with the continuity of $\pG$ entails that
 \begin{equation}\label{convergence of distribution function: 1}
 -\xi_n=\hH^{-1}(\eta)\as\pG^{-1}(\eta)=-\xi_0;
 \end{equation}
 and
 \begin{equation}\label{convergence of distribution function: 2}
 \hH^{-1}(1-\eta)\as\pG^{-1}(1-\eta)=\xi_0.
 \end{equation}
    Combined with strong consistency of $\bth$, the above leads to 
 \begin{equation}\label{convergence of inverses : one-step: 1}
 \bth+\xi_n=\bth+\hH^{-1}(1-\eta)\as\th_0+\xi_0=\pF^{-1}(1-\eta),
 \end{equation}
 and
 \begin{equation}\label{convergence of inverses : one-step: 2}
 \bth-\xi_n=\bth+\hH^{-1}(\eta)\as\th_0-\xi_0=\pF^{-1}(\eta).
 \end{equation}
 Observe that
\begin{align}\label{inlemma:consistency of Fisher information decomposition }
|\hin(\eta)-\Ig(\eta)|\leq &\ \bl\dint_{\bth-\xi_n}^{\bth+\xi_n}\hln'(x-\bth)^2d(\Fm-F_0)(x)\bl\nonumber\\
&\ +\bl\dint_{\bth-\xi_n}^{\bth+\xi_n} \hln'(x-\bth)^2 dF_0(x)-\Ig(\eta)\bl.
\end{align}
Let us consider the first term on the right hand side of \eqref{inlemma:consistency of Fisher information decomposition } first. To this end,  note that  Lemma~\ref{lemma: boundedness of tp} combined with \eqref{convergence of distribution function: 1} and \eqref{convergence of distribution function: 2} imply that 
\begin{equation}\label{inlemma: boundedness of tp}
\limsup_n\sup_{z\in[-\xi_n,\xi_n]}|\tp(z)| \leq C_{\xi_0}\quad a.s.,
\end{equation}
where $C(-\xi_0,\xi_0)>0$ is a finite constant depending on the truncation parameter $\eta$ via $\xi_0=\pG^{-1}(1-\eta)$. For the sake of simplicity, we  denote $C(-\xi_0,\xi_0)$ by $M_{\eta}$. 
Suppose $\mathcal{U}_{\eta}$ is the class of  non-increasing functions bounded by $M_{\eta}$, i.e.,
 \[\mathcal{U}_{\eta}=\bigg\{h: \RR\mapsto[-M_{\eta},M_{\eta}]\ \bl\ \ h\text{ is non-increasing}\bigg\},\]
 and denote by $\mathcal{F}_{\eta}$ the class
  \begin{align*}
 \mathcal{F}_{\eta}=\bigg\{h^2:\RR\mapsto & [0,M_{\eta}^2] \ \bl \ h(x)=u(x)1_{[r_1,r_2]}(x),\\
 &\ u\in\mathcal{U}_{\eta},\ [r_1,r_2]\subset\lb\pF^{-1}(\eta/2),\pF^{-1}(1-\eta/2)\rb\bigg\}.
 \end{align*} 
    Theorem~2.7.5 of \cite{wc} (pp. $159$) indicates that  for any $\e>0$, the bracketing entropy
   \begin{equation}\label{inlemma: finite entropy increasing}
  \log N_{[\ ]}(\e,\mathcal{U}_{\eta},L_2(P_0))\lesssim \e^{-1}.
   \end{equation}
   Also, by Example 2.5.4 of \cite{wc}, the class $\mathcal{F}_I$ of all indicator functions of  the form $1_{[z_1,z_2]}$, where $z_1\leq z_2$ with $z_1,z_2\in\RR$,  satisfies
   \begin{equation}\label{inlemma: finite entropy indicator functions}
   \log N_{[\ ]}(\e,\mathcal{F}_I,L_2(P_0))\lesssim 2/\e^2,
   \end{equation}
   implying that
\[\log N_{[\ ]}(\e,\mathcal{F}_{\eta},L_2(P_0))\lesssim \e^{-1}.\]
Since $\tp$ is non-increasing, \eqref{inlemma: boundedness of tp} implies that $(\tp)^2(\mathord{\cdot}-\bth)$ restricted to $[\bth-\xi_n,\bth+\xi_n]$ belongs to $\mathcal{F}_{\eta}$.
Therefore,  by   Glivenko-Cantelli Theorem (see Theorem~$2.4.1$ of \cite{wc}),
\begin{equation}\label{inlemma: consistency: first term}
\dint_{\bth-\xi_n}^{\bth+\xi_n}\hln'(x-\bth)^2d(\Fm-F_0)(x)\as 0.
\end{equation}
Now we claim that the second term in \eqref{inlemma:consistency of Fisher information decomposition } also approaches $0$ almost surely.
This follows by Theorem~\ref{Theorem: the L1 convergence of the density estimators of one-step estimators}(C) and  the dominated convergence theorem since $\tp$ is bounded on $[-\xi_n,\xi_n]$, completing the proof of $\hin(\eta)\as\Ig(\eta)$. 

Our next aim is to prove the consistency of $\hth$.
Observe that from \eqref{def: one-step estimator: truncated} it follows that
\begin{align*}
|\tilde{\theta}_n-\bth|
\leq&\ \hin(\eta)^{-1}\bl\dint_{\bth-\xi_n}^{\bth+\xi_n}\hln'(x-\bth)d(\Fm-F_0)(x)\bl\\
&\ +\hin(\eta)^{-1}\bl\dint_{\bth-\xi_n}^{\bth+\xi_n} \hln'(x-\bth) dF_0(x)\bl.
\end{align*}
Note that $\hin(\eta)\as\Ig(\eta)$ by the first part of the current lemma.
The proof of
\[\hin(\eta)^{-1}\bl\dint_{\bth-\xi_n}^{\bth+\xi_n}\hln'(x-\bth)d(\Fm-F_0)(x)\bl\as 0\]
is very similar to the proof of \eqref{inlemma: consistency: first term}, and follows by Glivenko-Cantelli Theorem (see Theorem~$2.4.1$ of \cite{wc}) noting that $\tp(\mathord{\cdot}-\bth)$  restricted to $[-\xi_n,\xi_n]$ is a member of $\mathcal{U}_{\eta}$, which has finite bracketing entropy by \eqref{inlemma: finite entropy indicator functions}.

Since $\tp$ is bounded  on $[-\xin,\xin]$,  \eqref{convergence of inverses : one-step: 1}, \eqref{convergence of inverses : one-step: 2}, and  Theorem~\ref{Theorem: the L1 convergence of the density estimators of one-step estimators}(C) combined with  the dominated convergence theorem  further entail that 
\begin{align*}
\dint_{\bth-\xi_n}^{\bth+\xi_n} \hln'(x-\bth) dF_0(x)\as \dint_{\pG^{-1}(\eta)}^{\pG^{-1}(1-\eta)}\psp'(x)g_0(x)dx,
\end{align*}
which is $0$ because $\psp'$ is an odd function while $g_0$ is an even function.  Therefore, strong consistency of $\tilde{\theta}_n$ follows from that of $\bth$.\hfill $\Box$
\subsubsection*{Proof of Theorem \ref{theorem: main: one-step}}
As in the proof of Lemma~\ref{lemma: L1 convergence: one step density estimators}, one can show that it suffices to prove Theorem \ref{theorem: main: one-step} when $\bth\as\th_0$. Therefore, in what follows, we assume that $\bth\as\th_0$. 
 
   First we consider the case when $\hn$ equals $\hn^{sym}$, $\widehat{g}_{\bth}$ or $\hn^{geo,sym}$, so that by Lemma~\ref{lemma: L1 convergence: one step density estimators}, the density estimator $\hn$ converges to $\q$ in $L_1$ almost surely.
From  \eqref{def: one-step estimator: truncated} we deduce that
\begin{align*}
-(\hth-\bth)= \dint_{\bth-\xin}^{\bth+\xin}\dfrac{\hln'(x-\bth)}{\hin(\eta)}d\Fm(x)=\dint_{-\xin}^{\xin}\dfrac{\hln'(z)}{\hin(\eta)}d\Fm(z+\bth).
\end{align*}
Denoting $\td=\th_0-\bth$, we observe that the expression in the above display can also be written as
\begin{align*}\label{theorem: main: one-step: first split}
\MoveEqLeft\dint_{-\xin}^{\xin}\dfrac{\tp(z)-\psp'(z-\td)}{\hin(\eta)}d(\Fm(z+\bth)-F_0(z+\bth))\nonumber\\
&\ +\dint_{-\xin}^{\xin}\dfrac{\tp(z)}{\hin(\eta)}\lb f_0(z+\bth)-g_0(z)\rb dz
 +\dint_{-\xin}^{\xin}\dfrac{\tp(z)-\psp'(z)}{\hin(\eta)}g_0(z)dz\nonumber\\
 &\ +  \dint_{-\xin}^{\xin}\dfrac{\psp'(z)}{\hin(\eta)}g_0(z)dz + \dint_{-\xin}^{\xin}\dfrac{\psp'(z-\td)}{\hin(\eta)}d(\Fm(z+\bth)-F_0(z+\bth))\nonumber\\
 =&\ T_{1n}+T_{2n}+T_{3n}+T_{4n}+T_{5n}.
\end{align*}
Observe that  $T_{3n}$ and $T_{4n}$ vanish since $\tp$ and $\psp'$ are odd functions while $\p$ is an even function. Now note that to prove \eqref{statement: one-step: main theorem: misspecified}, it suffices to show  that $\sqn T_{1n}=o_p(1)$, 
 \begin{equation}\label{intheorem: one step: T2 convergence}
  \dfrac{T_{2n}}{\td} \as \dint_{-\xi_0}^{\xi_0}\dfrac{\psp'(z)\ps_0'(z)}{\Ig(\eta)}g_0(z)dz,
  \end{equation}
 and that  
 \[\sqn T_{5n}=\int_{\th_0-\xi_0}^{\th_0+\xi_0}\dfrac{\psp'(x-\th_0)}{\Ig(\eta)}d\mathbb{Z}_n(x)+o_p(1).\]
 When $\hn=\htsm$, \eqref{statement: one-step: main theorem: misspecified: smoothef}  can also be proved in a similar way, and hence we skip the proof.

 We will first show that $\sqn T_{1n}=o_p(1)$. 
  Recall that in Section \ref{sec: Preliminaries} we denoted the empirical process $\sqn(\Fm-F_0)$ by $\mathbb{Z}_n$. Denote by $h_n$ the function
  \[h_n(x)=(\tp(x-\bth)-\php'(x))1_{[\bth-\xi_n,\bth+\xi_n]}(x),\]
  where $\php'(x)$ was previously defined as $\ps_0'(x-\th_0)$.
   It follows that
 \begin{align}\label{intheorem: T1n: representation}
 \sqn  T_{1n}\ =&\ \sqn\dint_{\bth-\xi_n}^{\bth+\xi_n}\dfrac{\tp(x-\bth)-\php'(x)}{\hin(\eta)}d(\Fm-F_0)(x)\nonumber\\
 =&\ \edint\dfrac{ h_n(x)}{\hin(\eta)}d\mathbb{Z}_n(x).
 \end{align}

Lemma~\ref{lemma: boundedness of tp} combined with \eqref{convergence of distribution function: 1} and \eqref{convergence of distribution function: 2} imply that 
\[\limsup_n\sup_{z\in[-\xi_n,\xi_n]}|\tp(z)| \leq C_{\xi_0}\quad a.s.,\]
where $C_{\xi_0}$ is a positive constant depending on $\xi_0$. 
 It is not hard to see that since $\php'$ is monotone,  $|\php'|$ attains its maxima over  $[\bth-\xi_n,\bth+\xi_n]$ at either of the endpoints. Though $[\bth-\xin,\bth+\xin]$ is random, \eqref{convergence of distribution function: 2} and \eqref{convergence of inverses : one-step: 1} indicate that with probability one, this interval is a subset of $[\pF^{-1}(\eta/2),\pF^{-1}(1-\eta/2)]$ for all sufficiently large $n$.  Hence, it follows that
 \begin{equation}\label{intheorem: boundedness of psp}
 \limsup_n\sup_{x\in [\bth-\xi_n,\bth+\xi_n]}|\psp'(x)|<M_{\eta} \quad a.s.
 \end{equation}
  for some $M_{\eta}>0$, which can be chosen in such a way so that
$C_{\xi_0}\leq M_{\eta}$. Therefore, we obtain that
\begin{equation}\label{inlemma: boundedness: hn}
\limsup_n\sup_{x\in [\bth-\xi_n,\bth+\xi_n]}|h_n(x)|<M_{\eta} \quad a.s.
\end{equation}

 
Now  define the class $\mathcal{H}_{\eta}$ by
  \begin{align*}
 \mathcal{H}_{\eta}=\bigg\{h:\RR\mapsto & [-M_{\eta},M_{\eta}] \ \bl \ h(x)=(u(x)-\php'(x))1_{[r_1,r_2]}(x),\\
 &\ u\in\mathcal{U}_{\eta},\ [r_1,r_2]\subset\lb\pF^{-1}(\eta/2),\pF^{-1}(1-\eta/2)\rb\bigg\},
 \end{align*}
 where $\mathcal{U}_{\eta}$ is the class of  non-increasing functions bounded by $M_{\eta}>0$, i.e.
 \[\mathcal{U}_{\eta}=\bigg\{h:\RR\mapsto[-M_{\eta},M_{\eta}]\ \bl\ \ h\text{ is non-increasing}\bigg\}.\] 
 Observe that \eqref{convergence of inverses : one-step: 1}, \eqref{convergence of inverses : one-step: 2}, and \eqref{inlemma: boundedness: hn} imply $h_n\in\mathcal{H}_{\eta}$ almost surely for all sufficiently large $n$. 
 
 
 From \eqref{inlemma: finite entropy increasing} and \eqref{inlemma: finite entropy indicator functions} we derive that
\[\log N_{[\ ]}(\e,\mathcal{H}_{\eta},L_2(P_0))\lesssim \dfrac{1}{\e}.\]
Therefore, we conclude that the bracketing integral 
\[\mathcal{J}_{[\ ]}(1,\mathcal{H}_{\eta},L_2(P_0))=\dint_{0}^{1}\sqrt{1+\log N_{\lbrack
\rbrack}(M_{\eta}\e,\mathcal{H}_{\eta},L_2(P_0))}d\e\lesssim \dint_{0}^{1}\e^{-1/2}d\e,\]
which is finite, thereby establishing that $\mathcal{H}_{\eta}$ is $P_0$-Donsker. Also, notice that
$|h_n|$ is bounded above by $M_{\eta}$, and Theorem \ref{Theorem: the L1 convergence of the density estimators of one-step estimators}(C) indicates that $h_n$ converges to $0$ at the continuity points of $\php'$. Since $\php$ is concave, $\php'$ is continuous almost everywhere with respect to Lebesgue measure on  $\dom(\php)$. Hence, the dominated convergence theorem yields 
\[\edint h_n(x)^2 dF_0(x)\as 0.\]
Since $\mathcal{H}_{\eta}$ is Donsker, Theorem 2.1 of \cite{epindex}  then implies that
\[\edint h_n(x)d\mathbb{Z}_n(x)=o_p(1).\]
Finally, an application of Lemma~\ref{lemma: consistency of Fisher information} leads to $\hin(\eta)\as\I(\eta)$, and thus from \eqref{intheorem: T1n: representation} we conclude that $\sqn T_{1n}=o_p(1)$.

Now we turn our attention to $T_{2n}$. Here we aim to prove \eqref{intheorem: one step: T2 convergence}. Observe that  $T_{2n}/\td$ can be written as
\begin{align}\label{intheorem:t2n:representation}
\dint_{-\xin}^{\xin}\dfrac{\tp(z)}{\hin(\eta)}\dfrac{\lb g_0(z-\td)-g_0(z)\rb}{\td} dz\ =&\ \dint_{-\xin}^{\xin}\dfrac{\tp(z)}{\hin(\eta)}\dfrac{\dint_{z}^{z-\td}g_0'(t)dt}{\td} dz\nonumber\\
= &\ \dint_{-\xin-\td}^{\xin}g_0'(t)\dfrac{\dint_{t}^{t+\td}\tp(z)dz}{\td\hin(\eta)}dt,
\end{align}
where the equality  follows by Fubini's theorem since $g_0$ is absolutely continuous. The concavity of $\hln$ implies that
$\tp$ is non-increasing. Hence, for any $t\in\iint(\dom(\psp))$, we have,
\begin{equation}\label{inlemma: T2: DCT}
\min\bigg\{\tp(t),\tp(t+\td)\bigg\}\leq \dfrac{\dint_{t}^{t+\td}\tp(z)dz}{\td}\leq \max\bigg\{\tp(t),\tp(t+\td)\bigg\}.
\end{equation}
Therefore, if $\psp'$ is continuous at $t\in\iint(\dom(\psp))$,  Theorem   
\ref{Theorem: the L1 convergence of the density estimators of one-step estimators}(C)  leads to
\[\dfrac{\dint_{t}^{t+\td}\tp(z)dz}{\td}\as \psp'(t)\quad\text{ as }n\to\infty.\]
Since $\psp$ is concave, $\psp'$ is continuous almost everywhere with respect to Lebesgue measure on $\dom(\psp))$.
Since $\tp$ is non-increasing, \eqref{inlemma: T2: DCT} further entails that 
\[\sup_{t\in[-\xin-\td,\xin]} \dfrac{\bl\dint_{t}^{t+\td}\tp(z)dz\bl}{|\td|}\leq \tp(-\xin-|\td|),\]
which can be bounded above by $\psp'(\pG^{-1}(\eta/2))$ using \eqref{convergence of distribution function: 1} and the fact that $\td\as 0$. Hence, we deduce that 
\[ |g_0'(t)|\dfrac{\bl\dint_{t}^{t+\td}\tp(z)dz\bl}{|\td|} 1_{[-\xin-\td,\xin]}(z)\leq |g_0'(t)|\lb\psp'(\pG^{-1}(\eta/2))\rb.\]
Note that the symmetry of $g_0$ about $0$ indicates that
\[\edint |g_0'(t)|dt=2\dint_{-\infty}^{0}g_0'(t)dt=2g(0),\]
which is finite because of \eqref{definition: model P1}.
Therefore, \eqref{intheorem:t2n:representation}, Lemma~\ref{lemma: consistency of Fisher information}, and the dominated convergence theorem yield
\[\dfrac{T_{2n}}{\td}\as \dint_{-\xi_0}^{\xi_0}\dfrac{\psp'(z)g_0'(z)}{\Ig(\eta)}dz,\]
 which completes the proof of \eqref{intheorem: one step: T2 convergence}  since $g_0'(z)=\ps_0'(z)g_0(z)$.

Now it only remains to show that the last term $T_{5n}$ converges weakly to a Gaussian random variable with variance $\Ig^{-1}$. Observe that  a change of variable leads to
\begin{align}\label{intheorem: T3n split}
\sqn T_{5n}=&\ \sqn\int_{\bth-\xin}^{\bth+\xin}\dfrac{\psp'(x-\th_0)}{\hin(\eta)}d(\mathbb{F}_n-F_0)(x)\nonumber\\
=&\  \int_{\th_0-\xi_0}^{\th_0+\xi_0}\dfrac{\psp'(x-\th_0)}{\hin(\eta)}d\mathbb{Z}_n(x)\nonumber\\
&\ +\edint 1_{C_n}(x)\dfrac{\psp'(x-\th_0)}{\hin(\eta)}d\mathbb{Z}_n(x),
\end{align}
where 
\[C_n=[\bth-\xin,\bth+\xin]\setminus[\th_0-\xi_0,\th_0+\xi_0].\]
From Lemma~\ref{lemma: consistency of Fisher information} and  Slutsky's theorem it follows that 
\[\int_{\th_0-\xi_0}^{\th_0+\xi_0}\dfrac{\psp'(x-\th_0)}{\hin(\eta)}d\mathbb{Z}_n(x)=\int_{\th_0-\xi_0}^{\th_0+\xi_0}\dfrac{\psp'(x-\th_0)}{\Ig(\eta)}d\mathbb{Z}_n(x)(1+o_p(1)),\]
which equals
\[\int_{\th_0-\xi_0}^{\th_0+\xi_0}\dfrac{\psp'(x-\th_0)}{\Ig(\eta)}d\mathbb{Z}_n(x)+o_p(1),\]
because  the central limit theorem and \eqref{definition: I(eta)} imply   that the first term on the above display is $O_p(1)$.

To deal with the second term, observe that the indicator function $1_{C_n}$ belongss to $\mathcal{F}_1$, the class of all indicator functions of  the form $1_{[z_1,z_2]\setminus[\th_0-\xi_0,\th_0+\xi_0]}$,  where $z_1\leq z_2$ with $z_1,z_2\in\RR$. Since the latter class is Donsker by \eqref{inlemma: finite entropy indicator functions},  Theorem 2.1 of \cite{epindex} entails that  the second term on the right hand side of \eqref{intheorem: T3n split} is of order $o_p(1)$ provided the following holds:
\[\edint 1_{C_n}(x)\dfrac{\psp'(x-\th_0)^2}{\hin(\eta)^2}dF_0(x)\as 0.\]
To this end note that since $\psp'$ is non-increasing, \eqref{convergence of inverses : one-step: 1}, \eqref{convergence of inverses : one-step: 2}, \eqref{intheorem: boundedness of psp}, and Lemma~\ref{lemma: consistency of Fisher information} entail that for all sufficiently large $n$,
\[\edint 1_{C_n}(x)\dfrac{\psp'(x-\th_0)^2}{\hin(\eta)^2}dF_0(x)\leq \dfrac{\psp'(\pG^{-1}(\eta/2))^2}{\Ig^2(\eta)}|\bth-\th_0|\lb\sup_x f_0(x)\rb\quad a.s.\] 
 Since $f_0\in\mP_1$, from the definition of $\mP_1$ in \eqref{definition: model P1} and the strong consistency of $\bth$ we obtain that the term on the right hand side of the last display
 converges to $0$ almost surely. Thus we  conclude that $\sqn T_{5n}$ is asymptotically distributed as a  centered Gaussian random variable with variance $\Ig(\eta)^{-1}$, which completes the proof.\hfill $\Box$
 
\subsection{Proofs for the MLE}
\label{sec:proofs:MLE}
We first state and prove a useful theorem which   plays a crucial role in proving Proposition \ref{Prop: existence of maximizers} and  Lemma~\ref{theorem: continuity of criterion function}. This theorem concerns the continuity of the function  $L(\mathord{\cdot}\ ;F):\RR\mapsto\RR$ given by 
\begin{equation}\label{def: L(th,F)}
L(\th;F)=\sup_{\psi\in\mathcal{SC}_0}\Psi(\th,\ps,F)
\end{equation}
where $F$ is a fixed distribution function, and $\Psi$ is as defined in \eqref{criterion function: xu samworth}.
 
\begin{theorem}\label{thm: continuity: general}
Suppose the distribution function $F$ satisfies Condition A. Then the map $\th\mapsto L(\th;F)$
is continuous on $\RR$, where the function $L(\th;F)$ was defined in \eqref{def: L(th,F)}.
\end{theorem}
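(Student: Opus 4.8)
The plan is to establish continuity of $\th\mapsto L(\th;F)$ by showing it is both upper and lower semicontinuous at an arbitrary fixed point $\th_0\in\RR$. The key reduction is to work with the unconstrained log-concave criterion through Lemma~\ref{Lemma: projection theory for a distribution function F}, which identifies
\[L(\th;F)=\sup_{\ph\in\mathcal{SC}_{\th}}\om(\ph,F)=\sup_{\ph\in\mathcal{C}}\om(\ph,F^{sym}_{\th})=\om(\tilde\ph_{F^{sym}_{\th}},F^{sym}_{\th}),\]
where the last equality uses that $F^{sym}_{\th}$ satisfies Condition~A (being a symmetrization of $F$, it inherits the finite first moment and non-degeneracy), so Theorem~2.7 of \cite{dumbreg} guarantees the unconstrained maximizer exists and $e^{\tilde\ph_{F^{sym}_{\th}}}$ integrates to one. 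Thus it suffices to prove that $\th\mapsto\om(\tilde\ph_{F^{sym}_{\th}},F^{sym}_{\th})$ is continuous, i.e.\ to track how the log-concave projection of $F^{sym}_{\th}$ varies with $\th$.

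First I would observe that $\th\mapsto F^{sym}_{\th}$ is continuous in the Wasserstein distance $d_W$: from the representation \eqref{def: symmetrized F}, a computation gives $d_W(F^{sym}_{\th},F^{sym}_{\th'})\le \tfrac12 d_W(F(\mathord{\cdot}),F(\mathord{\cdot}-2(\th'-\th)))+\text{(reflected term)}\le |\th-\th'|$-type bound — more precisely one bounds $\int|F^{sym}_{\th}(x)-F^{sym}_{\th'}(x)|dx$ by translating, using that translating a distribution function by $h$ changes the Wasserstein distance by exactly $|h|$. Moreover $\int|x|dF^{sym}_{\th}(x)$ stays locally bounded in $\th$. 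Then I would invoke the continuity of the maximized log-concave criterion with respect to $d_W$: this is precisely Theorem~2.15/Remark~2.10-2.11 of \cite{dumbreg} (or the machinery behind \cite{xuhigh}), which states that if $d_W(H_n,H)\to0$ with all satisfying Condition~A, then the maximal values $\om(\tilde\ph_{H_n},H_n)\to\om(\tilde\ph_{H},H)$. Applying this along any sequence $\th_n\to\th_0$ with $H_n=F^{sym}_{\th_n}$, $H=F^{sym}_{\th_0}$ gives $L(\th_n;F)\to L(\th_0;F)$, which is the claim.

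An alternative, more self-contained route — in case one does not want to quote the $d_W$-continuity of the projected criterion as a black box — is a direct two-sided semicontinuity argument. For lower semicontinuity: fix $\ph\in\mathcal{SC}_{\th_0}$; then $\ph(\mathord{\cdot}-(\th-\th_0))\in\mathcal{SC}_{\th}$ and $\om(\ph(\mathord{\cdot}-(\th-\th_0)),F)=\Psi(\th,\ph(\mathord{\cdot}-(\th-\th_0)),F)\to\Psi(\th_0,\ph,F)$ as $\th\to\th_0$, because $\int\ph(x-\th)\,dF(x)\to\int\ph(x-\th_0)\,dF(x)$ by dominated convergence (using the linear upper bound $\ph(x)\le -\alpha|x|+\beta$ valid for proper concave integrable-exponential functions together with $\int|x|\,dF<\infty$, and upper semicontinuity of $\ph$ handles the $-\infty$ values via Fatou) while $\int e^{\ph(x-\th)}dx$ is translation-invariant; taking the supremum over $\ph$ gives $\liminf_{\th\to\th_0}L(\th;F)\ge L(\th_0;F)$. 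For upper semicontinuity one uses that the near-maximizers $\pst$ are uniformly controlled — they live in a class with a common linear upper bound and satisfy $\int e^{\pst}\le 1+o(1)$ and a second-moment bound analogous to Corollary~\ref{Corollary 2.8 of Doss-Wellner (2018)} — so along $\th_n\to\th_0$ one extracts (via Theorem~10.9 of \cite{rockafellar} on pointwise convergence of concave functions on a dense set, plus a tightness/Helly argument) a subsequential limit $\ps_\infty\in\mathcal{SC}_0$ with $\limsup_n\Psi(\th_n,\widehat\ps_{\th_n},F)\le\Psi(\th_0,\ps_\infty,F)\le L(\th_0;F)$, using upper semicontinuity in the density integral term (Fatou, since $e^{\psi}\ge 0$) and again translation-invariance.

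The main obstacle is the upper-semicontinuity direction: controlling the near-maximizers $\widehat\ps_{\th}$ uniformly as $\th$ varies and extracting a limit that is still a proper concave function (not identically $-\infty$, and with the right integral). This requires a uniform-tail / uniform-upper-bound estimate for the family $\{\widehat\ps_{\th}:\th\in[\th_0-\delta,\th_0+\delta]\}$, which in turn leans on the variance bound of the type in Corollary~\ref{Corollary 2.8 of Doss-Wellner (2018)} (bounding $\mathrm{Var}$ of the projected distribution by $\int(x-\th)^2dF(x)$, which is locally bounded in $\th$ under Condition~A — here one may need to additionally assume or note that $F$ has a finite second moment on the relevant compact $\th$-range, or argue with first moments and the exponential-tail structure of log-concave densities). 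If one instead cites the Wasserstein-continuity of the log-concave projection from \cite{dumbreg}, this obstacle is absorbed into that reference and the proof becomes short; I would present that as the primary argument and relegate the self-contained version to a remark.
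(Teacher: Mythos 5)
Your primary argument is correct and is, at heart, the same strategy as the paper's: reduce $L(\cdot\,;F)$ to a maximized log-concave criterion over a \emph{fixed} function class applied to a $\th$-dependent distribution, then cite the known Wasserstein-continuity of that criterion, and finally check $d_W$-continuity of the $\th$-dependent distribution. The only difference is cosmetic: the paper keeps the class $\mathcal{SC}_0$ fixed and translates $F$, writing $L(\th;F)=\sup_{\ps\in\mathcal{SC}_0}\Psi(0,\ps,F(\cdot+\th))$ and invoking Proposition 6 of \cite{xuhigh} directly on the translated distributions (verifying $d_W(F(\cdot+\th_k),F(\cdot+\th))\to 0$ via Theorem 6.9 of \cite{villani2009}); you instead symmetrize $F$, pass to the unconstrained class $\mathcal{C}$ via Lemma~\ref{Lemma: projection theory for a distribution function F}, and cite \cite{dumbreg}. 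Both work — $d_W(F(\cdot+\th_k),F(\cdot+\th))$ and $d_W(F^{sym}_{\th_k},F^{sym}_{\th})$ both equal $|\th_k-\th|$ — but the paper's route avoids the symmetrization and is a bit more economical. One small point: Lemma~\ref{Lemma: projection theory for a distribution function F} as stated is an equality of $\argmax$'s, not of suprema; you should note (as the paper's proof of that lemma establishes) that $\om(\ph,F)=\om(\ph,F^{sym}_\th)$ for every $\ph\in\mathcal{SC}_\th$, and combine with Lemma~\ref{lemma: symmetry of projection operator}, to conclude that the optimal \emph{values} also agree. Your self-contained alternative is a reasonable sketch — lower semicontinuity is straightforward, and you correctly flag that upper semicontinuity is where the real work lies — but the primary route is what should appear, and it matches the paper's.
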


\begin{proof}
Suppose $\th_k\to\th$ as $k\to\infty$.
Observe that $\Psi(\th,\ps,F)$ can also be written as
\[\Psi(\th,\ps,F)=\Psi(0,\ps,F(\mathord{\cdot}+\th)).\]
 Hence, to prove Theorem \ref{thm: continuity: general}, it suffices to show that as  $k\to\infty$,
\begin{equation*}
L(\th_k;F)=\sup_{\psi\in\mathcal{SC}_0}\Psi(0,\ps,F(\mathord{\cdot}+\th_k))\to\sup_{\psi\in\mathcal{SC}_0}\Psi(0,\ps,F(\mathord{\cdot}+\th))=L(\th,F).
\end{equation*}
Proposition $6$ of \cite{xuhigh} implies  that under Condition A, the convergence in the above display holds if the Wasserstein distance
\begin{equation}\label{inlemma: continuity: convergence of }
d_W(F(\mathord{\cdot}+\th_k),F(\mathord{\cdot}+\th))\to 0,\quad\text{ as }k\to\infty.
\end{equation}
Now by Theorem $6.9$ of \cite{villani2009} \citep[see also Theorem $7.12$ of][]{villani2003}, \eqref{inlemma: continuity: convergence of } follows  if the followings hold as $k\to\infty$:
\begin{equation}\label{inlemma: continuity: weak convgnce}
F(\mathord{\cdot}+\th_k)\to_d F(\mathord{\cdot}+\th),
\end{equation}
and
\begin{equation}\label{inlemma: convergence: moments}
\edint|x|dF(x+\th_k)\to \edint|x|dF(x+\th).
\end{equation}
 To prove \eqref{inlemma: continuity: weak convgnce}, note that for any bounded continuous function $h$,
\[\edint h(x-\th_k)dF(x)\to\edint h(x-\th)dF(x)\]
by the dominated convergence theorem since $\th_k\to\th$ as $k\to\infty$.
For proving \eqref{inlemma: convergence: moments}, first notice that $F$ has finite first moment  by Condition A. Therefore, another application of the dominated convergence  yields that as $\th_k\to\th$,
\[\edint|x|dF(x+\th_k)=\edint|x-\th_k|dF(x)\to \edint|x|dF(x+\th),\]
which, combined with \eqref {inlemma: continuity: weak convgnce}, leads to \eqref{inlemma: continuity: convergence of }, and completes the proof.

\end{proof}

\subsubsection*{Proof of Proposition \ref{Prop: existence of maximizers}}
Our first step is to show that  $L(F)$  is finite. 
From the definition of $\Psi$ in \eqref{criterion function: xu samworth}, it is not hard to see that
\[L(F)\leq \sup_{\ps\in\mathcal{C}}\lb\edint\ps(x) dF(x)-\edint e^{\ps(x)}dx\rb,\]
where $\mathcal{C}$ denotes the set of all real-valued concave functions. Theorem 2.2 of \cite{dumbreg} entails that under condition A, the term on the right hand side of the above display  is finite. Therefore, $L(F)<\infty$ follows. To show that $L(F)>-\infty$, we show that the map $x\mapsto-|x|\in\mathcal{SC}_{0}$. Therefore, \eqref{criterion function: xu samworth} and \eqref{maximum of criterion function} lead to 
\[L(F)>-\edint |x|dF(x)-\edint e^{-|x|}dx>-\infty,\]
which follows from Condition A. Therefore, we  conclude that $L(F)\in\RR$.

Now we have to show that there exist $\th^*(F)\in\RR$ and $\ps^*(F)\in\mathcal{SLC}_0$ such that
\[\Psi(\th^*(F),\ps^*(F),F)=\sup_{\th\in\RR,\ps\in\mathcal{SLC}_0}\Psi(\th,\ps,F)=\sup_{\th\in\RR}L(\th;F)=L(F).\]
 Now there exists a sequence $\{\th_k\}_{k\geq 1}$ such that $L(\th_k;F)\uparrow L(F)$ as $k\to\infty$. Suppose 
the sequence $\{\th_k\}_{k\geq 1}$ is bounded. Then we can find a  subsequence $\{\th_{k_r}\}_{r\geq 1}$  converging to some $\th'\in\RR$. Since the map $L(\th;F)$ is continuous in $\th$ by Theorem \ref{thm: continuity: general},  we  also have 
\[L(\th';F)=\lim_{r\to\infty}L(\th_{k_r};F)=L(F),\]
which implies that $\th'$ is a maximizer of $L(\th;F)$. The rest of the proof then follows from Proposition 4(iii) of \cite{xuhigh}, 
 which  states that for each $\th\in\RR$,  there exists a unique log-density ${\ps}_{\th}$, which maximizes 
 $\Psi(\th,\ps,F)$ in $\ps\in\mathcal{SC}_{0}$ when $F$ satisfies condition A. Hence, $(\th',\ps_{\th'})$ is a candidate for $(\th^*(F),\ps^*(F))$. Thus, to complete the proof, it remains to show that  $\{\th_k\}_{k\geq 1}$ is bounded. We will show that
$\th_k\to\pm\infty$ leads to  $L(\th_k;F)\to-\infty$, which contradicts the fact that $L(\th_k;F)\to L(F)\in\RR$, thus completing the proof. 
 
 Consider $\th_k\to\pm\infty$. We have already mentioned that by Proposition 4(iii) of \cite{xuhigh}, for each $\th_k$, there exists a log-density $\ps_{\th_k}\in\mathcal{SC}_0$ such that 
 \[L(\th_k;F)=\Psi(\th,\ps_{\th_k},F),\]
which equals 
  \begin{align*}
  \edint \ps_{\th_k}(x-\th)dF(x)-1\leq -\edint\log \lb 2|x-\th_k|\rb dF(x)-1,
  \end{align*}
 where the last inequality follows from Lemma~\ref{lemma: bound on phi: slc}. Now if $\th_k\to\pm\infty$, using Fatou's Lemma, we derive that
 \begin{align*}
 \limsup_{k\to\infty} L(\th_k;F)\leq -\edint\liminf_{k\to\infty}\lb\log |x-\th_k|\rb dF(x)-\log 2=-\infty,
 \end{align*}
 which leads to the desired contradiction.\hfill $\Box$ 

\subsubsection*{Proof of Lemma \ref{theorem: continuity of criterion function}}
The proof follows from \eqref{criterion function: Doss} and Theorem \ref{thm: continuity: general} noting that $\Fm$ satisfies Condition A.\hfill $\Box$

\subsubsection*{Proof of Lemma \ref{Lemma: projection for symmetric densities}}
First we will show that if $F$ satisfies \eqref{condition: symmetry} and $\ps\in\mathcal{SC}_0$, there exists $\ps_1\in\mathcal{SC}_0$
such that
\begin{equation}\label{inlemma: inequality of psi}
\Psi(\th',\ps,F)\leq \Psi(\th,\ps_1,F),
\end{equation}
leading to
\[L(\th';F)\leq L(\th;F),\]
which implies $\th^*(F)=\th$.
Note that $G=F(\mathord{\cdot}+\th)$ satisfies 
\begin{equation}\label{inlemma: condition: symmetry}
G(x)+G(-x)=1.
\end{equation}
Using the symmetry of $\ps$  about $0$ and \eqref{inlemma: condition: symmetry} in the fourth step we deduce that
\begin{align*}
\MoveEqLeft\Psi(\th',\ps,F)\\
=&\ \edint\ps(x-\th')dG(x-\th)-\edint e^{\ps(x)}dx\\
=&\ \dint_{-\infty}^{0}\ps(y+\th-\th')dG(y)+\dint_{0}^{\infty}\ps(y+\th-\th')dG(y)-\edint e^{\ps(x)}dx\\
=&\ \dint_{-\infty}^{0}\ps(y+\th-\th')dG(y)-\dint_{-\infty}^{0}\ps(-y+\th-\th')dG(-y)-\edint e^{\ps(x)}dx\\
=&\ \dint_{-\infty}^{0}\ps(y+\th-\th')dG(y)+\dint_{-\infty}^{0}\ps(y-\th+\th')dG(y)-\edint e^{\ps(x)}dx\\
=&\ 2\dint_{-\infty}^{0}\ps_1(y)dG(y)-\edint e^{\ps(x)}dx,
\end{align*}
where 
\[\ps_1(x)\ =\ 2^{-1}\lb\ps(x+\th-\th')+\ps(x-\th+\th')\rb,\quad x\in\RR.\]
Since $\ps_1\in\mathcal{SC}_0$, we obtain that
\begin{align*}
\Psi(\th',\ps,F)\ =&\ \edint\ps_1(x)dG(x)-\edint e^{\ps(x)}dx\\
= &\ \edint\ps_1(x-\th)dF(x)-\edint e^{\ps(x)}dx\\
=&\ \Psi(\th,\ps_1,F)+\edint e^{\ps_1(x)}dx-\edint e^{\ps(x)}dx.
\end{align*}
Also, by the arithmetic mean-geometric mean inequality,
\[\edint e^{\ps_1(x)}dx\leq \edint 2^{-1}\lb e^{\ps(x+\th-\th')}+e^{\ps(x-\th+\th')}\rb dx=\edint e^{\ps(x)}dx,\]
where equality holds if and only if $\ps(x+\th-\th')=\ps(x-\th+\th')$ almost everywhere with respect to the Lebesgue measure, which
only takes place when $\th'=\th$. Therefore, we have established the inequality 
in \eqref{inlemma: inequality of psi} with equality holding if and only if $\th'=\th$, which proves $\th^*(F)=\th$.
Observe that
\[\Psi(\th,\ps,F)=\edint \ps(x)dG(x)-\edint e^{\ps(x)}dx=\Psi(0,\ps,G).\]
Finally, Lemma~\ref{lemma: symmetry of projection operator} combined with \eqref{inlemma: condition: symmetry} and \eqref{connection: om and psi} entail that
\[\argmax_{\ps\in\mathcal{SC}_0}\Psi(0,\ps,G)=\argmax_{\ph\in\mathcal{SC}_0}\om(\ph,G)=\argmax_{\ph\in\mathcal{C}}\om(\ph,G),\]
 which completes the proof of Lemma~\ref{Lemma: projection for symmetric densities}.\hfill $\Box$
 \subsubsection*{Proof of Lemma \ref{Lemma: projection nw}}
 Lemma~\ref{Lemma: projection nw} follows directly from Lemma~\ref{Lemma: projection theory for a distribution function F}.\hfill $\Box$
\subsubsection*{Proof of Lemma \ref{lem: support of projection map}}
Suppose $\overline{J(F)}=\RR$. Then $\th^*(F)\in\overline{J(F)}$ by Proposition \ref{Prop: existence of maximizers}. Otherwise, since $F$ is non-decreasing, $\overline{J(F)}$  takes one of the following three forms: $[a,b]$, $[a,\infty)$, or $(-\infty,b]$, where $a,b\in\RR$.
  Suppose $\overline{J(F)}=[a,b]$.  We will show that in this case $L(\th;F)$ defined in \eqref{def: L(th,F)} is non-decreasing in $\th$ on $(-\infty,a]$, and non-increasing in $\th$ on $[b,\infty)$. Since $\th^*(F)$ equals $\argmax_{\th\in\RR}L(\th;F)$, the above implies $\th^*(F)\in[a,b]$.  
 To show that $L(\th;F)$  is non-decreasing in $\th$ on $(-\infty,a]$, we first note that for   $\th<\th'\leq a$, and $\ps\in\mathcal{SC}_0$,
  \begin{align*}
 \dint_{a}^{b} \psi(x-\th)dF(x) \leq  \dint_{a}^{b} \psi(x-\th')dF(x),
  \end{align*}
 since $\ps$ is non-increasing on $[0,\infty)$, and 
\[0\leq x-\th'<x-\th\]
 for $x\geq a$.
Therefore, from  \eqref{def: L(th,F)}, it is not hard to see that $L(\th;F)\leq L(\th';F)$. Similarly we can show that for $\th>\th'\geq b$,
 \begin{align*}
 \dint_{a}^{b} \psi(x-\th)dF(x)\leq \dint_{a}^{b} \psi(x-\th')dF(x),
  \end{align*}
since $\psi$ is non-decreasing on $(-\infty,0]$, and $x-\th<x-\th'\leq 0$ for $x\leq b$. Therefore, $L(\th;F)\leq L(\th';F)$, which completes the proof of $\th^*(F)\in[a,b]$. When $\overline{J(F)}$ is of the form $[a,\infty)$ or $(-\infty,b]$, we can prove  $\th^*(F)\in\overline{J(F)}$ in a similar way. Hence, the first part of Lemma~\ref{lem: support of projection map} is proved. The second part of  Lemma~\ref{lem: support of projection map} follows from Lemma~\ref{lemma: domain of xuhigh type MLE}. \hfill $\Box$

\subsubsection*{Proof of Theorem \ref{theorem: existence}}

First part  follows from \eqref{criterion function: Doss} and  Proposition \ref{Prop: existence of maximizers} noting that $\Fm$ satisfies Condition A. The second part follows from Lemma~\ref{lem: support of projection map}.\hfill $\Box$

\subsubsection*{Proof of Theorem \ref{theorem: almost sure convergence: general theorem of the MLE}}

Since $d_W(F_n,F)\to 0$, from Theorem $6.9$ of \cite{villani2009} or Theorem $7.12$ of \cite{villani2003}  it follows that $F_n$ converges to $F$ weakly, and
\begin{equation}\label{Convergence of moments: Transport distance}
\edint|x|dF_n(x)\to \edint|x|dF(x)<\infty.
\end{equation}
Let us denote the measure corresponding to $F_n$ by $P_n$.

For the sake of simplicity,   we use the abbreviated notations $\ps^*_n$, $\ph^*_n$, and $\th^*_n$ in place of $\ps^*(F_n)$, $\ph^*(F_n)$, and $\th^*(F_n)$ respectively.

 Our proof is motivated by the proof of Proposition $6$ of \cite{xuhigh}.
  Since $F_n$ satisfies condition A, $L(F_n)<\infty$ by Proposition~\ref{Prop: existence of maximizers}. Therefore, starting with any subsequence of $\{F_n\}_{n\geq 1}$, we can extract a further subsequence $\{F_{n_k}\}_{k\geq 1}$, such that $L(F_{n_k})\to\vartheta\in[-\infty,\infty]$, and $\thnk\to\th'\in[-\infty,\infty]$. 
  
 The proof of Theorem \ref{theorem: almost sure convergence: general theorem of the MLE} involves the following steps:
    \begin{itemize}[noitemsep,topsep=2pt]
  \item[(A)]  Show that $\vartheta>-\infty$.
  \item[(B)] Show that $\vartheta<\infty$,  and there exists $M>\vartheta+1$ such that \[\limsup_{k\uparrow\infty}\sup_{x\in\RR}\ps^*_{n_k}(x)<M.\]
  \item[(C)]  Show that $\th'\in\RR$.
  \item[(D)]
    Taking $a$ and $b$ to be the left and right endpoints of $J(F)$ respectively, from Lemma~\ref{lem: support of projection map}, we observe that  
 \begin{equation}\label{inlemma: the convergence of hthm lemma: domain of the concave map}
 \iint(\dom(\ps^*(F)))=(-d,d),
 \end{equation} 
where $d=\max(b-\th^*(F),\th^*(F)-a)$. We now define
\begin{align}\label{inlemma: the convgnce of hthm lemma: definition of support}
D_{\th'}=&\ \begin{cases}
\RR, & \text{ if }\ a=-\infty, b=\infty,\\
\dom(\ps^*(F))+\th^*(F)-\th', &\text{ if }b-\th^*(F)>\th^*(F)-a,\\
\dom(\ps^*(F))-(\th^*(F)-\th'), &\text{ if }b-\th^*(F)\leq \th^*(F)-a.
\end{cases}
\end{align}  
 We  Show that $\liminf_k\ps^*_{n_k}(x)>-\infty$ for  $x\in\iint(D_{\th'})$, leading to $\liminf_k\ps^*_{n_k}(0)>-\infty$.
  \item[(E)] Show that there exist $\alpha'$, $\beta'>0$ such that for all sufficiently large $k$,
   \[\ps^*_{n_k}(x)\leq -\alpha'|x|+\beta',\quad\text{ for all }x\in\RR.\]
  \item[(F)] Show that $\vartheta=L(F)$, $\th'=\th$, and $\ps^*_{n_k}$ converges pointwise to $\psi^*(F)$  on $\iint(\dom(\ps^*(F))$.
  \end{itemize}
The proof of $(A)$ is quite straightforward. To see this, observe that the function $\ps(x)=-|x|$ is a member of $\mathcal{SC}_0$. Therefore,
\[\vartheta= \lim_{k\to\infty}L(F_{n_k})\geq\lim_{k\to\infty}\Psi(0,\psi,F_{n_k}) \geq -\lim_{k\to\infty}\edint|x|dF_{n_k}(x)-\edint e^{-|x|}dx,\]
which equals $-\int|x|dF(x)-2 $  by \eqref{Convergence of moments: Transport distance}. Since $\int|x|dF(x)<\infty$ by Condition A, step $(A)$ follows.

The proof of $(B)$ hinges on the  proof of Theorem $2.2$ of \cite{dumbreg}. Suppose $\limsup_k \sup_{x\in\RR}\psnk(x)=\infty$. In that case we can replace $\{n_k\}$ by a further subsequence such that $\lim_k \sup_{x\in\RR}\psnk(x)=\infty$. Hence, without loss of generality, we assume that $\lim_k \sup_{x\in\RR}\psnk(x)=\infty$. Also, note that $\sup_x\ps^*_{n_k}(x)=\ps^*_{n_k}(0)=M_k$, say.

   
  Letting $D_{k,t}$  be the level set $\{\ps^*_{n_k}\geq t\}$,  for any $c>0$, we can bound $L(F_{n_k})$ in the following way:
 \begin{align}\label{proof:  thm on continuity of criterion function: bound on criterion}
 L(\Fk)=&\ \edint\ps^*_{n_k}(x-\th^*_{n_k})d\Fk(x)-1\nonumber\\
 =&\ \edint\ps^*_{n_k}(x)d\Fk(x+\thnk)-1\nonumber\\
 \leq& -c M_k \Pk\lb\RR\setminus D_{k,-cM_k}+\thnk\rb+M_k\Pk( D_{k,-cM_{k}}+\thnk)-1\nonumber\\
 =&\ -c M_k\lb 1-\Pk(D_{k,-cM_{k}}+\thnk)\rb+M_k\Pk( D_{k,-cM_{k}}+\thnk)-1\nonumber\\
 =&\ -(c+1)M_k\lb\dfrac{c}{c+1}-\Pk( D_{k,-cM_{k}}+\thnk)\rb-1.
\end{align}  
Let us denote the Lebesue measure on $\RR$ by Leb.
 We intend to show that $\text{Leb}(D_{k,-cM_k})\to 0$ as $k\to\infty$. 
 Applying  Lemma~$4.1$ of \cite{dumbreg}, 
   we obtain that as $M_k\to\infty$, 
 \begin{align}\label{inlemma: as convergence of hthm: Leb goes to 0}
 \text{Leb}(D_{k,-cM_k})\leq&\ (1+c)M_ke^{-M_k}/\dint_{0}^{(1+c)M_k}te^{-t}dt\nonumber\\
=&\ (1+c)M_ke^{-M_k}/(1+o(1))\to 0.
 \end{align}
Now we will show that  $\Pk(D_{k,-cM_k}+\thnk)\to 0$. We consider two cases, $|\th'|<\infty $ and $\th'=\pm\infty$. Let us focus on the  case $|\th'|<\infty$ first. Now the concavity of $\psnk$ indicates that sets of the form $D_{k,-cM_k}$ are intervals including $0$. Since $\text{Leb}(D_{k,-cM_k})\to 0$ and $\thnk\to\th'$ as $k\to\infty$,  for sufficiently large $k$,  the set $ D_{k,-cM_k}+\thnk$  is an interval around $\th'$ whose length shrinks to $0$ as $k$ increases. Now since $F$ is non-degenerate, one can certainly find a $x_0\neq \th'$ such that $x_0\in\iint(J(F))$. As a result,  $x_0\notin D_{k,-cM_k}+\thnk$ for all sufficiently large $k$. 
 Thus,  for all sufficiently large $k$,
 \[D_{k,-cM_k}+\thnk\in\{C\subset\RR \text{ closed and convex, }\ x_0\notin \iint(C)\}=D_{x_0},\]
say. However, since $\Fk$ weakly converges to $F$, Lemma~$2.13$ of \cite{dumbreg} implies that
\begin{equation}\label{inlemma: bound on P not}
\limsup_k\sup_{C\in D_{x_0}}P_{n_k}(C)\leq \sup_{C\in D_{x_0}}P_{0}(C).
\end{equation}
By Lemma~$2.13$ of \cite{dumbreg}, $\sup_{C\in D_{x_0}}P(C)$ is bounded away from $1$ if 
\[x_0\in\bigcap_{C\in\mathcal{D}}C,\]
where
\[\mathcal{D}=\bigg\{C\ \bl\ C\text{ is closed and convex, }P(C)=1\bigg\}.\]
Since $x_0\in\iint (J(F))$, the above holds.
 Therefore, we conclude that the term in the left hand side of \eqref{inlemma: bound on P not} is less than $c/(c+1)$ for sufficiently large $c$. Since $D_{k,-cM_k}+\thnk\in D_{x_0}$, the above implies that $P_{n_k}(D_{k,-cM_k}+\thnk)<c/(c+1)$ for sufficiently large $k$. Combined with \eqref{proof:  thm on continuity of criterion function: bound on criterion}, this result leads to
 \[ L(\Fk)\to-\infty\quad\text{ as }k\to\infty,\]
   which contradicts the fact that $\vartheta>-\infty$. Therefore, for this case, we see that $\lim_k M_k=\infty$ leads to a contradiction.
   
   At this point it is clear that for the other case also, i.e. when $\th'=\pm\infty$, it is enough to show that $\limsup_k P_{n_k}(D_{k,-cM_k}+\thnk)<c/(c+1)$ for sufficiently large $c$. We will consider the case $\th'=\infty$. The proof for $\th'=-\infty$ will follow similarly. Consider $y$ such that $1-F(y)<c/(c+1)$. Since $\text{Leb}(D_{k,-cM_k})\to 0$, we observe that as $\th^{*}_{n_k}\to\infty$, for all sufficiently large $k$, 
   \[D_{k,-cM_k}+\thnk\subset (y,\infty).\]
    Noting $\Fk\to_w F$, we obtain that
   \[\limsup_k P_{n_k}(D_{k,-cM_k}+\thnk)\leq \limsup_k(1-\Fk(y))=1-F(y)<c/(c+1),\]
   which completes the proof of $(B)$.  Therefore, we assume that $\{M_k\}_{k\geq 1}$ is bounded above by a number, say $M$, which can be chosen to be greater than $\vartheta+1$.

Our next step is  step (C), which follows from Lemma~\ref{lemma: bound on phi: slc}. To see this, observe that  Lemma~\ref{lemma: bound on phi: slc} implies
\[\phnk(x)\leq -\log |2(x-\thnk)|.\]
Suppose $\limsup_k|\thnk|=\infty$. Then Fatou's Lemma implies that
\begin{align*}
\liminf_{k}\edint e^{\phnk(x)}dx\leq &\ \liminf_{k}\edint e^{-\log |2(x-\thnk)|}dx\\
\leq &\ \edint\liminf_{k} e^{-\log |2(x-\thnk)|}dx=0.
\end{align*}
 However, the left hand side of the above display equals $1$ since $e^{\phnk}$ is a density, which leads to a contradiction. Therefore, step (C) follows.


Next we will prove  (D). For $x\in D_{\th'}$, denote  $A_{n_k}=(-x+\thnk,x+\thnk)$.  We calculate
 \begin{align*}
 L(\Fk)=&\ \edint\psnk(z)d\Fk(z+\thnk)-1\\
 =& \dint_{-x}^{x}\psnk(z)d\Fk(z+\thnk)+\dint_{-\infty}^{-x}\psnk(z)d\Fk(z+\thnk)\\
 &\ +\dint_{x}^{\infty}\psnk(z)d\Fk(z+\thnk)-1\\
 \leq &\ \psnk(0)P_{n_k}(A_{n_k})+\psnk(x)P_{n_k}(\RR\setminus A_{n_k})-1,
 \end{align*}
 which follows since $\psnk\in\mathcal{SC}_0$. Now observe that the above implies
  \begin{align}\label{inlemma: as convergence of theta: D}
 \liminf\limits_{k\to\infty}\psnk(x)\geq&\ \liminf\limits_{k\to\infty}\dfrac{L(\Fk)+1-\psnk(0)P_{n_k}(A_{n_k})}{P_{n_k}(\RR\setminus A_{n_k})}\nonumber\\
 \geq &\ -\dfrac{M-1-\vartheta}{\liminf\limits_{k\to\infty}P_{n_k}(\RR\setminus A_{n_k})}.
 \end{align}
 Note that for every $\e>0$, 
 \[\th'-\e<\thnk<\th'+\e,\quad \text{ for all sufficiently large }k.\]
 Therefore, we can write
 \begin{align*}
\MoveEqLeft \liminf\limits_{k\to\infty}P_{n_k}(\RR\setminus A_{n_k})\\
 = &\ \liminf\limits_{k\to\infty}
 \Fk(-x+\thnk)+1-\limsup\limits_{k\to\infty}\Fk\lb(x+\thnk)-\rb\\
 \geq &\ \liminf\limits_{k\to\infty}
 \Fk(-x+\th'-\e)+1-\limsup\limits_{k\to\infty}\Fk\lb(x+\th'+\e)-\rb\\
 \geq &\ F\lb(-x+\th'-\e)-\rb+1-F\lb(x+\th'+\e)-\rb.
  \end{align*}
  If $J(F)=\RR$, it is clear that the right hand side of the last display is positive. Suppose $J(F)\neq \RR$. Then at least one of $a$ and $b$, the endpoints  of $J(F)$, is finite.
  Now suppose $b-\th^*(F)>\th^*(F)-a$.
 Noting  $x\in\iint(D_{\th'})$, we argue that for small enough $\e>0$, $x+\e\in\iint(D_{\th'})$. Then from \eqref{inlemma: the convergence of hthm lemma: domain of the concave map} and \eqref{inlemma: the convgnce of hthm lemma: definition of support} it follows that
 \[x+\e<b-\th^*(F)+\th^*(F)-\th',\]
 implying that $x+\e+\th'<b$, leading to $F((x+\th'+\e)-)<1$ since $b$ is the right endpoint of $J(F)$.
Now consider the case when $b-\th^*(F)\leq \th^*(F)-a$. Analogous to the previous case,  \eqref{inlemma: the convergence of hthm lemma: domain of the concave map} and \eqref{inlemma: the convgnce of hthm lemma: definition of support}  yield
 \[x+\e<\th^*(F)-a-\th^*(F)+\th',\]
 or equivalently,
 \[a<-x+\th'-\e,\]
 implying  $F((-x+\th'-\e)-)>0$, 
which leads to
 \[\limsup\limits_{k\to\infty}\Pm(\RR\setminus A_{n_k})>0.\] 
Since in step (B), $M$ was chosen such that $M>\vartheta+1$, \eqref{inlemma: as convergence of theta: D} entails that
\[\liminf\limits_{k\to\infty}\psr(x)>-\infty\] 
which proves (D).


 To prove (E), we first observe that since $\psnk\in\mathcal{SC}_0$,  Lemma~$3$ of \cite{exist} implies that
   \[\psnk(x)\leq \psnk(0)+1-e^{\psnk(0)}|x|,\quad x\in\RR.\]
 Observe that (B) and (D), combined with the above, imply   that there exist $\alpha'>0$ and $\beta'\in\RR$ such that for all sufficiently large $k$,
 \begin{equation}\label{inlemma: existence proof: linear bound}
 \psnk(x)\leq \beta'-\alpha'|x|\quad\text{ for all }x\in\RR.
 \end{equation}

 
  Now we move on to step (F). Since we have established \eqref{inlemma: existence proof: linear bound} and showed that $\liminf\limits_{k\to\infty}\psr(x)>-\infty$ for $x\in\iint(D_{\th'})$,  by  Lemma~$4.2$ of \cite{dumbreg},
    we can replace $\{\psnk\}_{k\geq 1}$, if necessary, by a subsequence such that for a concave function $\bar{\ps}$ the following conditions are met:
  \begin{align}\label{statement of lemma: Lemma 4.2 of Regression paper(cite dumbreg)}
\begin{matrix}
\iint(D_{\th'})\subset\dom(\bar{\ps}),&\\
\bar{\ps}(x)\leq \beta'-\alpha'|x| &\quad \text{ for all }x\in\RR,k\in\NN,\\
\lim\limits_{k\to\infty,y\to x}\psnk(y)=\bar{\ps}(x)&\quad \text{ for all }x\in\iint(\dom(\bar{\ps})),\\
\limsup\limits_{k\to\infty,y\to x}\psnk(y)\leq\bar{\ps}(x)&\quad  \text{ for all }x\in\RR.
\end{matrix}
\end{align}
We will now establish that $\bar{\ps}\in\mathcal{SC}_0$. To see this, first notice that since $\psnk\in\mathcal{SC}_0$, for $x\notin \dom(\bar{\ps})$, 
\[\lim_{k\to\infty}\psnk(-x)=\lim_{k\to\infty}\psnk(x)\leq \bar{\ps}(x)=-\infty.\]
Hence, $-x\notin\iint(\dom(\bar{\ps}))$, suggesting that either $-x\notin\dom(\bar{\ps})$, or  $-x\in \dom(\bar{\ps})$ is a boundary point of $\dom(\bar{\ps})$. Therefore, $\dom(\bar{\ps})$ is of the form $(-c,c)$, $[-c,c]$, $(-c,c]$, or $[-c,c)$ for some $c>0$. For the last two cases, if we extend $\bar{\ps}$ to $[-c,c]$ so that $\bar{\ps}$ is continuous on $[-c,c]$ and $-\infty$ everywhere else, $\bar{\ps}$ still satisfies \eqref{statement of lemma: Lemma 4.2 of Regression paper(cite dumbreg)}. Therefore, without loss of generality, we assume that $\dom(\bar{\ps})$ is of the form $(-c,c)$ or $[-c,c]$.
 For $y\in(-c,c)$  we obtain that
 \[\bar{\ps}(y)=\lim_{k\to\infty}\psnk(y)=\lim_{k\to\infty}\psnk(-y)=\bar{\ps}(-y).\]
 It remains to show that when $\dom(\bar{\ps})$ is the closed interval $[-c,c]$, $\bar{\ps}(-c)$ and $\bar{\ps}(c)$ agree. 
 Since $\ps^{*}$ is concave, it is right continuous at $-c$, and left continuous at $c$. Therefore, the symmetry of $\ps^{*}$ on $(-c,c)$ implies symmetry on $[-c,c]$, which establishes that $\bar{\ps}\in\mathcal{SC}_0$.
 Now we claim that $e^{\bar{\ps}}\in\mathcal{SLC}_0$. To see this observe that since $\psnk(x)\leq \beta'-\alpha'|x|$,  application of the  dominated convergence theorem yields
 \[\dint_{\iint(\dom(\bar{\ps}))} e^{\bar{\ps}(x)}dx=\dint_{\iint(\dom(\bar{\ps}))}e^{\lim_k \psnk(x)}dx=\lim_k \dint_{\dom(\bar{\ps})}e^{\psnk(x)}dx=1.\]
 
 Since Condition B holds, note that if we can show 
 \[\vartheta=\Psi(\th',\bar{\ps},F)=L(F),\]
we have $\th'=\th^*(F)$,  $\bar{\ps}=\ps^*(F)$, and \eqref{Convergence: maximized criterion function: Fn} also follows.  Therefore, we have $\th^*_n\to\th^*(F)$, and \eqref{statement of lemma: Lemma 4.2 of Regression paper(cite dumbreg)}  leads to the pointwise convergence of $\ps^*_n$ to  $\ps^*(F)$. The pointwise convergence of $\ps^*_n$'s implies pointwise convergence of $e^{\ps^*_n}$, which, in its turn leads to the weak convergence of the corresponding distribution functions. Then the rest of the proof of Theorem \ref{thm: continuity: general} follows from  Proposition $2$ of \cite{theory}. We prove the equations in the last display by first establishing that
 \begin{equation}\label{intheorem: v and L(F) inequality 1}
 \vartheta\leq \Psi(\th',\bar{\ps},F)\leq L(F),
 \end{equation}
 and then showing $\vartheta\geq L(F)$.

Observe that since $\Fk$ converges to $F$ weakly,  by Skorohod's theorem, there exists a probability space $(\Omega,\mathcal{A},P^*)$ with random variable $X_k\sim\Fk$, and $X\sim F$ such that $X_k\as X$ as $k\to\infty$.
Denote by $E^*$ the expectation operator with respect to the probability space $(\Omega,\mathcal{A},P^*)$.  Denoting the positive random variable $\beta'-\alpha'|X_k|-\phnk(X_k)$ by $H_k$, we calculate 
\begin{align*}
\vartheta=&\ \limsup_{k\to\infty}\edint \phnk(x)d\Fk(x)-1\\
=&\ \lim_{k\to\infty}\edint (\beta'-\alpha'|x|)d\Fk(x)-\liminf_{k\to\infty}E^*[H_k]-1\\
\leq&\ \beta'-\alpha'\edint|x|dF(x)-E^*\bigg[\liminf_{k\to\infty}H_k\bigg]-1,
\end{align*}
which follows using \eqref{Convergence of moments: Transport distance} and Fatou's Lemma.
As $\thnk\to\th'\in\RR$, and $X_k\as X$, \eqref{statement of lemma: Lemma 4.2 of Regression paper(cite dumbreg)} indicates that the following holds with probability $1$:
\begin{align*}
\liminf_{k\to\infty}H_k=\alpha'-\beta'|X|-\limsup_k\psnk(X_k-\thnk)\geq \beta'-\alpha'|X|-\bar{\ps}(X-\th').
\end{align*}
Since $X\sim F$, the above leads to
\begin{align*}
\vartheta\leq &\ \beta'-\alpha'\edint|x|dF(x)-E^*\bigg[ \beta'-\alpha'|X|-\bar{\ps}(X-\th')\bigg]-1\\
=&\ \edint\bar{\ps}(x-\th')dF(x)-1\\
=&\ \Psi(\th',\bar{\ps},F)
\leq L(F),
\end{align*}
which completes the proof of \eqref{intheorem: v and L(F) inequality 1}.   Hence, it only remains to prove that $\vartheta\geq L(F)$.

 Let us denote $c_0=\ps^*(F)(0)$.  From Lemma~\ref{lemma: Lemma about the integrable bound of concave functions } it follows that for each $\e>0$, there exist $\ps^{(\e)} \in\mathcal{SC}_0$  such that
  \begin{equation}\label{intheorem: bound on ps(epsilon)}
   -|x|/\e+c_0\leq \ps^{(\e)}(x)\quad\text{ for all }x\in\RR,
  \end{equation}
  and
  \begin{equation}\label{intheorem: bound on ps(epsilon) 2}
  \ps^*(F)\leq \ps^{(\e)}\leq c_0.
  \end{equation}
   Further,  $\ps^{(\e)}$ decreases to $\ps^*(F)$ pointwise as $\e\downarrow 0$.   
  Using the random variables $X_k\sim\Fk$ and $X\sim F$ constructed in the proof of  \eqref{intheorem: v and L(F) inequality 1}, we write
 \begin{align}\label{lower bound: v}
 \vartheta=\lim_{k\to\infty} L(\Fk)\geq &\ \lim_{k\to\infty} \Psi(\th^*(F),\ps^{(\e)},\Fk)\nonumber\\
 =&\ \lim_{k\to\infty} E^*\bigg[\ps^{(\e)}(X_k-\th^*(F))\bigg]-\edint e^{\ps^{(\e)}(x)}dx.
 \end{align}
 Note that \eqref{intheorem: bound on ps(epsilon)} and \eqref{intheorem: bound on ps(epsilon) 2} indicate that $\ps^{(\e)}(x)$ is bounded above and below by $c_0$ and $ -|x|/\e+c_0$ respectively.
From \eqref{Convergence of moments: Transport distance} it follows that
 \[E^*|X_k|=\edint|x|d\Fk(x)\to\edint|x|dF(x), \quad \text{ as }k\to\infty.\]
 Moreover, as $k\to\infty$, we also have $X_k\as X$. Observe that $\ps^{(\e)}$ is concave, and by \eqref{intheorem: bound on ps(epsilon)}, $\dom(\ps^{(\e)})=\RR$, which implies that $\ps^{(\e)}$ is continuous. Therefore, 
 \[\ps^{(\e)}(X_k-\th^*(F))\as \ps^{(\e)}(X-\th^*(F)).\]
Therefore, using Lemma~\ref{lemma: bill1}, from \eqref{lower bound: v} we  obtain that
  \begin{align*}
\vartheta\geq &\  E\bigg[\lim_{k\to\infty}\ps^{(\e)}(X_k-\th^*(F))\bigg]-\edint e^{\ps^{(\e)}(x)}dx\\
 =&\ \edint\ps^{(\e)}(x-\th^*(F))dF(x)-\edint e^{\ps^{(\e)}(x)}dx.
  \end{align*}

 Since $\ps^{(\e)}$  decreases to $\ps^*(F)$ pointwise as $\e\downarrow 0$, the monotone convergence theorem and \eqref{intheorem: bound on ps(epsilon) 2} lead to
 \[\lim_{\e\downarrow 0}\edint\lb c_0-\ps^{(\e)}(x-\th^*(F))\rb dF(x)=\edint\lb c_0-\ps^*(F)(x-\th^*(F))\rb dF(x),\]
 which can be rewritten as
 \begin{equation}\label{intheorem: consistency: MCT 1}
 \lim_{\e\downarrow 0}\edint \ps^{(\e)}(x-\th^*(F)) dF(x)=\edint\ps^*(F)(x-\th^*(F))dF(x).
 \end{equation}
 On the other hand, note that \eqref{intheorem: bound on ps(epsilon) 2} implies that  $g^*(F)\leq e^{\ps^{(\e)}}\leq g^*(F)(0)$. Since $e^{\ps^{(\e)}}$ decreases to $g^*(F)$ pointwise as $\e$ decreases to $0$, the monotone convergence theorem implies that
 \[\lim_{\e\downarrow 0}\edint \lb g^*(F)(0)-e^{\ps^{(\e)}(x)}\rb dx=\edint\lb g^*(F)(0)- g^*(F)(x)\rb dx,\]
 or equivalently,
 \begin{equation}\label{intheorem: consistency: MCT 2}
 \lim_{\e\downarrow 0}\edint e^{\ps^{(\e)}(x)}dx=\edint g^*(F)(x)dx=1.
 \end{equation}
 Therefore, from \eqref{intheorem: consistency: MCT 1} and \eqref{intheorem: consistency: MCT 2}, we conclude that
   \begin{align*}
 \vartheta\geq &\ \lim_{\e\downarrow 0}\edint\ps^{(\e)}(x-\th^*(F))\ dF(x)-\lim_{\e\downarrow 0}\edint e^{\ps^{(\e)}(x)}dx\\
  =&\ \edint \ps^*(F)(x-\th^*(F))\ dF(x)-1=L(F),
 \end{align*}
  which completes the proof. \hfill $\Box$
 \subsubsection*{Proof of Theorem \ref{MLE: Rate results}}
In their proof of Theorem $3.1$, \cite{exist} show that if a sequence of log-concave functions $\{f_n\}_{n\geq 1}$ (which can be stochastic as well) satisfies
 \begin{equation}\label{mle: Hellinger consistency: likelihood is larger}
  \si\log f_n(X_i)\geq \si\log f_0(X_i) \quad a.s.,
 \end{equation}
we have $H(f_n,f_0)\as 0$,  provided 
 \begin{equation*}
 P\lb\sup_x\log f_n(x)=o\lb\dfrac{\sqn}{\log n}\rb\rb =1.
 \end{equation*}
 If we take $f_n=\hgf$, we have 
 \[\sup_x\log \hgf(x)=\hpfm(\hthm)=\hpm(0).\]
 Theorem \ref{theorem: almost sure convergence of the MLE :specific theorem}(c) and Corollary \ref{corollary: almost sure convergence of the MLE :symmetry} entail that 
 \[P\lb\limsup_n\hpm(0)<\infty\rb =1.\]
 Also, note that being the MLE of $f_0$,  $\hgf$ automatically satisfies \eqref{mle: Hellinger consistency: likelihood is larger},
 which implies
 \begin{equation}\label{inlemma: hellinger consistency of MLE of fnot}
  H(\hgf,f_0)\as 0.
  \end{equation} 
 Since $f_0\in\mP_0$ is log-concave and continuous, Proposition 2(c) of \cite{theory} and \eqref{inlemma: hellinger consistency of MLE of fnot} entail that
 \begin{equation}\label{inlemma: uniform convergence of MLE of f}
 \sup\limits_{z\in\RR}|\hgf(z)-f_0(z)|\as 0.
 \end{equation}
   Note that
 \begin{align*}
 2H^2(\hgm,g_0) =&\ \edint\lb\sqrt{\hgm(z-\hthm)}-\sqrt{g_0(z-\hthm)}\rb^2dz\\
 \leq &\ 2\edint\lb\sqrt{\hgm(z-\hthm)}-\sqrt{g_0(z-\th_0\vphantom{\hthm})}\rb^2dz\\
 &\ +2\edint\lb\sqrt{g_0(z-\hthm)}-\sqrt{g_0(z-\th_0\vphantom{\hthm})}\rb^2dz,
 \end{align*}
 where the first term on the right hand side  of the last display approaches $0$ almost surely by \eqref{inlemma: hellinger consistency of MLE of fnot}. The second term  is also bounded above by a constant multiple of $g_0(z-\hthm)+g_0(z-\th_0)$, which is integrable. Therefore, using Lemma~\ref{lemma: bill1}, Theorem \ref{theorem: almost sure convergence of the MLE :specific theorem}(a), and Corollary \ref{corollary: almost sure convergence of the MLE :symmetry}, we deduce that this term also converge to $0$ almost surely.  Hence $H^2(\hgm,g_0)\as 0$ follows. 
 
Our next step is to find  the rate of convergence of $H(\hgf, f_0)$. To do so, we first introduce the class of functions 
 \begin{align*}
 \mathcal{P}_{M,0}=\bigg\{f\in\mathcal{LC}\ \bl\ &\ \sup\limits_{x\in\RR}f(x)<M,\ f(x)>1/M\text{ for all }|x|<1,\\
 &\  \text{Supp}(f)\subset\text{Supp}(f_0)\bigg\}.
 \end{align*}
 We will show  that without loss of generality, one can assume that $f_0\in\mP_{M,0}$ for some  $M>0.$ To this end, we translate and rescale the data letting $\tilde{X}_i=\aalpha X_i+\abeta$, where $\alpha>0$ and $\abeta\in\RR$. Observe that the rescaled data has density $\tilde{f}_0(x)=\aalpha^{-1}f_0((x-\abeta)/\aalpha)$. Denote by $\tilde{f}_{0,n}$ the MLE of $f_0$ based on the rescaled data. Note that the  MLE is affine-equivalent, which  entails that $\tilde{f}_{0,n}(x)=\aalpha^{-1}\hgf((x-\abeta)/\aalpha).$ Noting Hellinger distance  is invariant under affine transformations, we observe that $H(\hgf,f_0)=H(\tilde{f}_{0,n},\tilde{f}_0).$  
   Therefore, it suffices to show that $H(\tilde{f}_{0,n},\tilde{f}_0)\as 0$. Note that since $f_0$ is log-concave,  $\iint(\dom(f_0))$ contains an interval. We can choose $\aalpha$ and $\abeta$ in a way such that  $( x-\abeta)/\aalpha$ lie inside that interval for $x=\pm 1$. Then it is possible to find $M>0$ large enough such that
 \[f_0((x-\abeta)/\aalpha)>\aalpha/M,\quad x=\pm 1,\]
 or
 \[\min (\tilde{f}_0(-1),\tilde{f}_0(1))>1/M,\]
leading to
 \[\inf_{x\in[-1,1]}\tilde{f}_0(x)>1/M,\]
 since $f_0,$ or equivalently $\tilde{f}_0$ is unimodal. Hence, without loss of generality, we can assume that there exists $M>0$ such that $f_0(x)>1/M$ for $x\in[-1,1].$ 
 We can choose $M$ large enough such that additionally, $\sup\limits_{x\in\RR} f_0(x)<M$.   On the other hand,  \eqref{inlemma: uniform convergence of MLE of f} implies that the following inequalities hold with probability one: 
  \[\limsup_n\sup\limits_{x\in\RR} \hgf(x)<M,\]
  and
   $$\lim_n\hgf(\pm 1)>1/M.$$ 
    Therefore, $f_0\in\mathcal{P}_{M,0}$, and with probability one,  $\hgf\in\mathcal{P}_{M,0}$ as well for all sufficiently large $n$. \cite{dossglobal} obtained the bracketing entropy of the class $\mathcal{P}_{M,0}$. They showed that for any $\e>0$,
 \[\log N_{[\ ]}(\e,\mathcal{P}_{M,0},H)\lesssim \e^{-1/2}.\] 
The rest of the proof now follows from an application of Theorem $3.4.1$ and $3.4.4$ of \cite{wc}.
To this end, fixing $\e>0,$ for $f\in \mathcal{P}_{M,0}$  we define the function
 \[m_f(x)=\log \lb\dfrac{f(x)+f_0(x)}{2 f_0(x)}\rb,\]
 and we let $\mathcal{M}_{\e}$ denote the class
  \[\mathcal{M}_{\e}=\{m_f-m_{f_0}\ :\ H(f,f_0)\leq \e\}.\]
 Also, we denote
  \[||\mathbb{Z}_n||_{\mathcal{M}_{\e}}=\sup_{h\in\mathcal{M}_{\e}}\int h d\mathbb{Z}_n.\]
Then  Theorem $3.4.4$ of \cite{wc} yields 
\[P (m_f-m_{f_0})\lesssim -H^2(f,{f_0}),\]
 and
\[E_{P}||\mathbb{Z}_n||_{\mathcal{M}_{\e}}\lesssim \tilde{J}_{[\ ]}(\e,\mathcal{P}_{M,0},H)\lb 1+\dfrac{\tilde{J}_{[\ ]}(\e,\mathcal{P}_{M,0},H)}{\e^2 \sqn}\rb,\]
where
\begin{align*}
\tilde{J}_{[\ ]}(\e,\mathcal{P}_{M,0},H) =&\ \dint_{0}^{\e}\sqrt{\log N_{[\ ]}(t,\mathcal{P}_{M,0},H)}dt
\lesssim  \e^{3/4}.
\end{align*} 
Therefore, it is easy to see that
\[E_{P}||\mathbb{Z}_n||_{\mathcal{M}_{\e}}\lesssim \e^{3/4}+\e^{-1/2}n^{-1/2}=g(\e),\] 
where
  $$g(t)=t^{3/4}+t^{-1/2}n^{-1/2},\quad t>0.$$
  Notice that $g(t)/t^{3/4}$ is decreasing in $t.$  Also, for any constant $c>0,$ we have
  \[n^{4/5}g(cn^{-2/5})= (c^{3/4}+c^{-1/2})n^{1/2}.\] 
 Therefore, an application of Theorem $3.4.1$ of \cite{wc} yields
  \[H(\hgf,{f_0})=O_p(n^{-2/5}),\]
  which completes the proof of part A of Theorem \ref{MLE: Rate results}.
  
   Now we turn to the proof of part B. If $x-\th_0$ is a continuity point of $g_0'$, noting $\hthm\as\th_0$ by Theorem \ref{theorem: almost sure convergence of the MLE :specific theorem} and Corollary \ref{corollary: almost sure convergence of the MLE :symmetry}, we obtain that 
  \[\dfrac{\sqrt{g_0(x-\hthm)}-\sqrt{\vphantom{\hthm}g_0(x-\th_0)}}{(\hthm-\th_0)}\as \dfrac{g_0'(x-\th_0)}{2\sqrt{g_0(x-\th_0)}}.\]
 Noting $g_0'$ is continuous almost everywhere with respect to Lebesgue measure, and using  Fatou's lemma and part A of the current theorem, we obtain that
  \begin{align}\label{inlemma: inequality: hellinger distance and distance between theta}
  &\liminf_n \dfrac{\edint \lb\sqrt{g_0(x-\hthm)}-\sqrt{\vphantom{\hthm} g_0(x-\th_0)}\rb^2dx}{(\hthm-\th_0)^2}\nonumber\\
  \geq &\ \edint \lb\dfrac{g_0'(x-\th_0)}{2\sqrt{g_0(x-\th_0)}}\rb^2dx=\dfrac{\I}{4}\ a.s.
  \end{align} 
  Now observe that
  \begin{align*}
\MoveEqLeft 2H(\hgf,f_0)^2\\
 = &\ \edint\lb\sqrt{\hgm(x-\hthm)}-\sqrt{\vphantom{\hgm}g_0(x-\th_0)}\rb^2dx\\
 =&\ \edint\lb\sqrt{\hgm(x-\hthm)}-\sqrt{\vphantom{\hgm}g_0(x-\hthm)}\rb^2dx\\
 &\ +
 \edint\lb\sqrt{g_0(x-\hthm)}-\sqrt{\vphantom{\hthm}g_0(x-\th_0)}\rb^2dx+T_c,
   \end{align*}
   where
   \[T_c=2\edint\lb\sqrt{\hgm(x-\hthm)}-\sqrt{g_0(x-\hthm)}\rb\lb\sqrt{g_0(x-\hthm)}-\sqrt{\vphantom{\hgm}g_0(x-\th_0)}\rb dx.\]
      The inequality in \eqref{inlemma: inequality: hellinger distance and distance between theta} entails that  for all sufficiently large $n$,
   \begin{equation}\label{inlemma: lower bound on hellinger dist between f and hgf}
  2H(\hgf,f_0)^2\nonumber\geq  2H(\hgm,g_0)^2+\dfrac{(\hthm-\th_0)^2\I}{4}-|T_c|\quad a.s.
   \end{equation}
 We aim to show that the cross-term $|T_c|$ is small. In fact, we show that 
 \begin{equation}\label{claim: cross term small for rough rate calculations}
 \dfrac{|T_c|}{|\hthm-\th_0|^2+H(\hgm,g_0)^2}=o_p(1).
 \end{equation}
 Suppose \eqref{claim: cross term small for rough rate calculations} holds. Then  it follows that 
 \begin{align*}
 2H(\hgf,f_0)^2 \geq &\ 2H(\hgm,g_0)^2\\
 &\ +\dfrac{(\hthm-\th_0)^2\I}{4}-o_p(1)H(\hgm,g_0)^2-o_p(1)(\hthm-\th_0)^2,
 \end{align*}
  which completes the proof because $\I>0$. 
  
 Hence, it remains to prove \eqref{claim: cross term small for rough rate calculations}. To this end, notice that $T_c$ can be written as
  \begin{align*}
  T_c= 2\edint\lb\sqrt{\hgm(x)}-\sqrt{\vphantom{\hgm}g_0(x)}\rb\lb\sqrt{\vphantom{\hgm}g_0(x)}-\sqrt{\vphantom{\hgm}g_0(x+\hthm-\th_0)}\rb dx.
  \end{align*}
  For the sake of simplicity, we will denote  $\th_0-\hthm$ by $\d$ from now on.
  Since $g_0$ is absolutely continuous, we can write
  \begin{align*}
 |T_c|= \MoveEqLeft \bl 2\edint\lb\sqrt{\hgm(x)}-\sqrt{\vphantom{\hgm}g_0(x)}\rb\lb\dint_{-\d}^{0}\dfrac{g_0'(x+t)}{2\sqrt{g_0(x+t)}}dt\rb dx\bl.
  \end{align*}
  Since $g_0\in\mathcal{S}_0$, we have
  \begin{align*}
 |T_c|=&\ 2\bl \dint_{0}^{\infty} \lb\sqrt{\hgm(x)}-\sqrt{\vphantom{\hgm}g_0(x)}\rb\lb\dint_{-\d}^{0}\dfrac{g_0'(x+t)}{2\sqrt{g_0(x+t)}}dt\rb dx\\
&\ -\ \dint_{-\infty}^{0} \lb\sqrt{\hgm(-x)}-\sqrt{\vphantom{\hgm}g_0(-x)}\rb\lb\dint_{-\d}^{0}\dfrac{g_0'(-x-t)}{2\sqrt{g_0(-x-t)}}dt\rb dx\bl\\
=&\ 2\bl \dint_{0}^{\infty} \lb\sqrt{\hgm(x)}-\sqrt{\vphantom{\hgm}g_0(x)}\rb\lb\dint_{-\d}^{0}\dfrac{g_0'(x+t)}{2\sqrt{g_0(x+t)}}dt\rb dx\\
&\ -\dint_{0}^{\infty} \lb\sqrt{\hgm(x)}-\sqrt{\vphantom{\hgm}g_0(x)}\rb\lb\dint_{-\d}^{0}\dfrac{g_0'(x-t)}{2\sqrt{g_0(x-t)}}dt\rb dx\bl,
  \end{align*}
  yielding
  \begin{equation}
  |T_c|=2\bl \dint_{0}^{\infty} \lb\sqrt{\hgm(x)}-\sqrt{\vphantom{\hgm}g_0(x)}\rb\lb \dint_{-\d}^{0}\lb\dfrac{g_0'(x+t)}{2\sqrt{g_0(x+t)}}-\dfrac{g_0'(x-t)}{2\sqrt{g_0(x-t)}}\rb dt\rb dx\bl.
  \end{equation}
  Using the Cauchy-Schwarz inequality, we obtain that
  \begin{align*}
\dfrac{|T_c|}{2|\d|}
\leq &\ \lb\dint_{0}^{\infty} \lb\sqrt{\hgm(x)}-\sqrt{\vphantom{\hgm}g_0(x)}\rb^2dx\rb^{1/2}\\
&\ \lb\dint_{0}^{\infty}\lb\dint_{-\d}^{0}\dfrac{1}{|\d|}\lb\dfrac{g_0'(x+t)}{2\sqrt{g_0(x+t)}}-\dfrac{g_0'(x-t)}{2\sqrt{g_0(x-t)}}\rb dt\rb^2 dx\rb^{1/2}.
  \end{align*}
  Since $\sqrt{\hgm(x)}-\sqrt{\vphantom{\hgm}g_0(x)}$ is an even function, the first term on the right hand side of the last inequality is $\sqrt{2}H(\hgm,g_0).$ Hence,
  \begin{align*}
  \dfrac{T_c^2}{8H(\hgm,g_0)^2\d^2}\leq &\ \dint_{0}^{\infty}\lb\dint_{-\d}^{0}\dfrac{1}{|\d|}\lb\dfrac{g_0'(x+t)}{2\sqrt{g_0(x+t)}}-\dfrac{g_0'(x-t)}{2\sqrt{g_0(x-t)}}\rb dt\rb^2 dx,
  \end{align*}
  which, noting that
  \[t\mapsto \dfrac{g_0'(x+t)}{2\sqrt{g_0(x+t)}}-\dfrac{g_0'(x-t)}{2\sqrt{g_0(x-t)}}\]
  is an even function for each $x>0$, can be bounded above by  
 \[  \dint_{0}^{\infty}|\d|\lb\dint_{0}^{|\d|}\dfrac{1}{(\d)^2}\lb\dfrac{g_0'(x+t)}{2\sqrt{g_0(x+t)}}-\dfrac{g_0'(x-t)}{2\sqrt{g_0(x-t)}}\rb ^2 dt\rb dx\]
  using the Cauchy-Schwarz inequality. Therefore, we obtain 
  \begin{align}\label{inlemma: rough rate: intermediate limsup}
   \dfrac{T_c^2}{2H(\hgm,g_0)^2\d^2}\leq&\ \dfrac{1}{|\d|}\dint_{0}^{|\d|}\bigg[ \dint_{0}^{\infty}\lb\dfrac{g_0'(x+t)}{\sqrt{g_0(x+t)}}\rb^2dx\nonumber+\dint_{0}^{\infty}\lb\dfrac{g_0'(x-t)}{\sqrt{g_0(x-t)}}\rb^2dx\\
      &\ -2\dint_{0}^{\infty}\dfrac{g_0'(x-t)}{\sqrt{g_0(x-t)}}\dfrac{g_0'(x+t)}{\sqrt{g_0(x+t)}} dx\bigg ]dt.
  \end{align}
  For $t\geq 0,$ 
   \begin{align*}
  \dint_{0}^{\infty}\lb\dfrac{g_0'(x+t)}{\sqrt{g_0(x+t)}}\rb^2dx
 =  \dint_{t}^{\infty}\lb\dfrac{g_0'(x)}{\sqrt{g_0(x)}}\rb^2dx\leq \dfrac{\I}{2}.
  \end{align*}
  Now observe that for  $z\in(-|\d|,0),$
  \begin{equation}\label{intheorem: bound of ps'}
  |g_0'(z)/\sqrt{g_0(z)}|=|\ps_0'(z)|\sqrt{g_0(z)}\leq |\ps_0'(\d)|\sqrt{g_0(0)}=O_p(1),
  \end{equation}
  since $\ps_0\in\mathcal{SC}_0$, and $\d\as 0$. 
    Hence, for $t\in(0,|\d|)$,
  \begin{align*}
  \dint_{0}^{\infty}\lb\dfrac{g_0'(x-t)}{\sqrt{g_0(x-t)}}\rb^2dx \ =&\ \dint_{-t}^{\infty}\lb\dfrac{g_0'(z)}{\sqrt{g_0(z)}}\rb^2dz \\
 =&\ \dint_{-t}^{0}\lb\dfrac{g_0'(x)}{\sqrt{g_0(z)}}\rb^2dz+\dint_{0}^{\infty}\lb\dfrac{g_0'(z)}{\sqrt{g_0(z)}}\rb^2dz\\
 \leq &\ |\d| \ps_0'(\d)^2\sqrt{g_0(0)}+\I/2\\
 =&\ |\d| O_p(1)+\I/2,
  \end{align*}
  where the last step follows from \eqref{intheorem: bound of ps'}.
  Hence, for any $t\in(0,|\d|),$
  \begin{equation}\label{inlemma: rough rate: sum of sums}
   \dint_{0}^{\infty}\lb\dfrac{g_0'(x+t)}{\sqrt{g_0(x+t)}}\rb^2dx+\dint_{0}^{\infty}\lb\dfrac{g_0'(x-t)}{\sqrt{g_0(x-t)}}\rb^2dx
  = |\d| O_p(1)+\I.
  \end{equation}
 
  Our objective is to apply Fatou's lemma on the  third term on the right hand side of \eqref{inlemma: rough rate: intermediate limsup}. Therefore,  we want to ensure that the integrand is non-negative.
   Note that when $x\geq|\d|$ and $t\in(0, |\d|)$, we have $x>t$, which leads to
  \begin{equation}\label{intheorem: positivity of cross term}
  g_0'(x-t)g_0'(x+t)\geq 0.
  \end{equation}
   Keeping that in mind, we partition the term 
  \begin{align*}
  \MoveEqLeft - \dint_{0}^{\infty}\dfrac{g_0'(x-t)}{\sqrt{g_0(x-t)}}\dfrac{g_0'(x+t)}{\sqrt{g_0(x+t)}} dx\\
  =&\ - \dint_{|\d|}^{\infty}\dfrac{g_0'(x-t)}{\sqrt{g_0(x-t)}}\dfrac{g_0'(x+t)}{\sqrt{g_0(x+t)}} dx -\dint_{0}^{|\d|}\dfrac{g_0'(x-t)}{\sqrt{g_0(x-t)}}\dfrac{g_0'(x+t)}{\sqrt{g_0(x+t)}} dx\\
  \leq &\ - \dint_{|\d|}^{\infty}\dfrac{g_0'(x-t)}{\sqrt{g_0(x-t)}}\dfrac{g_0'(x+t)}{\sqrt{g_0(x+t)}} dx+|\d| O_p(1),
  \end{align*}
  where the last step follows from \eqref{intheorem: bound of ps'}.
The above combined with \eqref{inlemma: rough rate: intermediate limsup} and \eqref{inlemma: rough rate: sum of sums} leads to
  \begin{align}\label{intheorem: the bound on the ratio}
 \MoveEqLeft \limsup_n  \dfrac{T_c^2}{2H(\hgm,g_0)^2\d^2}\nonumber\\
  \leq &\ \limsup_n\dfrac{1}{|\d|}\dint_{0}^{|\d|}\bigg[ |\d| O_p(1)+\I- 2\dint_{|\d|}^{\infty}\dfrac{g_0'(x-t)}{\sqrt{g_0(x-t)}}\dfrac{g_0'(x+t)}{\sqrt{g_0(x+t)}} dx\bigg]dt\nonumber\\
 =  &\ O_p(1)\limsup_n|\d|+\I\nonumber\\
 &\ -2\liminf_n\dfrac{1}{|\d|}\dint_{0}^{|\d|}\dint_{|\d|}^{\infty}\dfrac{g_0'(x-t)}{\sqrt{g_0(x-t)}}\dfrac{g_0'(x+t)}{\sqrt{g_0(x+t)}} dxdt\nonumber\\
 = &\ 0+\I -2\liminf_n \dint_{|\d|}^{\infty}\dfrac{\dint_{0}^{|\d|}\dfrac{g_0'(x+t)}{\sqrt{g_0(x+t)}}\dfrac{g_0'(x-t)}{\sqrt{g_0(x-t)}}dt}{|\d|}dx.
   \end{align}
  Therefore, an application of Fatou's Lemma and \eqref{intheorem: positivity of cross term} yield
  \[\liminf_n \dint_{|\d|}^{\infty}\dfrac{\dint_{0}^{|\d|}\dfrac{g_0'(x+t)}{\sqrt{g_0(x+t)}}\dfrac{g_0'(x-t)}{\sqrt{g_0(x-t)}}dt}{|\d|}dx\geq \dint_{0}^{\infty}\dfrac{g_0'(x)^2}{g_0(x)}dx=\dfrac{\I}{2}.\]
  Thus \eqref{intheorem: the bound on the ratio} leads to
  \[    \dfrac{2T_c^2}{4H(\hgm,g_0)^2\d^2}=o_p(1).\]
  from which it is obvious that
  \[  \dfrac{\sqrt{2}|T_c|}{|\d|^2+H(\hgm,g_0)^2}\leq  \dfrac{\sqrt{2}|T_c|}{2H(\hgm,g_0)|\d|}=o_p(1),\]which proves \eqref{claim: cross term small for rough rate calculations} and thus completes the proof of part B of Theorem \ref{MLE: Rate results}.                                \hfill $\Box$
 
   \subsubsection{Proof of Lemma \ref{lemma: MLE: convg-lemma}}

From part A of Theorem \ref{MLE: Rate results}, we obtain that $\hgf$ is strongly Hellinger consistent for $f_0$.
 As a consequence, $||\hgf-f_0||_1\to_p 0$. Therefore, from proposition 2(c) of \cite{theory}, it follows that, 
  \begin{equation}\label{inlemma: convergence of density}
 \sup\limits_{z\in\RR}|\hgf(x)-f_0(x)|\as 0.
 \end{equation}
Now consider any compact set $K=[b_1,b_2]\subset\dom(\ph_0).$ Observe that
 \begin{align*}
\MoveEqLeft\sup_{x\in K}|\hpfm(x)-\ph_0(x)|\\
= &\ \sup_{x\in K}\log\dfrac{\max(\hgf(x),f_0(x))}{\min(\hgf(x),f_0(x))}\\
 = &\ \sup_{x\in K}\log\lb\dfrac{|\hgf(x)-f_0(x)|}{\min(\hgf(x),f_0(x))}+1\rb\\
 &\leq \log\lb\dfrac{\sup_{x\in K}|\hgf(x)-f_0(x)|}{\min(\hgf(b_1),\hgf(b_2),f_0(b_1),f_0(b_2))}+1\rb,
 \end{align*}
since $\hgf$ and $f_0$ are unimodal. From \eqref{inlemma: convergence of density} it follows that $\sup_{x\in K}|\hgf(x)-f_0(x)|\as 0$. Also, since $b_1\in\dom(\ph_0)$, $\hgf(b_1)\as f_0(b_1)>0$. The same holds for $\hgf(b_2)$, which establishes that
 \[\sup_{x\in K}|\hpfm(x)-\ph_0(x)|\as 0.\] 
 Now from Lemma~$F.10$ of \cite{bodhida2017}, it follows that if $\ph_0$ is differentiable at $x\in K$, leading to
\[\hpfm'(x)\as\ph_0'(x),\] 
which combined with \eqref{inlemma: convergence of density} yields
   \[\hgf'(x)\as f_0'(x).\] 
   Part A of Theorem \ref{MLE: Rate results} also indicates that  $H(\hgm,g_0)\as 0$, from which, proceeding like above, one can  show that similar results hold for $\hgm$ and $\hpm$ as well, which completes the proof.
\hfill $\Box$

\subsection{Additional lemmas}

\subsection{Additional lemmas for the one-step estimators}

\begin{lemma}\label{lemma: boundedness of tp} 
Consider $\zeta,\zeta'\in\iint(\dom(\psp))$ such that $\zeta\leq\zeta'$.  Suppose   $\zeta_n\as \zeta$, and $\zeta'_n\as \zeta'$.   Letting $\tp$ denote $\log\hn$, for $\hn=\hn^{sym}$, $\widehat{g}_{\bth}$, $\hn^{geo,sym}$, or $\hf(\bth\pm\mathord{\cdot})$, under the conditions of Theorem \ref{Theorem: the L1 convergence of the density estimators of one-step estimators}, we have 
\[\limsup_n\sup_{z\in[\zeta_n,\zeta_n']}|\tp(z)|\leq C_{\zeta,\zeta'}\quad a.s.,\]
where $C_{\zeta,\zeta'}$ is a constant depending on $\zeta$ and $\zeta'$.
When $\hn=\htsm$ or $\hts(\bth\pm \mathord{\cdot})$, the above conclusion holds for any $\zeta,\zeta'\in\RR$.
\end{lemma}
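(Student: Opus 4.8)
The plan is to read the bound off directly from part~(B) of Theorem~\ref{Theorem: the L1 convergence of the density estimators of one-step estimators}. As in the proof of Lemma~\ref{lemma: L1 convergence: one step density estimators}, it suffices to argue under $\bth\as\th_0$, so that the conclusions of Theorem~\ref{Theorem: the L1 convergence of the density estimators of one-step estimators} hold almost surely. Let $\psp$ denote the limiting log-density attached to $\hn$ there; for the smoothed estimators $\htsm$ and $\hts(\bth\pm\cdot)$ one replaces $\psp$ by $\pspm$ throughout, and since $\qsm=e^{\pspm}>0$ on all of $\RR$ we have $\iint(\dom(\pspm))=\RR$, which is exactly why the conclusion for those estimators is allowed to hold for arbitrary $\zeta\le\zeta'$ in $\RR$.

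First I would fix $\zeta\le\zeta'$ in $\iint(\dom(\psp))$ and, using that $\iint(\dom(\psp))$ is an open interval, choose $\delta>0$ with $K:=[\zeta-\delta,\zeta'+\delta]\subset\iint(\dom(\psp))$. The function $\psp$ is a proper concave function finite on the open interval $\iint(\dom(\psp))$, hence continuous there, so $C_{\zeta,\zeta'}:=\sup_{x\in K}|\psp(x)|<\infty$, a quantity depending only on $\zeta,\zeta'$ once the rule for $\delta$ is fixed. Since $\zeta_n\as\zeta$ and $\zeta'_n\as\zeta'$, on a probability-one event there is, for each sample path, an index beyond which $[\zeta_n,\zeta'_n]\subset K$. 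Applying Theorem~\ref{Theorem: the L1 convergence of the density estimators of one-step estimators}(B) with the constant sequence $y_n\equiv 0$ and the compact set $K$ gives $\sup_{x\in K}|\tp(x)-\psp(x)|\as 0$. Combining these facts,
\[
\sup_{z\in[\zeta_n,\zeta'_n]}|\tp(z)|\ \le\ \sup_{x\in K}|\tp(x)|\ \le\ \sup_{x\in K}|\psp(x)|+\sup_{x\in K}|\tp(x)-\psp(x)|
\]
for all large $n$ on that path; letting $n\to\infty$ and using that the last term vanishes almost surely yields $\limsup_n\sup_{z\in[\zeta_n,\zeta'_n]}|\tp(z)|\le C_{\zeta,\zeta'}$ a.s. The smoothed case is word-for-word the same with $\psp$ replaced by $\pspm$ and $K$ any compact subinterval of $\RR$.

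I do not expect a real obstacle here; the entire content is already packaged in Theorem~\ref{Theorem: the L1 convergence of the density estimators of one-step estimators}(B), which is stated for every estimator in the list (including the non-log-concave $\hn^{sym}$ and $\htsm$, so no separate concavity argument is needed). The only points needing care are: (i) choosing $K$ strictly inside the open set $\iint(\dom(\psp))$, since $|\psp|$ blows up at the endpoints of its domain precisely in the unbounded-score regime of Corollary~\ref{lemma: divergence} --- which is exactly the situation the truncation is designed to handle; and (ii) the routine reduction to $\bth\as\th_0$ via the subsequence argument used elsewhere in the paper.
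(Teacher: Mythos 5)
You have been misled by a typo in the statement: although the lemma says ``Letting $\tp$ denote $\log\hn$,'' every use of the lemma in the paper, and the paper's own proof, treat $\tp$ as the \emph{score} $(\log\hn)'=\hln'$, not the log-density $\hln$ itself. Immediately after the lemma is invoked (in the proofs of Lemma~\ref{lemma: consistency of Fisher information} and Theorem~\ref{theorem: main: one-step}), the bound is used to place $\tp(\cdot-\bth)$ and $(\tp)^2(\cdot-\bth)$ in brackets of the monotone-function classes $\mathcal{U}_\eta$ and $\mathcal{F}_\eta$, and $\tp$ is explicitly described as non-increasing --- properties of the derivative of a concave function, not of the concave function itself. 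Moreover, the object $\hin(\eta)$ that the bound is meant to control is built from $\tilde\psi_n'$.

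Your argument therefore proves the wrong statement. Reading the bound off Theorem~\ref{Theorem: the L1 convergence of the density estimators of one-step estimators}(B) only gives $\limsup_n\sup_{[\zeta_n,\zeta_n']}|\hln|\le C$, which is not what is needed and which does \emph{not} imply the same for $\hln'$: uniform convergence of a sequence of concave functions on a compact interval does not yield a uniform bound on their derivatives without further structure. The paper's actual route is the one you explicitly say is unnecessary. For the genuinely log-concave estimators ($\widehat g_{\bth}$, $\hn^{geo,sym}$, $\hf(\bth\pm\cdot)$, $\hts(\bth\pm\cdot)$) it uses the fact that $\hln'$ is non-increasing, so $\sup_{[\zeta_1,\zeta_1']}|\hln'|\le|\hln'(\zeta_1)|+|\hln'(\zeta_1')|$, together with the \emph{pointwise} convergence in part~(C) at continuity points of $\psp'$ sandwiching $[\zeta,\zeta']$. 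For $\hn^{sym}$ and $\htsm$, which are not log-concave, it uses the convex-combination identity \eqref{definition of tpz by tpc}/\eqref{definition of tp when we have htsm} to reduce to the already handled $\hf(\bth\pm\cdot)$ and $\hts(\bth\pm\cdot)$ cases. So the monotonicity and the case split are the whole content, and part~(B) alone cannot substitute for them. Your step~(i) (keeping $K$ strictly inside the open domain) and the subsequence reduction are correct and are also present in the paper, but the central argument needs to be redone for the derivative.
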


\begin{proof}
When $\hn$ equals the densities $\widehat{g}_{\bth}$, $\hg^{geo,sym}$, $\hf(\bth\pm\mathord{\cdot})$, or $\hts(\mathord{\cdot}+\bth)$, it is clear that $\hln$ is concave.  We will consider these  choices of $\hn$  first.   For the first three cases, the pointwise limit of $\hn$ is $\q$ by Theorem~\ref{Theorem: the L1 convergence of the density estimators of one-step estimators}. Note that we can choose  $\zeta_1,\zeta_1'\in\dom(\psp)$ such that  $\psp'$ is continuous at $\zeta_1$ and $\zeta'_1$, and  $[\zeta-\e,\zeta'+\e]\subset[\zeta_1,\zeta_1']$ for some sufficiently  small $\e>0$. 
 Now the conditions on $\zeta_n$ and $\zeta_n'$ underscore that for sufficiently large $n$,
 \[[\zeta_n,\zeta_n']\subset[\zeta_1,\zeta_1']\quad a.s. \]
   Now note that a concave $\hln$ leads to a non-increasing $\tp$. Therefore, on any interval, $|\tp|$ attains its maxima on either of the endpoints. As a consequence, the supremum of $|\tp|$ on the  interval $[\zeta_1,\zeta_1']$ does not exceed  $|\tp(\zeta_1)|+|\tp(\zeta'_1)|$,  which converges almost surely to $|\psp'(\zeta_1)|+|\psp'(\zeta'_1)|$ by Theorem~\ref{Theorem: the L1 convergence of the density estimators of one-step estimators}. Therefore, we conclude that 
   \begin{equation}\label{inlemma: additional: one-step: hn first four cases}
\limsup_n \sup_{z\in[\zeta_n,\zeta_n']}|\tp(z)|< |\psp'(\zeta_1)|+|\psp'(\zeta'_1)|\quad a.s.,
   \end{equation}
   when $\hn=\widehat{g}_{\bth}$, $\hg^{geo,sym}$, or $\hf(\mathord{\cdot}+\bth)$. Now consider  $\hn=\hts(\bth\pm\mathord{\cdot})$. Note that the $L_1$ limit of this $\hn$ is $\qsm$, and $\dom(\pspm)=\RR$. Therefore, for any $\zeta,\zeta'\in\RR$, the same arguments as the previous case will lead to
   \begin{equation}\label{inlemma: additional: one-step: smoothed case}
\limsup_n \sup_{z\in[\zeta_n,\zeta_n']}|\tp(z)|< |(\pspm)'(\zeta_1)|+|(\pspm)'(\zeta'_1)|\quad a.s.
   \end{equation}


 Now let us  consider the non-log-concave cases. Assume $\hn=\hn^{sym}$. Then from \eqref{definition of tpz by tpc} we obtain that
 \[\tp(x)=\varrho_n(x)\lb(\log\hf)'(\bth+x)\rb-(1-\varrho_n(x))\lb(\log\hf)'(\bth-x)\rb,\]
 where $\hf$ is the log-concave MLE and $\varrho_n(x)<1$,   leading to
 \[\limsup_n\sup_{x\in [\zeta_n,\zeta_n']}|\tp(x)|\leq \sup_{x\in [\zeta_n,\zeta_n']}|(\log\hf)'(\bth+x)|+\sup_{x\in [\zeta_n,\zeta_n']}|(\log\hf)'(\bth-x)|,\]
which is bounded above by \eqref{inlemma: additional: one-step: hn first four cases}.
Finally, when $\hn=\htsm$, from  \eqref{definition of tp when we have htsm} we observe  that $\tp$ can be expressed in terms of  $\hts$  as
\[\tp(x)=\varrho^{sm}_n(x)\lb(\log\hts)'(\bth+x)\rb-(1-\varrho^{sm}_n(x))\lb(\log\hts)'(\bth-x)\rb,\] 
  for some $\varrho^{sm}_n(x)<1$. Boundedness of $\tp$ then follows from \eqref{inlemma: additional: one-step: smoothed case}. 
\end{proof}
\subsection{Additional lemmas for the MLE}

\begin{lemma}\label{lemma: bound on phi: slc}
Suppose $e^{\ps}\in\mathcal{SLC}_0$. Then $\ps(x)\leq -\log |2x|$ for all $x\in\RR$.
\end{lemma}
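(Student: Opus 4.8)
Write $g=e^{\psi}\in\mathcal{SLC}_0$, so $g$ is a log-concave density symmetric about $0$. Since $-\log|2x|=+\infty$ at $x=0$, and the inequality $\psi(x)\le-\log|2x|$ is vacuous whenever $\psi(x)=-\infty$, it suffices to prove the claim for $x$ with $g(x)>0$, and by the symmetry $\psi(x)=\psi(-x)$ we may assume $x>0$. The set $\{g>0\}$ is an interval symmetric about $0$ on which $\psi$ is real-valued and concave, so every such $x$ lies in this interval.

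The first step is to observe that a concave even function is non-increasing on $[0,\infty)$. Indeed, for $0\le s\le t$ in $\dom(\psi)$ one has the convex combination identity
\[
s=\frac{t-s}{2t}\,(-t)+\frac{t+s}{2t}\,t,
\]
with non-negative coefficients summing to $1$, so concavity of $\psi$ together with $\psi(-t)=\psi(t)$ gives
\[
\psi(s)\ \ge\ \frac{t-s}{2t}\,\psi(-t)+\frac{t+s}{2t}\,\psi(t)\ =\ \psi(t).
\]
Hence $g=e^{\psi}$ is non-increasing on $[0,\infty)\cap\dom(\psi)$ (and is $0$ outside), so in particular $g(t)\ge g(x)$ for all $t\in[0,x]$.

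The second step is an integral comparison. Using that $g$ integrates to $1$, that $g\ge0$, and the monotonicity just established (applied on both $[0,x]$ and, by symmetry, on $[-x,0]$),
\[
1=\int_{-\infty}^{\infty}g(t)\,dt\ \ge\ \int_{-x}^{x}g(t)\,dt\ \ge\ \int_{-x}^{x}g(x)\,dt\ =\ 2x\,g(x).
\]
Therefore $g(x)\le \tfrac{1}{2x}$, i.e. $\psi(x)=\log g(x)\le-\log(2x)=-\log|2x|$, which is the desired bound.

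There is essentially no hard step here; the only point requiring a little care is the bookkeeping around the effective domain of $\psi$ (the values $x$ where $\psi(x)=-\infty$ or $x=0$), which is handled by the trivial-case reduction at the outset. I would present the argument in the three short moves above: reduction to $x>0$ with $g(x)>0$, monotonicity of $g$ on $[0,\infty)$ from concavity and evenness, and the area bound $1\ge 2x\,g(x)$.
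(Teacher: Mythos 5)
Your proof is correct and follows essentially the same route as the paper: bound $1=\int e^{\psi}$ below by $\int_{-x}^{x}e^{\psi}$, then use that a symmetric concave $\psi$ is non-increasing on $[0,\infty)$ to bound the latter by $2x\,e^{\psi(x)}$. The paper states the monotonicity step implicitly ("because $\psi\in\mathcal{SC}_0$"), whereas you spell it out via the convex-combination identity, but the argument is the same.
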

\begin{proof}
Note that since $e^{\ps}\in\mathcal{SLC}_0$,  for $x>0$, we have,
\[1=\edint e^{\ps(z)}dz\geq \dint_{-x}^{x} e^{\ps(z)}dz,\]
which is bounded below by $2e^{\ps(x)}x$ because $\ps\in\mathcal{SC}_0$. Hence the result follows.
\end{proof}

\begin{lemma}\label{lemma: domain of xuhigh type MLE}
Suppose $F$ is a distribution function satisfying Condition A. Denote 
\[\iint(J(F))=(a,b),\]
for some $a,b\in[-\infty,\infty]$.
Suppose $\ps_{\th}$ is the maximizer of $\Psi(\th,\ps,F)$ over $\ps\in\mathcal{SC}_{0}$ for some $\th\in\RR$. Then 
\[\iint(\dom(\ps_{\th}))=(-d,d),\] 
where   $d=(b-\th)\vee (\th-a).$
\end{lemma}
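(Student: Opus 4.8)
The plan is to reduce the statement to a fact about the \emph{unconstrained} log-concave projection of the symmetrized distribution function $F^{sym}_{\th}$ of \eqref{def: symmetrized F}, whose domain is already understood, and then to compute a convex support. First I would translate the hypothesis: writing $\ph_{\th}:=\ps_{\th}(\mathord{\cdot}-\th)\in\mathcal{SC}_{\th}$ and recalling the identity $\om(\ph,F)=\Psi(\th,\ps,F)$ recorded in \eqref{connection: om and psi} for $\ph=\ps(\mathord{\cdot}-\th)$, the assumption that $\ps_{\th}=\argmax_{\ps\in\mathcal{SC}_{0}}\Psi(\th,\ps,F)$ is equivalent to $\ph_{\th}=\argmax_{\ph\in\mathcal{SC}_{\th}}\om(\ph,F)$. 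By Lemma~\ref{Lemma: projection theory for a distribution function F} (whose proof also notes that $F^{sym}_{\th}$ satisfies Condition~A, so all maximizers involved exist), this equals $\argmax_{\ph\in\mathcal{C}}\om(\ph,F^{sym}_{\th})$; that is, $\ph_{\th}$ is exactly the log-concave projection of $F^{sym}_{\th}$. Since $\dom(\ps_{\th})=\dom(\ph_{\th})-\th$, it suffices to prove $\iint(\dom(\ph_{\th}))=(\th-d,\th+d)$.

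Next I would invoke Theorem~2.2 of \cite{dumbreg} — the same result used for the projection in the proof of Theorem~\ref{Theorem: the L1 convergence of the density estimators of one-step estimators} — which identifies $\iint(\dom(\ph_{\th}))$ with the interior of the convex support of $F^{sym}_{\th}$, i.e. with $\iint\big(\overline{J(F^{sym}_{\th})}\big)$. It remains to compute this interval. Here I would use that, by \eqref{def: symmetrized F}, $F^{sym}_{\th}$ is the distribution of the equal mixture of $X\sim F$ and its reflection $2\th-X$; hence its support is the union of the supports of these two laws, and its convex support is the convex hull of $\overline{J(F)}\cup\big(2\th-\overline{J(F)}\big)$. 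Since $\iint(J(F))=(a,b)$, this convex hull is the closed interval with endpoints $a\wedge(2\th-b)$ and $b\vee(2\th-a)$; a one-line rearrangement gives $a\wedge(2\th-b)=\th-d$ and $b\vee(2\th-a)=\th+d$ with $d=(b-\th)\vee(\th-a)$, these identities (and the conclusion) remaining valid when $a=-\infty$ or $b=+\infty$, in which case $d=+\infty$. Therefore $\iint(\dom(\ph_{\th}))=(\th-d,\th+d)$, and subtracting $\th$ yields $\iint(\dom(\ps_{\th}))=(-d,d)$, as claimed.

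The argument is essentially bookkeeping once the reduction to $F^{sym}_{\th}$ is in place. The only points requiring care are that Condition~A passes to $F^{sym}_{\th}$ so that the projection theory of \cite{dumbreg} applies — which is already contained in the proof of Lemma~\ref{Lemma: projection theory for a distribution function F} — and the treatment of the cases with infinite endpoints $a$ or $b$ when describing the convex support; the $\wedge/\vee$ formulas absorb these uniformly, so I do not anticipate a genuine obstacle.
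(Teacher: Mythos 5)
Your proof is correct, but it takes a genuinely different route from the paper's. The paper argues directly from optimality: it first establishes $\iint(\dom(\ps_\th))\supseteq(-d,d)$ by feeding the perturbation $\Delta=1_A$, with $A=\overline{(-d,d)}\setminus\dom(\ps_\th)$, into the first-order characterization (Proposition~5(ii) of \cite{xuhigh}) to conclude that $F(\cdot+\th)$ puts no mass on $A$, and then rules out the strict containment $c>d$ by a direct comparison argument (truncating $\ps_\th$ to the smaller interval would strictly increase the criterion, contradicting optimality). You instead chain two existing structural facts: Lemma~\ref{Lemma: projection theory for a distribution function F} identifies $\ps_\th(\cdot-\th)$ as the \emph{unconstrained} log-concave projection of the symmetrized measure $F^{sym}_\th$, and Theorem~2.2 of \cite{dumbreg} (already invoked in the paper's proof of Theorem~\ref{Theorem: the L1 convergence of the density estimators of one-step estimators}) gives $\iint(\dom)$ of that projection as $\iint(\text{csupp}(F^{sym}_\th))$, after which computing the convex support of the symmetrized mixture and simplifying the $\wedge/\vee$ expressions is bookkeeping. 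Your route is arguably cleaner, since it factors the whole statement through a known domain characterization rather than running a bespoke two-sided argument, and it handles the infinite-endpoint cases uniformly; the paper's route is more self-contained, relying only on the variational characterization of the constrained maximizer. Both are sound.
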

Note that $a$ or $b$ can be $\infty$, in which case $d=\infty$, and $\iint(\dom(\ps_{\th}))=\RR$.
\begin{proof}
Note that $\iint(\dom(\ps_{\th}))=(-c,c)$ for some $c>0$. We first show that $c\geq d$.   Consider the set $A=\overline{(-d,d)}\setminus \dom(\ps_{\th})$.
Note that unless $d=\infty$, $\overline{(-d,d)}=[-d,d]$. Letting $\Delta$ denote the indicator function $1_{A}$, we observe that $\ps_{\th}+\Delta=\ps_{\th}$. Also, note that 
\[\ps_{\th}=\argmax_{\ps\in\mathcal{SC}_0}\Psi(0,\ps,F(\mathord{\cdot}+\th))=\argmax_{\ps\in\mathcal{SC}_0}\om(\ps,F(\mathord{\cdot}+\th)),\]
which follows from \eqref{criterion function: xu samworth} and \eqref{definition: new criterion function }.
 Hence, Proposition $5$(ii) of \cite{xuhigh} leads to
\[\dint_{-\infty}^{\infty}\Delta(x)dF(x+\th)\leq \dint_{-\infty}^{\infty}\Delta(x)e^{\ps_{\th}(x)}dx.\]
Since $\Delta=0$ on $\dom(\ps_{\th})$, it follows that
\[\dint_{A}dF(x+\th)= 0,\]
implying $(a,b)\cap (A+\th)=\emptyset$. 

   It is easy to see that if either $a$ or $b$ equals $\infty$, $d=\infty$, and  $A+\th=\RR \setminus\dom(\ps_{\th})$. Therefore,
\begin{equation*}
(a,b)\cap (A+\th)=(a,b)\setminus\lb \dom(\psi_{\th})+\th\rb=\emptyset,
\end{equation*}
which indicates that
\begin{equation}\label{inlemma:domains}
(a,b)\subset(-c,c)+\th,
\end{equation}
implying $(-c,c)=\RR$. Hence $(-c,c)=(-d,d)$, which completes the proof for this case. Therefore, it only remains  to prove Lemma~\ref{lemma: domain of xuhigh type MLE} for the case when both $a$ and $b$ are finite.

When $a,b<\infty$, there can be two sub-cases. In the first case, $b-\th\geq \th-a$, which indicates $d=b-\th$. In this case, $[-d,d]+\th=[2\th-b,b]$, and 
\[[a,b]\subset [2\th-b,b]=[-d,d]+\th,\]
leading to
\[(a,b)\setminus\lb \dom(\psi_{\th})+\th\rb=(a,b)\cap (A+\th)=\emptyset,\]
from which \eqref{inlemma:domains} follows. 
Therefore, $b\leq c+\th$, or $d\leq c$. In the second case $b-\th\leq \th-a$, $d=\th-a$ and   $[-d,d]+\th=[a,2\th-a]$. Proceeding similarly as in the first case,   we can again show that $c\geq d$. Hence, we have established that $c\geq d$ in general.

Now suppose $c>d$, which implies that $a,b<\infty$. Observe that in this case, there exists a $\ps\in\mathcal{SC}_0$, which equals $\ps_{\th}$ on $(-d,d)$ and $-\infty$ everywhere else. It is easy to verify that when  $c> d$,
\begin{align*}
\MoveEqLeft  \Psi(\th,\ps_{\th},F)-\Psi(\th,\ps,F)\\
=&\ \dint_{-d}^{d}e^{\ps_{\th}(x)}dx-\dint_{-c}^{c}e^{\ps_{\th}(x)}dx< 0.
\end{align*}
However, $\ps_{\th}$ satisfies $ \Psi(\th,\ps_{\th},F)\geq \Psi(\th,\ps,F)$ for all $\ps\in\mathcal{SC}_0$, which leads to a contradiction.
 Hence, we conclude that $c=d$, which completes the proof. 
\end{proof}

 \begin{lemma}\label{lemma: symmetry of projection operator}
 Suppose $F$ satisfies Condition A and \eqref{condition: symmetry}. Then  \[\argmax_{\ph\in\mathcal{C}}\om(\ph,F)=\argmax_{\ph\in\mathcal{SC}_{\th}}\om(\ph,F),\]
  where the criterion function $\om$ is defined in \eqref{definition: new criterion function }. 
 \end{lemma}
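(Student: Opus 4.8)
The plan is to show that the unconstrained log-concave projection of a distribution function $F$ satisfying the symmetry relation \eqref{condition: symmetry} is automatically symmetric about $\th$, so that the constrained and unconstrained maximizers of $\om(\mathord{\cdot},F)$ coincide. First I would recall that under Condition A, Theorem~2.7 of \cite{dumbreg} guarantees that $\tilde{\ph}=\argmax_{\ph\in\mathcal{C}}\om(\ph,F)$ exists and is \emph{unique}, and that $e^{\tilde{\ph}}$ integrates to one. Uniqueness is the lever: if I can exhibit one symmetric maximizer, it must be \emph{the} maximizer, and since $\mathcal{SC}_{\th}\subset\mathcal{C}$, the constrained problem has the same solution.

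The key step is a symmetrization argument. Given the unconstrained maximizer $\tilde{\ph}$, define its reflection about $\th$, namely $\ph^{r}(x)=\tilde{\ph}(2\th-x)$. Because $F$ satisfies $F(x)=1-F(2\th-x)$, a change of variable $x\mapsto 2\th-x$ in the two integrals defining $\om$ shows $\om(\ph^{r},F)=\om(\tilde{\ph},F)$; also $\ph^{r}\in\mathcal{C}$ since reflection preserves concavity, closedness and properness. Hence $\ph^{r}$ is also a maximizer, and by uniqueness $\ph^{r}=\tilde{\ph}$, i.e. $\tilde{\ph}(2\th-x)=\tilde{\ph}(x)$ for all $x$, which says exactly that $\tilde{\ph}\in\mathcal{SC}_{\th}$. (Alternatively one may argue via the midpoint $\tfrac12(\tilde{\ph}+\ph^{r})$: by concavity of $\ph\mapsto\om(\ph,F)$ this midpoint does at least as well, it is symmetric about $\th$, and strict concavity of $t\mapsto e^{t}$ in the $-\int e^{\ph}$ term forces $\tilde{\ph}=\ph^{r}$ unless already symmetric — but the direct uniqueness route is cleaner.)

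Once $\tilde{\ph}\in\mathcal{SC}_{\th}$ is established, the conclusion is immediate: for any $\ph\in\mathcal{SC}_{\th}$ we have $\ph\in\mathcal{C}$, so $\om(\ph,F)\le\om(\tilde{\ph},F)$ with $\tilde{\ph}\in\mathcal{SC}_{\th}$; therefore
\[
\argmax_{\ph\in\mathcal{SC}_{\th}}\om(\ph,F)=\tilde{\ph}=\argmax_{\ph\in\mathcal{C}}\om(\ph,F),
\]
and uniqueness on the left follows from uniqueness on the right. I do not anticipate a serious obstacle here; the only point requiring a little care is verifying that $\ph^{r}$ genuinely lies in $\mathcal{C}$ (closedness and properness of a reflection are routine) and that the change of variables in $\om$ is legitimate, which it is because $F$ has a finite first moment under Condition A so the linear term $\int\ph\,dF$ is well-defined for any $\ph\in\mathcal{C}$ (bounded above by an affine function by Lemma~1 of \cite{theory}-type arguments, already invoked elsewhere in the paper). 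The mild subtlety — effectively the "main obstacle" — is simply making sure the uniqueness statement from \cite{dumbreg} is being applied to $F$ itself and not to a density, which is fine since Theorem~2.7 there is stated for distribution functions satisfying Condition A.
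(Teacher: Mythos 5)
Your proof is correct, and it takes a slightly different route than the paper's. The paper does not invoke the uniqueness of the unconstrained projection at all: instead it shows directly, for an arbitrary $\ph\in\mathcal{C}$, that its symmetrized \emph{average} $\bar{\ph}(x)=\tfrac12\big(\ph(x)+\ph(2\th-x)\big)$ satisfies $\om(\bar{\ph},F)\geq\om(\ph,F)$, by rewriting the linear term via the symmetry relation \eqref{condition: symmetry} and then applying Jensen's inequality (convexity of $t\mapsto e^t$) to the $-\int e^{\ph}$ term. This shows the supremum over $\mathcal{C}$ is attained on $\mathcal{SC}_{\th}$. Your argument instead fixes the unique maximizer $\tilde{\ph}$ from \cite{dumbreg}, shows its \emph{reflection} $\ph^r(x)=\tilde{\ph}(2\th-x)$ has the same criterion value (a change of variables using \eqref{condition: symmetry}), and concludes $\ph^r=\tilde{\ph}$ from uniqueness. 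Both are valid symmetrization arguments; yours leans on the uniqueness theorem in place of Jensen's inequality, which makes it shorter and arguably cleaner, whereas the paper's argument is constructive and self-contained (it proves the statement without pre-supposing that a unique maximizer exists). The "alternative midpoint argument" you mention in passing is essentially the paper's actual proof.
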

 
 \begin{proof}
 First we consider the special case $\th=0$. 
 For $\ph\in\mathcal{C}$ and $F$ symmetric about $0$, using \eqref{condition: symmetry} with $\theta=0$,  the criterion function $\om(\ph,F)$ can be written as
 \begin{align*}
\MoveEqLeft \dint_{-\infty}^{0}\ph(x)dF(x)+\dint_{0}^{\infty}\ph(x)dF(x)-\edint e^{\ph(x)}dx\\
 =&\ -\dint_{0}^{\infty}\ph(-x)dF(-x)+\dint_{0}^{\infty}\ph(x)dF(x)-\edint e^{\ph(x)}dx\\
 =&\  \dint_{0}^{\infty}\ph(-x)dF(x)+\dint_{0}^{\infty}\ph(x)dF(x)-\edint e^{\ph(x)}dx\\
=&\ \edint \dfrac{\ph(-x)+\ph(x)}{2}dF(x)-\edint e^{\ph(x)}dx,
 \end{align*}
 which is not larger than
 \[\edint\dfrac{\ph(x)+\ph(-x)}{2}dF(x)-\edint e^{(\ph(x)+\ph(-x))/2}dx,\]
 by the convexity of the exponential function. This proves that a $\phi\in\mathcal{SC}_0$ maximizes $\om(\phi,F)$ over $\mathcal{C}$ when $F$ is symmetric about $0$. The proof  for a general $\th$ follows in a similar way.
 \end{proof}
 
 \begin{lemma}\label{lemma: Lemma about the integrable bound of concave functions }
For any $\ps\in\mathcal{SC}_0$ with nonempty domain, and each $\e>0$,  there exists a function
$\ps^{(\e)}\in\mathcal{SC}_0$ for each $\e>0$ such that the following conditions are met:
\begin{enumerate}
\item[(A)] $\ps(x)\leq\ps^{(\e)}(x)\leq \ps(0)$ for all $x\in\RR$.
\item[(B)]
\[\ps^{(\e)}(x)\geq -|x|/\e+\ps(0),\]
 for all $x\in\RR$. 
\item[(B)]  $\ps^{(\e)}(x)\downarrow\ps(x)$ for each $x\in\RR$ as $\e\downarrow 0$.
\end{enumerate}
 \end{lemma}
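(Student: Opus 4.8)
The plan is to build $\ps^{(\e)}$ by taking the pointwise infimum of the concave function $\ps$ with a tent-shaped concave function of prescribed slope $1/\e$, anchored at the peak value $\ps(0)$. Explicitly, set
\[
\ps^{(\e)}(x)=\min\bigl\{\ps(x),\ \ps(0)-|x|/\e\bigr\},\qquad x\in\RR.
\]
Since $\ps\in\mathcal{SC}_0$, we have $\ps(x)\leq\ps(0)$ for all $x$ (the peak of a symmetric concave function is at the center of symmetry, which follows from $\ps(x)=\ps(-x)$ together with midpoint concavity $\ps(0)\geq\tfrac12(\ps(x)+\ps(-x))=\ps(x)$). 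Hence property (A) is immediate from the definition: the lower bound $\ps(x)\leq\ps^{(\e)}(x)$ is clear since $\ps(x)\leq\ps(0)-|x|/\e$ is false in general but $\ps^{(\e)}$ is the min of $\ps(x)$ with something $\geq\ps(x)$ only where... — here I must be careful. Actually the correct construction is the \emph{infimal convolution}-type \emph{sup}, not a min: I want $\ps^{(\e)}\geq\ps$. So instead take
\[
\ps^{(\e)}(x)=\sup_{y\in\RR}\bigl\{\ps(y)-|x-y|/\e\bigr\}
=\bigl(\ps\,\square\,(-|\cdot|/\e)\bigr)(x),
\]
the sup-convolution of $\ps$ with the concave tent $-|\cdot|/\e$. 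This is the Moreau–Yosida-type regularization from above.

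The key steps, in order, are as follows. First, I would verify $\ps^{(\e)}\in\mathcal{SC}_0$: the sup-convolution of two concave functions is concave (standard), it is closed and proper because the $1/\e$-Lipschitz upper envelope of a proper concave function that is bounded above by $\ps(0)$ is real-valued and continuous, and it is symmetric because $\ps$ is symmetric and $|\cdot|$ is even (replace $y$ by $-y$ in the sup). Second, property (A): $\ps^{(\e)}(x)\geq\ps(x)$ by taking $y=x$ in the sup; and $\ps^{(\e)}(x)\leq\ps(0)$ since $\ps(y)\leq\ps(0)$ and $-|x-y|/\e\leq 0$, so each term in the sup is $\leq\ps(0)$. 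Third, property (B): taking $y=0$ in the sup gives $\ps^{(\e)}(x)\geq\ps(0)-|x|/\e$; in particular $\dom(\ps^{(\e)})=\RR$, so $\ps^{(\e)}$ is finite everywhere and hence continuous. Fourth, the monotone decrease in (C): if $\e_1<\e_2$ then $|x-y|/\e_1\geq|x-y|/\e_2$, so the sup defining $\ps^{(\e_1)}$ is over smaller terms, giving $\ps^{(\e_1)}\leq\ps^{(\e_2)}$; thus $\ps^{(\e)}$ is nonincreasing as $\e\downarrow0$. Finally, pointwise convergence $\ps^{(\e)}(x)\to\ps(x)$: fix $x\in\iint(\dom(\ps))$; by concavity $\ps$ is locally Lipschitz near $x$, say with constant $L$ on a neighborhood, and for $\e<1/L$ the sup in $\ps^{(\e)}(x)$ is attained (or approached) at $y$ close to $x$, forcing $\ps^{(\e)}(x)\leq\ps(x)+o(1)$; combined with $\ps^{(\e)}(x)\geq\ps(x)$ and monotonicity this gives the limit. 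For $x$ on the boundary of $\dom(\ps)$ or outside it, upper semicontinuity of the closed concave function $\ps$ and the explicit bound $\ps^{(\e)}(x)\leq\sup_{|y-x|\leq\delta}\ps(y)+\delta/\e$ handle the limit by first sending $\e\downarrow0$ then $\delta\downarrow0$, using $\ps(x)=\lim_{\delta\downarrow0}\sup_{|y-x|\leq\delta}\ps(y)$.

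The main obstacle is the last step, the pointwise convergence $\ps^{(\e)}(x)\downarrow\ps(x)$, precisely at points $x$ where $\ps$ fails to be continuous or is $-\infty$ — that is, at and beyond the boundary of $\dom(\ps)$. There the naive Lipschitz argument breaks down and one must instead exploit that $\ps$ is \emph{closed} (upper semicontinuous), so $\ps$ equals the infimum over neighborhoods of its supremum, and feed this into the two-parameter limit $\lim_{\delta\downarrow0}\limsup_{\e\downarrow0}\ps^{(\e)}(x)\leq\lim_{\delta\downarrow0}\sup_{|y-x|\le\delta}\ps(y)=\ps(x)$. Everything else is routine manipulation of sup-convolutions. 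I would also note the reassurance that $\ps^{(\e)}$ need not be normalized to a density — the lemma only asserts the three sandwiching/monotonicity properties, which is exactly what the calling proof of Theorem \ref{theorem: almost sure convergence: general theorem of the MLE} uses via monotone convergence.
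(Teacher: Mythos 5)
Your proof is correct in its essentials and uses a construction that is mathematically identical to the paper's, just presented differently. You define $\ps^{(\e)}$ as the sup-convolution $\ps^{(\e)}(x)=\sup_{y\in\RR}\{\ps(y)-|x-y|/\e\}$, while the paper (following Lemma~4.3 of \cite{dumbreg}) defines it as $\inf\{tx+c : |t|\le 1/\e,\ tx'+c\ge\ps(x')\ \forall x'\in\RR\}$, the infimum of all affine majorants with slope bounded by $1/\e$. These produce the same function---the smallest $1/\e$-Lipschitz concave majorant of $\ps$---so the underlying object is the same. The difference is in what gets proved from scratch: the paper cites \cite{dumbreg} for concavity, for $\ps\le\ps^{(\e)}$, and for the monotone pointwise decrease in (C), and then does its own casework to establish $\ps^{(\e)}(0)=\ps(0)$ (distinguishing $|\ps'(0+)|\le 1/\e$ from $|\ps'(0+)|>1/\e$). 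Your sup-convolution form gives immediate one-line proofs of (A), (B), symmetry, and $\e$-monotonicity via choice of $y$ or slope comparison, and you supply your own proof of the pointwise limit, making the argument self-contained.

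One small slip in the convergence step: you write the bound $\ps^{(\e)}(x)\le\sup_{|y-x|\le\delta}\ps(y)+\delta/\e$ and propose to send $\e\downarrow 0$ first, but $+\delta/\e\to\infty$ in that order, so the bound as written gives nothing. The correct split is
\[
\ps^{(\e)}(x)\ \le\ \max\!\Bigl(\sup_{|y-x|\le\delta}\ps(y),\ \ \ps(0)-\delta/\e\Bigr),
\]
since on $|y-x|\le\delta$ the tent term is $\le 0$, while on $|y-x|>\delta$ one uses $\ps\le\ps(0)$ together with $|x-y|/\e>\delta/\e$. Sending $\e\downarrow 0$ kills the second term, and then $\delta\downarrow 0$ gives $\limsup_{\e\downarrow 0}\ps^{(\e)}(x)\le\ps(x)$ by upper semicontinuity of the closed concave $\ps$; combined with $\ps^{(\e)}\ge\ps$ this yields the pointwise decrease. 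Notice this argument works uniformly at interior, boundary, and exterior points, so your separate Lipschitz argument on $\iint(\dom(\ps))$ is not actually needed.
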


 \begin{proof}
 
 Following the construction  in Lemma~$4.3$ of \cite{dumbreg} we set
 \begin{equation}\label{def:ps(epsilon)(0)}
 \ps^{(\e)}(x)=\inf_{(t,c)\in A} (tx+c),\quad x\in\RR,
 \end{equation}
 \[\]
 where
 \[A=\bigg\{(t,c)\in\RR\times\RR\ \bl\ |t|\leq 1/\e,\ tx+c\geq \ps(x)\text{ for all }x\in\RR\bigg\}.\]
 Note that $\ps^{(\e)}(x)$ is concave by Lemma~$4.3$ of \cite{dumbreg}. To show that $\ps^{(\e)}$ is symmetric about $0$, observe that if $tx+c\geq \ps(x)$ for all $x\in\RR$, the symmetry of $\ps$ about $0$ leads to
 \[-t(-x)+c\geq\ps(-x)\text{ for all }x\in\RR,\]
 which can be rewritten as
 \[-tx+c\geq \ps(x)\text{ for all }x\in\RR.\] 
  Hence, $(t,c)\in A$ implies that  $(-t,c)\in A$. As a result,
  \[\inf_{(t,c)\in A} (tx+c)=\inf_{(-t,c)\in A} (tx+c)=\inf_{(t,c)\in A} (-tx+c),\]
  which implies that $\ps^{(\e)}\in\mathcal{SC}_0$.
 That $\ps\leq \ps^{(\e)}$ follows from Lemma~$4.3$ of \cite{dumbreg}. Hence, to prove part A, we need to show that $\ps^{(\e)}$ is bounded above by $\ps(0)$. 
 
 Observe that since $\ps^{(\e)}\in\mathcal{SC}_0$,  it is bounded above by $\ps^{(\e)}(0)$, which, by \eqref{def:ps(epsilon)(0)}, equals the infrimum of the set
 \[A'=\{c\ :\ (t,c)\in A \text{ for some }t\in\RR\}.\]
 However, since $c\in A'$ satisfies $tx+c\geq \ps(x)$ for all $x\in\RR$, and some $t$, we conclude that 
  Thus we obtain
  \begin{equation}\label{inlemma: the integrable bound of concave functions: lower bound on A}
 \ps^{(\e)}(0)=\inf A'\geq\ps(0).
 \end{equation}
 
 In light of \eqref{inlemma: the integrable bound of concave functions: lower bound on A}, to prove part A of the current lemma, it suffices to show that $\ps(0)\in A'$.  Since $\ps\in\mathcal{SC}_0$, for $x>0$, we have  $\ps(x)\leq \ps'(0+)x+\ps(0)$.  When $x\leq 0$, using the fact that $\ps'(0+)\leq 0$, we calculate
 \[\ps(x)=\ps(-x)\leq -\ps'(0+)x+\ps(0)\leq \ps'(0+)x+\ps(0).\]
 Therefore, for all $x\in\RR$, the following holds:
 \[\ps(x)\leq \ps'(0+)x+\ps(0).\]
 When  $-\ps'(0+)\leq 1/\e$,  \eqref{def:ps(epsilon)(0)} thus indicates that $(-\ps'(0+),\ps(0))\in A$. In case $-\ps'(0+)>1/\e$, we still have 
\[\ps(x)\leq \ps'(0+)x+\ps(0)< -x/\e +\ps(0),\quad\text{ for all }x> 0.\]
For $x\leq 0$,
\[\ps(x)=\ps(-x)\leq x/\e +\ps(0)\leq -x/\e+\ps(0).\]
Therefore, it turns out that if $-\ps(0+)>1/\e$,
\[\ps(x)\leq -x/\e +\ps(0)\text{ for all }x\in\RR,\]
which implies that $(1/\e,\ps(0))\in A$. The above calculations indicate that
there always exists a $t\in\RR$ such that $(t,\ps(0))\in A$, entailing that $\ps(0)\in A'$. This fact, combined with \eqref{inlemma: the integrable bound of concave functions: lower bound on A}, yields that $\ps^{(\e)}(0)=\ps(0)$. Part A follows from \eqref{inlemma: the integrable bound of concave functions: lower bound on A}.

 Note that \eqref{def:ps(epsilon)(0)} and \eqref{inlemma: the integrable bound of concave functions: lower bound on A} imply that
\[\ps^{(\e)}(x)\geq -|x|/\e+\ps(0),\quad x\in\RR,\]
which settles the proof of part B.

  Lemma~\ref{lemma: Lemma about the integrable bound of concave functions }(C) follows directly from Lemma~$4.3$ of \cite{dumbreg}.
 \end{proof}
 
The next lemma is Pratt's lemma \citep[][Theorem 1]{Pratt1960}.
 We state it here for convenience.
 \begin{lemma}\label{lemma: bill1}
 Suppose $(\Omega,\mathcal{F},\mu)$ is a measure space and $a_n,b_n,c_n$ are sequences of functions on $\Omega$ converging almost everywhere to functions $a,b,c$ respectively. Also, all functions are integrable and
 $\int a_n d\mu\to \int ad\mu$ and $\int c_n d\mu\to \int cd\mu.$ Moreover, $a_n\leq b_n\leq c_n.$ Then 
 \[\dint b_n d\mu\to\dint b d\mu.\]
\end{lemma}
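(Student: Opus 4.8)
The plan is to obtain Lemma~\ref{lemma: bill1} from two applications of Fatou's lemma, one to the nonnegative sequence $b_n-a_n$ and one to the nonnegative sequence $c_n-b_n$. First I would note that the hypotheses $a_n\le b_n\le c_n$ together with the integrability of all the functions involved guarantee that $b_n-a_n\ge 0$ and $c_n-b_n\ge 0$ are integrable, and that the almost-everywhere convergences $a_n\to a$, $b_n\to b$, $c_n\to c$ pass to $b_n-a_n\to b-a$ and $c_n-b_n\to c-b$ almost everywhere; in particular $a\le b\le c$ a.e., and since $a,b,c$ are integrable, each of $\int a\,d\mu$, $\int b\,d\mu$, $\int c\,d\mu$ is finite.

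For the lower bound, Fatou's lemma applied to $\{b_n-a_n\}$ gives $\int(b-a)\,d\mu\le\liminf_n\int(b_n-a_n)\,d\mu$. Using $\int a_n\,d\mu\to\int a\,d\mu$, the right-hand side equals $\liminf_n\int b_n\,d\mu-\int a\,d\mu$, and cancelling the finite quantity $\int a\,d\mu$ yields $\int b\,d\mu\le\liminf_n\int b_n\,d\mu$. For the upper bound, Fatou's lemma applied to $\{c_n-b_n\}$ gives $\int(c-b)\,d\mu\le\liminf_n\int(c_n-b_n)\,d\mu$; since $\int c_n\,d\mu\to\int c\,d\mu$ and $\liminf_n(-x_n)=-\limsup_n x_n$, the right-hand side equals $\int c\,d\mu-\limsup_n\int b_n\,d\mu$, so cancelling the finite quantity $\int c\,d\mu$ gives $\limsup_n\int b_n\,d\mu\le\int b\,d\mu$. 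Combining the two inequalities forces $\limsup_n\int b_n\,d\mu\le\int b\,d\mu\le\liminf_n\int b_n\,d\mu$, i.e. $\int b_n\,d\mu\to\int b\,d\mu$.

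There is no real obstacle in this argument; the only point requiring a moment's attention is verifying that every integral that gets cancelled is finite, which is immediate from the assumption that $a$, $b$, and $c$ are integrable. I would also remark that the result is classical (Pratt, 1960) and that in the excerpt it is only invoked with $\Omega$ the real line equipped with a probability measure, but the proof above works verbatim in the stated generality.
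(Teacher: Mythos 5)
Your argument is correct and is the standard double-Fatou proof of Pratt's lemma. Note, though, that the paper does not actually prove this statement; it simply records it as a classical fact with a citation to Pratt (1960), so there is no in-paper argument to compare against. Your proof supplies exactly what that reference contains: apply Fatou to the nonnegative integrable sequences $b_n-a_n$ and $c_n-b_n$, use the convergence of $\int a_n\,d\mu$ and $\int c_n\,d\mu$ to isolate $\liminf_n\int b_n\,d\mu$ and $\limsup_n\int b_n\,d\mu$, and cancel the finite integrals of $a$ and $c$. The one point you flag — that the cancelled integrals are finite — is indeed the only place where care is needed, and it follows from the stated integrability of $a$, $b$, $c$. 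No gaps.
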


\bibliographystyle{natbib}
\bibliography{location_estimation}

\end{document}